\newcommand{\znd}{\mathbb{Z}^d_n}
\newtheorem{theorem}{Theorem}
\newtheorem{lemma}{Lemma}[section]
\newtheorem{proposition}[lemma]{Proposition}
\newtheorem{corollary}[lemma]{Corollary}
\newtheorem{definition}[lemma]{Definition}
\newtheorem{assumption}[lemma]{Assumption}
\newtheorem{remark}[lemma]{Remark}
\newcommand{\CC}{\mathbb{C}}
\newcommand{\RR}{\mathbb{R}}
\newcommand{\NN}{\mathbb{N}}
\newcommand{\ZZ}{\mathbb{Z}}
\newcommand{\EE}{\mathbb{E}}
\newcommand{\QQ}{\mathbb{Q}}
\newcommand{\PP}{\mathbb{P}}
\newcommand{\TT}{\mathbb{T}}
\newcommand{\DD}{\mathbb{D}}
\newcommand{\mL}{\mathcal{L}}
\newcommand{\mP}{\mathcal{P}}
\newcommand{\mQ}{\mathcal{Q}}
\newcommand{\mC}{\mathcal{C}}
\newcommand{\mM}{\mathcal{M}}
\newcommand{\mF}{\mathcal{F}}
\newcommand{\mD}{\mathcal{D}}
\newcommand{\mX}{\mathcal{X}}
\newcommand{\mS}{\mathcal{S}}
\newcommand{\mB}{\mathcal{B}}
\newcommand{\mA}{\mathcal{A}}
\newcommand{\mH}{\mathcal{H}}
\newcommand{\LLL}{\mathscr{L}}
\newcommand{\mf}[1]{\mathfrak{#1}}
\renewcommand{\l}{\ell}
\newcommand{\para}{\varolessthan}
\newcommand{\rpara}{\varogreaterthan}
\newcommand{\reso}{\varodot}
\newcommand{\hh}{\frac{1}{2}}
\newcommand{\ve}{\varepsilon}
\newcommand{\vr}{\varrho}
\newcommand{\vt}{\vartheta}
\newcommand{\bigslant}[2]{{\raisebox{.1em}{$#1$}\left/\raisebox{-.1em}{$#2$}\right.}}
\newcommand{\lqm}{``}
\newcommand{\rqm}{'' }
\newcommand*{\ud}{\mathrm{\,d}}
\newcommand{\supp}{\operatorname{supp} } 
\newcommand*{\mint}[1]{%
  \mint@l{#1}{}%
}
\newcommand*{\mint@l}[2]{%
  \@ifnextchar\limits{%
    \mint@l{#1}%
  }{%
    \@ifnextchar\nolimits{%
      \mint@l{#1}%
    }{%
      \@ifnextchar\displaylimits{%
	\mint@l{#1}%
      }{%
	\mint@s{#2}{#1}%
      }%
    }%
  }%
}
\newcommand*{\mint@s}[2]{%
  \@ifnextchar_{%
    \mint@sub{#1}{#2}%
  }{%
    \@ifnextchar^{%
      \mint@sup{#1}{#2}%
    }{%
      \mint@{#1}{#2}{}{}%
    }%
  }%
}
\def\mint@sub#1#2_#3{%
  \@ifnextchar^{%
    \mint@sub@sup{#1}{#2}{#3}%
  }{%
    \mint@{#1}{#2}{#3}{}%
  }%
}
\def\mint@sup#1#2^#3{%
  \@ifnextchar_{%
    \mint@sup@sub{#1}{#2}{#3}%
  }{%
    \mint@{#1}{#2}{}{#3}%
  }%
}
\def\mint@sub@sup#1#2#3^#4{%
  \mint@{#1}{#2}{#3}{#4}%
}
\def\mint@sup@sub#1#2#3_#4{%
  \mint@{#1}{#2}{#4}{#3}%
}
\newcommand*{\mint@}[4]{%
  \mathop{}%
  \mkern-\thinmuskip
  \mathchoice{%
    \mint@@{#1}{#2}{#3}{#4}%
	\displaystyle\textstyle\scriptstyle
  }{%
    \mint@@{#1}{#2}{#3}{#4}%
	\textstyle\scriptstyle\scriptstyle
  }{%
    \mint@@{#1}{#2}{#3}{#4}%
	\scriptstyle\scriptscriptstyle\scriptscriptstyle
  }{%
    \mint@@{#1}{#2}{#3}{#4}%
	\scriptscriptstyle\scriptscriptstyle\scriptscriptstyle
  }%
  \mkern-\thinmuskip
  \int#1%
  \ifx\\#3\\\else_{#3}\fi
  \ifx\\#4\\\else^{#4}\fi  
}
\newcommand*{\mint@@}[7]{%
  \begingroup
    \sbox0{$#5\int\m@th$}%
    \sbox2{$#5\int_{}\m@th$}%
    \dimen2=\wd0 %
    \let\mint@limits=#1\relax
    \ifx\mint@limits\relax
      \sbox4{$#5\int_{\kern1sp}^{\kern1sp}\m@th$}%
      \ifdim\wd4>\wd2 %
	\let\mint@limits=\nolimits
      \else
	\let\mint@limits=\limits
      \fi
    \fi
    \ifx\mint@limits\displaylimits
      \ifx#5\displaystyle
	\let\mint@limits=\limits
      \fi
    \fi
    \ifx\mint@limits\limits
      \sbox0{$#7#3\m@th$}%
      \sbox2{$#7#4\m@th$}%
      \ifdim\wd0>\dimen2 %
	\dimen2=\wd0 %
      \fi
      \ifdim\wd2>\dimen2 %
	\dimen2=\wd2 %
      \fi
    \fi
    \rlap{%
      $#5%
	\vcenter{%
	  \hbox to\dimen2{%
	    \hss
	    $#6{#2}\m@th$%
	    \hss
	  }%
	}%
      $%
    }%
  \endgroup
}
\newcommand{\opnorm}{\@ifstar\@opnorms\@opnorm}
\newcommand{\@opnorms}[1]{%
  \left|\mkern-1.5mu\left|\mkern-1.5mu\left|
   #1
  \right|\mkern-1.5mu\right|\mkern-1.5mu\right|
}
\newcommand{\@opnorm}[2][]{%
  \mathopen{#1|\mkern-1.5mu#1|\mkern-1.5mu#1|}
  #2
  \mathclose{#1|\mkern-1.5mu#1|\mkern-1.5mu#1|}
}
\begin{document}

\title{The spatial $\Lambda$--Fleming--Viot process in a random environment}

\maketitle
\thispagestyle{empty}

\vspace{-0.5cm}

\centerline{\sc Tommaso Rosati\footnote{Department of Mathematics, Imperial
College London, GB, \texttt{t.rosati@imperial.ac.uk}} and Aleksander
Klimek{\footnote{School of Mathematics, University of Edinburgh, GB, {\tt aleksander.klimek@ed.ac.uk}}}}
\renewcommand{\thefootnote}{}

\bigskip

\vspace{.5cm} 
 



\begin{abstract}
We study the large scale behaviour of a population consisting of two types
which evolve in dimension $d=1,2$ according to a spatial Lambda-Fleming-Viot
process subject to random time-independent selection.
    If one of the two types is rare compared to the other, we prove that its
    evolution can be approximated by a super-Brownian motion in a random
    (and singular) environment.
	Without the sparsity assumption, a diffusion approximation leads to a Fisher-KPP equation in a random potential.
    The proofs build on two-scale Schauder estimates and semidiscrete approximations of the Anderson Hamiltonian.
\end{abstract}

\bigskip\noindent 
{\it MSC:} 35R60, 60F05, 60J68, 60G51, 60J70, 92D15. 

\medskip\noindent
{\it Keywords:} Spatial Lambda Fleming-Viot, super-processes, Anderson
Hamiltonian, scaling limits.



\maketitle

\tableofcontents

\section*{Introduction}

A fundamental challenge in population genetics is to understand the interplay between different evolutionary and ecological factors and their overall contribution to genetic variety, i.e.\ the distribution of different types within a population.
A prominent example of such a force is the random neutral process of `genetic
drift', which occurs due to the random reproduction of organisms. 
Another one is the adaptive process of selection.
Both genetic drift and selection work, in different ways, to reduce the genetic variability of populations. 
However, other ecological and evolutionary forces may counterbalance those factors and explain durable heterogeneity within the populations.

Starting with the pioneering works by Wright \cite{wright:1943}, spatial
structure has  played a key role in understanding genetic diversity. Since
individuals inhabit different, possibly distant geographical regions and do not
move too far from their place of birth, the likelihood of mating between
geographically distant populations is very small. This leads to a greater
differentiation between subpopulations, as distant individuals evolve
essentially independently of each other. In extreme cases, this mechanism,
which is usually referred to as isolation by distance may even lead to the creation of different species. 
Even though, in principle, selection acts to reduce the genetic variety, Wright
argued in the same article that if the selection is spatially heterogeneous,
that is, if selection favors different types of individuals in different
regions in space, it may further enhance the differentiation coming from
isolation by distance. A large body of empirical evidence suggests that this
may indeed be the case. Studies on plants
\cite{pausas/carreras/ferre/font:2003}, bacteria \cite{rainey/travisano:1998},
animals \cite{kerr/packer:1997} seem to all confirm that inhomogeneity in the
spatial environmental enhances diversity. For more in-depth description of biological literature, including less favorable viewpoints of the phenomena we are concerned with, we refer to \cite{tews/etal:2004, hedrick:2006, stein/gerstner/kreft:2014}.

Our work is similarly motivated by the question: 
 does spatially heterogeneous selection enhance the genetic diversity?

There are many approaches one could take to model a spatially structured population.
The stepping stone models (see e.g. \cite{kimura:1953}), where the population
evolves in separated islands distributed on a lattice and interacts only with
neighboring islands, lead to an artificial subdivision of the population.
Approaches based around the Wright–Mal\'ecot formula \cite{barton/depaulis/etheridge:2002, malecot:1948, wright:1943} (which was introduced to study the isolation by distance phenomena) suffer from either inconsistencies in their assumptions or lead to unnatural `clumping' of the population. 
We refer to \cite{barton/etheridge/veber:2013} for an overview of the difficulties associated with modelling spatially distributed populations.
The spatial Lambda-Fleming-Viot (SLFV) class of models, introduced in \cite{etheridge:2008} and formally constructed in \cite{barton/etheridge/veber:2010}, has been proposed specifically to overcome those difficulties, and is at the basis of our work.
Here the population is distributed over continuous space, and
reproductive events involve macroscopic regions of space (in this work balls of
a fixed radius $ 1/n$, for $ n \in \NN $) and follow a space-time Poisson point process. 

In the neutral SLFV there is no bias in the relative fitness of the populations
at hand. Our work considers instead the case in which the population consists
of just two types ($\mf{a}$ and $\mf{A}$) and their relative fitness is
modeled by a sign changing selection coefficient $s_n(x), x \in \TT^d$ ($
\TT^{d} $ being the $d$-dimensional torus), so that $\mf{a}$ is favored in the
location $x$ if $s_n(x)>0$ and $\mf{A}$ is favored in the opposite case.
Instead of choosing a specific selection coefficient, we sample it from a
probability distribution $\PP$. We will consider the proportion $X^n(\omega, t,
x)$, evaluated at time $t \geq 0$ and position $x \in \TT^d$, of particles of
type $\mf{a}$ with respect to the total population, given the realization
$s_n(\omega)$ of the selection coefficient. The parameter $n \in \NN$ indicates
the size of the impact area of reproductive events: we are interested in the
limit $n \to \infty$ and will scale the magnitude of the reproductive events and the strength of the selection coefficient $s_n$  according to $n$ as well. All our scaling limits are diffusive and the effect of selection is \textit{weak} with respect to neutral events.

We study two different scenarios. In the first one, we assume that type
$\mf{a}$ is rare compared to $\mf{A}$. The rarity is described by considering
an initial condition $X^n(\omega, 0,x)$ of order $n^{-\vr}$ for certain values
of $\vr>0$, so that the process will be of order $ n^{-\varrho} $ for very long
times. In this scenario $\mf{a}$ represents a mutation which tries to establish itself among the wild type $\mf{A}$. Just as a small sub-population in the Wright-Fisher model is described by a branching process, we expect the limit to be a superBrownian motion (see \cite{Etheridge2000} for an introduction to superprocesses) in a random time-independent environment.
A similar scaling result without selection was first obtained by \cite{chetwynd-diggle/etheridge:2017} (see also \cite{Cox-Durrett-Perkins2000RescaledVoter} for an analogous result regarding the voter model) and recently extended in \cite{cox/perkins:2019} to critical values of the parameter $\vr$. A scaling limit for a model with a selection coefficient which is white in time and correlated in space, was obtained by \cite{chetwynd-diggle/klimek:2019} using a lookdown representation.

We will assume, instead, that $s_n$ scales to a spatial white noise $\xi$ on the torus $\TT^d$ and consider only dimension $d=1,2$. In this setting, the limit (cf. Theorem~\ref{thm:convergence_rsbm}) is the rough superBrownian motion introduced in \cite{PerkowskiRosati2019RSBM}, which formally solves the following stochastic partial differential equation (SPDE) for some $\nu_0>0$ (in $d=2$ the SPDE has to be replaced by the associated martingale problem):
\begin{equation}\label{eqn:rsbm-introduction} 
\partial_t Y  = \nu_0 \Delta Y + (\xi - \infty 1_{\{d=2\}}) Y + \sqrt{Y} \widetilde{\xi}, \qquad Y(0) = Y_0.
\end{equation}
Here $\widetilde{\xi}$ is a space-time white noise independent of $\xi$. The
$\infty$ appearing in $d=2$ is a formal representation of the renormalisation
required to make sense of the Anderson mode (see the discussion below), which is described by the SPDE
\begin{equation}\label{eqn:pam-introduction} \partial_t Y = \nu_0 \Delta Y + (\xi - \infty 1_{\{d=2\}}) Y, \qquad Y(0) = Y_0.\end{equation}
The latter equation is \textit{singular} in $d=2$ because the expected
regularity of the solution $Y$ is not sufficient to make sense of the product
$\xi \cdot Y$ and requires theories such as regularity structures or
paracontrolled distributions (cf. Section~\ref{sec:semidiscrete_PAM} or see
\cite{Hairer2014, GubinelliPerkowski2017KPZ} for complete works on singular
SPDEs). In particular, there is no understanding of the Anderson model in
dimension $d \geq 4$. We restrict to $d \leq 2$ as these are the biologically
interesting cases and in $d=3$ the renormalization procedure is more involved. The quoted
solution theories for singular SPDEs work pathwhise, conditional on the
realization of the noise $\xi$ and some functionals thereof. As a consequence,
solutions to \eqref{eqn:rsbm-introduction} are defined as martingale solutions
conditional on the realization of $\xi$ and uniqueness in distribution of
solutions to \eqref{eqn:rsbm-introduction} is proven through a conditional
duality argument. This lies in contrast to cases where the environment is white in time \cite{Mytnik1996}, where the martingale term can contain also the environment.

A crucial step in the proof of the scaling limit is to show that the continuous
Anderson Hamiltonian $\mH= \nu_0 \Delta + \xi - \infty1_{\{d=2\}}$ is the limit
of approximations $\mH_n = \mA_n + \xi_n - c_n 1_{\{d=2\}}$ (cf. Theorem~\ref{thm:semidiscrete-pam-approximation}). In the latter operator, the approximate Laplacian $\mA_n$ acts on $L^2(\TT^d)$ and is expressed in terms of local averages of functions: we call this setting semidiscrete, as opposed to the fully discrete setting, where the underlying space is for example a lattice. Fully discrete approximations of singular SPDEs have been the object of many studies (see \cite{MourratWeber2017, ErhardHairer2019Discretizations, ChoukGairingPerkowski2017, MartinPerkowski2019Bravais} for a partial literature). Instead, approximations in the present semidiscrete case appear new. In the study of such SPDEs the smoothing effect of the Laplacian is crucial: the first step towards understanding the convergence of the operators is to establish the regularization properties of the approximate Laplacian $\mA_n$, commonly known as Schauder estimates. Through a two-scale argument, we separate macroscopic scales in frequency space, at which $\mA_n$ regularizes analogously to the Laplacian, and microscopic scales, which are small but see no regularization (see Theorem~\ref{thm:regularization-estimates-main-results}). Once we are provided with the Schauder estimates and the convergence of $\mH_n$, the scaling limit is proven through an application of the Krein-Rutman theorem. At this point it is particularly important that the space is compact, while all other results in this work seem to extend from $\TT^d$ to $\RR^d$.

In the second scenario, $s_n$ is chosen to scale to a smooth random function
$\overline{\xi}$, and we do not make the sparsity assumption. 
This regime corresponds to studying the long time behaviour of a large population.
In this case under diffusive scaling one obtains (cf. Theorem~\ref{thm:convergence_fkpp}) convergence to a solution of the (in $d=1$ stochastic) Fisher-KPP equation
\begin{equation}\label{eqn:fkpp-introduction} 
\partial_t X = \nu_0 \Delta X + \overline{\xi} X(1-X)+ \sqrt{X(1-X)} \widetilde{\xi} 1_{\{d=1\}}, \qquad X(0) = X_0.
\end{equation}
As before $\widetilde{\xi}$ is a space-time white noise independent of
$\overline{\xi}$. In a nutshell, the intensity of the martingale term is
governed by a parameter $\eta \geq 0$ and there exists a critical value $\eta_c
(d) \geq 0$ such that the martingale term is of order $n^{-(\eta - \eta_c)}$. In dimension $d=1$ we consider $\eta = \eta_c$, while in dimension $d=2$ we take $\eta > \eta_c$. In some models, by taking into account dual processes, cf. \cite{etheridge:2008, forien/penington:2017} , one can prove that in $d=2$ the deterministic limit holds also \textit{at} the critical value. 
To the best of our knowledge the process we consider does not have a dual: hence although a similar result is expected, it remains open as the quoted methods do not apply.
Due to the same lack of duality, in $d=1$ uniqueness of the solutions seems out of reach. Similar results have been obtained in \cite{etheridge/veber/yu:2014} where the selection coefficient is constant in space and time (in this case the process admits a dual) and in \cite{biswas/etheridge/klimek:2018}, where the selection coefficient is fluctuating in time and space and correlated in the latter, giving rise to an additional martingale term. 

The treatment of this second regime is apparently much simpler, as the solution
is bounded between $0$ and $1$. The only difficulty is to prove convergence in
a topology, in which one can pass to the limit inside the nonlinearity. Unlike
the previous works \cite{etheridge/veber/yu:2014, biswas/etheridge/klimek:2018}
we can make good use of the Schauder estimates and directly prove tightness for
a smoothed version of $X^{n}$ in a Sobolev space of positive regularity (see Theorem~\ref{thm:convergence_fkpp}). 

In conclusion, this work extends previous scaling limits to incorporate a sign changing, possibly rough, selection. Choosing the selection at random provides a natural setting which exhibits interesting longtime behavior. We believe this could be the starting point for some ulterior studies: for example Equation~\eqref{eqn:fkpp-introduction} in $d=1$ with $x \in \RR$ (so globally in space) can be recovered with the same methods and could have interesting longtime properties, as the selection could balance out the genetic drift. The methods we used are based on two-scale Schauder estimates and do not rely on duality. They allow us to establish a connection to singular SPDEs, but appear to be a fairly simple, powerful tool to treat nonlinearities appearing in the SLFV.

\subsection*{Discussion} Let us briefly come back to our first question,
concerning the influence of spatially heterogeneous selection on genetic
diversity. Our scaling limits show how strong the effect of weak selection can
be, even in presence of genetic drift. One observation is that the degree of
spatial heterogeneity, encoded in the regularity of the environment, plays a
key r\^ole. In fact, we see that if the environment is sufficiently irregular, a
rare type will invade the population in finite time, in our parabolic scale:
this is captured by the necessity of \emph{renormalisation}. Our analysis then
moves on to finer scales. Once we remove this first impact via an appropriate
exponential damping -- captured by the sequence of renormalisation constants --
we obtain \eqref{eqn:rsbm-introduction}: here the survival of the rare type is
determined by the longtime behavior of the Anderson model, despite the presence
of genetic drift \cite{PerkowskiRosati2019RSBM} and becomes ever more likely on
large domains, since the top eigenvalue of the Anderson Hamiltonian tends to
increase with the volume size \cite{Chouk2019} (for the 2D case). Yet, if we
pass to the nonlinear regime \eqref{eqn:fkpp-introduction} the effect of weak
selection is not strong enough to overcome genetic drift
(at least not on finite volume) and we expect fixation to one of the stable
states in finite time. Instead, without the presence of genetic drift, we will
observe longtime coexistence (that is, a limiting stable fixed point that is
neither $ X\equiv 0 $ not $ X \equiv 1 $) as long as both the linearization at
$ 0 $ and $ 1 $ are unstable: so once more the dynamic is governed by the top
eigenvalue of the Anderson Hamiltonian (with smooth noise), see
\cite[Theorem 10.1.5]{Henry1981GeometricTheoryPDEs}.

\subsection*{Structure of the paper}
In Section~\ref{sec:models_statment}, we introduce the model and state our main results.
    Section~\ref{sec:scaling_to_rsbm} is devoted to the relation between the
    Spatial Lambda-Fleming-Viot process with fluctuating selection and the
    rough super-Brownian motion, whereas in Section~\ref{sec:scaling_fkpp} we
    establish the scaling limit to the Fisher-KPP equation in rough potential. 
The rest of the paper is devoted to analytical backbone of our results.
 Section~\ref{sec:schauder_estimates} covers Schauder estimates
and Section~\ref{sec:semidiscrete_PAM} discusses analytic and probabilistic aspects of the Anderson model.

\subsection*{Acknowledgements}
{
We would like to thank Nicolas Perkowski for many helpful discussions and
comments, Guglielmo Feltrin for an enlightening conversation and the anonymous
referees for pointing out certain mistakes and their numerous suggestions to
improve the article. 

AK acknowledges that the funding for this work has been provided by the Alexander von Humboldt Foundation in the framework of the Sofja Kovalevskaja Award endowed by the German Federal Ministry of Education and Research. 

TR gratefully acknowledges support by the IRTG 1740: this paper was developed within the scope of the IRTG 1740/TRP2015/50122-0, funded by the DFG/FAPESP.
}

\section{Models and statement of main results}
\label{sec:models_statment}
We begin with introducing our notation. 
In Subsection~\ref{sec:slfv_definition} we describe the
Spatial-Lambda-Fleming-Viot process. 
In Subsection~\ref{sub:sparse_regime} we study the convergence to the rough
super-Brownian motion.  Subsection~\ref{sub:fkpp} is devoted to the diffusive scaling which leads to Fisher-KPP equation. In Subsection~\ref{subsec:proof_methods} we describe the main analytical components of the proofs.

\subsection{Notation}
We write \(\NN= \{0, 1,2, \ldots\}\), and
\(\RR_{+} = [0, \infty)\). 
The \(d-\)dimensional torus $\TT^{d}$ is defined as \(\TT^{d} =
\bigslant{\RR^{d}}{\ZZ^{d}}\), where \(\ZZ^{d}\) acts by translation
on $ \RR^{d} $. 

We indicate with \(|A|\) the Lebesgue
measure of a Borel set \(A \subseteq \TT^{d}\).
Let \( \overline{B}_{n}(x) \subseteq
\RR^{d}\) be the ball (with respect to the Euclidean norm) of volume \(n^{-d}\)
around \(x\). Similarly, let \(\overline{Q}_{n}(x) \subset \RR^d\) be the $d$-dimensional cube
\[ 
  y \in \overline{Q}_n(x) \iff (y {-} x)_{i} \in [ - (2n)^{-1} ,
  (2n)^{-1} ), \qquad \forall i \in \{1, \ldots ,
  d\}. 
\]
As we work on the \( d- \)dimensional torus we denote with
\[ B_{n} (x) = \bigslant{\overline{B}_{n}(x)}{\ZZ^{d}} \subseteq
\TT^{d}, \quad Q_{n}(x) =\bigslant{ \overline{Q}_{n}(x)}{\ZZ^{d}} \subseteq
\TT^{d} \] 
the projections of \( \overline{Q}_{n}, \overline{B}_{n} \) on the torus.
To make sure that these still satisfy the normalization
\[ |B_{n}(x)| = |Q_{n}(x)|= n^{-d},\]
 observe that for every \( d \in \NN
\) there exists a \( c(d) \in \NN \) such that
\[ \overline{B}_{n}(0), \overline{Q}_{n}(0) \subseteq ( -
1/2 , 1/2)^{d}, \qquad \forall n \geqslant c(d).\] 
For this reason, throughout this work we consider only
$ n \geqslant c(d). $ 
We will not repeat this assumption to avoid an additional burden on the
notation. Next, consider the lattice \[\ZZ_{n}^{d} = \big( n^{-1} 
\ZZ^{d} \big) \cap \TT^{d}.\]
Since $ \{Q_n(x)\}_{x \in \ZZ_{n}^{d} \cap \TT^d}$ is a collection of
disjoint sets, the torus is the disjoint union
\[\TT^{d} = \bigcup_{x \in \ZZ_{n}^{d} \cap \TT^d} Q_{n}(x).\]
For integrable \(w \colon \TT^{d} \to \RR\) define  \( \Pi_{n} w(x)\) as an average integral of $w$ over $B_{n}(x)$, that is
\[ 
  \Pi_n w (x) : = \mint{-}_{B_{n}(x)} w(y) \ud y  : =
  \frac{1}{|B_{n} (x)|} \int_{B_{n}(x)} w(y) \ud y. 
\]

Next, we make often use of the Fourier transform both on the torus
and in the full space. We denote with $ \mS(\TT^{d}), \mS^{\prime}
(\TT^{d}) $ the space of (smooth) Schwartz test functions and (its dual) of Schwartz
distributions respectively.
For \(\varphi \in \mS^{\prime}(\TT^{d})\), we define
\begin{align*}
  \widehat{\varphi} (k) = \mF_{\TT^{d}} \varphi (k) = \int_{\TT^{d}} e^{- 2 \pi
  \iota k \cdot x} \varphi (x) \ud x, \ \ k \in \ZZ^{d}.
\end{align*}
Analogously, for \(\psi \in \mS^{\prime}(\RR^{d}): \mF_{\RR^{d}} \psi (k) = \int_{\RR^{d}} e^{- 2 \pi \iota k
  \cdot x} \psi(x) \ud x\), for \(k \in \RR^{d}\). These Fourier transforms admit inverses, which we denote with
\(\mF_{\TT^{d}}^{-1}, \mF_{\RR^{d}}^{-1}\) respectively.

 
For   \(a \colon \ZZ^{d} \to \RR\) with at most polynomial growth we define the Fourier multiplier as an operator of the form
\[ a(D) \varphi = \mF_{\TT^{d}}^{-1} \big( a ( \cdot) \mF_{\TT^{d}} \varphi
( \cdot)\big), \qquad \forall \varphi \in
\mS^{\prime}(\TT^{d}).\] 

Since characteristic functions normalized to integrate to $1$ enter  the calculations repeatedly, for a set $A$
we write:
\[ \chi_{A}(x) = \frac{1}{|A|} 1_{A}(x).\] 
In the special case of balls and cubes we additionally define for $ x \in
\TT^{d} $ and $ k \in \ZZ^{d} $
\begin{align*}
 \chi_{n} (x)&=   n^{d} 1_{B_{n}(0)} (x) , \qquad
  \widehat{\chi}_{n}(k) = \widehat{\chi}(n^{-1} k) = \mF_{\TT^{d}} \chi_{n}(k) = \mF_{\RR^{d}} \chi_{n}(k), 
 \\
   \chi_{Q_{n}}(x)&= n^{d} 1_{Q_{n}(0)}(x), \qquad
   \widehat{\chi}_{Q_{n}}
  (k) = \widehat{\chi}_{Q} (n^{-1} k) =  \mF_{\TT^{d}}
  \chi_{Q_{n}} (k)  = \mF_{\RR^{d}} \chi_{Q_{n}}(k).
\end{align*}
Observe that in order to obtain the identity between the Fourier transform on
the torus and in the full space, we have used that \( n \geqslant c(d)\).
A special role will be played by the \emph{semidiscrete Laplace} operator \(\mA_{n}\):
\begin{equation}\label{eqn:A_ve-definition} 
  \mA_{n}(\varphi)(x) = n^{2}  \mint{-}_{B_{n}(x)}
  \mint{-}_{B_{n}(y)} \mint{-}_{B_{n}(z)}\mint{-}_{B_{n}(r)}\varphi(s) {-}
  \varphi(x) \ud s \ud r \ud z \ud y  = n^{2}  \big( \Pi_{n}^{4}\varphi {-}
  \varphi \big) (x).
\end{equation} 
Such an operator is a Fourier multiplier with
\[ 
  \mA_{n} = \vt_{n}(D), \qquad \vt_{n}(k) =n^{2} \big(
\widehat{\chi}^{4}(n^{-1} k) -1\big).
\] 

We proceed with a definition of Besov spaces. Following
\cite[Proposition 2.10]{BahouriCheminDanchin2011FourierAndNonLinPDEs},
fix a dyadic partition of the unity \(\{ \varrho_{j}\}_{j \geq -1}\). We assume
that for \(j \geq 0\), \(\varrho_{j}( \cdot) = \varrho( 2^{-j} \cdot)\) is a  radial, smooth
compactly supported function.
For a distribution \(\varphi \in
\mS^{\prime}(\TT^{d})\) define \(\Delta_{j} \varphi = \varrho_{j}(D) \varphi\)
and then define the spaces \(B^{\alpha}_{p, q}\) for \(\alpha \in \RR, p, q \in [1,
\infty]\) via the norms
\[ \| \varphi \|_{B^{\alpha}_{p,q}} = \| (2^{\alpha j} \| \Delta_{j} \varphi
\|_{L^{p}(\TT^{d})})_{j \geq {-}1} \|_{\l^{q}}.\]
Since the partition of unity was chosen to be smooth, we define the Besov
spaces on the full space via the same formula.
It is convenient to introduce the notation
\begin{align*}
  K_{j}^{x}(y) = \mF_{\TT^{d}}^{-1} \rho_{j}(x-y).
\end{align*}

For \(\alpha \in \RR_{+} \setminus \NN_{0}\) and
\(p, q = \infty\) the above definition coincides with that of classical
H\"older spaces. We therefore write (for any $ \alpha \in \RR $ and $ p \in
[1, \infty] $)
\[ \mC^{\alpha} = B^{\alpha}_{\infty, \infty}, \qquad
\mC^{\alpha}_{p} = B^{\alpha}_{p, \infty}. \] 
We shall denote the norm of the H\"older space $\mC^{\alpha}$ by
$\|\cdot\|_{\alpha}$. 

To conclude, let $\mathcal{M}(\TT^d)$ be the space of
finite positive measures over $\TT^d$. For metric spaces \(X, Y\) let \(C(X;
Y)\) and \(C_{b}(X; Y)\) be respectively the
space of continuous, and continuous and bounded functions from \(X\) to
\(Y\). If \(Y = \RR\), we may drop the second argument. In addition, for a
metric space \(X\) we define \(\DD([0, \infty); X)\) to be the space of
c\`adl\`ag functions with values in \(X\), endowed with the Skorohod topology as in
\cite[Section 3.5]{EthierKurtz1986} (similarly for finite time horizon
\(T>0\) we write \(\DD([0,T]; X)\)). If \(X\) is a Banach space we write
\(L^{2}([0, T] ; X)\) for the space of measurable functions \(\varphi\) on \([0, T]\) taking values in
\(X\) and satisfying \(\| \varphi \|_{L^{2}([0, T]; X)} = \big(
\smallint_{0}^{T} \| \varphi(s) \|_{X}^{2} \ud s  \big)^{1/2}< \infty\).
The local variant of the space for $T = \infty$ is then defined as \(L^{2}_{\mathrm{loc}}([0, \infty); X) = \bigcap_{T >0} L^{2}([0, T];
X)\).

\subsection{The spatial $\Lambda$-Fleming-Viot process in a random environment}\label{sec:slfv_definition}

We now describe the Spatial Lambda-Fleming-Viot process. In addition to the
original neutral process we consider the effect of a (later randomly chosen) spatially
inhomogeneous selection.
We consider a population that presents two genetic types, $\mf{a}$ and $\mf{A}$. 
At each time $t\geq 0$, $X_{t}^{n}$ is a random function such that
$$X_{t}^{n}(x) = \text{ proportion of individuals of type } \mf{a}
\text{ at time } t \text{ and at position } x.$$ 

The dynamics of the Spatial Lambda-Fleming-Viot model are determined by reproduction events. In order to incorporate selection, we distinguish two types of reproduction events, neutral and selective. These events are driven by independent Poisson point processes. In simple terms 
\begin{align*}
\textbf{Neutral event:} & \ \ \text{Both types have the same chance of reproducing,} \\
\textbf{Selective event:} & \ \ \text{One of the two types is more likely to reproduce than the other.}
\end{align*}
The strength, and the direction of the selection are encoded respectively by the magnitude and sign of a function $s_{n}$. 
In our setting the function $ s_{n} $ will be chosen at random (so it will
depend on events $ \omega $ in a probability space $ (\Omega, \mF, \PP) $),
thus implying that the entire process $ X^{n}_{t} $ will also depend on $
\omega $ and be a Markov process \emph{only conditionally on the realization of
the environment.}
The function $s_{n}$ should satisfy the following requirements.
\begin{assumption}\label{assu:slfv:random-environment-general}
	Consider a probability space \((\Omega, \mF, \PP)\) and fix \(n \in
      \NN\). We assume that \(s_n\) is a measurable map $s_n \colon \Omega \to
    L^\infty(\TT^d; \RR),$ such that:
	\[|s_n(\omega, x)| < 1, \qquad \forall \omega \in \Omega, x \in \TT^d.\]
\end{assumption}

Conditional on the realization $s_n(\omega)$ of the environment, $X^n(\omega)$ will be a Markov process. Its dynamics
are defined below, deferring technical steps of the construction to
Appendix~\ref{app:construction}. We write:
 \[M = \big\{ w \colon \TT^d \to [0,1], \ \ w \ \text{ measurable}\big\}.\]

\begin{definition}[Spatial $\Lambda$-Fleming-Viot process with random selection]
\label{defn:spatial_lamdba_fv_in_random_environment}

  Fix \(n \in \NN, \mf{u} \in (0,1)\) and consider \(s_{n}\) and \((\Omega,
\mF, \PP)\) satisfying Assumption~\ref{assu:slfv:random-environment-general} and $X^{n, 0} \in M$. Define the process
\(X^{n}\) on the probability space \[ (\Omega \times \DD([0, \infty);M),
\mF \otimes \mB(\DD([0, \infty);M)), \PP \ltimes \PP^{\omega, n}),\] %
where \(\mA \otimes \mA^{\prime} \) is the product sigma-field of $ \mA ,
\mA^{\prime} $, $ \PP^{\omega , n} $ the conditional law of $ X^{n} $ given the
realization $ s_{n}(\omega) $ of the environment and $ \ltimes $ the semidirect
product as defined in the Appendix~\ref{app:construction}.
Then for every \(\omega \in \Omega\) it holds that

\begin{itemize}
  \item[i)]
The space \((\DD([0, \infty);M), \PP^{\omega,n})\) supports a pair of independent
Poisson point processes $\Pi^{\mathrm{neu}}_{\omega}$ and
$\Pi^{\mathrm{sel}}_{\omega}$ on 
$\RR_+\times \TT^d$ with intensity measures 
$\mathrm{d}t \otimes (1- |s_{n}(\omega, x)|)\mathrm{d}x$
and 
$\mathrm{d}t \otimes |s_{n}(\omega, x)|\mathrm{d}x $ 
respectively. 

\item[ii)] The random process \(\RR_{+} \ni t
  \mapsto X^{n}_{t}(\omega)\) is the canonical process on \(\DD([0, \infty);M)\). It is the Markov process with law $\PP^{\omega, n}$ started in
    \(X^{n, 0}\) with values in \(M\) associated to the generator
    \[\mL(n, s_{n}(\omega), \mf{u}) \colon C_b(M ; \RR) \rightarrow C_b(M; \RR)
    \;,\] (see
    Appendix \ref{app:construction}) that can be described by the
  following dynamics.
\begin{enumerate}
  \item If $(t,x) \in \Pi^{\mathrm{neu}}_{\omega}$, a neutral event occurs at time 
  $t$ in the ball $B_{n}(x)$, namely:
\begin{enumerate}
\item Choose a parental location $y$ uniformly in 
  $B_{n}(x)$. 
\item Choose the parental type $\mf{p} \in \{\mf{a}, \mf{A}\}$ according to the
distribution $ \QQ $
\begin{align*}
  \mathbb{Q}\left[\mf{p} = \mf{a} \right] = \Pi_{n}^{2} X^{n}_{t-} (\omega, y) , 
  \quad \mathbb{Q}\left[\mf{p} = \mf{A} \right] = 1 -  \Pi_{n}^{2} X^{n}_{t-} (\omega, y).
\end{align*}
\item A proportion $\mf{u}$ of the population within
  $B_{n}(x)$ dies and is replaced by an offspring with type $\mf{p}$.
Therefore, for each point $z \in B_n(x)$, 
\begin{align*}
  X^{n}_{t}(\omega,z) = (1 - \mf{u})X^{n}_{t -}(\omega, z) +
  \mf{u}\chi_{\{\mf{p} = \mf{a}\}}. 
\end{align*}
\end{enumerate}

\item If $(t,x) \in \Pi^{\mathrm{sel}}_{\omega}$, a selective event  occurs in
  the ball $B_{n}(x)$, namely:
\begin{enumerate}
\item Choose two parental locations $y_{0}, y_{1}$ independently, 
  uniformly in $B_{n}(x)$.
\item Choose the two parental types, $\mf{p}_{0}, \mf{p}_{1}, $ independently, according to  
\begin{align*}
  \mathbb{Q}\left[\mf{p}_{i} = \mf{a} \right] = \Pi_{n}^{2} X_{t-}^{n} (\omega,y_{i}), \quad 
  \mathbb{Q}\left[\mf{p}_{i} = \mf{A} \right] = 1 - \Pi_{n}^{2} X^{n}_{t-} (\omega,y_{i}).
\end{align*}
\item A proportion $ \mf{u}$ of the population within $B_{n}(x)$ dies
and is replaced by an offspring with type chosen as follows:
\begin{enumerate}
\item If  $s_n(\omega, x) < 0$, their type is set to be $\mf{a}$ if   
  $\mf{p}_{0} = \mf{p}_{1} = \mf{a}$, and $\mf{A}$ otherwise. 
  Thus for each $z \in B_{n}(x)$ 
\begin{align*}
  X^{n}_{t}(\omega,x)  = (1 - \mf{u}) X_{t-}^{n}(\omega,z)  +
  \mf{u}\chi_{\{\mf{p}_{0} = \mf{p}_{1} = \mf{a}\}}.
\end{align*}
\item If $s_n(\omega, x) > 0$, their type is set to be $ \mf{a}$ if   
  $\mf{p}_{0} = \mf{p}_{1} = \mf{a}$ or $\mf{p}_{0} \neq \mf{p}_{1}$ and $
  \mf{A}$ otherwise, 
  so that for each $z \in B_{n}(x)$, 
\begin{align*}
  X^{n}_{t}(\omega, z)  = (1 - \mf{u}) X^{n}_{t-}(\omega,z)  +
  \mf{u} ( 1 - \chi_{\{\mf{p}_{1}= \mf{p}_{2} = \mf{A}\}} ). 
\end{align*} 
\end{enumerate}
\end{enumerate}
\end{enumerate}
\end{itemize}
\end{definition}

\begin{remark}

    In the original SLFV process the probabilities at points \( 1.b, \ 2.b \)
    of the definition do not
  depend on the local average \(\Pi_{n}^{2} X_{t-}(y)\). Instead they
  depended only on the evaluation at the exact point \(X_{t-}(y)\).
Introducing the local average is a mathematical simplification of the model:
the main implication is that the operator \( \mH_{n}^{\omega} \) considered in
Theorem~\ref{thm:semidiscrete-pam-approximation} will be selfadjoint.

\end{remark}
Most of the arguments we use take advantage of the martingale representation of
the process. We record this representation as a lemma. The proof can be found
in Appendix \ref{app:construction}. For a
function \(\varphi \colon [0, \infty) \to \RR\) we write
\[ \varphi_{t,s} = \varphi_{t}- \varphi_{s}. \]

\begin{lemma}\label{lem:martingale_problem_model_description}
    Fix \(\omega \in \Omega\) and let $X^n$ be the SLFV process as in the previous definition.
For every \(\varphi \in L^{\infty}(\TT^{d})\) the process
    \(t \mapsto \langle X^{n}_{t}(\omega), \varphi \rangle\) satisfies the following
    martingale problem, for \(t \geq s \geq 0:\)
    \begin{align*}
      \langle X^{n}_{t,s}(\omega), \varphi \rangle =  \mf{u} n^{-d} \int_{s}^{t}
      \langle & \big( \Pi^{4}_{n} {-} \mathrm{Id}\big)
      (X^{n}_{r}(\omega)), \varphi \rangle \\
      & + \langle \Pi_n \big[ s_{n}(\omega)
      \big(\Pi_n^{3} X^{n}_{r}(\omega) {-} (\Pi_{n}^{3} X^{n}_{r}(\omega))^{2}\big)\big], \varphi  \rangle \ud r +
    M^{n}_{t,s}(\varphi)
\end{align*}
  where \(M^{n}_{t,s}(\varphi)\) is the increment of a square integrable martingale with
  predictable quadratic variation given by
\begin{align*}
  \langle  M^{n}(\varphi) \rangle_{t} = 
    \mf{u}^{2} n^{-2d} \int_0^{t} & \langle (1 {+}
    s_{n}(\omega)) \Pi_{n}^{3} X^{n}_{r}(\omega), ( \Pi_{n} \varphi)^2 {-} 2 
    (\Pi_{n} \varphi) \big(\Pi_{n}(X^{n}_{r}(\omega)
    \varphi) \big) \rangle \\
  & +  \langle \big( \Pi_{n}(X^{n}_{r}(\omega) \varphi )
  \big)^2, 1 \rangle \\
& -\langle s_{n}(\omega)(\Pi_{n}^{3}X^{n}_{r}(\omega))^2
     , (\Pi_{n}\varphi)^2 - 2
    (\Pi_{n} \varphi) \big(\Pi_{n}(X^{n}_{r}(\omega) \varphi) \big)
  \rangle \ud r. 
\end{align*}
\end{lemma}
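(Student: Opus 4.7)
The natural strategy is to apply Dynkin's formula directly to the Markov generator $\mL(n, s_n(\omega), \mf{u})$ constructed in Appendix~\ref{app:construction}. Since the process is pure jump, for a test function $F(X) = \langle X, \varphi \rangle$ with $\varphi \in L^\infty(\TT^d)$ the generator can be read off from Definition~\ref{defn:spatial_lamdba_fv_in_random_environment} by computing the expected jump of $F$ per unit space-time intensity. The process $M^n_t(\varphi) := \langle X^n_{t,0}(\omega), \varphi\rangle - \int_0^t \mL F(X^n_r(\omega)) \, \ud r$ is then a local martingale, square-integrable because $F$ is bounded on $M$, and its sharp bracket is obtained from $\langle M^n(\varphi)\rangle_t = \int_0^t [\mL F^2 - 2F \mL F](X^n_r(\omega))\, \ud r$.

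For the \emph{drift}, I first fix $\omega$ and a Poisson point $(t,x)$ and compute the expected change $\EE[\Delta F \mid (t,x)]$. For a neutral event, after averaging the parental location $y$ uniformly over $B_n(x)$ and then the parental type, $\mathbb Q[\mf{p}=\mf{a}\mid x] = \Pi_n^3 X^n_{t-}(x)$, and the resulting jump is $\mf{u} n^{-d}\bigl[\Pi_n^3 X(x)\, \Pi_n\varphi(x) - \Pi_n(X\varphi)(x)\bigr]$. For a selective event with $s_n(x)<0$ resp.\ $s_n(x)>0$ the probability that the offspring is of type $\mf{a}$ becomes $(\Pi_n^3 X(x))^2$ resp.\ $2\Pi_n^3 X(x) - (\Pi_n^3 X(x))^2$. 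Weighting each contribution with its intensity, $(1-|s_n|)\,\ud x$ for neutral and $|s_n|\,\ud x$ for selective events, the $-\Pi_n(X\varphi)$ terms combine (since $(1-|s_n|)+|s_n|=1$) while the coefficient of $\Pi_n\varphi$ on $\{s_n<0\}$ becomes $(1-|s_n|)\Pi_n^3 X + |s_n|(\Pi_n^3 X)^2 = \Pi_n^3 X + s_n[\Pi_n^3 X - (\Pi_n^3 X)^2]$, and the analogous identity on $\{s_n>0\}$ produces the same expression. Using $\int \Pi_n \psi = \int \psi$ and the self-adjointness of $\Pi_n$, the integral in $x$ rearranges into $\mf{u} n^{-d}\bigl[\langle (\Pi_n^4-\mathrm{Id}) X^n_r, \varphi\rangle + \langle \Pi_n[s_n(\Pi_n^3 X^n_r - (\Pi_n^3 X^n_r)^2)], \varphi\rangle\bigr]$, i.e.\ exactly the claimed drift.

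For the \emph{predictable quadratic variation}, one computes $\EE[(\Delta F)^2\mid (t,x)]$ for each type of event. Writing $A(x)=\mf{u} n^{-d}\Pi_n\varphi(x)$ and $B(x)=\mf{u} n^{-d}\Pi_n(X\varphi)(x)$, every jump has the form $\Delta F = A\, C - B$ for some Bernoulli random variable $C \in \{0,1\}$ depending on the event, so $\EE[(\Delta F)^2\mid (t,x)] = \EE[C]\,(A^2-2AB) + B^2$. The mean of $C$ equals $\Pi_n^3 X$, $(\Pi_n^3 X)^2$ or $2\Pi_n^3 X - (\Pi_n^3 X)^2$ in the neutral, negative-selective and positive-selective cases, with respective intensities $(1-|s_n|)$, $|s_n|\mathbf 1_{\{s_n<0\}}$, $|s_n|\mathbf 1_{\{s_n>0\}}$. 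The $B^2$-contributions total to $\int B^2 \ud x$ since the three intensities sum to $1$, while the coefficient of $(A^2-2AB)$ collapses on both $\{s_n<0\}$ and $\{s_n>0\}$ to $(1+s_n)\Pi_n^3 X - s_n(\Pi_n^3 X)^2$, which yields precisely the expression for $\langle M^n(\varphi)\rangle_t$ stated in the lemma.

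The only genuine difficulty is not analytic but combinatorial: one must verify that the sign-dependent splitting of the selection intensity into $\{s_n<0\}$ and $\{s_n>0\}$ recombines into a single, sign-free expression. This is exactly the reason both the $A^2-2AB$ prefactor and the $\Pi_n\varphi$ prefactor above agree between the two cases, so that the resulting integrals can be rewritten globally in terms of $s_n$ (not $|s_n|$). Once this cancellation is identified, the martingale part is square integrable because $\varphi\in L^\infty$, $\|X^n_t\|_\infty\leq 1$ and $\|s_n\|_\infty<1$ uniformly bound both the drift and the quadratic variation, so Dynkin's formula closes the argument.
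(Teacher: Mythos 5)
Your proposal is correct and follows essentially the same route as the paper's proof in Appendix~\ref{app:construction}: apply Dynkin's formula to $F_\varphi(w)=\langle w,\varphi\rangle$ and $F_\varphi^2$, split the generator into neutral and sign-dependent selective parts, observe that on $\{s_n<0\}$ and $\{s_n>0\}$ the coefficients recombine to the same sign-free expression in $s_n$, and use self-adjointness of $\Pi_n$ to rearrange. The only difference is presentational: your Bernoulli parametrization $\Delta F = A\,C - B$ is a compact bookkeeping device for the identity $\EE[(\Delta F)^2\mid(t,x)] = \EE[C](A^2-2AB)+B^2$, whereas the paper carries the three generator pieces $\mL^{\mathrm{neu}}$, $\mL^{\mathrm{sel}}_{<}$, $\mL^{\mathrm{sel}}_{>}$ through the Dynkin identity $\langle M^n(\varphi)\rangle_t=\int_0^t(\mL\,\mathrm{Id}_\varphi^2 - 2\,\mathrm{Id}_\varphi\,\mL\,\mathrm{Id}_\varphi)\,\ud r$ and simplifies at the end; both yield the same result.
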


\subsection{Sparse regime}
\label{sub:sparse_regime}
First, we consider a scaling regime in which the part of the population of type
$\mf{a}$ is rare, which means that $X_{t}^{n}$ is very close to $0$. To
quantify what we mean with "close to zero", we introduce a smallness parameter
$\vr>0.$ We assume that the initial condition $X^{n,0}$ is of order $n^{-\vr}$
and we will work under the following assumptions on the parameter $\vr$
(it will be a consequence of our scaling limit that if the initial condition is
small, then \( X^{n}_{t} \) stays of order $ n^{- \vr } $, at least for times
of order one in the appropriate diffusive scaling).

\begin{assumption}[Sparsity]\label{ass:sparsity}
  Fix any $\varrho> \frac {3d} {2}$ and a sequence \( \{X^{n,
0}\}_{n \in \NN} \subseteq M\)
  such that for some \(Y^{0} \in \mM(\TT^{d})\)
  \begin{equation*} 
    \lim_{n \to \infty}  n^{\varrho}
    X^{n, 0} = Y^{0} \text{ in } \mM(\TT^{d}).
  \end{equation*}
\end{assumption}
Our selection coefficient will converge to space white noise.
To obtain a non-trivial scaling limit in dimension $d=2$, renormalisation has to be taken into account. Hence we define
\begin{equation}\label{eqn:renormalization_constant-slfv}
   c_{n} = \sum_{k \in \ZZ^2}  \frac{\hat{\chi}^2( n^{-1} k) \hat{\chi}_Q(n^{-1}
   k)}{-\vt_n (k) +1} \simeq \log{n}.
\end{equation}
The assumptions on the noise are summarized in what follows. We emphasize that
$ \xi^{n} $ is an approximation of space white noise that is constant on the
disjoint boxes $ Q_{n}(x) $. In particular for any $ y \in \TT^{d} $ there
exists a unique $ x $ such that $ y \in Q_{n}(x) $.

\begin{assumption}[White noise scaling]
\label{ass:selection_coefficient}
Fix \(d = 1\) or \(2\) and consider a probability space \( (\Omega, \mF, \PP)\) supporting for any $n
  \in \NN$ a sequence of i.i.d.\ random variables \(\{Z_n(x)\}_{x \in \znd}\)
  satisfying $\EE \big[ Z^2_n(x) \big] =  1, \ Z_n(\omega, x) \in(-2,2)$
  for all $x \in \znd$ and $\omega \in \Omega.$
  Then define \[s_{n}(\omega, y) = Z_{n}( \omega, x) - n^{-\frac d 2}c_{n}
  1_{\{d = 2\}}, \ \text{ if } y \in
  Q_n(x), \qquad \forall \omega \in \Omega, x \in \znd\] 
  and write:
  \[ 
    \xi^{n}_{e}(\omega, x) = n^{\frac{d}{2} } s_{n}( \omega, x),
    \qquad \xi^{n}(\omega, x) = \xi^{n}_{e}(\omega, x) + c_{n}
    1_{\{d=2\}}.
  \] 
\end{assumption}
Under appropriate scaling, we will prove that the process \(X^{n}\) converges to a rough superBrownian motion.
First, we recall the Anderson Hamiltonian on the torus, and its relationship to our setting.

\begin{lemma}\label{def:anderson_hamiltontian}
  Let \( (\Omega, \mF, \PP)\) be a probability space
  supporting a white noise \(\xi \colon \Omega \to \mS^{\prime}(\TT^{d})\), that
  is a process such that for all \(f \in \mS(\TT^{d})\) the projection
  \(\langle \xi, f \rangle=\smallint_{\TT^{d}} f(x) \xi( \ud x)\)
  are Gaussian random variables with covariance
  \[ \EE \big[ \langle \xi, f \rangle \langle \xi, g \rangle \big] = \langle f, g \rangle, \qquad \qquad
  \forall f, g \in \mS(\TT^{d}).\] 
  For almost every \(\omega \in \Omega\) there exists an operator
  \[ \mH^\omega \colon \mD_{\omega} \subseteq C(\TT^{d}) \to
  C(\TT^{d}),\]
  with a dense domain $\mD_\omega \subseteq C(\TT^d)$, such that 
  \[ \mH^{\omega} = \lim_{n \to \infty} \Big[ \mA_{n}+ \Pi_{n}^{2} (\xi^{n}-c_{n}
      1_{\{d= 2\}}) \Pi_{n}^{2} \Big] =: \nu_{0} \Delta + \xi,\] 
with \( \nu_{0} \) defined by:
\begin{equation}\label{eqn:for-nu-0}
\nu_{0} = \frac 1 {3} \ \ \text{ in } \ \ d = 1, \qquad \nu_{0} = \frac 1
  { \pi} \ \ \text{ in } \ \ d = 2.
      \end{equation}
  The limit is taken in distribution, with $\xi^n$ as in
Assumption~\ref{ass:selection_coefficient}. The precise meaning of the limit is
provided in Theorem~\ref{thm:semidiscrete-pam-approximation}.
\end{lemma}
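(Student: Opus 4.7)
The plan has three parts. \emph{Step 1: convergence of the symbol of $\mA_n$.} I would analyse the Fourier multiplier $\vartheta_n(k) = n^{2}(\hat{\chi}^{4}(n^{-1}k)-1)$ representing $\mA_n$. Taylor expanding $\hat{\chi}(h) = \int_{B_{1}(0)} e^{-2\pi \iota h\cdot x}\,\ud x$ at the origin and using the isotropy of $\chi_{1}$ gives $\hat{\chi}(h) = 1 - 2\pi^{2} \sigma_{d}^{2} |h|^{2} + O(|h|^{4})$ with $\sigma_{d}^{2} = \int_{B_{1}(0)} x_{1}^{2}\,\ud x$. Raising to the fourth power and multiplying by $n^{2}$ shows that $\vartheta_n(k)$ converges pointwise to a constant multiple of $-|k|^{2}$; computing $\sigma_{d}^{2}$ explicitly in $d=1,2$ identifies this coefficient as $-(2\pi)^{2}\nu_{0}$ with $\nu_{0}$ as in \eqref{eqn:for-nu-0}, i.e.\ $\mA_n \to \nu_{0}\Delta$ at the symbol level. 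The two-scale Schauder estimates of Section~\ref{sec:schauder_estimates} then upgrade this symbolic convergence to resolvent convergence $(\lambda-\mA_n)^{-1}\to(\lambda-\nu_{0}\Delta)^{-1}$ in appropriate Besov spaces, separating macroscopic frequencies $|k|\ll n$ (where $\mA_n\approx \nu_{0}\Delta$) from microscopic ones $|k|\sim n$ (where no smoothing takes place).

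\emph{Step 2: construction of $\mH^{\omega}$.} In $d=1$, white noise lies in $\mC^{-1/2-\kappa}$ and a Bony/Young paraproduct argument defines $\xi\cdot f$ for $f\in\mC^{1/2+\kappa'}$, giving $\mH^{\omega} = \nu_{0}\Delta+\xi$ directly on a dense H\"older domain. In $d=2$, $\xi\in\mC^{-1-\kappa}$ and the product is singular; I would follow the paracontrolled construction of Allez--Chouk and Gubinelli--Ugurcan, defining $\mD_{\omega}$ as the space of paracontrolled distributions $f = \varphi \para Y_{1} + f^{\sharp}$ with $Y_{1} = (1-\nu_{0}\Delta)^{-1}\xi$, and replacing the ill-defined pointwise product $\xi\cdot f$ by a renormalised Wick product whose construction requires the subtraction of the divergent constant $\lim_{n}c_{n} = +\infty$. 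Self-adjointness and density of the domain then follow as in the quoted references.

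\emph{Step 3: convergence $\mH_n^{\omega}\to\mH^{\omega}$.} The probabilistic input is convergence in law of $\xi^{n}$ to $\xi$ in $\mC^{-d/2-\kappa}$, which follows from the i.i.d.\ and uniform boundedness hypotheses of Assumption~\ref{ass:selection_coefficient}. In $d=2$ one additionally needs the convergence of the renormalised resonant term $\Pi_n^{2}\xi^{n}\Pi_n^{2}\reso(1-\mA_n)^{-1}\Pi_n^{2}\xi^{n}\Pi_n^{2} - c_n$; the explicit formula \eqref{eqn:renormalization_constant-slfv} is extracted precisely by computing the expectation of this resonant term, whose frequency sum diverges logarithmically. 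Once the enhanced noise converges, continuity of the paracontrolled fixed-point/resolvent map yields $(\lambda-\mH_n^{\omega})^{-1} \to (\lambda-\mH^{\omega})^{-1}$ almost surely, which is the precise sense of convergence formalised in Theorem~\ref{thm:semidiscrete-pam-approximation}.

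The main obstacle is Step 3 in $d=2$: the continuum paracontrolled machinery has to be re-derived with $\mA_n$ and the projections $\Pi_n$ in place of $\Delta$. Because $\mA_n$ regularises only on macroscopic frequencies, one must prove commutator estimates between $\Pi_n$ and the paraproduct that are uniform in $n$, together with sharp two-scale Schauder bounds; these are the technical content of Section~\ref{sec:schauder_estimates}, and are invoked here as a black box.
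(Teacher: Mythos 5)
Your outline matches the paper's own route almost exactly: Lemma~\ref{def:anderson_hamiltontian} is proved in the paper simply by citing Proposition~\ref{prop:continuous-pam-operator} (construction and spectral properties of $\mH^{\omega}$, via Fukushima--Nakao in $d=1$ and Allez--Chouk in $d=2$, with density of the domain in Lemma~\ref{lem:domain_for_anderson_hamiltonian}) together with Theorem~\ref{thm:semidiscrete-pam-approximation} (semidiscrete convergence, obtained from the symbol/Taylor analysis of Lemma~\ref{lem:taylor_expansion_and_decay_ft_characteristic_function}, the two-scale Schauder estimates, the stochastic estimates of Proposition~\ref{prop:stochastic_estimates} including the extraction of $c_n$, resolvent convergence in Proposition~\ref{prop:semidiscrete-resolvent-convergence}, and Kato-type eigenfunction convergence in Corollary~\ref{cor:convergence-eigenfunctions}), which is precisely your Step~1/Step~2/Step~3 decomposition. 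The only cosmetic deviation is that the paper ultimately works with the dense domain $\mD_{\omega}$ given by finite linear combinations of eigenfunctions rather than the full paracontrolled space, but this does not affect the statement.
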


This lemma is a consequence of Proposition~\ref{prop:continuous-pam-operator}
and Theorem~\ref{thm:semidiscrete-pam-approximation} below.
The rough superBrownian motion is then a Markov process conditional on the
realization of the spatial white noise and thus on the realization of the Anderson Hamiltonian.

\begin{definition}\label{defn:rough_superbrownian_motion}
  Let \((\Omega, \mF, \PP)\) be a probability space supporting a white noise
  \(\xi\) and consider $Y^{0} \in \mathcal{M}(\TT^{d})$. Consider
  an enlarged probability space \((\Omega \times \overline{\Omega}, \mF \otimes
  \overline{\mF}, \PP \ltimes \overline{\PP}^{\omega})\), where  \(\PP^{\omega}\) is the
  conditional (given the realization $\omega$ of the environment) law of a process
  $Y \colon \Omega \times \overline{\Omega} \to C([0, \infty);
  \mM(\TT^{d})).$
  For any \(\omega \in \Omega\) let \(\{\mF_{t}^{\omega}\}_{t \geq
  0}\) be the filtration
  generated by \(t \mapsto Y_{t}(\omega)\), right-continuous and enlarged with all null sets. And let
  \(\mH^\omega\) be the operator in the definition above. $Y$ is a rough superBrownian motion, if for all \(\varphi \in \mD_{\omega}\) and $T >0$, the process
      \begin{align*} 
	M_{t}^{\varphi}(\omega) = \langle Y_{t}(\omega), \varphi \rangle - \langle Y^{0}, \varphi \rangle - \int_{0}^{t}
	\langle Y_{s}(\omega),\mathcal{H}^{\omega} \varphi \rangle \mathrm{d}s 
      \end{align*}
      is a centered continuous, square integrable
      \(\mF^{\omega}_{t}\)-martingale on \([0,T]\)  with quadratic variation 
     \begin{align*}
       \langle M^{\varphi}(\omega) \rangle_{t} = \int_{0}^{t}\langle
       Y_{s}(\omega), \varphi^{2} \rangle \ \mathrm{d}s.
     \end{align*}
\end{definition}

We are now in position to state the first main result of this work.
\begin{theorem}\label{thm:convergence_rsbm}
  For any \(\varrho> \frac{3}{2} d\) consider a random environment \(s_{n}\) as in
  Assumption~\ref{ass:selection_coefficient}, and
  initial conditions \(X^{n, 0}\) as in
  Assumption~\ref{ass:sparsity}.
  Consider the process \(X^{n}\) as in
  Definition~\ref{defn:spatial_lamdba_fv_in_random_environment}, but associated, for each $\omega \in \Omega$, to
  the generator
  \[ n^{d+2+\eta} \mL (n, n^{\frac{d}{2}-2} s_{n}(\omega),
  n^{-\eta}),\] 
meaning that with respect to the process constructed in
Definition~\ref{defn:spatial_lamdba_fv_in_random_environment}, we speed up time
by a factor $ n^{d+2+\eta} $, consider impacts of order $ n^{- \eta} $ and use as
environment the function $ n^{\frac{d}{2}-2} s_{n}(\omega)$.
  Here \(\eta\) is defined by
  \begin{align}\label{eqn:definition_of_intensity_eta_rsbm}  
  \eta := \varrho + 2 - d.
  \end{align}
  Then the process \(t \mapsto Y^{n}_{t} = n^{\vr}X^{n}_{t}\) converges
  in distribution in $\DD([0, \infty);
  \mM(\TT^{d}))$ to a process $ Y $, which is 
  the unique (in distribution) rough superBrownian motion as in
  Definition~\ref{defn:rough_superbrownian_motion}, started in \(Y^{0}\).
\end{theorem}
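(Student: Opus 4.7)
The strategy is the classical three-step scheme: (i) derive from Lemma~\ref{lem:martingale_problem_model_description} a martingale problem for $Y^{n} = n^{\vr}X^{n}$ that formally approximates the rSBM problem of Definition~\ref{defn:rough_superbrownian_motion}; (ii) prove tightness in $\DD([0,\infty);\mM(\TT^{d}))$; (iii) identify any limit point and close via conditional uniqueness.

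\textbf{Step 1 (scaled martingale problem).} Plugging $\mf{u} = n^{-\eta}$, the time factor $n^{d+2+\eta}$ and environment $n^{d/2-2}s_{n}$ into Lemma~\ref{lem:martingale_problem_model_description} and multiplying by $n^{\vr}$, the diffusive drift reduces to $\langle Y^{n}_{s},\mA_{n}\varphi\rangle$ using $\Pi_{n}^{4} - \mathrm{Id} = n^{-2}\mA_{n}$. The selection drift becomes $\langle Y^{n}_{s},\Pi_{n}^{2}(\xi^{n} - c_{n}1_{\{d=2\}})\Pi_{n}^{2}\varphi\rangle$ plus a nonlinear error of formal size $n^{-\vr}$ coming from the $(\Pi_{n}^{3}X^{n})^{2}$ term. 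Together the linear part equals $\langle Y^{n}_{s},\mH_{n}^{\omega}\varphi\rangle$ with $\mH_{n}^{\omega} = \mA_{n} + \Pi_{n}^{2}(\xi^{n} - c_{n}1_{\{d=2\}})\Pi_{n}^{2}$, whose convergence to $\mH^{\omega}$ is ensured by Theorem~\ref{thm:semidiscrete-pam-approximation}. The relation $\eta = \vr + 2 - d$ is precisely what makes the quadratic-variation prefactor $\mf{u}^{2}n^{-2d}\cdot n^{d+2+\eta}\cdot n^{\vr}$ equal to one, so that the dominant QV summand from Lemma~\ref{lem:martingale_problem_model_description} converges, upon using $\Pi_{n}\varphi\to\varphi$, to $\int_{0}^{t}\langle Y_{s},\varphi^{2}\rangle\ud s$, while the remaining QV terms are of smaller order in $n^{-\vr}$.

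\textbf{Step 2 (tightness).} This is where the main obstacle lies: in $d=2$ the field $\xi^{n}$ has pointwise size of order $n^{d/2}$, and naive Gronwall bounds fail because of the renormalisation. The remedy is to exploit the spectral theory of $\mH_{n}^{\omega}$, which is selfadjoint on $L^{2}(\TT^{d})$ (this is the reason the $\Pi_{n}^{2}\cdots\Pi_{n}^{2}$ sandwich was built into the model). On the compact torus Krein-Rutman yields a positive principal eigenfunction $\psi_{n}$ with eigenvalue $\lambda_{n}$, and Theorem~\ref{thm:semidiscrete-pam-approximation} gives convergence of $(\lambda_{n},\psi_{n})$ to the principal pair of $\mH^{\omega}$, so in particular $\psi_{n}$ is uniformly bounded away from $0$ and $\infty$. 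Testing the scaled martingale problem against $\psi_{n}$ makes $e^{-\lambda_{n}t}\langle Y^{n}_{t},\psi_{n}\rangle$ into an almost-supermartingale (up to the $O(n^{-\vr})$ nonlinear correction, handled by a bootstrap), giving uniform first moments; squaring together with the QV gives uniform second moments, which in turn justify discarding the nonlinear error. Path tightness follows from the Aldous criterion applied to $\langle Y^{n}_{t},\varphi\rangle$ for $\varphi\in\mD_{\omega}$, using the two-scale Schauder estimates of Theorem~\ref{thm:regularization-estimates-main-results} to control $\|\mH_{n}^{\omega}\varphi\|_{\infty}$ uniformly in $n$.

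\textbf{Step 3 (identification and uniqueness).} Along a convergent subsequence $Y^{n_{k}}\to Y$, for each $\varphi\in\mD_{\omega}$ pick approximants $\varphi_{n}$ with $\mH_{n}^{\omega}\varphi_{n}\to\mH^{\omega}\varphi$ supplied by Theorem~\ref{thm:semidiscrete-pam-approximation} and pass to the limit in the martingale problem of Step 1; the moment bounds of Step 2 absorb the nonlinear error, while QV convergence is standard. Any limit point is therefore a rough superBrownian motion in the sense of Definition~\ref{defn:rough_superbrownian_motion}. Uniqueness in law, conditional on the realisation of $\xi$, then follows from the conditional duality between rSBM and the quadratic PAM $\partial_{t}u = \mH^{\omega}u - u^{2}/2$ established in \cite{PerkowskiRosati2019RSBM}, which closes the proof.
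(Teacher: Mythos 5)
Your proposal follows essentially the same route as the paper: rescale the martingale problem from Lemma~\ref{lem:martingale_problem_model_description} into one involving $\mA_{n}+\Pi_{n}^{2}\xi^{n}_{e}\Pi_{n}^{2}$, prove tightness by testing against approximate eigenfunctions of $\mH_{n}^{\omega}$ supplied by Theorem~\ref{thm:semidiscrete-pam-approximation} (whose uniform $\mC^{\ve}$ bounds replace a direct Schauder bound on $\|\mH_{n}^{\omega}\varphi\|_{\infty}$), identify the limit as an rSBM, and close with the conditional duality of \cite{PerkowskiRosati2019RSBM}. The only imprecision is in your Step~2: since the selection $\xi^{n}_{e}$ is signed, $e^{-\lambda_{n}t}\langle Y^{n}_{t},\psi_{n}\rangle$ is not an (almost-)supermartingale, and the paper instead localizes at a stopping time $\tau_{R}$ under which the nonlinear error is of order $Rn^{-\varrho_{0}}\langle Y^{n},\Pi_{n}e_{1}^{n}\rangle$, then runs Gronwall directly on the stopped second moment --- the claimed path ``first moments, then squaring with QV gives second moments'' does not quite parse, but the underlying principal-eigenfunction idea is the same.
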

\begin{remark}

Let us comment on the scaling in the previous theorem. The temporal speed of order $n^{d + 2 + \eta}$ corresponds to parabolic scaling. The factor $n^d$ is payed to cancel the corresponding factor appearing in Lemma~\ref{lem:martingale_problem_model_description}. The factor $n^\eta$ instead cancels with the size of the impact. So we are left with a factor $n^2$, which corresponds to parabolic scaling, since spatial distances are of order $1/n$.

As for the selection, we necessarily consider a \emph{weak} regime, that is $|s_n| \simeq n^{-2}$, which cancels with the temporal speed up, providing a term of macroscopic order.
Finally, the smallness of the impact enters only to see fluctuations of the correct order. 
\end{remark}

\subsection{Diffusive regime}
\label{sub:fkpp}
The second scaling regime we consider is a purely diffusive one. 
As before, the impact factor $\mf{u}$ is scaled as $n^{-\eta}$. 
The restrictions on the value of $\eta$ 
follow 
\begin{assumption}\label{assu:small_jumps_fkpp}
  Choose \(\eta\) such that $\eta = 1$ if $d=1,$ and $\eta >0$ if $d=2$.
\end{assumption}
In this diffusive regime we still assume that the selection coefficient may be random, but we restrict to smooth selection.
\begin{assumption}\label{assu:smoothened_noise_fkpp}
	  Consider a probability space $(\Omega, \mF, \PP)$ and let
      $\overline{\xi}$ be a measurable map: $\overline{\xi} \colon \Omega \to
    \mS(\TT^d)$. Then define:
	$$s_n (\omega, x)= (n^{-2} \bar{\xi}(\omega, x) )\vee 1 \wedge (-1).$$
\end{assumption}

The limiting process in this setting will be the (stochastic if \(d=1\)) Fisher--KPP equation in a random potential, defined as follows.

\begin{definition}\label{def:Fisher-KPP_in_rough_potential}
 Consider $\Omega$ and $\overline{\xi}$ as in Assumption~\ref{assu:smoothened_noise_fkpp}. Fix any $\alpha>0$ and \(X^{0} \in B^{\alpha}_{2,2}\). A
  (stochastic if \(d=1\)) Fisher--KPP
  process in random potential is a couple given by
  a probability space \((\Omega \times \overline{\Omega}, \mF \otimes
  \overline{\mF}, \PP \ltimes \overline{\PP}^{\omega})\) (cf.\
  Definition~\ref{defn:rough_superbrownian_motion}) and a map 
  $ X \colon \Omega \times \overline{\Omega} \to L^2_{\mathrm{loc}}([0,
  \infty); B^{\alpha}_{2,2}).$ 
  For \(\omega \in \Omega\) let \(\{\mF^{\omega}_{t}\}_{t \geq 0}\)
  be the filtration generated by \(t \mapsto X_{t}(\omega)\), right-continuous
  and enlarged with all null sets. Then for all \(\omega \in \Omega\) it is
  required that:
  \begin{itemize}
    \item[i)] In dimension $d = 1$ for all \(\varphi \in C^{\infty}(\TT)\):  
      \begin{align*}
	N^{\varphi}_{t} := \langle X_{t}(\omega), \varphi \rangle - \langle X^{0}, \varphi \rangle -
	\int_{0}^{t} \langle X_{s}(\omega), \nu_{0} \Delta \varphi \rangle -
	 \langle  \overline{\xi} (\omega)X_{s} (\omega) (1 -  X_{s}(\omega)) , \varphi\rangle \mathrm{d}s
      \end{align*}
is a continuous in  time, square integrable martingale with quadratic variation 
\begin{align*}
  \langle N^{\varphi} \rangle_{t} = \int_{0}^{t} \langle
  X_{s}(\omega)(1 -  X_{s}(\omega)), \varphi^{2} \rangle \mathrm{d}s.
\end{align*}
\item[ii)] In dimension $d = 2$ for all \(\varphi \in
      C^{\infty}(\TT^{2})\)
      \begin{align*}
	\langle X_{t}(\omega), \varphi \rangle = \langle X^{0}, \varphi \rangle +
	\int_{0}^{t} \langle X_{s}(\omega), \nu_{0} \Delta \varphi \rangle +
	 \langle  \overline{\xi}(\omega) X_{s}(\omega) (1 -  X_{s}(\omega)) ,
	 \varphi\rangle \ \mathrm{d}s.
      \end{align*}
  \end{itemize}
  \end{definition}

  \begin{remark}
    Note that in the previous definition, since \(X \in
    L^{2}_{\mathrm{loc}}([0, \infty); B^{\alpha}_{2,
    2})\) for $ \alpha > 0 $, the non-linearity
    $ \smallint_{0}^{t} \langle X_{s}^{2}, \varphi \rangle \ud s $ is
    well-defined. 
    Moreover, up to enlarging the probability space, the process can be
    represented in \(d=1\) as a solution to an SPDE of the form
\begin{align*}
  \partial_{t}X = \nu_{0}\Delta X + \overline{\xi} X(1 - X) +
  \sqrt{ X(1-X)} \widetilde{\xi},
\end{align*}
    where the spatial noise \( \overline{\xi}\) is independent of the space-time white
    noise \(\widetilde{\xi}\), following \cite{KonnoShiga1988}.
  \end{remark}

In this setting, we can prove the following scaling limit.

\begin{theorem}\label{thm:convergence_fkpp}
  Let \(\eta\) satisfy Assumption~\ref{assu:small_jumps_fkpp} and
  \(s_{n}\) be as in Assumption~\ref{assu:smoothened_noise_fkpp}. Consider
  $X_0 \in \mS(\TT^d)$ with \( 0 \leqslant X_{0}(x) \leqslant 1, \ \forall x
\in \TT^{d}\), and let $X^n(\omega)$ be the
  Markov process associated to the generator $$n^{\eta + d+2.}\mL(n,
  s_{n}(\omega), n^{-\eta})$$ and started in $X_0$, as in
  Definition~\ref{defn:spatial_lamdba_fv_in_random_environment}. There exists
  an \(\alpha>0\) such that for every \(\omega \in \Omega\) $\{t \mapsto
  \Pi_n X^{n}_{t}(\omega)\}_{n \in \NN}$ is tight in the space
  \(L^2_{\mathrm{loc}}([0, \infty); B^{\alpha}_{2,2}(\TT^d))\). Similarly, the sequence $\{t \mapsto X^n_t(\omega)\}_{n \in \NN}$ is tight in $\DD([0,\infty); \mM(\TT^d))$. In particular:
  \begin{itemize}
    \item[i)] In dimension \(d=1\) both sequences converge in distribution to the unique in law solution to the martingale problem of the stochastic Fisher-KPP process in a random potential, as in Definition~\ref{def:Fisher-KPP_in_rough_potential}.
  \item[ii)] In dimension \(d=2\) both sequences converge in distribution to
      the unique solution to the Fisher-KPP equation in a random potential as in
      Definition~\ref{def:Fisher-KPP_in_rough_potential}.
  \end{itemize}
\end{theorem}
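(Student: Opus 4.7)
\medskip

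\noindent \textbf{Proof proposal.} The plan is to combine a soft tightness argument in $\DD([0,\infty);\mM(\TT^d))$ with the two-scale Schauder estimates of Section~\ref{sec:schauder_estimates} to upgrade tightness to the Sobolev space $L^2_{\mathrm{loc}}([0,\infty); B^\alpha_{2,2}(\TT^d))$, and then to identify and uniquely characterise the limit. The starting point is Lemma~\ref{lem:martingale_problem_model_description}, which under the rescaling $n^{\eta+d+2}\mL(n,s_n(\omega),n^{-\eta})$ gives, for $\varphi \in C^\infty(\TT^d)$,
\begin{align*}
  \langle X_{t,s}^n(\omega),\varphi\rangle = \int_s^t \langle \mA_n X_r^n(\omega),\varphi\rangle \ud r + \int_s^t \langle n^2 \Pi_n [s_n(\omega)(\Pi_n^3 X_r^n - (\Pi_n^3 X_r^n)^2)],\varphi\rangle \ud r + M^n_{t,s}(\varphi),
\end{align*}
where $\langle M^n(\varphi)\rangle_t$ is of order $n^{2-d-\eta}$, hence $O(1)$ in $d=1,\eta=1$ and $o(1)$ in $d=2,\eta>0$. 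Because $X^n\in[0,1]$, the drift is uniformly bounded and $n^2 s_n\to\overline{\xi}$ pointwise by Assumption~\ref{assu:smoothened_noise_fkpp}. Jakubowski's criterion plus an Aldous-type estimate on the drift and the quadratic variation then yield tightness of $X^n$ in $\DD([0,\infty);\mM(\TT^d))$.

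To upgrade to the stronger tightness, I would write the mild form for the smoothed process $Y^n := \Pi_n X^n$:
\begin{align*}
  Y^n_t = e^{t\mA_n} Y^n_0 + \int_0^t e^{(t-s)\mA_n} \Pi_n\!\left( n^2 \Pi_n[s_n (\Pi_n^3 X^n_s - (\Pi_n^3 X^n_s)^2)]\right) \ud s + \int_0^t e^{(t-s)\mA_n} \Pi_n\, \mathrm{d}M^n_s(\cdot),
\end{align*}
and then apply the Schauder estimates of Section~\ref{sec:schauder_estimates} to $\{e^{t\mA_n}\}_{n \in \NN}$. Since the drift is $L^\infty$-bounded by Assumption~\ref{assu:smoothened_noise_fkpp} (where $|n^2 s_n|\leq \|\overline\xi\|_\infty$), the deterministic part is bounded in $B^\alpha_{2,2}$ for any $\alpha<2$. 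For the stochastic convolution, a BDG inequality combined with the quadratic variation estimate from Lemma~\ref{lem:martingale_problem_model_description} and two-scale Schauder gives $L^p$-bounds in $B^\alpha_{2,2}$ for some $\alpha>0$ (here the bound $X^n\leq 1$ and the smoothing by $e^{(t-s)\mA_n}$ are essential, since at finite $n$ the martingale carries only $L^2$ regularity in $d=1$). Combining, $\{Y^n\}$ is tight in $L^2_{\mathrm{loc}}([0,\infty); B^\alpha_{2,2})$ for a suitable $\alpha>0$.

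It remains to identify the limit and prove uniqueness. By the Skorokhod representation theorem I can assume $X^n\to X$ in $\DD([0,\infty);\mM(\TT^d))$ and $Y^n\to Y$ in $L^2_{\mathrm{loc}}([0,\infty);B^\alpha_{2,2})$ almost surely along a subsequence, and by consistency of the two limits $X=Y$ as time-evolving distributions. The compact embedding $B^\alpha_{2,2}\hookrightarrow L^2$ and the uniform bound in $L^\infty$ give a.e.-in-time strong $L^2$ convergence of $\Pi_n^3 X^n$ to $X$, so $(\Pi_n^3 X^n)^2 \to X^2$ strongly enough to pass to the limit in the nonlinearity. Convergence $\mA_n \to \nu_0\Delta$ on smooth test functions follows from Theorem~\ref{thm:semidiscrete-pam-approximation}, and the martingale $M^n(\varphi)$ converges to a continuous square-integrable martingale with the stated quadratic variation in $d=1$ (and vanishes in $d=2$). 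Uniqueness in law in $d=2$ is classical since the limit is a PDE with smooth coefficient $\overline{\xi}\in\mS(\TT^2)$ and Lipschitz nonlinearity $X(1-X)$ on $[0,1]$. In $d=1$, since $\overline{\xi}$ is smooth (not a distribution), the martingale problem defines a standard SPDE with Lipschitz drift and H\"older-$1/2$ diffusion coefficient $\sqrt{X(1-X)}$, for which uniqueness in law is due to Shiga \cite{KonnoShiga1988}. The main obstacle in this programme is step two, namely obtaining a \emph{positive} Sobolev regularity uniform in $n$ for $Y^n$: the martingale term has to be smoothed by the semidiscrete semigroup via a two-scale BDG argument, and one must avoid loss of regularity coming from the microscopic scales where $\mA_n$ does not regularize like $\nu_0\Delta$.
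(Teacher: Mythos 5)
Your proposal follows the same overall architecture as the paper's argument: tightness of $X^n$ in $\DD([0,\infty);\mM(\TT^d))$ via Jakubowski/Aldous from the boundedness $0\le X^n\le 1$; upgraded tightness of $\Pi_n X^n$ in $L^2_{\mathrm{loc}}([0,\infty);B^\alpha_{2,2})$ through the mild formulation and the two-scale Schauder estimates (the paper uses Simon's compactness criterion, combining an $L^2$-in-time moment bound in $B^\alpha_{2,2}$ with a $W^{2,\zeta}$-increment bound in a weaker $B^\beta_{2,2}$, which matches your intended ``BDG $+$ Schauder'' step); and identification of the limit along a Skorokhod subsequence using strong $L^2$ convergence to pass to the limit in the quadratic nonlinearity.

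The one place where your write-up has a genuine gap is uniqueness in law in $d=1$. You assert that uniqueness holds because the limit SPDE has a ``Lipschitz drift and H\"older-$1/2$ diffusion coefficient $\sqrt{X(1-X)}$, for which uniqueness in law is due to Shiga.'' This is not correct as stated: for SPDEs driven by space-time white noise there is no general Yamada--Watanabe type theorem that converts H\"older-$1/2$ diffusion coefficients plus Lipschitz drift into uniqueness, and Shiga's duality argument \cite{Shiga1988SteppingStone} applies to the equation \emph{without} the drift $\overline{\xi}X(1-X)$. The paper bridges exactly this gap in Lemma~\ref{lem:uniqueness-martingale-problem-fkpp} with a Girsanov transform: because $\overline{\xi}$ is bounded and the drift sits inside the square root of the diffusion coefficient, the Radon--Nikodym density removing the drift term is well defined and uniformly bounded, which reduces the problem to $\partial_t X = \nu_0\Delta X + \sqrt{X(1-X)}\,\widetilde{\xi}$, where duality with coalescing Brownian motions gives uniqueness in law. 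Without the Girsanov step your argument does not close; you should also cite \cite{Shiga1988SteppingStone} rather than \cite{KonnoShiga1988} (the latter concerns existence/representation of measure-valued diffusions as SPDE solutions, not uniqueness by duality). Two smaller points: the convergence $\mA_n\to\nu_0\Delta$ on test functions is Lemma~\ref{lem:convergence_A_ve_to_laplace}, not Theorem~\ref{thm:semidiscrete-pam-approximation}; and identifying the quadratic variation in $d=1$ needs a commutator estimate of the form $\Pi_n(\psi\psi')-(\Pi_n\psi)\psi'\to 0$ to reconcile the two smoothings appearing in $\langle M^n(\varphi)\rangle_t$ --- routine, but worth stating.
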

\begin{remark}
The scaling in Theorem~\ref{thm:convergence_fkpp} is similar to that of
Theorem~\ref{thm:convergence_rsbm}  in the case $\vr =0$. The only difference
is the assumption $\eta>0$ in $d=2$. At $\eta = 2-d$ we expect to
see fluctuations, so a natural guess would be the appearance of a \emph{stochastic}
Fisher-KPP equation. Instead, the limit 
should be deterministic. Indeed if $\overline{\xi}=0$ one can
show that the dual converges to a system of coalescing Brownian motions: in
dimension $d=2$ Brownian motions cannot meet, leading to the heat equation. In
our setting we expect that the same
argument holds and the correct scaling limit should be the
deterministic Fisher-KPP equation. 
\end{remark}

\subsection{Proof methods}
\label{subsec:proof_methods}

The main ingredient in the proofs of the previous scaling limits is a careful study of the semidiscrete Laplace
operator \(\mA_{n}\). Intuitively, one expects that this operator
approximates the Laplacian with periodic boundary conditions and therefore has similar regularizing properties. To quantify this
intuition we introduce a division of scales. On large scales, namely for
Fourier modes \(k\) of order \(|k| \lesssim n \) we show that
\(\mA_{n}\) has the required regularizing properties. On small
scales, that is for modes of order \(|k| \gtrsim n \) we do not
expect any regularization at all. Instead we prove that small scales are negligible in terms of powers of $n$.
Below we state a slimmed version of the results we require. The proof of the following
theorem, as well as additional side results, is the content of Section~\ref{sec:schauder_estimates}.

\begin{theorem}\label{thm:regularization-estimates-main-results}
  Fix any smooth radial function with compact
  support \(\daleth \colon \RR^{d} \to \RR\) such that for some \(0 < r<R\)
  $ \daleth(k) = 1, \ \forall |k| \leq  r$, and  $\daleth(k) = 0, \ \forall |k|
  \geq R$. Then define
  \[ \mP_{n} = \daleth( n^{-1} D), \qquad \mQ_{n} = (1 - \daleth)(n^{-1} D).\]
  For any \(\alpha \in \RR, p \in [1, \infty]\) the following holds:
  \begin{itemize}
    \item[i)]  For $ \nu_{0}  $ as in \eqref{eqn:for-nu-0}, any \(\zeta>0\) and \(\varphi \in \mC^{\alpha}_{p}\)
  \[ 
    \mA_{n} \varphi \to  \nu_{0} \Delta \varphi \ \
      \text{in} \ \ \mC^{\alpha -2- \zeta}_{p},\qquad \text{ as } \ n \in \NN.
  \] 
    \item[ii)]  Uniformly over \(\lambda>1, n \in \NN\) and \(\varphi \in \mC^{\alpha}_{p}\) the following estimates hold:
  \[ 
    \| \mP_{n} ( - \mA_{n} + \lambda)^{-1} \varphi \|_{\mC^{\alpha +
      2}_{p}} +  n^{2}\| \mQ_{n} (- \mA_{n} +
  \lambda)^{-1} \varphi\|_{\mC^{\alpha}_{p}} \lesssim \| \varphi
  \|_{\mC^{\alpha}_{p}}. \] 
  \end{itemize}
\end{theorem}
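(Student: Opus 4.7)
The approach is Fourier-analytic: $\mA_n$ is the Fourier multiplier with symbol $\vt_n(k) = n^2\bigl(\hat{\chi}^4(n^{-1}k)-1\bigr)$, so the proof reduces to a careful analysis of this symbol separately at the macroscopic scale ($|k| \lesssim n$, where $\vt_n$ resembles $-4\pi^2\nu_0|k|^2$) and at the microscopic scale ($|k| \gtrsim n$, where $-\vt_n(k) \gtrsim n^2$). The first ingredient is the Taylor expansion $\hat{\chi}(\xi) = 1 - a|\xi|^2 + O(|\xi|^4)$ near the origin, which, combined with the choice of $\nu_0$ in~\eqref{eqn:for-nu-0}, gives $\bigl|\vt_n(k) + 4\pi^2\nu_0|k|^2\bigr| \lesssim n^{-2}|k|^4$ on $\{|k| \leq Rn\}$. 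The second ingredient is a quantitative lower bound: since $\chi$ is a probability density whose support is not a single point, $|\hat{\chi}(\xi)| < 1$ strictly for all $\xi \neq 0$, while $\hat{\chi}(\xi) \to 0$ at infinity; combined with continuity, compactness, and the quadratic vanishing at the origin, this yields $1 - \hat{\chi}^4(\xi) \gtrsim \min(1, |\xi|^2)$ globally, whence $-\vt_n(k) \gtrsim |k|^2$ on $\{|k| \leq Rn\}$ and $-\vt_n(k) \gtrsim n^2$ on $\{|k| \geq rn\}$.

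For part~(i), the plan is to use a Littlewood--Paley decomposition and write $\|(\mA_n - \nu_0\Delta)\varphi\|_{\mC^{\alpha-2-\zeta}_p} = \sup_j 2^{(\alpha-2-\zeta)j}\|\Delta_j(\mA_n - \nu_0\Delta)\varphi\|_{L^p}$. For low-frequency blocks $2^j \leq n$, the symbol difference is of size $n^{-2}2^{4j}$ on the support of $\varrho_j$, and a standard dyadic Fourier-multiplier estimate contributes $\lesssim \sup_{2^j \leq n} n^{-2} 2^{(2-\zeta)j}\|\varphi\|_{\mC^\alpha_p} \lesssim n^{-\zeta}\|\varphi\|_{\mC^\alpha_p}$. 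For high-frequency blocks $2^j > n$, both $|\vt_n(k)|$ and $|k|^2$ are bounded by a multiple of $2^{2j}$ on the support of $\varrho_j$, which gives $\lesssim \sup_{2^j > n} 2^{-\zeta j}\|\varphi\|_{\mC^\alpha_p} \lesssim n^{-\zeta}\|\varphi\|_{\mC^\alpha_p}$. Both contributions vanish as $n \to \infty$, proving convergence.

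For part~(ii), the resolvent $(-\mA_n+\lambda)^{-1}$ is the multiplier with symbol $(-\vt_n(k)+\lambda)^{-1}$, and the cutoffs $\mP_n, \mQ_n$ cleanly isolate the two scales. On the support of $(1-\daleth)(n^{-1}\cdot)$ one has $-\vt_n+\lambda \gtrsim n^2$ (uniformly for $\lambda>1$), so the symbol is $\lesssim n^{-2}$; on the support of $\daleth(n^{-1}\cdot)$, at Fourier scale $|k| \sim 2^j$, the lower bound $-\vt_n(k) \gtrsim |k|^2$ gives symbol $\lesssim (2^{2j}+\lambda)^{-1}$. A dyadic Fourier-multiplier estimate then provides $\|\Delta_j \mQ_n(-\mA_n+\lambda)^{-1}\varphi\|_{L^p} \lesssim n^{-2}\|\Delta_j\varphi\|_{L^p}$ for $2^j \gtrsim n$ and $\|\Delta_j \mP_n(-\mA_n+\lambda)^{-1}\varphi\|_{L^p} \lesssim (2^{2j}+\lambda)^{-1}\|\Delta_j \varphi\|_{L^p}$ for $2^j \lesssim n$; multiplying by $2^{\alpha j}$, respectively $2^{(\alpha+2)j}$, and taking the supremum yields the two claimed bounds.

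The main technical obstacle in both parts is securing the dyadic Fourier-multiplier estimates with constants uniform in $n, \lambda$ and $j$. Since $\vt_n(k) = n^2\psi(n^{-1}k)$ for the fixed smooth function $\psi = \hat{\chi}^4 - 1$, its derivatives obey $|\partial^\beta \vt_n(k)| \lesssim n^{2-|\beta|}$ on $\{|k| \leq Rn\}$, and the resolvent symbol inherits analogous bounds after suitable rescaling; one then verifies that the $L^1$-norm of the kernel associated to $\varrho_j \cdot m$ stays uniformly bounded, by a change-of-variables argument adapted to the ratio $2^j/n$. The other delicate point is the global lower bound $1 - \hat{\chi}^4(\xi) \gtrsim \min(1, |\xi|^2)$, which requires patching the quadratic behavior at the origin (from the Taylor expansion) with strict positivity elsewhere (using strict sub-unitarity of $|\hat{\chi}|$ off the origin, together with its decay at infinity, via a compactness argument on compact annular shells).
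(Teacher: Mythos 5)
Your proposal follows essentially the same Fourier-multiplier, scale-separated route as the paper's (Proposition~\ref{prop:crucial_for_schauder}, Lemma~\ref{lem:convergence_A_ve_to_laplace}, Proposition~\ref{prop:elliptic_schauder_estimates}): analyse $\vt_n$ near the origin via a Taylor expansion and establish a symbol lower bound at the microscale, then close with an $L^1$ kernel bound on each Littlewood--Paley block. Your global lower bound $1-\hat\chi^4(\xi)\gtrsim\min(1,|\xi|^2)$ and your Taylor expansion are both correct, and they are indeed the ingredients the paper uses for the large-scale blocks. For part~(i) you use a direct norm estimate on $\mA_n\varphi-\nu_0\Delta\varphi$ rather than the paper's compactness-plus-identification argument; this is a harmless variant. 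However, there is a genuine gap in your treatment of the small-scale blocks.

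The issue is the multiplier estimate for $\mQ_n(-\mA_n+\lambda)^{-1}$ (and likewise for $\mQ_n\mA_n$ in part~(i)). On a block $2^j>n$, the derivative bounds you cite---$|\partial^\beta\vt_n(k)|\lesssim n^{2-|\beta|}$, which hold for all $k$, not just $|k|\leq Rn$---do not close the argument. Following the standard route $\|\mF^{-1}(m\varrho_j)\|_{L^1}\lesssim 2^{-jd}\sup_x(1+2^j|x|)^{d+1}|\mF^{-1}(m\varrho_j)(x)|$, the term with $|\beta|=d+1$ derivatives contributes $\approx n^{-2}(2^j/n)^{d+1}$, which is unbounded as $2^j/n\to\infty$. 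Concretely, the symbol $(-\vt_n(k)+\lambda)^{-1}$ oscillates at frequency scale $\sim n$ in $k$ (because $\vt_n(k)=n^2\psi(n^{-1}k)$), which is \emph{much finer} than the block width $2^j$; a generic smooth $\psi$ with bounded derivatives and $\psi<0$ away from $0$ does \emph{not} yield a uniform $L^1$ bound in this regime.

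What saves the estimate in the paper---and what your proposal does not invoke---is the quantitative decay $|\partial^m\hat\chi(k)|\lesssim (1+|k|)^{-(d+1)/2}$ from Lemma~\ref{lem:taylor_expansion_and_decay_ft_characteristic_function}, a genuine fact about characteristic functions of balls that has no analogue for a general bounded $\psi$. After rescaling the block to unit scale, every derivative hitting the symbol picks up a factor $(2^j n^{-1})^r$ \emph{and} a decay factor $\big(1+2^jn^{-1}\big)^{-2(d+1)}$ from the derivatives of $\hat\chi^3\partial\hat\chi$ evaluated at $2^jn^{-1}k$; the paper's Step~3 proof shows via Fa\`a di Bruno that the decay strictly dominates the growth for all $\ell\leq 2(d+1)$. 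Your "change-of-variables argument adapted to the ratio $2^j/n$" must be fleshed out to incorporate this decay; as stated, with only $|\partial^\beta\vt_n|\lesssim n^{2-|\beta|}$ in hand, the small-scale estimates would not go through.
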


A precise control of the regularization effects of the
semidiscrete Laplacian \(\mA_{n}\) allows us to treat semidiscrete
approximations of the Anderson model that appear in the study of the rough
superBrownian motion. In the next proposition we recall some salient features
of the continuous Anderson Hamiltonian.

\begin{proposition}\label{prop:continuous-pam-operator}
  Fix \(d=1\) or \(2\),  $\kappa>0$ and \( (\Omega, \mF,
\PP) \) a probability space supporting a space white noise $\xi \colon \Omega
\to \mS^\prime (\TT^d)$. Then the following holds true for all
$\omega\in\Omega$. 
\begin{itemize}
\item[i)] The Anderson Hamiltonian
  \[\mH^\omega = \nu_0\Delta +
  \xi(\omega)\] associated to \(\xi(\omega)\) is defined,
  as constructed
  in \cite{Fukushima1976} in \(d=1\) and
  \cite{AllezChouk2015AndersonHamiltonian2D} in \(d=2\).
\item[ii)]  The Hamiltonian, as an
  unbounded selfadjoint operator on $L^2(\TT^d)$, has a discrete
  spectrum given by pairs of eigenvalues and eigenfunctions
  \(\{(\lambda_{k}(\omega), e_{k}(\omega))\}_{k \in \NN}\) such that:
  \[ 
    \lambda_{1} (\omega) > \lambda_{2} (\omega) \geq \lambda_{3} (\omega) \geq
    \ldots, \qquad \lim_{k \to \infty} \lambda_{k}(\omega) = {-} \infty, \qquad
    e_{1}(\omega, x)>0, \forall x \in \TT^{d}.\]
  \item[iii)] In addition, for every \(k \in \NN,\) \(e_{k}(\omega) \in
  \mC^{2 - \frac{d}{2} - \kappa} (\TT^{d})\), and the following set is dense in
  \(C(\TT^{d})\):
  \[ \mD_{\omega} = \{ \text{Finite linear combination of }
  \{e_{k}(\omega)\}_{k \in \NN} \}. \] 
  \end{itemize}

\end{proposition}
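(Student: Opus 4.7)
The plan is to reduce the proposition to constructions already in the literature and then bootstrap spectral and regularity information via standard functional-analytic tools. For (i), we invoke \cite{Fukushima1976} in $d=1$, which constructs $\mH^\omega = \nu_0 \Delta + \xi(\omega)$ via Dirichlet forms, and \cite{AllezChouk2015AndersonHamiltonian2D} in $d=2$, which uses paracontrolled calculus after the renormalisation implicit in the definition. Both produce, for almost every $\omega$, a selfadjoint operator on $L^2(\TT^d)$, bounded from above, whose form domain embeds continuously into a Sobolev space $H^s(\TT^d)$ of positive order $s>0$.

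For (ii), since $H^s$ embeds compactly into $L^2(\TT^d)$ on the compact torus, the form domain embeds compactly into $L^2$, and thus for $\lambda$ sufficiently negative $(\mH^\omega - \lambda)^{-1}$ is a compact selfadjoint operator on $L^2$. The spectral theorem then yields the eigenpairs $\{(\lambda_k(\omega), e_k(\omega))\}_{k \in \NN}$ with $\lambda_k \to -\infty$ and $\{e_k\}$ an $L^2$-orthonormal system. For simplicity of $\lambda_1$ and strict positivity of $e_1$, I would apply a Krein-Rutman argument to $\{e^{t\mH^\omega}\}_{t>0}$: first regularise $\xi$ at a mollification scale $\ve$, for which the associated semigroup is positivity improving by Feynman-Kac, and then pass to the limit $\ve \to 0$ via resolvent convergence $\mH_\ve^\omega \to \mH^\omega$ (available from the cited constructions). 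Positivity is preserved under strong limits, and composing with the positivity-improving heat semigroup $e^{t\nu_0 \Delta}$ yields the positivity-improving property of $e^{t\mH^\omega}$, hence the Krein-Rutman conclusion.

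For (iii), the regularity $e_k \in \mC^{2-d/2-\kappa}$ follows from an elliptic bootstrap applied to $-\nu_0 \Delta e_k = -\lambda_k e_k + \xi(\omega) \cdot e_k$. In $d=1$, where $\xi(\omega) \in \mC^{-1/2-\kappa/2}$, the form-domain embedding gives $e_k$ enough regularity for Bony's paraproduct theorem to provide $\xi \cdot e_k \in \mC^{-1/2-\kappa/2}$, and Schauder estimates close the bootstrap at $e_k \in \mC^{3/2-\kappa}$. In $d=2$, the product must be interpreted paracontrolledly: letting $Y$ solve $-\nu_0 \Delta Y = \xi$ so that $Y \in \mC^{1-\kappa/2}$, the ansatz $e_k = e_k \para Y + e_k^\sharp$ makes $\xi \cdot e_k$ well-defined after renormalisation and the bootstrap closes at $e_k \in \mC^{1-\kappa}$. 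For density of $\mD_\omega$ in $C(\TT^d)$, I would use a two-step approximation: given $f \in C(\TT^d)$ and $\delta>0$, the spectral partial sums $f_N = \sum_{k\leq N} \langle f, e_k\rangle e_k$ converge to $f$ in $L^2$; applying the map $e^{\delta\mH^\omega}\colon L^2 \to \mC^{2-d/2-\kappa}$ (continuous by Schauder-type estimates for $\mH^\omega$ analogous to those in Section~\ref{sec:schauder_estimates}) yields $e^{\delta\mH^\omega} f_N \to e^{\delta\mH^\omega} f$ in $C(\TT^d)$. Each $e^{\delta\mH^\omega} f_N$ is itself a finite linear combination of the $e_k$, and $e^{\delta\mH^\omega} f \to f$ in $C(\TT^d)$ as $\delta \to 0$ by strong continuity of the semigroup on continuous functions, so a diagonal choice of $(N, \delta)$ produces the density.

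The principal technical obstacle is the paracontrolled bookkeeping in $d=2$: justifying the resolvent convergence of the smoothed operators with sufficient control to preserve positivity improvement in the Krein-Rutman step, and making sense of $\xi \cdot e_k$ throughout the elliptic bootstrap. All required inputs are nonetheless already developed in \cite{AllezChouk2015AndersonHamiltonian2D} and the subsequent literature on the 2D Anderson Hamiltonian, so one only needs to assemble them rather than redo the analysis.
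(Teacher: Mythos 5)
Your overall architecture tracks the paper's quite closely --- compact resolvent from the cited constructions, Krein--Rutman for simplicity and positivity of the ground state, parabolic regularisation to get $e_k\in\mC^{2-d/2-\kappa}$, and density via applying the singular semigroup to spectral partial sums. Two of your technical steps, however, have genuine gaps.

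First, the strict positivity needed for Krein--Rutman. You pass to the limit $\ve\to 0$ from the smoothed operators $\mH_\ve^\omega$ and argue that ``positivity is preserved under strong limits, and composing with the positivity-improving heat semigroup $e^{t\nu_0\Delta}$ yields the positivity-improving property.'' Positivity \emph{preservation} does survive the limit, but strict positivity does not: a strong limit of positivity-improving operators need only be positivity-preserving. And $e^{t\mH^\omega}$ does not factor through $e^{s\nu_0\Delta}$, so composing with the heat semigroup does not upgrade $e^{t\mH^\omega}$ from positive to strictly positive. This is precisely where the paper instead invokes a hard input: the strong maximum principle for singular SPDEs of \cite{Cannizzaro2017Malliavin} (Theorem 5.1 and Remark 5.2 there), applied directly to $e^{t\mH^\omega}$. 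In $d=2$ this is a nontrivial result on the level of paracontrolled solution theory, not something one deduces from the mollified approximations by soft limiting arguments.

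Second, the density argument relies on ``strong continuity of the semigroup on continuous functions,'' which is not available here. For $f\in C(\TT^d)$ (indeed for $f\in C^\infty(\TT^d)$) one does \emph{not} have $f\in D(\mH^\omega)$: the domain consists of paracontrolled functions, and the singular product $\xi\cdot f$ is not $L^2$ even for smooth $f$. Hence there is no generator argument nor a standard strong continuity claim to invoke for $\|e^{\delta\mH^\omega}f - f\|_\infty\to 0$. The paper sidesteps this by working only with $\varphi\in C^\infty$ (dense in $C(\TT^d)$), establishing a uniform H\"older bound $\sup_{0\leq t\leq T}\|e^{t\mH}\varphi\|_{\mC^{\zeta'}}<\infty$ for $\zeta' < 2-d/2$ from the semigroup Schauder estimates, and extracting $\frac1t\int_0^t e^{s\mH}\varphi\,\ud s \to \varphi$ in $\mC^{\zeta}$ by compactness of the embedding $\mC^{\zeta'}\hookrightarrow\mC^\zeta$ together with identification of the limit through $L^2$ projections onto $\{e_k\}$. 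Your diagonal scheme over $(N,\delta)$ is salvageable, but the $\delta\to 0$ leg must be justified by such a compactness-plus-projection argument, not by strong continuity of the semigroup on $C(\TT^d)$.

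Your route to compactness of the resolvent via a form-domain embedding into $H^s$ is a sensible alternative to the paper's direct citation of \cite[Proposition~4.13]{AllezChouk2015AndersonHamiltonian2D}, but be aware that in $d=2$ the form domain is not a standard Sobolev space, and pinning down the embedding requires a bit more care than the sentence suggests.
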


The proof of this proposition is postponed to Section~\ref{sec:semidiscrete_PAM}, in Lemmata~\ref{lem:semidiscrete-pam:continuous-pam-operator}~and~\ref{lem:domain_for_anderson_hamiltonian}. For the semidiscrete Laplace operator $\mA_n$ the following holds.

\begin{theorem}\label{thm:semidiscrete-pam-approximation}

  Fix \(d = 1\) or \(2\), $\kappa>0$ and $\xi^n$ satisfying Assumption
  \ref{ass:selection_coefficient}. Up to changing probability space \( \Omega
\), the following hold true for almost all \( \omega\) in \( \Omega \).
For every \(k \in \NN\) let \(m(\lambda_{k})\) be the multiplicity of the
  eigenvalue \(\lambda_{k}\) of \(\mH^{\omega}\) and let
  \(\{e_{k}^{i} (\omega)\}_{i = 1}^{m (\lambda_{k})}\) be an associated set of
orthonormal eigenfunctions. In particular, \( m(\lambda_{1})=1 \).

Then for every \( k \in \NN \) there exists an \(n_{0}(\omega, k) \in \NN\)
  such that for every \(n \geq n_{0}(\omega, k)\) there exist orthonormal
  functions \(\{e_{k}^{i, n}(\omega)\}_{i = 1}^{m (\lambda_{k})} \subseteq
  L^{2}(\TT^{d})\) such that, considering the operator
  \[ 
    \mH_{n}^\omega := \mA_{n} + \Pi_{n}^{2} (\xi^{n}(\omega) -c_
    n) \Pi_{n}^{2}, \qquad \mH_n^\omega \colon L^2(\TT^d) \to L^2(\TT^d),
\]
  with $c_n$ as in \eqref{eqn:renormalization_constant-slfv}, one has for some
\( \ve>0 \):
  \[ 
    \lim_{n \to \infty} e^{i, n}_{k}(\omega) = e^{i}_{k} (\omega), \ \ \text{in} \ \
    L^{2}(\TT^{d}) \qquad
    \lim_{n \to \infty} \Pi_{n} e_{k}^{i,n}(\omega) = e_{k}^{i}(\omega) \ \ \text{in} \ \
    \mC^{\ve} (\TT^{d}),
  \] 
  and
  \[ \lim_{n \to \infty} \mH^{\omega}_{n} e^{i, n}_{k}(\omega) =
    \lambda_{k} e^{i}_{k}(\omega), \ \ \text{in} \ \ L^{2}(\TT^{d}), \qquad \lim_{n \to
    \infty} \Pi_{n} \mH^{\omega}_{n} e^{i}_{k}(\omega) = \lambda_{k} e^{i}_{k}
    (\omega)\ \ \text{in} \ \ \mC^{\ve}(\TT^{d}).\]
If the eigenvalue is simple, i.e.\ \( m(\lambda_{k}) =1 \), then in addition \(
e^{n}_{k} (\omega) \) is an eigenfunction for \( \mH^{\omega}_{n}:\)
\[ \mH^{\omega}_{n} e^{n}_{k} (\omega) = \lambda^{n}_{k} e^{n}_{k}(\omega), \] 
with \( \lim_{n \to \infty}\lambda^{n}_{k} = \lambda_{k}.\)
\end{theorem}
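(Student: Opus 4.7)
The argument splits into a probabilistic step and an operator-theoretic step. The probabilistic step upgrades the in-distribution convergence of Assumption \ref{ass:selection_coefficient} to a.s.\ convergence of a full \emph{enhanced noise} via Skorokhod's representation theorem, which justifies the ``up to changing probability space'' clause. The operator-theoretic step proves norm resolvent convergence $\mH_n^\omega \to \mH^\omega$ on $L^2(\TT^d)$ and extracts spectral convergence by means of Riesz projections. Both operators are selfadjoint on $L^2$ (the $\Pi_n^2\,\cdot\,\Pi_n^2$ sandwich makes $\mH_n^\omega$ symmetric since $\Pi_n$ is a selfadjoint Fourier multiplier) and have compact resolvents, which is what allows for a clean spectral perturbation argument.

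In dimension $d=1$, Assumption \ref{ass:selection_coefficient} entails $\xi^n \to \xi$ in $\mC^{-1/2-\kappa}$ and no renormalisation is needed. In $d=2$ the constant $c_n$ in \eqref{eqn:renormalization_constant-slfv} is the precise choice for which the second-order enhanced datum $[(-\mA_n+1)^{-1}\Pi_n^2(\xi^n - c_n)] \reso (\xi^n - c_n)$ converges in $\mC^{-\kappa}$: a direct Wick computation on the Fourier side, using the symbol $\vt_n$ of $\mA_n$ together with the factors $\widehat{\chi}^2\widehat{\chi}_Q$ produced by the $\Pi_n^2$ operators, identifies the diverging mean with exactly $c_n$. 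Joint convergence of the noise and of its quadratic data can then be transferred to a.s.\ convergence by Skorokhod on a new probability space.

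To build the resolvent with uniform bounds, I would mimic the paracontrolled construction of \cite{AllezChouk2015AndersonHamiltonian2D} in the present semidiscrete setting. Given large $\lambda$, solve $(\lambda - \mH_n^\omega)u = f$ via the ansatz $u = \Pi_n^2 u \para w_n + u^\sharp$ with $w_n = (-\mA_n+1)^{-1}\Pi_n^2(\xi^n - c_n)\Pi_n^2$. The two-scale Schauder estimates of Theorem \ref{thm:regularization-estimates-main-results}(ii), combined with standard paraproduct, commutator and resonance estimates in Besov spaces, give uniform-in-$n$ bounds of the form
\[
\|u\|_{L^2(\TT^d)} + \|u\|_{\mC^{2-d/2-\kappa}(\TT^d)} \leq C(\omega)\|f\|_{L^2(\TT^d)}
\]
for $\lambda \geq \lambda_0(\omega)$. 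Together with the convergence $\mA_n \to \nu_0\Delta$ from part (i) of Theorem \ref{thm:regularization-estimates-main-results} and the a.s.\ convergence of the enhanced noise, passing to the limit in the paracontrolled equation yields norm resolvent convergence $(\lambda - \mH_n^\omega)^{-1} \to (\lambda - \mH^\omega)^{-1}$ in $\mL(L^2(\TT^d))$. Around each eigenvalue $\lambda_k$ draw a contour $\gamma_k$ separating it from the rest of the spectrum; for $n$ large the Riesz projections $P_k^n = \tfrac{1}{2\pi\iota}\oint_{\gamma_k}(\lambda - \mH_n^\omega)^{-1}\,\ud\lambda$ converge in operator norm to $P_k$, hence both have the same rank $m(\lambda_k)$. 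The desired orthonormal family $\{e_k^{i,n}\}$ is obtained by diagonalising the finite-dimensional selfadjoint operator $P_k^n \mH_n^\omega P_k^n$ on $\mathrm{range}(P_k^n)$; its eigenvalues lie inside $\gamma_k$ and converge to $\lambda_k$, which in the simple case directly gives $\lambda_k^n \to \lambda_k$ with $e_k^n$ a genuine eigenfunction of $\mH_n^\omega$.

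Finally, the uniform paracontrolled bound applied to $e_k^{i,n}$ gives $\|\Pi_n e_k^{i,n}\|_{\mC^{2-d/2-\kappa}} \lesssim 1$, so compact embedding and the already-established $L^2$-convergence upgrade this to convergence in $\mC^{\ve}$ for any $0<\ve<2-d/2-\kappa$; an analogous argument applied to $\mH_n^\omega e_k^{i,n}$ (which is a linear combination of $\{e_k^{j,n}\}_{j=1}^{m(\lambda_k)}$ with coefficients converging to $\lambda_k \delta_{ij}$) handles the convergence of $\Pi_n \mH_n^\omega e_k^{i,n}$ in $\mC^\ve$. The main technical obstacle is the $d=2$ semidiscrete paracontrolled machinery: establishing versions of the commutator and resonance estimates that are uniform in $n$, especially near the threshold frequencies $|k|\sim n$ where the two-scale splitting between $\mP_n$ and $\mQ_n$ must be balanced so that the microscopic part produces only $O(n^{-2})$ errors while the macroscopic part reproduces the continuum paracontrolled structure in the limit.
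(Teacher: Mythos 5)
Your plan follows essentially the same route as the paper: upgrade the in-distribution convergence of the enhanced noise to almost sure convergence via Skorokhod (Proposition~\ref{prop:stochastic_estimates}), establish norm resolvent convergence of $\mH^\omega_n$ via a paracontrolled fixed point with the two-scale Schauder estimates (Proposition~\ref{prop:semidiscrete-resolvent-convergence}), extract spectral convergence via Riesz projections (Corollary~\ref{cor:convergence-eigenfunctions}), and finally upgrade $L^2$ convergence of the eigenfunctions to $\mC^\ve$ by a uniform bound on $\Pi_n e^{i,n}_k$. Two details, however, need attention.

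First, your construction of $\{e^{i,n}_k\}$ by diagonalising $P^n_k\mH^\omega_n P^n_k$ gives genuine eigenfunctions of $\mH^\omega_n$, but for a degenerate $\lambda_k$ these need not converge to the \emph{prescribed} basis $\{e^i_k\}$: degenerate perturbation theory singles out a particular basis of $\ker(\mH^\omega-\lambda_k)$ in the limit (the one adapted to the first-order splitting of the approximating eigenvalues), and in general that basis differs from $\{e^i_k\}$. You only get convergence of the spans. The statement asks for orthonormal families converging to $e^i_k$, not necessarily eigenfunctions; the paper instead takes $\tilde e^{n}_j=P(\Omega,A_n)\,e_j$ and Gram--Schmidts, which converges to $e_j$ by construction and happens to be an eigenfunction automatically when $m(\lambda_k)=1$ (a one-dimensional $\mH^\omega_n$-invariant subspace is an eigenspace). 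In the degenerate case your diagonalised basis and the paper's projected basis differ and you would need an extra rotation to recover convergence.

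Second, the claimed uniform bound $\|\Pi_n e^{i,n}_k\|_{\mC^{2-d/2-\kappa}}\lesssim 1$ is not what the semidiscrete paracontrolled estimates deliver. The small-scale part $\mQ_n(e^n)^\sharp$ of the remainder is only controlled in $\mC^{-1+2\kappa}_p$, with a compensating factor $n^{-(2-\kappa)}$, and on the $L^\infty$-type scale $\Pi_n$ gains at most one derivative (Corollary~\ref{cor:regularity_gain_convolution_with_characteristic_functions}), which is insufficient to reach $\mC^{1+\kappa}$. The paper closes the gap with a separate Fourier-side argument (Step~6 of Proposition~\ref{prop:semidiscrete-resolvent-convergence} and the last part of the proof of the theorem, see Remark~\ref{rem:on-bound-for-convergence-of-eigenfunctions}): on the $L^2$ Sobolev scale $\Pi_n$ gains $\frac{d+1}{2}$ derivatives thanks to the explicit decay $|\hat\chi(k)|\lesssim(1+|k|)^{-(d+1)/2}$, and even then the conclusion is $\|\Pi_n e^n\|_{\mC^{1+\kappa/2}_2}\lesssim 1$, hence $\mC^{\kappa/2}$ by Besov embedding in $d=2$, rather than $\mC^{1-\kappa}$. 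This does not endanger the statement (only some $\ve>0$ is needed), but it is a step that requires its own estimate beyond the generic two-scale Schauder bounds and cannot be waved away as ``the uniform paracontrolled bound''.
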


The proof of this result is the content of
Section~\ref{subsection:proof-of-pam-theorem}.

\section{Scaling to the rough super-Brownian motion}
\label{sec:scaling_to_rsbm}

This section is devoted to the proof of Theorem~\ref{thm:convergence_rsbm}.
Since we want to prove convergence in distribution for the sequence $Y^n$, the
exact choice of the probability space $\Omega$ of
Definition~\ref{defn:spatial_lamdba_fv_in_random_environment} is not important.
For this reason we make the following assumption.

\begin{assumption}\label{assu:probability-space}
	Let $(\Omega, \mF, \PP)$, the probability space in
	Definition~\ref{defn:spatial_lamdba_fv_in_random_environment} and
	Assumption~\ref{ass:selection_coefficient}, be such that results of Proposition~\ref{prop:continuous-pam-operator} and
	Theorem~\ref{thm:semidiscrete-pam-approximation} hold true for all $
	\omega \in \Omega $.
\end{assumption}

The first step towards establishing tightness is to
restate the martingale problem of
Lemma~\ref{lem:martingale_problem_model_description} to take into account the scaling assumed in  Theorem~\ref{thm:convergence_rsbm}. 

\begin{lemma}\label{lem:discrete_martingale_problem_rsbm_scaling}

  In the setting of Theorem~\ref{thm:convergence_rsbm} and under Assumption~\ref{assu:probability-space}, for every \(\omega \in \Omega\) and \(n \in \NN\), under the law
  \(\PP^{\omega}\), and for every $\varphi \in L^{\infty}(\TT^{d})$ the process
  \(t \mapsto \langle Y^{n}_{t} (\omega), \varphi \rangle\) satisfies the following martingale problem:
\begin{equation}\label{eqn:discrete_mp_drift}
  \begin{aligned}
    \langle Y^{n}_{t,s}(\omega), \varphi \rangle   =  \int_{s}^{t} &\langle
\mA_{n}(Y^{n}_{r}(\omega)) +\Pi_n[ \xi^{n}_{e}(\omega)
    \Pi_n^{3} Y^{n}_{r}(\omega) ], \varphi  \rangle \\
&  \quad- n^{-\varrho} \langle
    \big(\Pi_{n}^{3}Y^{n}_{r}(\omega) \big)^{2}, \xi^{n}_{e}
(\omega)\Pi_{n}(\varphi) \rangle \ud r + M^{n}_{t,s}(\varphi),
\end{aligned}
\end{equation}
  where \(M^{n}_{\cdot}(\varphi)\) is a square integrable martingale with
  predictable quadratic variation 
  \begin{equation}\label{eqn:discrete_mp_quadratic_variation}
\begin{aligned}
  \langle  M^{n}(\varphi) \rangle_{t} =  
  \int_0^{t} & \langle (1 {+} n^{-2 + \frac
d2}s_{n}(\omega))\Pi_{n}^{3}Y^{n}_{r}(\omega), ( \Pi_{n} \varphi)^2 {-} 2 n^{-\varrho}
  \Pi_{n} (\varphi) \Pi_{n}(Y^{n}_{r}(\omega)
  \varphi) \rangle  \\
  & + n^{-\varrho} \langle \big( \Pi_{n}(Y^{n}_{r}(\omega) \varphi )
  \big)^2, 1 \rangle \\
  &  - n^{-\varrho} \langle n^{-2 + \frac d2}s_{n}(\omega) (\Pi_{n}^{3}Y^{n}_{r}(\omega))^2,
   (\Pi_{n}\varphi)^2 {-} 2
  n^{-\varrho} \Pi_{n}(\varphi) \Pi_{n}(Y^{n}_{r} (\omega)\varphi)
  \rangle \ud r. 
\end{aligned}
\end{equation}
\end{lemma}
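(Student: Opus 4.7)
The statement is a bookkeeping rewriting of Lemma~\ref{lem:martingale_problem_model_description} after inserting the scaling specified in Theorem~\ref{thm:convergence_rsbm}. I would prove it by direct substitution, in three steps.

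\textbf{Step 1: apply the scaling to the generic martingale problem.} By Lemma~\ref{lem:martingale_problem_model_description}, for the SLFV associated to the generator $\mL(n,\tilde s, \tilde{\mf{u}})$ the process $\langle X^n_t,\varphi\rangle$ satisfies a martingale problem with drift coefficient $\tilde{\mf u}\,n^{-d}$ and quadratic variation coefficient $\tilde{\mf u}^{2}n^{-2d}$. The process $Y^{n}=n^{\varrho}X^{n}$ of Theorem~\ref{thm:convergence_rsbm} corresponds to the choice $\tilde{\mf u}=n^{-\eta}$, $\tilde s=n^{\frac d2-2}s_{n}$, and a time acceleration by the factor $n^{d+2+\eta}$. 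Multiplying the martingale identity by $n^{d+2+\eta}$ inside the time integral and by $n^{\varrho}$ on both sides (so as to pass from $X^n$ to $Y^n$), the prefactor in the drift becomes
\[
 n^{\varrho}\cdot n^{d+2+\eta}\cdot n^{-\eta}\cdot n^{-d}=n^{\varrho+2},
\]
and the prefactor in the quadratic variation becomes
\[
 n^{2\varrho}\cdot n^{d+2+\eta}\cdot n^{-2\eta}\cdot n^{-2d}= n^{2\varrho-\eta-d+2}= n^{\varrho},
\]
using the identity $\eta=\varrho+2-d$ from \eqref{eqn:definition_of_intensity_eta_rsbm}.

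\textbf{Step 2: identify the limiting operators.} Rewriting $n^{2}(\Pi_{n}^{4}-\mathrm{Id})=\mA_{n}$ by \eqref{eqn:A_ve-definition} and $n^{\frac d2}s_{n}=\xi^{n}_{e}$ by Assumption~\ref{ass:selection_coefficient}, the drift of $\langle Y^{n}_{t,s},\varphi\rangle$ becomes
\[
  \int_{s}^{t}\langle \mA_n Y^n_r,\varphi\rangle+\langle \Pi_n[\xi^n_e(\Pi_n^{3}X^n_r)-\xi^n_e(\Pi_n^{3}X^n_r)^{2}],\varphi\rangle\,\mathrm d r.
\]
Substituting $X^{n}=n^{-\varrho}Y^{n}$, the linear selective term stays of order one while the quadratic term picks up a factor $n^{-\varrho}$. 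The quadratic term is then rewritten using the self-adjointness of $\Pi_{n}$ (which follows from a change of variables: $\langle \Pi_n w,\varphi\rangle=\langle w,\Pi_n \varphi\rangle$), turning $\langle \Pi_n[\xi^n_e(\Pi_n^{3}Y^n_r)^{2}],\varphi\rangle$ into $\langle \xi^n_e(\Pi_n^{3}Y^n_r)^{2},\Pi_n\varphi\rangle$. This yields exactly the drift in \eqref{eqn:discrete_mp_drift}.

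\textbf{Step 3: rewrite the quadratic variation.} The integrand in the quadratic variation of Lemma~\ref{lem:martingale_problem_model_description} contains three bilinear brackets. Substituting $X^{n}=n^{-\varrho}Y^{n}$, each occurrence of $\Pi_{n}^{3}X^{n}$, $\Pi_{n}(X^{n}\varphi)$ and $(\Pi_{n}^{3}X^{n})^{2}$ produces a power of $n^{-\varrho}$. After collecting with the overall $n^{\varrho}$ factor obtained in Step~1, the first bracket in \eqref{eqn:discrete_mp_quadratic_variation} is of order one, while the second and third pick up a single factor $n^{-\varrho}$; the internal occurrences of $\Pi_{n}(Y^n_r\varphi)$ each contribute an additional $n^{-\varrho}$, which produces the explicit factors in \eqref{eqn:discrete_mp_quadratic_variation}. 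Replacing $\tilde s=n^{\frac d2-2}s_{n}$ leaves those terms already in the form claimed.

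Conceptually there is no obstacle here: the whole statement is a mechanical substitution. The only point that requires any care is matching the powers of $n$, which works out because of the relation $\eta=\varrho+2-d$; this is what forces the quadratic terms of the martingale and the nonlinearity in the drift to be of order $n^{-\varrho}$, in accordance with the expected superBrownian scaling.
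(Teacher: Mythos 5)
Your proof is correct and matches the intent of the paper, which treats this lemma as a direct rewriting of Lemma~\ref{lem:martingale_problem_model_description} under the stated scaling and gives no written proof. The powers of $n$ all check out: the drift prefactor is $n^{\varrho}\cdot n^{d+2+\eta}\cdot n^{-\eta}\cdot n^{-d}=n^{\varrho+2}$, which after extracting $n^{2}(\Pi_{n}^{4}-\mathrm{Id})=\mA_{n}$ and $n^{2}\cdot n^{\frac d2-2}s_n=\xi^{n}_{e}$ leaves the quadratic selective term with a spare $n^{-\varrho}$; and the quadratic-variation prefactor $n^{2\varrho}\cdot n^{d+2+\eta}\cdot n^{-2\eta}\cdot n^{-2d}=n^{\varrho}$, together with the two internal substitutions $X^{n}=n^{-\varrho}Y^{n}$ appearing inside each bracket, reproduces exactly the powers in \eqref{eqn:discrete_mp_quadratic_variation}. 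You also correctly invoke the symmetry of $\Pi_{n}$ (convolution with a symmetric kernel) to move the outer $\Pi_{n}$ onto $\varphi$ in the nonlinear drift term. Nothing is missing.
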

\begin{remark}\label{rem:bfrproof}
  The only term that is not of lower order in the quadratic
  variation is 
  $ \langle \Pi_{n}^{3} Y^{n}_{r}, (\Pi_{n}\varphi)^{2} \rangle,$ 
  which explains the superBrownian noise in the limit.
  Furthermore, at first sight this martingale problem has no relationship with the operator \[
\mH^{\omega}_{n} = \mA_{n} + \Pi_{n}^{2} \xi^{n}_{e}(\omega)
\Pi_{n}^{2} \] we introduced earlier. The reason for our definition of $
\mH^{\omega}_{n} $ is that if we test on \( \varphi = \Pi_{n} e^{n} \), with \( e^{n} \) in the domain of \(
\mH^{\omega}_{n} \), then the first line of the drift
becomes
$ \langle Y_{r} (\omega), \Pi_{n} \mH^{\omega}_{n} e^{n} \rangle, $ 
which is exactly the kind of term that
Theorem~\ref{thm:semidiscrete-pam-approximation} aims at controlling.
\end{remark}

In order to obtain the convergence, the first step is to prove a tightness
result. 

\begin{proposition}\label{prop:tightness_rough_SBM}
  In the setting of Theorem~\ref{thm:convergence_rsbm} and under Assumption~\ref{assu:probability-space} fix any $\omega \in \Omega$. For any \(T>0\) the sequence
  \(\{Y^{n}(\omega)\}_{n \in \NN}\) is tight in \(\DD([0,T]; \mM(\TT^{d}))\).
  Moreover any limit point is continuous, i.e.\ lies in \(C([0,T];
  \mM(\TT^{d}))\).
\end{proposition}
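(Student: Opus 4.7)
I plan to apply Jakubowski's criterion to $\{Y^n(\omega)\}$ in $\DD([0,T];\mM(\TT^d))$. This requires (a) compact containment in $\mM(\TT^d)$ and (b) tightness of $\{t \mapsto \langle Y^n_t(\omega), \varphi\rangle\}$ in $\DD([0,T];\RR)$ for $\varphi$ ranging over a family of continuous functions that separates points and is closed under addition. Following Remark~\ref{rem:bfrproof}, the right family to test against is the dense (in $C(\TT^d)$) set $\mD_\omega$ of finite linear combinations of eigenfunctions of $\mH^\omega$, since only for such $\varphi$ does the singular drift in \eqref{eqn:discrete_mp_drift} telescope into a well-behaved object via the Anderson Hamiltonian.

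\textbf{Step 1: total-mass bound.} By Theorem~\ref{thm:semidiscrete-pam-approximation}, the approximate top eigenfunction $e_1^n$ of $\mH^\omega_n$ satisfies $\Pi_n e_1^n \to e_1$ in $\mC^\ve$, and since $e_1>0$ is continuous, $\Pi_n e_1^n \geq c>0$ uniformly for $n$ large. Since $Y^n_t\geq 0$, this gives $\langle Y^n_t,1\rangle \leq c^{-1}\langle Y^n_t, \Pi_n e_1^n\rangle$. Testing \eqref{eqn:discrete_mp_drift}--\eqref{eqn:discrete_mp_quadratic_variation} against $\varphi = \Pi_n e_1^n$, the first-order drift collapses to $\lambda_1^n\int_s^t \langle Y^n_r, \Pi_n e_1^n\rangle\,dr$, and the leading piece of the quadratic variation is bounded by $C\int_0^t \langle Y^n_r,1\rangle\,dr$. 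Gronwall applied to $\EE[\langle Y^n_t, \Pi_n e_1^n\rangle]$ combined with Doob's inequality then yields
\[
  \sup_{n} \EE\bigl[\sup_{t\in[0,T]} \langle Y^n_t,1\rangle^2\bigr] < \infty.
\]
Since $\{\mu\in\mM(\TT^d) : \mu(\TT^d)\leq R\}$ is compact in the weak topology, this gives the required compact containment.

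\textbf{Step 2: tightness for each test functional.} Fix $\varphi = \sum_{k=1}^N \alpha_k e_k(\omega) \in \mD_\omega$ and approximate it by $\varphi^n = \sum_k \alpha_k \Pi_n e_k^n(\omega) \to \varphi$ in $\mC^\ve$. Writing $\langle Y^n_t,\varphi\rangle = \langle Y^n_t,\varphi^n\rangle + \langle Y^n_t, \varphi-\varphi^n\rangle$, the correction term vanishes uniformly in $t$ by Step 1 and the $\mC^\ve$--convergence, so it suffices to prove tightness of $\langle Y^n_\cdot,\varphi^n\rangle$. The computation in Remark~\ref{rem:bfrproof} gives that the leading drift equals $\int_s^t \langle Y^n_r, \Pi_n\mH^\omega_n \varphi^n\rangle\,dr$, and $\Pi_n \mH^\omega_n\varphi^n=\sum_k\alpha_k\lambda_k^n \Pi_n e_k^n$ converges in $\mC^\ve$ and is in particular uniformly bounded. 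Together with the quadratic variation bound from Step 1, this gives uniform control of the drift and bracket increments $\EE[|\langle Y^n_{\tau+h,\tau},\varphi^n\rangle|]$ and $\EE[\langle M^n(\varphi^n)\rangle_{\tau+h,\tau}]$ by $Ch$ for bounded stopping times $\tau$, yielding Aldous' criterion.

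\textbf{Step 3: continuity of limit points.} Each reproductive event replaces a proportion $\mf{u}=n^{-\eta}$ of the population in a ball of volume $n^{-d}$; since $Y^n_t \leq n^\vr$ pointwise, a single jump of $\langle Y^n_\cdot,\varphi\rangle$ for $\|\varphi\|_\infty\leq 1$ has size at most $n^\vr \cdot n^{-\eta}\cdot n^{-d} = n^{\vr-\eta-d} = n^{-2}$ by \eqref{eqn:definition_of_intensity_eta_rsbm}. Uniform vanishing of the jump size transfers to all limit points, which must therefore lie in $C([0,T];\mM(\TT^d))$.

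\textbf{Main obstacle.} The drift in \eqref{eqn:discrete_mp_drift} involves $\xi^n_e$, whose $L^\infty$ norm blows up like $n^{d/2}$, so the naive estimate of $\int\langle \xi^n_e\Pi_n^3 Y^n_r,\Pi_n\varphi\rangle dr$ for generic smooth $\varphi$ diverges and cannot supply Aldous. Circumventing this \emph{requires} bundling the singular noise into $\mH^\omega_n$ by choosing $\varphi$ among eigenfunctions, which in turn makes the spectral control of Theorem~\ref{thm:semidiscrete-pam-approximation} the backbone of the argument. The companion task, showing that the genuinely lower-order terms -- in particular the quadratic drift $n^{-\vr}\langle(\Pi_n^3 Y^n_r)^2,\xi^n_e\Pi_n\varphi\rangle$ and the subleading pieces of \eqref{eqn:discrete_mp_quadratic_variation} -- are negligible in the limit, will be the most delicate part, relying on the mean-zero structure of $\xi^n_e$ together with the smoothing produced by $\Pi_n$.
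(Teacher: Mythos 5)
Your overall architecture -- Jakubowski's criterion, testing against the dense eigenfunction domain $\mD_\omega$ so the singular drift telescopes into $\langle Y^n_r, \Pi_n\mH_n^\omega\varphi^n\rangle$, Aldous for one-dimensional tightness, and the deterministic $n^{-2}$ jump-size bound for continuity -- is precisely the paper's route. But Step~1 has a gap where you claim that Gronwall plus Doob yields $\sup_n\EE[\sup_{t\le T}\langle Y^n_t,1\rangle^2]<\infty$. The drift tested against $\Pi_n e_1^n$ is \emph{not} only the linear eigenvalue term: there is also the quadratic piece $n^{-\varrho}\langle(\Pi_n^3 Y^n_r)^2,\xi^n_e\,\Pi_n^2 e_1^n\rangle$, which you mention only in your ``main obstacle'' paragraph and never actually control. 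One cannot avoid it by calling it lower order: bounding it via $\|\xi^n_e\|_\infty\lesssim n^{d/2}$ and $\|\Pi_n Y^n_r\|_\infty\leq n^d\langle Y^n_r,1\rangle$ produces a term of size $n^{-\varrho_0}\langle Y^n_r,\Pi_n e_1^n\rangle^2$ with $\varrho_0=\varrho-\tfrac{3d}{2}>0$, which is super-linear and defeats Gronwall. And since $Y^n_r$ is \emph{a priori} only known to be $\leq n^{\varrho}$ pointwise, $n^{-\varrho_0}$ does not render this term negligible on its own.

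The paper resolves this by introducing the stopping time $\tau_R=\inf\{t:\langle Y^n_t,\Pi_n e_1^n\rangle\geq R\}$ and applying Gronwall only to the \emph{stopped} process, where the quadratic term can be linearized to $R\,n^{-\varrho_0}\langle Y^n_{r\wedge\tau_R},\Pi_n e_1^n\rangle$; this gives $\sup_{t\le T}\EE|\langle Y^n_{t\wedge\tau_R},\Pi_n e_1^n\rangle|^2\lesssim e^{C(1+Rn^{-\varrho_0})}$ and hence, for $n\geq R^{1/\varrho_0}$, the compact-containment probability bound $\PP(\sup_t\langle Y^n_t,\Pi_n e_1^n\rangle\geq R)\lesssim R^{-2}$ directly. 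Note this is weaker than a uniform second-moment bound for the unstopped total mass and is all one needs. The same localization should be carried over into your Step~2 (the paper reuses $R(\gamma)$ from the containment estimate and runs Aldous' estimate on $[\tau_n\wedge\tau_R,\,(\tau_n+\delta_n)\wedge\tau_R]$), since otherwise the estimates $\EE[|\langle Y^n_{\tau+h,\tau},\varphi^n\rangle|]\lesssim Ch$ that you invoke again implicitly assume a moment bound on the unstopped process that has not been established.
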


\begin{proof}[Proof of Proposition~\ref{prop:tightness_rough_SBM}]

Since \(\omega \in \Omega\) is fixed, we omit the dependence on it. The proof
relies on Jakubowski's tightness criterion \cite[Theorem 3.1]{jakubowski:1986}. The criterion consists of a
compact containment condition and the tightness of one-dimensional
projections.

In a first step of the proof, we establish the compact containment condition.
  Since for \(R >0\) sets of the form \(K_{R} =
  \{\mu \colon \langle \mu, 1 \rangle \leq R\} \subseteq \mM(\TT^{d})\) are compact in the weak
  topology,
 it is sufficient to show that
  \begin{equation}\label{eqn:compact_containment}
    \begin{aligned}
      \forall \delta>0, \quad \exists R(\delta) >0, \ n(\delta) \in \NN \
      \text{such that} \ \inf_{n \geq n(\delta)} \PP \Big( \sup_{t \in [0,T]} \langle
      Y_t^{n}, 1\rangle \leq R (\delta) \Big) \geq 1- \delta.
    \end{aligned}
  \end{equation}

  In a second step, we establish one-dimensional tightness. 
By Theorem~\ref{thm:semidiscrete-pam-approximation} (since the domain $\mD_\omega$ is dense in $C(\TT^d)$), it is 
  sufficient to show that for every \(k \in \NN\) the process \(\langle
Y^{n}_{t}, e_{k} \rangle\) is tight in \(\DD([0,T]; \RR)\),
where the sequence \( \{ e_{k}\}_{k \in \NN} \) is an
orthonormal basis of \( L^{2}(\TT^{d}) \) consisting of eigenfunctions of \(
\mH \), as in Proposition~\ref{prop:continuous-pam-operator}.
By Aldous' tightness criterion \cite[Theorem 1]{Aldous1978}, this reduces to proving that
  for any sequence  of stopping times \(\tau_{n}\), taking finitely many
  values and  adapted to the filtration
  of \(Y^{n}\), and any sequence \(\delta_{n}\) of constants such that \(\delta_{n} \to
  0\) as \(n \to \infty\)
  \begin{equation}\label{eqn:one_dimensional_tightness}
    \begin{aligned}
      \forall \delta>0, \qquad \lim_{n \to \infty} \PP \Big( | \langle
	Y^{n}_{\tau_{n}+ \delta_{n}},
      e_{k} \rangle - \langle Y^{n}_{\tau_{n}}, e_{k} \rangle| \geq \delta
      \Big) = 0 .
    \end{aligned}
  \end{equation}
  In the third step we address the continuity of the limiting process.

\textit{Step 1.}
 By Theorem~\ref{thm:semidiscrete-pam-approximation},  for any \(k \in \NN\) and
\(n \geq n_0(k)\) there exists a function $e^n_k \in L^{2}(\TT^{d})$
such that \( \Pi_{n} e_{k}^{n} \to e_{k} \) in \( \mC^{\ve}(\TT^{d}), \) and \( \Pi_{n} \mH_{n} e_{k}^{n} \to \lambda_{k}
e_{k} \) in \( \mC^{\ve}(\TT^{d}) \) for some \( \ve> 0 \).
 In particular, choose \( k =1. \) Then \( \lambda_{1} \) is simple and we can
choose \( e_{1}^{n} \) to be an eigenfunction of \( \mH_{n} \) of eigenvalue \(
\lambda_{1}^{n} \to \lambda_{1} \). Since
  \(e_{1}>0\), we may assume that \(\Pi_{n} e_{1}^{n}>0, \forall n \geq n_{0}(1)\) and hence  for any positive measure \(\mu\) there exists a \(C>0\) such that
  \[ \langle \mu , 1 \rangle \leq C  \langle
  \mu , \Pi_{n} e^{n}_{1} \rangle, \qquad \forall n \geq
n_{0}(1).\]
Therefore \eqref{eqn:compact_containment} follows if one can show that
  \begin{equation*}
    \begin{aligned}
      \forall \delta>0, \quad \exists R(\delta) >0,  \ n(\delta) \geq n_0(1) \
      \text{such that} \ \inf_{n \geq n(\delta)} \PP \Big( \sup_{t \in [0,T]} \langle
      Y_t^{n}, \Pi_{n} e_{1}^{n}\rangle \leq R(\delta) \Big) \geq 1- \delta.
    \end{aligned}
  \end{equation*}
 Let us hence focus on \(\langle Y^{n}_{t},\Pi_{n} e_{1}^{n} \rangle\). By the
 martingale representation \eqref{eqn:discrete_mp_drift} (see also the
 discussion in Remark~\ref{rem:bfrproof}) one obtains
  \begin{align*}
    \langle Y^{n}_{t}, \Pi_{n} e_{1}^{n} \rangle = \langle
    Y^{n}_{0}, \Pi_{n} e_{1}^{n} \rangle
    + \int_{0}^{t} \!\!\! \lambda_{0}^{n} \langle Y^{n}_{r},
    \Pi_{n} e_{1}^{n} \rangle - n^{-\varrho} \langle
    \big(\Pi_{n}^{3} Y^{n}_{r}\big)^{2}, \xi^{n}_{\ve} \Pi_{n}^{2}
    e^{n}_{1} \rangle \ud r + M^{n}_{t}(\Pi_{n} e_{1}^{n}).
  \end{align*}  
  To treat the nonlinear quadratic term, we shall consider a stopped process. 
Let us fix \(R>0\) and consider the stopping
  time \(\tau_{R}\) and a parameter \(\varrho_{0}\), defined as
  \[ \tau_{R} := \inf \{ t \geq 0 \ \colon \ \langle Y^{n}_{t},
  \Pi_n e^n_1  \rangle \geq R\}, \qquad \varrho_{0} = \varrho-\frac d 2 -  d.\]
 Since $|\xi^n(x)| \lesssim n^{\frac d 2}$ and since
\[ \| \Pi_{n}^{3} Y^{n}_{r}  \|_{\infty} \leqslant \| \Pi_{n} Y^{n}_{r}
\|_{\infty} \leqslant n^{d} \langle Y^{n}_{r}, 1 \rangle \lesssim n^{d} \langle Y^{n}_{r}, \Pi_{n} e_{1}^{n} \rangle,\] 
 one can bound
  \begin{align*}
    n^{-\varrho}| \langle (\Pi_{n}^{3} Y^{n}_{r \wedge \tau_{R}})^{2}, \xi^{n} \Pi_{n}^{2}
    e^{n}_{1} \rangle| \lesssim n^{-\varrho+\frac d 2 +  d} \langle
    Y^{n}_{r \wedge \tau_{R}} , \Pi_n e_1^n \rangle^{2} \lesssim R
    n^{-\varrho_{0}} \langle Y^{n}_{r \wedge \tau_{R}}, \Pi_n e^n_1 \rangle,
  \end{align*}
  and therefore
  \begin{align*}
    \EE | \langle Y^{n}_{t \wedge \tau_{R}}, \Pi_{n} e^{n}_{1} \rangle
    |^{2} \lesssim \langle Y^{n}_{0}, 1\rangle + (1 + R
    n^{-\varrho_{0}}) \int_{0}^{t} \!\!\! \EE |\langle Y^{n}_{r\wedge \tau_{R} },
    \Pi_{n} e_{1}^{n} \rangle|^{2} \ud r + \EE \langle M^{n}(\Pi_{n}
    e^{n}_{1}) \rangle_{t \wedge \tau_{R}}.
  \end{align*}
Furthermore, using the formula for the predictable quadratic variation from
  Lemma~\ref{lem:discrete_martingale_problem_rsbm_scaling} 
\begin{align*}
  \EE \langle  M^{n}(\Pi_{n} e_{1}^{n}) & \rangle_{t \wedge
    \tau_{R}} \leqslant   
  \EE \int_0^{t}  \langle (1 {+} n^{-2 + \frac
d2}s_{n})\Pi_{n}^{3}Y^{n}_{r \wedge \tau_{R}}, (
\Pi_{n}^{2} e_{1}^{n})^2 \rangle  + n^{-\varrho} \langle \big( \Pi_{n}(Y^{n}_{r \wedge
\tau_{R}} \Pi_{n} e_{1}^{n} )
  \big)^2, 1 \rangle \\
  &  + n^{-\varrho} \langle n^{-2 + \frac d2}| s_{n} | (\Pi_{n}^{3}Y^{n}_{r
\wedge \tau_{R}})^2, (\Pi_{n}^{2} e_{1}^{n} )^2 + 2 n^{-\varrho} (\Pi_{n}^{2}
e_{1}^{n})  \Pi_{n}(Y^{n}_{r \wedge \tau_{R} } \Pi_{n} e^{n}_{1})\ud r.
\end{align*}
Since by Assumption~\ref{ass:selection_coefficient} \( n^{-2 +
\frac{d}{2}}| s_{n}| \leqslant 2 n^{-2 + \frac{d}{ 2}},\) and since \(
\sup_{ n \geqslant n_{0}(1)} \| \Pi_{n} e^{n}_{1} \|_{\infty} < \infty \) as well as \( 0
\leqslant  Y_{r} \leqslant n^{\varrho}\), we can rewrite the bound as:
\begin{align*}
 \EE \langle  M^{n}(\Pi_{n} e_{1}^{n}) \rangle_{t \wedge
    \tau_{R}} &  \lesssim   
  \EE \int_0^{t} \langle \Pi_{n}^{3}Y^{n}_{r \wedge \tau_{R}}, 
\Pi_{n}^{2} e_{1}^{n} \rangle  +  \langle \Pi_{n}(Y^{n}_{r \wedge
\tau_{R}} \Pi_{n} e_{1}^{n} ) , 1 \rangle +  \langle  \Pi_{n}^{3}Y^{n}_{r
\wedge \tau_{R}},  \Pi_{n}^{2} e_{1}^{n} \rangle \ud r \\
& \lesssim \EE \int_{0}^{t} \langle Y_{r \wedge \tau_{R}}, \Pi_{n}
e^{n}_{1} \rangle \ud r.
\end{align*}
  Therefore, by Gronwall's inequality, there exists a \(C>0\) such that
  \begin{equation}\label{eqn:proof-tightness-RSBM-apriori-L2-bound}
    \sup_{0 \leq t \leq T} \EE| \langle Y^{n}_{t \wedge \tau_{R}},
    \Pi_{n} e^{n}_{1} \rangle|^{2} \lesssim e^{C(1 + R n^{-\varrho_{0}})}.
  \end{equation} 
  It follows that if $n \geq R^{\frac{1}{\varrho_{0}}}$
  \[ 
    \PP \Big( \sup_{0 \leq t
    \leq T} | \langle Y^{n}_{t}, \Pi_{n} e^{n}_{1} \rangle| \geq R \Big)
    = \PP \Big(  | \langle Y^{n}_{\tau_{R} \wedge T}, \Pi_{n} e^{n}_{1}
    \rangle| = R \Big) \lesssim R^{-2}.
  \] 
  This concludes the proof of the compact containment
  condition \eqref{eqn:compact_containment}.

\textit{Step 2.} Next we want to prove~\eqref{eqn:one_dimensional_tightness},
so let us fix \(k \in \NN, \gamma>0\) and \( \delta>0 \). In view of the calculations from Step~1 there exist \(R(\gamma),
  n(\gamma)\) for which \eqref{eqn:compact_containment} holds (with \( \delta
\) replaced by \( \gamma \)). In addition, for some \(
n(\gamma, \delta) \geqslant n (\gamma) \) we may also assume that
  \[ \forall n \geq n(\gamma, \delta)  \qquad \| e_{k} - \Pi_{n}
  e^{n}_{k} \|_{L^{\infty}} \leq \frac{\delta}{2R(\gamma)}.\] 
Hence, for every  \(n \geq n(\gamma, \delta)\)
  \[ 
    \PP \Big(  | \langle Y^{n}_{\tau_{n}+ \delta_{n}}, e_{k} \rangle -
    \langle Y^{n}_{\tau_{n}}, e_{k} \rangle| \geq \delta \Big) \leq \gamma
    + \PP \Big(  | \langle Y^{n}_{\tau_{n}+ \delta_{n}}, \Pi_{n}
      e_{k}^{n} \rangle -
  \langle Y^{n}_{\tau_{n}}, \Pi_{n}e_{k}^{n} \rangle| \geq \delta \Big). \] 
 Now, using the definition of \( R(\gamma) \) (and writing for simplicity \(R\) instead of \(R(\gamma)\)):
  \begin{align*}
    \PP \Big(  | \langle Y^{n}_{\tau_{n}+ \delta_{n}}, \Pi_{n}
    e_{k}^{n} \rangle - \langle Y^{n}_{\tau_{n}}, \Pi_{n}e_{k}^{n}
  \rangle| \geq \delta \Big) \leq & \gamma + \PP \Big(  | \langle Y^{n}_{(\tau_{n}+ \delta_{n})\wedge \tau_R}, \Pi_{n}
    e_{k}^{n} \rangle - \langle Y^{n}_{\tau_{n}\wedge \tau_R}, \Pi_{n}e_{k}^{n}
  \rangle| \geq \delta \Big).
  \end{align*}
 At this point, via the representation of
Lemma~\ref{lem:discrete_martingale_problem_rsbm_scaling} we have that
  \begin{align*}
    \langle Y^{n}_{(\tau_{n} + \delta_{n}) \wedge \tau_{R}}- Y^{n}_{\tau_{n} \wedge \tau_{R}}, \Pi_{n} e_{k}^{n} \rangle
   = \int\limits_{\tau_{n} \wedge \tau_{R} }^{(\tau_{n}+ \delta_{n}) \wedge
\tau_{R}} \!\!\!  \langle Y^{n}_{r}, & \Pi_{n} \mH_{n} e_{k}^{n} \rangle - n^{-\varrho} \langle
    \big(\Pi_{n}Y^{n}_{r}\big)^{2}, \xi^{n}_{e} \Pi_{n}^{2}
    e^{n}_{k} \rangle \ud r \\ 
     & + M^{n}_{\tau_{n}+ \delta_{n}}(\Pi_{n} e_{k}^{n}) -
    M^{n}_{\tau_{n}}(\Pi_{n} e_{k}^{n}).
  \end{align*}
Hence we obtain 
\begin{align*}
  \PP \Big(  | \langle Y^{n}_{(\tau_{n}+ \delta_{n})\wedge \tau_R}, \Pi_{n}
e_{k}^{n} \rangle - & \langle Y^{n}_{\tau_{n}\wedge \tau_R}, \Pi_{n}e_{k}^{n}
\rangle| \geq \delta \Big) \\
\leqslant & \ \PP \bigg(  \bigg\vert \int_{\tau_{n} \wedge \tau_{R} }^{(\tau_{n}+ \delta_{n}) \wedge
\tau_{R}} \!\!\!  \langle Y^{n}_{r}, \Pi_{n} \mH_{n} e_{k}^{n} \rangle - n^{-\varrho} \langle
    \big(\Pi_{n}Y^{n}_{r}\big)^{2}, \xi^{n}_{e} \Pi_{n}^{2}
    e^{n}_{k} \rangle \ud r \bigg\vert \geq \frac{\delta}{2} \bigg) \\
& \ + \frac{4}{\delta^{2}} \EE |M^{n}_{(\tau_{n}+ \delta_{n}) \wedge
\tau_{R}}(\Pi_{n} e_{k}^{n}) -
    M^{n}_{\tau_{n} \wedge \tau_{R}}(\Pi_{n} e_{k}^{n})|^{2},
\end{align*}
where we used Markov's inequality in the last line. Following the calculations
of Step~1 and using that both \( \Pi_{n} \mH_{n} e^{n}_{k} \) and \( \Pi_{n}
e^{n}_{k} \) are uniformly bounded in \( \mC^{\ve}(\TT^{d}) \) for some \(
\ve >0 \), we find
\begin{align*}
 \bigg\vert \int_{\tau_{n} \wedge \tau_{R} }^{(\tau_{n}+ \delta_{n}) \wedge
\tau_{R}} \!\!\!  \langle  Y^{n}_{r}, \Pi_{n} \mH_{n} e_{k}^{n} \rangle - n^{-\varrho} \langle
    \big(\Pi_{n}Y^{n}_{r}\big)^{2}, & \xi^{n}_{e} \Pi_{n}^{2}
    e^{n}_{k} \rangle \ud r \bigg\vert  \lesssim \int_{\tau_{n}}^{\tau_{n} + \delta_{n}} \langle Y^{n}_{r \wedge
\tau_{R}}, 1 \rangle \ud r \lesssim \delta_{n} R(\gamma).
\end{align*}
Similar calculations for the quadratic variation show that
\begin{align*}
\EE |M^{n}_{(\tau_{n}+ \delta_{n}) \wedge \tau_{R}}(\Pi_{n} e_{k}^{n}) -
M^{n}_{\tau_{n} \wedge \tau_{R}}(\Pi_{n} e_{k}^{n})|^{2} \lesssim
\delta_{n} R(\gamma).
\end{align*}
Collecting all the bounds we proves so far and passing to the limit \( n \to \infty\) we obtain that
\begin{align*}
\limsup_{n \to \infty}  \PP \Big(  | \langle Y^{n}_{\tau_{n}+ \delta_{n}}, e_{k} \rangle -
\langle Y^{n}_{\tau_{n}}, e_{k} \rangle| \geq \delta \Big) \leq 2 \gamma .
\end{align*}
  Since $\gamma$ is arbitrary, this proves
\eqref{eqn:one_dimensional_tightness}.

\textit{Step 3.}  
   So far any limit point $Y$ of the sequence $Y^n$ lies in $\DD([0,T]; \mM(\TT^d))$. Since $\mM(\TT^d)$ is endowed with the weak
   topology, to prove that actually $Y \in C([0,T]; \mM(\TT^d))$, it is
   sufficient to show that for any continuous function $\varphi$, $\langle
   Y_{t}, \varphi \rangle$ is continuous in time. Here one can apply a
   criterion \cite[Theorem 3.10.2]{EthierKurtz1986} according to
   which it is sufficient to prove that the maximum size of a jump converges
   weakly to zero. In our case such convergence happens even almost surely:
   \begin{align*}
     |\langle Y^n_t, \varphi \rangle - \langle Y^n_{t-},
     \varphi \rangle | \lesssim n^{\vr-d -\eta} \| \varphi \|_{C(\TT^d)} = n^{-2} \| \varphi \|_{C(\TT^d)}.
   \end{align*}
   This follows from the definition of the generator, as well as
 Equation~\eqref{eqn:definition_of_intensity_eta_rsbm}) for $ \eta $, which
imply that jumps are bounded by 
  $\| Y^n_t - Y^n_{t-}\|_{L^\infty} \lesssim n^{\vr-\eta}
    \lesssim 1.$
  Since a jump has impact only in a  ball $B_n(x)$ for some $x \in \TT^d$,
  integrating \(\varphi\) over such ball guarantees the claimed estimate.
\end{proof}

Finally we are in position to deduce Theorem~\ref{thm:convergence_rsbm}.

\begin{proof}[Proof of Theorem~\ref{thm:convergence_rsbm}]

  By Proposition~\ref{prop:tightness_rough_SBM} the sequence $Y^{n}(\omega)$
  is tight, for every \(\omega \in \Omega\), under
  Assumption~\ref{assu:probability-space}. 
  It remains to show that, for fixed \(\omega \in \Omega\), every
  limit point satisfies the martingale problem for
  the rough superBrownian motion as in
  Definition~\ref{defn:rough_superbrownian_motion}, which is covered by
Steps~\(1\) and \( 2 \),  and that such solutions are unique, which is covered by Step~\(3\).

  \textit{Step 1.} As in the proof of Proposition~\ref{prop:tightness_rough_SBM}, since \(\omega \in \Omega\) is
  fixed we omit writing it. Moreover it is sufficient to fix a finite but
  arbitrary time horizon \(T>0\) and check the martingale property until that time. Assume
  that (up to taking a subsequence and
  applying the Skorohod representation theorem) \(Y^{n}
  \to Y\) almost surely in \(\DD([0,T]; \mM(\TT^{d}))\).
Recall that the domain \( \mD \) of the Anderson Hamiltonian is composed of
finite linear combinations of eigenfunctions, hence we have to check the
martingale property for \( \varphi \) of the form
$ \varphi = \sum_{i = 1}^{m} \alpha_{k_{i}} e_{k_{i}},$
for some \( m \in \NN, \ k_{1}, \ldots k_{m} \in \NN, \ \alpha_{k_{i}} \in
\RR,\) and where \( \{e_{k}\}_{k \in \NN} \) is the set of eigenfunctions of \(
\mH.\)
Now consider the approximate eigenfunctions \( e^{n}_{k} \) from
Theorem~\ref{thm:semidiscrete-pam-approximation} and define \( \varphi^{n}  \)
by
$ \varphi^{n} = \sum_{i = 1}^{m} \alpha_{k_{i}} e^{n}_{k_{i}}$.
Then Theorem~\ref{thm:semidiscrete-pam-approximation} implies that for some \( \ve>0 \)
\begin{align*}
\lim_{n \to \infty} \Pi_{n} \varphi^{n} = \varphi, \qquad \lim_{n \to \infty} \Pi_{n} \mH_{n}
\varphi^{n} = \mH \varphi = \sum_{i = 1}^{m} \alpha_{k_{i}}
\lambda_{k_{i}} e_{k_{i}}, \quad \text{ in } \qquad \mC^{\ve}.
\end{align*}
  In this setting, with the notation of
Lemma~\ref{lem:discrete_martingale_problem_rsbm_scaling}, one has almost surely
  \begin{align*}
    M^{\varphi}_{t} & = \langle Y_{t,0}, \varphi \rangle - \int_{0}^{t} \langle
    Y_{s}, \mH \varphi \rangle \ud s \\
& = \lim_{n \to \infty} \bigg[ \langle Y^{n}_{t, 0}, \Pi_{n}
    \varphi^{n} \rangle -\int_{0}^{t} \langle Y^{n}_{r}, \Pi_{n}  \mH_{n} \varphi^{n} \rangle  {-} n^{-\varrho} \langle
    \big(\Pi_{n}^{3}Y^{n}_{r}  \big)^{2}, \xi^{n}
    \Pi_{n}^{2} \varphi^{n} \rangle \ud r \bigg]\\
    & = \lim_{n \to \infty} \bigg[ \langle Y^{n}_{t, 0}, \Pi_{n}
      \varphi^{n} \rangle -\int_{0}^{t} \langle \mA_{n}(Y^{n}_{r})
      +\Pi_n[ \xi^{n}_{e}
      \Pi_n^{3} Y^{n}_{r} ], \Pi_{n} \varphi^{n} \rangle  {-} n^{-\varrho} \langle
      \big(\Pi_{n}^{3}Y^{n}_{r} \big)^{2}, \xi^{n}_{e}
    \Pi_{n}^{2} \varphi^{n} \rangle \ud r \bigg]\\
    & = \lim_{n \to \infty} M^{n}_{t}(\Pi_{n} \varphi^{n}). 
  \end{align*}
  Here the convergence to zero of the non-linear term follows as in
  the proof of Proposition~\ref{prop:tightness_rough_SBM}:
  \begin{align*}
    \langle \big(\Pi_{n}Y^{n}_{r} \big)^{2}, \xi^{n}_{e}
    \Pi_{n}^{2} \varphi^{n} \rangle \lesssim n^{-\varrho + d + \frac d 2} \|
    \Pi_{n}\varphi^{n} \|_{\mC^{\ve}} \langle
    Y^{n}_{r}, 1 \rangle^{2} \to 0,
  \end{align*}
  by the assumption on \(\varrho\). Our aim is to establish the martingale
property for \( M^{\varphi}_{t}\) with respect to the filtration \(
\mF_{t} \) generated by
\( Y_{t} \). The almost sure convergence \( M^{n}_{t}(\Pi_{n}
\varphi^{n}) \to
M^{\varphi}_{t} \) is not sufficient. Instead, we will pick a sequence of stopped
martingales \( \widetilde{M}^{n}_{t}(\Pi_{n} \varphi^{n}) \), such that \(
\widetilde{M}^{n}_{t}(\Pi_{n} \varphi^{n}) \to M^{\varphi}_{t} \) almost surely and
in \( L^{1} \), for all \( t \in [0, T] \). As we will see, the additional
convergence in \( L^{1} \) will guarantee that the limit \( M^{\varphi} \) is a
martingale. Hence, let us define the following stopping time, for any path \(
z \in \DD([0,T]; \mM(\TT^{d})) \):
  \[ \tau_R (z) \colon = \inf \{ t \in [0,T]  : | \langle z_t, 1 \rangle |
     \geqslant R \}.\]
Since \( Y \) takes values in \( \DD([0,T]; \mM(\TT^{d})) \) we have that
$ \lim_{R \rightarrow \infty} \tau_R (Y) = \infty.$
  Now, Lemma \ref{lem:stopping-times} guarantees that almost surely (that is,
on the events in which \( Y^{n} \to Y \) in \( \DD([0,T]; \mM(\TT^{d}) ) \)) for any \( 0 < \ve < R
\):
\begin{align*}
\tau_{R - \ve} (Y) \leqslant \liminf_{n \to \infty} \tau_{R}(Y^{n}).
\end{align*}
We deduce, using the monotonicity \( \tau_{R}(z) \leqslant
\tau_{R^{\prime}}(z)  \) if \( R \leqslant R^{\prime} \), that for \(
\varrho_{0} = \varrho - \frac{d}{2} -d>0\) (by Assumption~\ref{ass:sparsity})
almost surely:
  $ \lim_{n \rightarrow \infty} \tau_{n^{\varrho_{0}}} (Y^n) = \infty.$
  Now, Equation~\eqref{eqn:proof-tightness-RSBM-apriori-L2-bound} implies that
  \[ \sup_{n \in \NN} \sup_{0 \leqslant t \leqslant T} \mathbb{E}
     \left[ \left| \left\langle Y^n_{t \wedge \tau_{n^{\varrho_0}} (Y^n)}, 1
     \right\rangle \right|^2 \right] < \infty . \]
  In particular, following the calculations of
Proposition~\ref{prop:tightness_rough_SBM} the sequence of stopped martingales
  $ \big\{ M^n_{t \wedge \tau_{n^{\varrho_0}} (Y^n)} (\Pi_{n}
\varphi^{n}) \big\}_{n \in \NN} $ is uniformly integrable:
\begin{align*}
\sup_{n \in \NN} \sup_{t \in [0, T]} \EE | M^n_{t \wedge \tau_{n^{\varrho_0}} (Y^n)} (\Pi_{n}
\varphi^{n}) |^{2} < \infty.
\end{align*}
 Moreover, following from the previous observations
\(\widetilde{M}^{n}_{t}(\Pi_{n} \varphi^{n}) \colon =M^n_{t \wedge \tau_{n^{\varrho_0}} (Y^n)} (\Pi_{n}
\varphi^{n}) \) converges almost surely to $M_t^{\varphi}$. The uniform
integrability implies that the convergence holds also in $L^1$.
In order to conclude that \( M^{\varphi} \) is a martingale with respect to \( \mF
\) it suffices to show that for every \( s <t, \ m \in \NN, \ 0 \leqslant s_{1} \leqslant
\cdots \leqslant  s_{m} \leqslant s \) and every $ h \in C_{b}(\RR^{m}; \RR) $ 
\begin{align*}
\EE \Big[ M^{\varphi}_{t} h (Y_{s_{1}}, \ldots , Y_{s_{m}}) \Big] = \EE
\Big[ M^{\varphi}_{s} h(Y_{s_{1}}, \ldots , Y_{s_{m}})  \Big].
\end{align*} 
From the convergence in \( L^{1} \) and almost surely that we just proved we obtain that
\begin{equation}\label{eqn:martingale-property-preserved}
\begin{aligned}
\EE \Big[ M^{\varphi}_{t} h (Y_{s_{1}}, \ldots , Y_{s_{m}}) \Big] & =
\lim_{n \to \infty} \EE \Big[ \widetilde{M}^{n}_{t} (\Pi_{n}
\varphi^{n}) h (Y^{n}_{s_{1} \wedge \tau_{n^{\varrho_{0}}}(Y^{n})}, \ldots ,
Y^{n}_{s_{m} \wedge \tau_{n^{\varrho_{0}}}(Y^{n})}) \big] \\
& = \lim_{n \to \infty} \EE \Big[ \widetilde{M}^{n}_{s} (\Pi_{n}
\varphi^{n}) h (Y^{n}_{s_{1} \wedge \tau_{n^{\varrho_{0}}}(Y^{n})}, \ldots ,
Y^{n}_{s_{m} \wedge \tau_{n^{\varrho_{0}}}(Y^{n})}) \big] \\
& =\EE \Big[ M^{\varphi}_{s}  h (Y_{s_{1}}, \ldots ,
Y_{s_{m}}) \big],
\end{aligned}
\end{equation}
where in the second line we used the martingale property for \(
\widetilde{M}^{n}(\Pi_{n} \varphi^{n}) \).

  \textit{Step 2.}  Now we have to show that the martingale has the correct quadratic
  variation. Here the problem is that we do not control moments of
$\widetilde{M}^n_t(\Pi_{n} \varphi^{n})$ higher than
  the second one. So proving that the martingale property of $( \widetilde{M}^n_t)^2 -
  \langle \widetilde{M}^n \rangle_t$ is preserved in the limit does not follow from the
  same arguments we just used. Instead we stop the martingales in a different
way. Consider the following stopping times as a sequence indexed by \( R \in
\NN \):
  \[ \{ \tau_R (Y^n) \wedge T \}_{R \in \NN} \in [0, T]^{\NN} .
  \]
Here the space \( [0, T]^{\NN} \) is endowed with the product topology and under this
topology it is both compact and separable. In particular, since we are assuming that \(
Y^{n} \to Y \) in distribution in \( \DD([0,T]; \mM(\TT^{d})) \), the
sequence
$( \{ \tau_R (Y^n) \wedge T \}_{R \in \NN} , Y^{n})_{n \in \NN}$
is tight in the space
$[0,T]^{\NN} \times \DD([0,T]; \mM(\TT^{d})).$
Hence let \( (\{ \overline{\tau}_{R} \}_{R \in \NN}, Y)  \) be any limit point
of the joint distribution. Since the space \( [0, T]^{\NN} \times
\DD([0,T]; \mM(\TT^{d})) \) is separable, by the Skorohod representation
theorem, up to changing probability space, we can pick a subsequence \(
n_{k}\), for \(k \in \NN \) such that almost surely
\begin{align*}
\lim_{k \to \infty} ( \{ \tau_R (Y^{n_{k}}) \wedge T \}_{R \in \NN} , Y^{n_{k}})
=( \{ \overline{\tau}_{R} \}_{R \in \NN}, Y), \quad \text{ in } \quad [0,
T]^{\NN} \times \DD([0, T] ; \mM(\TT^{d})).
\end{align*}
The limiting random variables still satisfy the ordering:
$ \bar{\tau}_R \leqslant \bar{\tau}_{R + m}, \ \forall m \in
     \NN,$ as well as, by Lemma~\ref{lem:stopping-times}:
\begin{equation}\label{eqn:comparison-stopping-times}
 \tau_{R - \varepsilon} (Y) \wedge T \leq \bar{\tau}_R \leq \tau_{R + \varepsilon} (Y) \wedge T, \qquad \forall \varepsilon > 0. 
\end{equation}
  Now, the same calculations leading to
Equation~\eqref{eqn:martingale-property-preserved} show that for any \( R \in
\NN \) the stopped martingales $M^{n_{k}}_{t \wedge \tau_R
(Y^{n_{k}})}(\Pi_{n_{k}} \varphi^{n_{k}})$ converge to $M_{t \wedge \bar{\tau}_R}^{\varphi}$ almost surely
and in \( L^{1} \) (note that now the martingales \( M^{n_{k}}_{t \wedge
\tau_{R}(Y^{n_{k}})}(\Pi_{n_{k}} \varphi^{n_{k}}) \) are even bounded).
Similarly we obtain that \( M_{t
\wedge \bar{\tau}_R}^{\varphi} \) is a martingale with respect to the
filtration \( \overline{\mF}_{t}^{R} \)
generated by $Y_{t \wedge \bar{\tau}_R}$. Following the calculations of
Proposition~\ref{prop:tightness_rough_SBM} we observe that
\begin{align*}
\langle M^{n_{k}}(\Pi_{n_{k}} \varphi^{n_{k}}) \rangle_{t \wedge
\tau_{R}(Y^{n_{k}})} \leqslant
C \int_{0}^{t \wedge \tau_{R}(Y^{n_{k}})} \langle Y_{s}^{n_{k}}, 1 \rangle \ud s,
\end{align*}
for some deterministic \( C>0 \). In particular, following once more the
calculations of Proposition~\ref{prop:tightness_rough_SBM}, we deduce that the martingale
$\big( M^{n_{k}}_{t \wedge \tau_{R}(Y^{n_{k}})}(\Pi_{n_{k}}
\varphi^{n_{k}}) \big)^{2} - \langle
M^{n_{k}} (\Pi_{n_{k}} \varphi^{n_{k}}) \rangle_{t \wedge
\tau_{R}(Y^{n_{k}})} $
is bounded and converges almost surely to
$\big( M^{\varphi}_{t \wedge \overline{\tau}_{R}}\big)^{2} - \int_{0}^{
 t \wedge \overline{\tau}_{R}} \langle Y_{s}, \varphi^{2} \rangle \ud s.$
We then conclude that
  $ \langle M_{\cdot \wedge \bar{\tau}_R}^{\varphi} \rangle_t = \int_0^{t \wedge
     \bar{\tau}_R} \langle Y_s, \varphi^2 \rangle \ud s  .$
Now, defining \( t^{n}_{k} = \frac{k T}{n} \), for \( k \leqslant n \in \NN \),
we can view the quadratic variation as the limit in probability:
\begin{align*}
\langle M^{\varphi}_{\cdot \wedge \overline{\tau}_{R}}  \rangle_{t} = \mathbb{P}- \lim_{n
     \rightarrow \infty} \sum_{k = 0}^{n} (M_{t \wedge \bar{\tau}_R \wedge
     t_{k + 1}^{n}} - M_{t \wedge \bar{\tau}_R \wedge t_k^{n}})^2
\end{align*}
  Similarly for the martingale whose quadratic variation we would actually
like to compute:
  \[ \langle M^{\varphi} \rangle_t =\mathbb{P}- \lim_{n \rightarrow \infty}
     \sum_{k =0}^{n} (M_{t \wedge t_{k + 1}^{n}} - M_{t \wedge t_k^{n}})^2.\]
     Now, for any $\delta > 0$ and \( t \in [0, T) \) we can choose an $R \in \NN$ such that
  $ \mathbb{P} (\bar{\tau}_R > t) \geq 1 - \delta, $
  by comparison with the stopping time $\tau_{R - \varepsilon} (Y)$ for any
  $\varepsilon > 0$ (see Equation~\eqref{eqn:comparison-stopping-times}) and
since \( \lim_{R \to \infty} \tau_{R}(Y) = \infty \). So we conclude
that for any \( t < T\) it holds that
  $ \mathbb{P} \big( \langle M \rangle_t = \int_0^t \langle Y_s, \varphi^2
     \rangle \ud s \big) \geq 1 - \delta. $
Since \(\delta, T>0 \) are arbitrary we obtain the correct quadratic variation for
all times.

  \textit{Step 3.} We conclude by explaining the uniqueness in law of any process
  \(Y\) satisfying the martingale problem of the rough superBrownian motion (in the following
  as always \(\omega \in \Omega\) is fixed, and we omit from writing it). The uniqueness is the consequence of a duality argument. For
  any \(\varphi \geq 0, \varphi \in C^{\infty}\) we
   find a process \(t \mapsto U_{t} \varphi\) such that
  \begin{equation}\label{eqn:proof-convergence-rsbm-duality}
    \EE \Big[ e^{- \langle Y_{t}, \varphi \rangle} \Big] = e^{- \langle
    Y^{0}, U_t \varphi \rangle}.
  \end{equation}
  Hence the distribution of \(\langle Y_{t}, \varphi \rangle\) is uniquely
  characterized by its Laplace transform. This also characterizes the law of
  the entire process \(\langle Y_{t}, \varphi \rangle\) through a monotone class argument (see \cite[Lemma 3.2.5]{DawsonMaisonneuve1993SaintFlour}), proving the required
  result. 

  We are left with the task of describing the process \(U_{t} \varphi\). This
  is the solution, evaluated at time \(t \geq 0\), of the
   non-linearly damped parabolic equation
  \begin{align*}
    \partial_{t}(U_{\cdot} \varphi) = \mH (U_{\cdot} \varphi) -
    \frac{1}{2} (U_{\cdot} \varphi)^{2}, \qquad U_0 \varphi = \varphi,
  \end{align*}
  where we consider solutions in the mild sense, namely
    $U_{t} \varphi = e^{t \mH} \varphi - \frac{1}{2} \int_{0}^{t} e^{(t {-} s) \mH}
    (U_{s} \varphi)^{2} \ud s,$
  as in Lemma~\ref{lem:solution-dual-equation}. To obtain
  Equation~\eqref{eqn:proof-convergence-rsbm-duality} consider some
  \(\zeta> 0\) and a process \(\psi \in C([0,T]; \mC^{\zeta})\) of the form
  $ \psi_{t} = e^{t \mH } \psi_{0} + \int_{0}^{t}  e^{(t-s) \mH} f_{s}\ud s,$
  with \(f \in C([0,T]; \mC^{\zeta}), \psi_{0} \in \mC^{\zeta}.\)
  Now approximate \(f\) through a piecewise constant function in time
  \(\widetilde{f}\) and in turn approximate both \( \widetilde{f}\) and
  \(\psi_{0}\) via a finite number of
eigenfunctions (here we use the density of the domain proved in
  Lemma~\ref{lem:domain_for_anderson_hamiltonian}). Using the continuity
  of the semigroup as in
  Equation~\eqref{eqn:proof-density-second-regularization}, it
  follows from the definition of the rough superBrownian motion that for \(0
  \leq s \leq t\):
  \begin{align*}
    \langle Y_{s}, \psi_{t-s} \rangle - \langle Y_{0}, \psi_{t}\rangle -
    \int_{0}^{s} \langle Y_{r} , f_{r} \rangle \ud r =:
    \widetilde{M}_{s}(\psi)
  \end{align*}
  is a continuous martingale with quadratic variation
    $\langle \widetilde{M}(\psi) \rangle_{s} = \int_{0}^{s} \langle
    Y_{r}, \psi_{t -r}^{2} \rangle \ud r.$
  We apply this observation together with It\^o's formula to deduce that
 $ [0, t] \ni s \mapsto e^{- \langle Y_{s}, U_{t-s} \varphi \rangle}$ 
  is a martingale on \([0,t]\). In particular, this implies
  Equation~\eqref{eqn:proof-convergence-rsbm-duality} and concludes the proof.
\end{proof}

The following result states the well-posedness of the dual PDE to the rough
superBrownian motion. The proof is identical to that of
\cite[Proposition 4.5]{PerkowskiRosati2019RSBM}.

\begin{lemma}\label{lem:solution-dual-equation}
  Under Assumption~\ref{assu:probability-space}, fix \(\omega \in \Omega\). For any \(\varphi \geq 0, \varphi \in C^{\infty}\), time horizon \(T>0\)
  and \(\zeta< 2- \frac d 2\), there exists a unique function  
  \((t,x) \mapsto (U_{t}^{\omega} \varphi) (x)\) such that  \(U^{\omega} \varphi \in C([0,T]
  ; \mC^{\zeta})\), where
  \begin{align*}
    U_{t}^{\omega} \varphi = e^{t \mH^\omega} \varphi - \frac{1}{2}
    \int_{0}^{t} e^{(t-s) \mH^\omega}
    ( U_{s}^{\omega} \varphi)^{2} \ud s.
  \end{align*}
\end{lemma}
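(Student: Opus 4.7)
The plan is to run a standard Banach fixed point argument on a short time interval and then globalize via a pointwise positivity/$L^\infty$ bound. The only nonstandard ingredient is the regularization of the singular semigroup $e^{t\mH^\omega}$, which I would take from the paracontrolled construction of $\mH^\omega$ referenced in Proposition~\ref{prop:continuous-pam-operator}.

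First, I would establish that $\{e^{t\mH^\omega}\}_{t \geq 0}$ acts as a strongly continuous semigroup on $\mC^\zeta$ for each $\zeta < 2 - \tfrac{d}{2}$, together with the Schauder-type smoothing bound
\[
\|e^{t\mH^\omega} f\|_{\mC^{\zeta'}} \lesssim t^{-(\zeta' - \zeta)/2} e^{\lambda t} \|f\|_{\mC^{\zeta}}, \qquad \zeta \leq \zeta' < 2 - \tfrac{d}{2},
\]
for some $\lambda = \lambda(\omega)$. At the $L^2$ level this is just the spectral decomposition $e^{t\mH^\omega} f = \sum_k e^{t \lambda_k} \langle f, e_k\rangle e_k$, and the upgrade to Hölder spaces follows from the eigenfunction regularity $e_k \in \mC^{2 - d/2 - \kappa}$ in Proposition~\ref{prop:continuous-pam-operator} combined with the paracontrolled representation of $\mH^\omega$ that underlies its construction in $d=2$.

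With this in hand I would fix $\zeta \in (0, 2 - \tfrac{d}{2})$ and consider the map
\[
\Phi(u)_t = e^{t \mH^\omega} \varphi - \tfrac{1}{2} \int_0^t e^{(t-s)\mH^\omega} u_s^2 \, \mathrm{d}s
\]
on the closed ball $B_R \subset C([0, T_0]; \mC^\zeta)$ of radius $R$. Because $\zeta > 0$, the Hölder space $\mC^\zeta$ is a multiplicative algebra, so $\|u_s^2\|_{\mC^\zeta} \lesssim \|u_s\|_{\mC^\zeta}^2$, and the bound above with $\zeta = \zeta'$ yields $\|\Phi(u)\|_{C_{T_0} \mC^\zeta} \leq C \|\varphi\|_{\mC^\zeta} + C T_0 R^2$ together with an analogous Lipschitz estimate on $\Phi(u) - \Phi(v)$. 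Choosing $R$ suitably large and $T_0 = T_0(\omega, \varphi, R)$ suitably small turns $\Phi$ into a contraction on $B_R$ and produces a unique local solution $U^\omega \varphi \in C([0, T_0]; \mC^\zeta)$.

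To extend the solution to an arbitrary horizon $T > 0$ I would establish the a priori bound $0 \leq U_t^\omega \varphi \leq e^{t \lambda_1^\omega} \|\varphi\|_{L^\infty}$. Positivity comes from the nonnegativity of $\varphi$ and the positivity-preserving character of $e^{t \mH^\omega}$ (already implicit in $e_1 > 0$ from Proposition~\ref{prop:continuous-pam-operator}), applied to the mild formulation; the upper bound follows by dropping the damping $-\tfrac{1}{2} U^2$ and using the growth rate of the top eigenvalue. Since $\varphi \in C^\infty$ is bounded, this gives a uniform $L^\infty$ control on every interval of existence, which combined with the local theory allows us to iterate the fixed point construction and reach any $T > 0$, yielding the unique $U^\omega \varphi \in C([0, T]; \mC^\zeta)$.

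The main obstacle is therefore the first step, namely the Schauder-type smoothing estimate for $e^{t \mH^\omega}$ across two Hölder exponents, which in $d = 2$ relies on the renormalized paracontrolled construction of the Anderson Hamiltonian. Everything afterwards is a routine Picard iteration, which is precisely why the proof can be imported essentially verbatim from \cite[Proposition~4.5]{PerkowskiRosati2019RSBM}.
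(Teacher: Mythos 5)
Your proposal is correct and follows essentially the route the paper takes: the paper's own proof of this lemma is literally a reference to \cite[Proposition~4.5]{PerkowskiRosati2019RSBM}, and your Picard contraction on $C([0,T_0];\mC^\zeta)$ plus globalization via positivity and comparison with the linear Anderson semigroup is precisely that argument. One small caution: the Schauder-type smoothing for $e^{t\mH^\omega}$ does not really follow from the spectral decomposition and the per-eigenfunction regularity $e_k\in\mC^{2-d/2-\kappa}$ (those norms are not summable in $k$); it comes directly from the paracontrolled solution theory for the Anderson heat equation, as in \cite[Section~6]{GubinelliPerkowski2017KPZ}, which you do correctly identify as the ultimate source.
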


We conclude the section with a consideration on stopping times and convergence
in the Skorohod topology, which is used in the proofs above. The proof of this
lemma follows from the definition of the Skorohod distance.
\begin{lemma}\label{lem:stopping-times}
Consider \( T>0 \) and \( \{z^{n}\}_{n \in \NN}, z \in \DD([0,T]; \RR ) \) such that $z^n \rightarrow z$ in $\mathbb{D} ([0,
  T], \RR)$. Define, for \( R >0 \):
\begin{align*}
\tau_{R}(z) = \inf \{ t \in [0,T]  \ \colon \ | z_{t} |
\geqslant R \},
\end{align*}
and identically also \( \tau_{R}(z^{n}) \), with the convention that \(
\inf \emptyset = \infty \). Then, for any $\varepsilon > 0$ 
  \[\tau_{R - \varepsilon} (z) \leqslant  \liminf_{n \rightarrow \infty} \tau_R (z^n)
     \leqslant \limsup_{n \rightarrow \infty} \tau_R (z^n) \leqslant \tau_{R +
     \varepsilon} (z) . \]
\end{lemma}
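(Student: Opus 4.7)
The strategy is to invoke the standard time-change characterization of Skorokhod $J_1$-convergence in $\DD([0,T]; \RR)$: $z^n \to z$ means there exist continuous strictly increasing bijections $\lambda_n \colon [0,T] \to [0,T]$, fixing $0$ and $T$, such that $\|\lambda_n - \mathrm{id}\|_\infty \to 0$ and $\|z^n \circ \lambda_n - z\|_\infty \to 0$. Both inequalities then reduce to transporting a single distinguished time between $z$ and $z^n$ via $\lambda_n$. One preliminary observation I would record first: since every path is càdlàg and $\{|\cdot| \geq R\}$ is closed, the infimum defining $\tau_R(w)$ is attained whenever finite, i.e.\ $|w(\tau_R(w))| \geq R$; this comes from right-continuity applied to any sequence $t_j \downarrow \tau_R(w)$ with $|w(t_j)| \geq R$.

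For the upper bound $\limsup_n \tau_R(z^n) \leq \tau_{R+\varepsilon}(z)$, set $s = \tau_{R+\varepsilon}(z)$; the case $s = \infty$ is vacuous, so assume $s \leq T$, in which case the preliminary gives $|z(s)| \geq R+\varepsilon$. Uniform closeness of $z^n \circ \lambda_n$ to $z$ then yields $|z^n(\lambda_n(s))| \geq R$ for all $n$ large, and since $\lambda_n(s) \in [0,T]$ we have $\tau_R(z^n) \leq \lambda_n(s)$. Letting $n \to \infty$ and using $\lambda_n(s) \to s$ gives the claim.

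For the lower bound $\tau_{R-\varepsilon}(z) \leq \liminf_n \tau_R(z^n)$, set $s_\ast := \liminf_n \tau_R(z^n)$ and, again discarding $s_\ast = \infty$, extract a subsequence $n_k$ with $t_k := \tau_R(z^{n_k}) \to s_\ast$. The preliminary gives $|z^{n_k}(t_k)| \geq R$. Defining $u_k := \lambda_{n_k}^{-1}(t_k)$, one has $u_k \to s_\ast$ because $\|\lambda_{n_k}^{-1} - \mathrm{id}\|_\infty = \|\lambda_{n_k} - \mathrm{id}\|_\infty \to 0$, while the uniform estimate gives
\[ |z(u_k)| \geq |(z^{n_k}\circ\lambda_{n_k})(u_k)| - \|z^{n_k}\circ\lambda_{n_k} - z\|_\infty = |z^{n_k}(t_k)| - o(1) \geq R - \varepsilon \]
for all sufficiently large $k$. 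Hence $u_k \in \{t : |z(t)| \geq R-\varepsilon\}$, so $\tau_{R-\varepsilon}(z) \leq u_k$, and passing to the limit yields the inequality.

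There is essentially no genuine obstacle in this argument: the only delicate point is the attainment preliminary, which ensures that we can work with an actual value $|w(\tau_R(w))| \geq R$ rather than with an approaching sequence. Once that is in place, the rest is the mechanical tracking of one time through the reparametrization $\lambda_n$, combined with the equality $\|\lambda_n^{-1} - \mathrm{id}\|_\infty = \|\lambda_n - \mathrm{id}\|_\infty$ that comes from the change of variables $s = \lambda_n(t)$.
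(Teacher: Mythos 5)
Your proof is correct and takes the route the paper indicates (the authors simply state that the lemma ``follows from the definition of the Skorohod distance'' and omit the argument); you have supplied the details faithfully via the time-change characterization of $J_1$-convergence. The one step that genuinely needs care — that the infimum in $\tau_R(w)$ is attained, so $|w(\tau_R(w))|\geq R$ when finite, by right-continuity and closedness of $\{|\cdot|\geq R\}$ — is identified and handled, and the identity $\|\lambda_n^{-1}-\mathrm{id}\|_\infty=\|\lambda_n-\mathrm{id}\|_\infty$ used to transport $t_k$ back through $\lambda_{n_k}^{-1}$ is valid for increasing continuous bijections of $[0,T]$.
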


\section{Scaling to Fisher-KPP}\label{sec:scaling_fkpp}

As in Section~\ref{sec:scaling_to_rsbm}, we fix one realization
\(\omega \in \Omega\) of the environment and work conditionally on that realization. 
The first step towards the scaling limit is to restate the martingale problem
of Lemma~\ref{lem:martingale_problem_model_description} under the present
scaling. The proof of the next result is an immediate consequence of the aforementioned lemma.

\begin{lemma}\label{lem:martingale_problem_discrete_fkpp}
  In the setting of Theorem~\ref{thm:convergence_fkpp} fix any
  \(\omega \in \Omega\). For all $\varphi \in L^{\infty} (\TT^d)$, the process
  $t\mapsto \langle X^n_t(\omega), \varphi\rangle$ satisfies 
  \begin{equation}\label{eqn:discrete_mp_drift_fkpp}
  \begin{aligned}
    \langle X^{n}_{t,s}(\omega), \varphi \rangle = \int_{s}^{t} \langle
    \mA_{n}(X^{n}_{r}(\omega)), \varphi \rangle + \langle \Pi_n \big[ \bar{\xi}(\omega)
      ( \Pi_n^{3} X^{n}_{r}(\omega) -(\Pi_n^{3} X^n_r(\omega))^2 )\big], \varphi  \rangle \ud r +
      M^{n}_{t,s}(\varphi),
  \end{aligned}
  \end{equation}
  where \(M^{n}_{\cdot}(\varphi)\) is a centered square integrable martingale with
  predictable quadratic variation
\begin{equation}\label{eqn:discrete_mp_quadratic_variation_fkpp}
\begin{aligned}
    \langle  M^{n}(\varphi) \rangle_{t} = n^{-\eta - d + 2}
  \int_0^{t}  & \langle (1 {+} s_{n}(\omega)) \Pi_{n}^{3}X^{n}_{r}(\omega), (
  \Pi_{n} \varphi)^2 {-} 2   \Pi_{n} (\varphi)
  \Pi_{n}(X^{n}_{r}(\omega) \varphi) \rangle \\
   & + \langle \big( \Pi_{n}(X^{n}_{r}(\omega) \varphi )
  \big)^2, 1 \rangle \\
& - \langle s_{n}(\omega) (\Pi_{n}^{3}X^{n}_{r}(\omega))^2,
   (\Pi_{n}\varphi)^2 {-} 2
  \Pi_{n}(\varphi) \Pi_{n}(X^{n}_{r}(\omega) \varphi) \rangle \ud r. 
\end{aligned}
\end{equation}
\end{lemma}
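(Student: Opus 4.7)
The statement is a direct rewriting of Lemma~\ref{lem:martingale_problem_model_description} under the scaling prescribed in Theorem~\ref{thm:convergence_fkpp}, so the plan is just to track prefactors and identify the limiting symbols. First I would apply Lemma~\ref{lem:martingale_problem_model_description} with impact parameter $\mathfrak{u}=n^{-\eta}$ and selection coefficient $s_n(\omega)$ from Assumption~\ref{assu:smoothened_noise_fkpp}, and then use that multiplying the generator by $n^{\eta+d+2}$ corresponds to speeding up time by that factor, which multiplies both the drift integrand and the predictable quadratic variation integrand by $n^{\eta+d+2}$.

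For the drift the combined constant is $n^{\eta+d+2}\cdot\mathfrak{u}\, n^{-d}=n^{2}$. Applied to the first summand this turns $n^{2}(\Pi_n^{4}-\mathrm{Id})$ into $\mathcal{A}_n$ by definition~\eqref{eqn:A_ve-definition}, producing $\langle\mathcal{A}_n X_r^n(\omega),\varphi\rangle$. Applied to the selection summand it yields $\langle\Pi_n[\,n^{2}s_n(\omega)(\Pi_n^{3}X_r^n-(\Pi_n^{3}X_r^n)^{2})\,],\varphi\rangle$. Since $\bar{\xi}(\omega)\in\mathcal{S}(\TT^d)$ is bounded, there exists $n_0(\omega)\in\NN$ such that $\|n^{-2}\bar{\xi}(\omega)\|_{L^{\infty}}<1$ for all $n\geq n_0(\omega)$; for such $n$ the clipping in Assumption~\ref{assu:smoothened_noise_fkpp} is inactive, so $n^{2}s_n(\omega)=\bar{\xi}(\omega)$ and the drift in~\eqref{eqn:discrete_mp_drift_fkpp} follows (for $n<n_0(\omega)$ one would keep $n^{2}s_n(\omega)$ in place of $\bar{\xi}(\omega)$, but this regime is irrelevant to the subsequent scaling limit).

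For the predictable quadratic variation the combined constant is $n^{\eta+d+2}\cdot\mathfrak{u}^{2}n^{-2d}=n^{-\eta-d+2}$, matching the prefactor in~\eqref{eqn:discrete_mp_quadratic_variation_fkpp}. The three integrands then carry over verbatim from Lemma~\ref{lem:martingale_problem_model_description}, with $s_n(\omega)$ kept as such (no need to trade it for $\bar{\xi}(\omega)$, since the prefactor already suppresses it to the appropriate order).

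There is no substantive obstacle: the result is pure bookkeeping. The only observation that deserves a word of justification is the identification $n^{2}s_n(\omega)=\bar{\xi}(\omega)$ in the drift, which holds once $n$ exceeds an $\omega$-dependent threshold thanks to the smoothness built into Assumption~\ref{assu:smoothened_noise_fkpp}.
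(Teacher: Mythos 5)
Your proof is correct and is exactly the paper's (unwritten) argument: the paper only states that the lemma "is an immediate consequence" of Lemma~\ref{lem:martingale_problem_model_description}, and your bookkeeping — combining the $n^{\eta+d+2}$ time speed-up with $\mf{u}=n^{-\eta}$ to get the prefactor $n^{2}$ in the drift and $n^{-\eta-d+2}$ in the quadratic variation, then identifying $n^{2}(\Pi_n^4-\mathrm{Id})=\mA_n$ — is exactly that consequence. Your remark that the identification $n^{2}s_n(\omega)=\bar\xi(\omega)$ requires $n\geq n_0(\omega)$ because of the clipping in Assumption~\ref{assu:smoothened_noise_fkpp} is a genuine, if minor, precision that the paper glosses over (the lemma as stated holds only from that threshold on), and your note that this does not affect the scaling limit is the right way to dispense with it.
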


Now we are able to prove tightness of our process.

\begin{proposition}\label{prop:tightness_fkpp}
  In the setting of Theorem~\ref{thm:convergence_fkpp} fix any
  \(\omega \in \Omega, T>0\) and $\alpha$ such that
  \begin{align*}
    \begin{cases}
      \alpha \in (0, 1/2) \qquad & \text{ if } d=1, \\
      \alpha \in (0, \min \{ \eta, 1\}) \qquad &\text{ if } d=2.
    \end{cases}
  \end{align*}
  Then the sequence $\{ s\mapsto
  \Pi_n X^n_s(\omega)\}_{n \in \NN}$ is tight in the space
$L^2([0,T];B^\alpha_{2,2}).$
In addition, the sequence $\{s \mapsto X^n_s(\omega)\}_{n \in \NN}$ is tight
  in $\DD([0,T]; \mM(\TT^d))$, and any limit point lies in \(C([0,T];
  \mM(\TT^{d}))\). 
\end{proposition}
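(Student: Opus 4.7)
The proof splits into two tightness claims linked by the mild formulation of the martingale problem in Lemma~\ref{lem:martingale_problem_discrete_fkpp} and the Schauder estimates of Theorem~\ref{thm:regularization-estimates-main-results}. The decisive feature of the diffusive regime is the pointwise bound $0 \leqslant X^n_t \leqslant 1$: this makes the drift $\bar{\xi}(\omega)(\Pi_n^3 X^n - (\Pi_n^3 X^n)^2)$ uniformly bounded in $L^\infty$ and keeps the quadratic variation in \eqref{eqn:discrete_mp_quadratic_variation_fkpp} under control, since its prefactor $n^{-\eta-d+2}$ equals $1$ when $d=1$, $\eta=1$ and vanishes when $d=2$, $\eta>0$.

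For the Sobolev tightness of $\Pi_n X^n(\omega)$, I would start from the mild equation
\[
\Pi_n X^n_t(\omega) = \Pi_n e^{t\mA_n} X^{n,0} + \int_0^t \Pi_n e^{(t-s)\mA_n} \Pi_n\bigl[\bar{\xi}(\omega)(\Pi_n^3 X^n_s - (\Pi_n^3 X^n_s)^2)\bigr] \ud s + \Pi_n M^n_t
\]
and estimate each term separately. For the initial condition and the drift integral I would invoke Theorem~\ref{thm:regularization-estimates-main-results}(ii), which, written in resolvent form and transferred to the semigroup $e^{t\mA_n}$ through the representation $e^{t\mA_n}=\mP_n e^{t\mA_n} + \mQ_n e^{t\mA_n}$, buys up to two spatial derivatives on the macroscopic part $\mP_n$ while leaving an $n^{-2}$ factor on the microscopic part $\mQ_n$. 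Since the drift is bounded in $L^\infty$ uniformly in $n$ and $s$, this yields a uniform bound in $C([0,T];B^\alpha_{2,2})$ for any $\alpha$ strictly smaller than the target. For the martingale contribution I would estimate $\EE\|\Pi_n M^n_t\|_{B^\alpha_{2,2}}^2$ via a Littlewood--Paley decomposition: Burkholder--Davis--Gundy and the formula for $\langle M^n(\varphi)\rangle_t$ tested against each kernel $K_j^x$ give, after summation over $j$, a bound that stays finite provided $\alpha < 1/2$ when $d=1$ and $\alpha < \min\{\eta,1\}$ when $d=2$. Combining this with analogous estimates on time increments, one obtains uniform bounds in $L^2([0,T]; B^\beta_{2,2})\cap C^{1/2-\zeta}([0,T]; B^{-\gamma}_{2,2})$ for some $\beta>\alpha$ and some $\gamma>0$, and Aubin--Lions type compactness (equivalently, the compact Besov embedding $B^\beta_{2,2}\hookrightarrow B^\alpha_{2,2}$) upgrades this to tightness in $L^2([0,T]; B^\alpha_{2,2})$.

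For the measure-valued tightness I follow the blueprint of Proposition~\ref{prop:tightness_rough_SBM} and apply Jakubowski's criterion. Compact containment is trivial: the conservation $\langle X^n_t,1\rangle\leqslant 1$ deterministically keeps the paths inside the compact set $\{\mu\in\mM(\TT^d) : \langle\mu,1\rangle\leqslant 1\}$. For one-dimensional tightness, fix a smooth test function $\varphi$ and apply Aldous' criterion on intervals $[\tau_n,\tau_n+\delta_n]$; the drift is controlled in absolute value by $\delta_n$ times a constant depending only on $\|\varphi\|_{C^2}$, $\|\bar{\xi}\|_\infty$ and on $\|\mA_n\varphi\|_{L^\infty}$, which is uniformly bounded by Theorem~\ref{thm:regularization-estimates-main-results}(i); the squared increment of $M^n(\varphi)$ has expectation bounded by $C\delta_n$ via \eqref{eqn:discrete_mp_quadratic_variation_fkpp} and the boundedness of $X^n$. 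Continuity of any limit point follows from the criterion \cite[Theorem~3.10.2]{EthierKurtz1986}, since a single jump triggered by a reproductive event in $B_n(x)$ changes $X^n$ by at most $n^{-\eta}$ on a ball of volume $n^{-d}$ and therefore yields $|\langle X^n_t - X^n_{t-},\varphi\rangle|\lesssim n^{-\eta-d}\|\varphi\|_{C(\TT^d)}$, which vanishes as $n\to\infty$.

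The main technical obstacle is the uniform Sobolev control of the martingale term in $d=1$, where its quadratic variation is of macroscopic order and nothing can be discarded by the prefactor $n^{-\eta-d+2}$. The right bookkeeping requires exploiting the two-scale decomposition of $\mA_n$: on the macroscopic scales $|k|\lesssim n$ the propagator $\mP_n e^{t\mA_n}$ behaves like the heat kernel and provides genuine $B^\alpha_{2,2}$ regularisation of the stochastic convolution, whereas on microscopic scales $|k|\gtrsim n$ the absence of smoothing is compensated by the $n^{-2}$ factor furnished by Theorem~\ref{thm:regularization-estimates-main-results}(ii), after noting that the $\Pi_n^{3}$ inside \eqref{eqn:discrete_mp_quadratic_variation_fkpp} already damps the very high frequencies of the jump kernel. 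Checking that these two contributions combine to a uniform-in-$n$ moment bound for a strictly positive Sobolev exponent $\alpha$, and then transferring this bound to the time-regularity estimate required by Aubin--Lions, is the step that demands the most care.
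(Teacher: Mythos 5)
Your proposal follows essentially the same strategy as the paper: Jakubowski's criterion (compact containment from $0\leqslant X^n\leqslant 1$, one-dimensional tightness via Aldous) for the measure-valued sequence and the jump-size bound for continuity of limit points; the mild formulation of Lemma~\ref{lem:martingale_problem_discrete_fkpp} plus the two-scale Schauder estimates and the smoothing of $\Pi_n$ for the uniform $B^\alpha_{2,2}$ bound; a time-increment estimate; and finally Simon/Aubin--Lions. The only deviations are immaterial bookkeeping choices: the paper computes the second moment of the martingale directly through the Itô isometry with the predictable quadratic variation rather than invoking Burkholder--Davis--Gundy, it controls time regularity in a Sobolev--Slobodeckij scale $W^{2,\zeta}([0,T];B^\beta_{2,2})$ rather than a H\"older one, and in $d=2$ the gain is traced to Corollary~\ref{cor:regularity_gain_convolution_with_characteristic_functions} (one degree of regularity from a single $\Pi_n$, hence $\lambda=\min\{\eta,1\}$) used together with the $n^{-\eta}$ prefactor, rather than to the $n^{-2}$ resolvent estimate; your phrase about $\Pi_n^3$ damping the kernel captures the right idea but the quantitative mechanism is the smoothing of the $\Pi_n$-factors multiplying $e^{(t-r)\mA_n}K_j^x$ inside the quadratic variation.
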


To prove the proposition we will make use of the regularizing properties of the
semigroup \( e^{t \mA_{n}} \) as described by the following result.
\begin{lemma}\label{lem:parabolic-Schauder-simplified}
For any \( \gamma \in [0, 1), p \in [1, \infty], T >0 \) and \( \alpha \in \RR \) one can bound,
uniformly over \( n \in \NN, \ \varphi \in \mC^{\alpha}_{p}, t \in [0, T]\):
\[ \| \Pi_{n} e^{t \mA_{n}} \varphi \|_{\mC^{\alpha + \gamma}_{p}} \lesssim
t^{-\frac{\gamma}{2}} \| \varphi
\|_{\mC^{\alpha}_{p}}.\] 
\end{lemma}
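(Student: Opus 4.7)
The strategy is to split
\[\Pi_n e^{t\mA_n}\varphi = \Pi_n\mP_n e^{t\mA_n}\varphi + \Pi_n\mQ_n e^{t\mA_n}\varphi\]
using the spectral cutoffs from Theorem~\ref{thm:regularization-estimates-main-results}, and to estimate each piece using distinct properties of the symbol $\vartheta_n(k)=n^2(\widehat{\chi}^4(n^{-1}k)-1)$ of $\mA_n$. Since $\Pi_n$ is convolution with a probability density, it acts with norm uniformly bounded in $n$ on every $\mC^\beta_p$, so it plays essentially no role and can be absorbed at the end. The two key facts about the symbol are: (a) the Taylor expansion $\widehat{\chi}(\xi)=1-c|\xi|^2+O(|\xi|^4)$ at the origin yields $\vartheta_n(k)\leq -c_1|k|^2$ on the range of $\mP_n$ (i.e.\ for $|k|\lesssim n$); (b) on the range of $\mQ_n$, the strict inequality $\widehat{\chi}^4(n^{-1}k)\leq c_0<1$ (which follows from $\chi$ being a compactly supported density and continuity) gives $\vartheta_n(k)\leq -c_2 n^2$.

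For the low-frequency piece, a Bernstein-Mihlin-type argument shows that each Littlewood-Paley block of $e^{t\mA_n}\mP_n$ obeys
\[\|\Delta_j e^{t\mA_n}\mP_n \varphi\|_{L^p} \lesssim e^{-c_1 t 2^{2j}}\|\Delta_j\varphi\|_{L^p},\]
uniformly in $n$, for all dyadic scales in the range of $\mP_n$. This is checked by rescaling $k=2^j\xi$ and verifying that the localized symbol $\widetilde{\varrho}(\xi) e^{t\vartheta_n(2^j\xi)}$ has inverse Fourier transform of $L^1$-norm bounded by $e^{-ct2^{2j}}$, which follows from (a) through a Mihlin-type check using smoothness and quadratic control of $\vartheta_n$. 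Multiplying by $2^{j(\alpha+\gamma)}$, taking $\ell^\infty$ over $j$ and using the elementary bound $\sup_{j}2^{j\gamma}e^{-c_1 t 2^{2j}}\lesssim t^{-\gamma/2}$ gives the claimed estimate on $\Pi_n\mP_n e^{t\mA_n}\varphi$.

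For the high-frequency piece, the bound (b) yields the operator norm estimate $\|e^{t\mA_n}\mQ_n\|_{\mC^\alpha_p\to\mC^\alpha_p}\lesssim e^{-c_2 tn^2}$, while a Bernstein-type inequality for the projection onto frequencies of order $n$ gives $\|\mQ_n\psi\|_{\mC^{\alpha+\gamma}_p}\lesssim n^\gamma\|\psi\|_{\mC^\alpha_p}$ for $\gamma\geq 0$. Combining,
\[\|\Pi_n\mQ_n e^{t\mA_n}\varphi\|_{\mC^{\alpha+\gamma}_p}\lesssim n^\gamma e^{-c_2 tn^2}\|\varphi\|_{\mC^\alpha_p}\lesssim t^{-\gamma/2}\|\varphi\|_{\mC^\alpha_p},\]
since the scalar function $y\mapsto y^{\gamma/2}e^{-c_2 y}$ is bounded and we evaluate it at $y=tn^2$.

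The main obstacle is the block estimate for the low-frequency semigroup with the correct $e^{-ct2^{2j}}$ factor, uniform in $n$: this is not automatic from Theorem~\ref{thm:regularization-estimates-main-results}, which only delivers elliptic-type resolvent bounds. The quadratic lower bound $-\vartheta_n(k)\gtrsim |k|^2$ available on the range of $\mP_n$, together with the smoothness of $\vartheta_n$, is precisely what makes the Mihlin-type verification go through uniformly in $n$, mirroring the classical parabolic Schauder theory for the Laplacian on each dyadic scale.
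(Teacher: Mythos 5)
Your decomposition into low and high frequencies via $\mP_n,\mQ_n$ matches the paper, and your treatment of the low-frequency piece (heat-kernel-type block bound $\|\Delta_j e^{t\mA_n}\mP_n\varphi\|_{L^p}\lesssim e^{-ct2^{2j}}\|\Delta_j\varphi\|_{L^p}$ plus $\sup_j 2^{j\gamma}e^{-ct2^{2j}}\lesssim t^{-\gamma/2}$) is a correct re-derivation of what the paper imports from Proposition~\ref{prop:schauder_estimates} \eqref{eq:par_schauder_est_spatial_large}. On that side nothing is wrong, you have simply inlined the proof of the parabolic Schauder estimate.

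The gap is in the high-frequency piece, and it stems from your opening remark that ``$\Pi_n$ acts with norm uniformly bounded in $n$ on every $\mC^\beta_p$, so it plays essentially no role and can be absorbed at the end.'' The operator $\Pi_n$ is in fact the crucial ingredient on small scales, and the Bernstein-type inequality you invoke, namely $\|\mQ_n\psi\|_{\mC^{\alpha+\gamma}_p}\lesssim n^\gamma\|\psi\|_{\mC^\alpha_p}$ for $\gamma\geq 0$, is false. The projector $\mQ_n$ is \emph{not} a projection onto an annulus of frequencies of order $n$; it retains \emph{all} frequencies $|k|\gtrsim n$. Testing on a single Littlewood--Paley block $\psi=\Delta_j\psi$ with $2^j\gg n$ gives $\|\mQ_n\psi\|_{\mC^{\alpha+\gamma}_p}\simeq 2^{j\gamma}\cdot 2^{j\alpha}\|\Delta_j\psi\|_{L^p}$, whereas $n^\gamma\|\psi\|_{\mC^\alpha_p}\simeq n^\gamma\cdot 2^{j\alpha}\|\Delta_j\psi\|_{L^p}$, and $2^{j\gamma}/n^\gamma\to\infty$. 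Moreover the semigroup does not rescue you: on the range of $\mQ_n$ the decay $e^{t\vartheta_n(k)}\lesssim e^{-ctn^2}$ is uniform in $k$ and does \emph{not} improve for $|k|\to\infty$ (indeed $\vartheta_n(k)\to -n^2$ as $|k|\to\infty$), so the higher dyadic blocks of $\mQ_n e^{t\mA_n}\varphi$ suffer no extra damping. Without $\Pi_n$ the claimed bound fails.

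What actually saves the day, and what the paper uses, is Corollary~\ref{cor:regularity_gain_convolution_with_characteristic_functions}: the average $\Pi_n$ (convolution with $\chi_n$, whose Fourier transform decays like $(1+|n^{-1}k|)^{-(d+1)/2}$) gains $\gamma<1$ derivatives at the cost of a factor $n^\gamma$, i.e.\ $\|\Pi_n\psi\|_{\mC^{\alpha+\gamma}_p}\lesssim n^\gamma\|\psi\|_{\mC^\alpha_p}$. The small-scale chain therefore reads
\begin{align*}
\|\Pi_n \mQ_n e^{t\mA_n}\varphi\|_{\mC^{\alpha+\gamma}_p}
\lesssim n^\gamma\|\mQ_n e^{t\mA_n}\varphi\|_{\mC^{\alpha}_p}
\lesssim n^\gamma e^{-ctn^2}\|\varphi\|_{\mC^{\alpha}_p}
\lesssim t^{-\gamma/2}\|\varphi\|_{\mC^{\alpha}_p},
\end{align*}
with the gain placed on $\Pi_n$ rather than on $\mQ_n$. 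If you replace your Bernstein step by this application of Corollary~\ref{cor:regularity_gain_convolution_with_characteristic_functions}, the proof closes and is then essentially the one in the paper.
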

\begin{proof}
We can bound
$\|  \mP_{n} e^{t \mA_{n}}\Pi_{n} \varphi \|_{\mC^{\alpha + \gamma}_{p}}
 \lesssim \| \mP_{n} e^{t \mA_{n}} \varphi \|_{\mC^{\alpha +
\gamma}_{p}}  \lesssim t^{- \frac{\gamma}{2}}  \| \varphi
\|_{\mC^{\alpha}_{p}},$
where in the first step we applied
Corollary~\ref{cor:regularity_gain_convolution_with_characteristic_functions}
and in the last step the large scale estimate of
Proposition~\ref{prop:schauder_estimates}. Instead, on small scales we find
$\| \mQ_{n} e^{t \mA_{n}} \Pi_{n} \varphi \|_{\mC^{\alpha + \gamma}_{p}}
 \lesssim n^{\gamma}\| \mQ_{n} e^{t \mA_{n}} \varphi \|_{\mC^{\alpha}_{p}}
 \lesssim t^{- \frac{\gamma}{2}} \| \varphi \|_{\mC^{\alpha}_{p}},$
where we again applied
Corollary~\ref{cor:regularity_gain_convolution_with_characteristic_functions}
and Proposition~\ref{prop:schauder_estimates}.
\end{proof}

\begin{proof}[Proof of Proposition~\ref{prop:tightness_fkpp}]
  As \(\omega \in \Omega\) is fixed, we omit writing
  it.

\textit{Step 1.} The tightness of the sequence \(X^{n}\) in \(\DD([0,T];
\mM(\TT^{d}))\) is a consequence of the bound \(0 \leq
X^{n}_{t} \leq 1\). In fact, we can apply Jakubowski's tightness criterion
\cite[Theorem 3.1]{jakubowski:1986}. The criterion
consists in proving first a compact containment condition. This is immediately
satisfied since
$\PP( \sup_{ 0 \leqslant t \leqslant T} | \langle X^{n}_{t}, 1 \rangle | >1)
=0,$
from the boundedness of \( X^{n}. \) The second and last requirement for Jakubowski's
tightness criterion is the tightness of one dimensional distributions. Namely
it suffices to prove that for any \( \varphi \in C^{\infty}(\TT^{d}) \) the sequences of process 
$ \{ t \mapsto \langle X^{n}_{t}, \varphi \rangle \}_{n \in \NN} $ 
is tight in \( \DD([0.T]; \RR) \). For this purpose we use Aldous' tightness
criterion (this is the same approach as in the proof of
Proposition~\ref{prop:tightness_rough_SBM}). Let us define
\[ D^{n}_{t, s} (\varphi) = \langle X^{n}_{t, s}, \varphi \rangle -
M^{n}_{t,s}(\varphi),\] 
where we used the notations of
Lemma~\ref{lem:martingale_problem_discrete_fkpp}. Now to prove tightness of the
one-dimensional distributions Aldous' criterion guarantees that it suffices to
show that for any sequence of stopping times \( \tau^{n} \) and any
deterministic sequence \( \delta_{n} \) with \( \delta_{n} \to 0 \) one has
\[ \forall \delta > 0 \qquad \lim_{n \to \infty} \PP \Big( |\langle
X^{n}_{\tau_{n} + \delta_{n}, \tau_{n}}, \varphi \rangle| \geqslant \delta \Big)
= 0. \] 
In particular it suffices to show that for any \( \delta> 0 \)
\[ \lim_{n \to \infty} \PP \Big( |D^{n}_{\tau_{n} + \delta_{n},
\tau_{n}}(\varphi)| \geqslant \delta \Big)
= \lim_{n \to \infty} \PP \Big( |M^{n}_{\tau_{n} + \delta_{n},
\tau_{n}}(\varphi)| \geqslant \delta \Big)=0.\]
Now by Proposition~\ref{prop:elliptic_schauder_estimates} we find that
(since \( \varphi \) is smooth)
$ \sup_{n \in \NN} \| \mA_{n} \varphi \|_{L^{\infty}} < \infty.  $ 
Hence the following deterministic bound holds (since \( 0 \leqslant
X^{n}_{t} \leqslant 1 \) ):
\[ | D_{\tau_{n} + \delta_{n}, \tau_{n}}^{n}(\varphi) | \lesssim_{\varphi}
\delta_{n},\] 
which proves the first limit. As for the second one, we observe that
\begin{align*}
\PP \Big( |M^{n}_{\tau_{n} + \delta_{n}, \tau_{n}}(\varphi)| \geqslant \delta
\Big) \leqslant \frac{1}{\delta^{2}} \EE \Big[ \langle M^{n}(\varphi) \rangle_{\tau_{n} +
\delta_{n}, \tau_{n}}  \Big] \lesssim \frac{\delta_{n}}{\delta^{2}} \to 0,
\end{align*}
where for the quadratic variation we used similar bounds as for the drift.
 Finally, to show that any limit point lies in \(C([0,T];
\mM(\TT^{d}))\) note that for any \(\varphi \in C(\TT^{d})\)
\begin{align*}
  | \langle X^{n}_{t}, \varphi \rangle - \langle X^{n}_{t-}, \varphi
  \rangle | \lesssim n^{-\eta - d} \| \varphi \|_{L^{\infty}} \leq n^{-2}\| \varphi \|_{L^{\infty}},
\end{align*}
so that the maximal jump size is vanishing as \(n \to \infty\). The continuity of
the limit points follows then through \cite[Theorem 3.10.2]{EthierKurtz1986}.

\textit{Step 2.} Tightness in the space of measures is not sufficient to make sense of the
nonlinearity in the limit. 
Hence from now on we now concentrate on proving the tightness of the sequence
$\Pi_{n}X_{s}^{n}$ in \(L^{2}([0,T]; B^{\alpha}_{2,2})\) for some \( \alpha>0
\). Our aim is to apply Simon's tightness criterion, which we recall in
Proposition~\ref{prop:simon_criterion}.
We will apply the criterion for some $\alpha^{\prime} >\alpha>\alpha^{\prime
\prime}$ and
\begin{align*}
X = B^{\alpha^{\prime}}_{2,2}, \quad  Y = B^{\alpha}_{2,2}, \quad  Z =
B^{\alpha^{\prime\prime}}_{2,2}.
\end{align*}
Then, as a first objective, we derive a uniform bound for the second moment of the
\(B^{\alpha}_{2,2}\) norm (this in particular implies boundedness of the
sequence \(\Pi_{n} X^{n}\) in \(L^{2}([0,T]; B^{\alpha}_{2,2})\)):
\begin{equation}\label{eqn:proof_tightness_fkpp_uniform_L^2_bound}
  \begin{aligned}
    \sup_{n \in \NN} \sup_{0 \leq t \leq T} \EE \| \Pi_{n} X_{t}^{n}
    \|_{B^{\alpha}_{2,2}}^{2} < \infty. 
  \end{aligned}
\end{equation}
To obtain this bound it is convenient to prove the following stronger estimate.
for \(s \in [0,T]\)
\begin{equation}\label{eqn:first_step_tightness_fkpp}
 \sup_{s \leq t \leq T}\EE \big[ \| \Pi_n X^n_t \|^2_{B^{\alpha}_{2,2}}
  \big\vert \mF_s \big] \lesssim_{T} 1 + \|\Pi_n
  X^n_s\|_{B^\alpha_{2,2}}^2, 
\end{equation}
where \(\{ \mF_{t}\}_{t \geq 0}\) is the natural filtration generated by
\(X^{n}\) (we omit the dependence of the filtration on \(n\)). We state the bound with the
conditional expectation, since in this form it is
simpler to derive, via a Gronwall-type argument.
For brevity, fix the notation $ \overline{X}^n = \Pi_n X^n.$
By the martingale representation of Lemma~\ref{lem:martingale_problem_discrete_fkpp}
and  a change of variables formula
  \begin{align*}
    \overline{X}^{n}_{t} = e^{(t -s) \mA_{n}} \overline{X}^{n}_{s} +
    \int_{s}^{t} e^{(t - r) \mA_{n}} \Pi_{n}^{2} \big[ \overline{\xi}
\big( \Pi_{n}^{2} \overline{X}^{n}_{r} - (\Pi_{n}^{2}  \overline{X}^{n}_{r})^{2}\big) \big] \ud
    r + \int_{s+}^{t} \Pi_{n} e^{(t-r) \mA_{n}} \ud M^{n}_{r},
  \end{align*}
  where the last integral is understood as a stochastic integral against a martingale measure (cf.
  \cite{Walsh1986}). For the purpose of the proof it is sufficient to consider its  one dimensional projections, that is for $\varphi \in C(\TT^d)$
  \begin{align*}
    \langle \overline{X}^{n}_{t}, \varphi \rangle = \langle 
    \overline{X}^{n}_{s}, e^{(t - s) \mA_{n}} \varphi \rangle +
    \int_{s}^{t} & \langle \Pi_{n}^{2} \big[ \overline{\xi} 
    \big( \Pi_{n}^{2}\overline{X}^{n}_{r} - (\Pi_{n}^{2} \overline{X}^{n}_{r})^{2}\big)
    \big],e^{(t - r) \mA_{n}} \varphi\rangle   \ud
    r \\
    & + \int_{s+}^{t}  \ud M^{n}_{r}(\Pi_{n} e^{(t-r) \mA_{n}} \varphi).   
  \end{align*}
  The $B^{\alpha}_{2,2}$ norm is then estimated by 
\begin{align*}
\EE \big[ \| \overline{X}^n_t \|^2_{B^{\alpha}_{2,2}} \big\vert \mF_s \big]
  \lesssim \| \overline{X}^n_s\|_{B^\alpha_{2,2}}^2 &+ \EE \bigg[ \bigg\|
  \int_s^t e^{(t-r)\mA_n}\Pi_n^2 \big[ \bar{\xi} (
    \Pi_{n}^{2}\overline{X}^{n}_{r} -(\Pi_{n}^{2}\overline{X}^n_r)^2 )\big] \ud r
    \bigg\|^2_{B^\alpha_{2,2}}\bigg\vert \mF_s \bigg]\\
    & + \EE \bigg[ \bigg\| \int_{s+}^t \Pi_{n} e^{(t-r)\mA_n}\ud M_r^n
    \bigg\|^2_{B^\alpha_{2,2}} \bigg\vert \mF_s\bigg]. 
\end{align*}
An extension of the paraproduct estimates of Lemma
\ref{lem:paraproduct-estimates} to the $B^\alpha_{p,q}$ scale (see
\cite[Theorems 2.82, 2.85]{BahouriCheminDanchin2011FourierAndNonLinPDEs})
guarantees that
$$\| f^2 \|_{B^\alpha_{2,2}} \leq 2\| f \para f \|_{B^{\alpha}_{2,2}} +
\|f\reso f\|_{B^\alpha_{2,2}} \lesssim \| f\|_{L^\infty}
\|f\|_{B^\alpha_{2,2}}.$$ 
Now we apply the Schauder estimates of
Proposition~\ref{prop:schauder_estimates}. Note that here we do not need the
real strength of the estimates, as we do not need to gain any regularity. Note
also that the estimates are proven
on the scale of \( B^{\alpha}_{p, \infty} \) spaces but extend verbatim to \(
B^{\alpha}_{p, q} \) spaces for \( q \in [1, \infty) \). Hence, using the $L^\infty$ bound on
$\overline{X}^n$ and the fact that \( \overline{\xi}\) is smooth one obtains
\begin{align*}
\Big\| e^{(t-r)\mA_n}\Pi_n^2 \big[ \bar{\xi} ( \Pi_{n}^{2}\overline{X}^{n}_{r}
-(\Pi_{n}^{2}\overline{X}^n_r)^2 )\big] \Big\|_{B^{\alpha}_{2, 2}} & \lesssim \Big\|
\Pi_n^2 \big[ \bar{\xi}( \Pi_{n}^{2}  \overline{X}^{n}_{r}
-( \Pi_{n}^{2} \overline{X}^n_r)^2 )\big] \Big\|_{B^{\alpha}_{2 ,2}} \lesssim\|  \overline{X}^{n}_{r} \|_{B^{\alpha}_{2 ,2}},
\end{align*}
so that:
\begin{align*}
\EE \bigg[ \bigg\| \int_s^t e^{(t-r)\mA_n}\Pi_n^2 \big[ \bar{\xi}
(\Pi_{n}^{2}  \overline{X}^{n}_{r} -(\Pi_{n}^{2} \overline{X}^n_r)^2 )\big] \ud r
\bigg\|^2_{B^\alpha_{2,2}}\bigg\vert \mF_s \bigg]\lesssim |t {-} s|^{2} \sup_{s \leq t \leq T}\EE \big[ \| \overline{X}^n_t \|^2_{B^{\alpha}_{2,2}} \big\vert \mF_s \big].
\end{align*}
As for the martingale term, let us introduce a parameter \(\lambda\) according
to the following definition:
\begin{align*}
  \begin{cases}
    \text{If } d=1, \qquad \eta= 1 \ \ \Rightarrow \ \ \text{ set } \lambda = 0, \\
    \text{If } d=2, \qquad \eta > 0 \ \ \Rightarrow \ \ \text{ set } \lambda =
\min \{ \eta, 1\}.
  \end{cases}
\end{align*}
Then, from the definition of the space $B^{\alpha}_{2,2}$
we have
\begin{align*}
  &\EE \bigg[ \bigg\|  \int_{s+}^t \Pi_n e^{(t-r)\mA_n}\ud M_r^n
  \bigg\|^2_{B^\alpha_{2,2}} \bigg\vert \mF_s\bigg] =\sum_{j \geq -1} 2^{2 \alpha j} \int_{\TT^d} \EE \bigg[ \bigg\vert\int_{s+}^t  \ud
  M^n_r(e^{(t-r)\mA_n}\Pi_n K_j^x)\bigg\vert^2 \  \bigg\vert \ \ \mF_s
  \bigg] \ud x
\end{align*}
where  $K_j^x$ stands for the function $K_j^x(y) = \mF_{\TT^d}^{-1} \varrho_j (x-y),$
with $\vr_j$ the elements of the dyadic partition of the unity that define the Besov spaces.
Using the predictable quadratic variation computed in Lemma~\ref{lem:martingale_problem_discrete_fkpp} one obtains, uniformly over $x$
\begin{equation}\label{eqn:proof_tightness_fkpp_qv_bound}
\begin{aligned}
  \EE \bigg[ & \bigg\vert\int_{s+}^t \ud M^n_r(e^{(t-r)\mA_n} \Pi_n K_j^x)\bigg\vert^2 \ \ \bigg\vert \ \ \mF_s \bigg]\\
  & \leqslant  n^{- \lambda}\EE \bigg[ \int_s^{t} \langle \Pi_{n}^{2} \overline{X}^{n}_{r}, (1 {+}
s_{n}) \big[ ( \Pi_{n}^2 e^{(t-r)\mA_n} K_j^x)^2 {-} 2   \Pi_{n}^2
(e^{(t-r)\mA_n} K_j^x) \Pi_{n}(X^{n}_{r} \Pi_n e^{(t-r)\mA_n} K_j^x)\big] \rangle \\
  & \qquad \qquad + \langle \big( \Pi_{n}(X^{n}_{r} \Pi_n e^{(t-r)\mA_n} K_j^x)
  \big)^2, 1 \rangle \\
  & \qquad \qquad - \langle (\Pi_{n}^{2}\overline{X}^{n}_{r})^2,
  s_{n} \big[ (\Pi_{n}^2e^{(t-r)\mA_n} K_j^x)^2 {-} 2
  \Pi_{n}^2(e^{(t-r)\mA_n} K_j^x) \Pi_{n}(X^{n}_{r} \Pi_n e^{(t-r)\mA_n} K_j^x) \big]\rangle \ud r \ \ \bigg\vert \ \ \mF_s\bigg] \\
  & \lesssim n^{- \lambda} \int_s^t \big\| \Pi_n \big\vert \Pi_{n}
  e^{(t-r)\mA_n}K_j^x \big\vert \big\|_{L^2}^2 \ud r, 
\end{aligned}
\end{equation}
since $|s_n|,|X^n|, | \overline{X}^{n} | \leq 1$. Now, for $\zeta \in \RR$, for
example via the Poisson summation formula in Lemma
\ref{lem:poisson_summation_formula} and a scaling argument on $\RR^d$
$\| K_j^x \|_{\mC^{\zeta}_1} \lesssim 2^{j \zeta}$.
Therefore by the Schauder estimates that we recalled in
Lemma~\ref{lem:parabolic-Schauder-simplified}, for $\gamma \in [0,1)$
$\| \Pi_n e^{(t-r)\mA_n} K_j^x\|_{\mC_1^{\zeta + \gamma}} \lesssim
(t{-}r)^{-\frac{\gamma}{2}} 2^{j \zeta}.$

Now, for clarity, dimension \(d=1\) and dimension  \(d=2\) will be treated separately. 
In dimension
\(d=1\) choose $-\frac 1 2 < \zeta < -\alpha$ and fix $\gamma \in (0,1)$ such that
$\zeta+\gamma>\frac 1 2$. Then, by Besov embedding, one has
\begin{align*}
 \| \Pi_n \big\vert \Pi_{n}
  e^{(t-r)\mA_n}K_j^x \big\vert \|_{L^2}^2 \leq \|\Pi_n e^{(t-r)\mA_n}
  K_j^x\|_{L^2}^2 \lesssim\|\Pi_n e^{(t-r)\mA_n} K_j^x\|^2_{\mC_1^{\zeta +
  \gamma}} \lesssim (t{-}r)^{-\gamma} 2^{2 j \zeta}.
\end{align*}

In dimension \(d=2\), we make additional use of the regularizing properties of
\( \Pi_{n} \) together with the factor \( n^{- \lambda} \) appearing in front of the
quadratic variation. Note that
Corollary~\ref{cor:regularity_gain_convolution_with_characteristic_functions}
allows only to gain one degree of regularity, which is why we have defined \(
\lambda = \min \{ 1, \eta \}\) (we have no use for additional powers of \(
n \)).
Now, choose \(\kappa > 0\)  such that
\(\alpha < \lambda - 5 \kappa\) and set $\gamma = 1 - \kappa$. Then
Corollary~\ref{cor:regularity_gain_convolution_with_characteristic_functions}
implies that
\begin{align*}
\| \Pi_{n} \big\vert \Pi_{n} e^{(t-r)\mA_n}K_j^x \big\vert  \|_{L^{2}}
  & \lesssim n^{\lambda - \kappa}\| \big\vert \Pi_{n} e^{(t-r)\mA_n}K_j^x
  \big\vert \|_{\mC^{-\lambda + 2 \kappa}_{2}},
\end{align*}
and Besov embeddings~\ref{prop:besov_embedding} additionally guarantee the
following chain of inequalities (here the main aim is to get rid of the
absolute value):
\begin{align*}
  \| \big\vert \Pi_{n} e^{(t-r)\mA_n}K_j^x
  \big\vert \|_{\mC^{- \lambda + 2 \kappa}_{2}}
  & \lesssim\| \big\vert \Pi_{n} e^{(t-r)\mA_n}K_j^x
  \big\vert \|_{\mC^{-\kappa}_{\frac{2}{1 + \lambda - 3\kappa} } } \lesssim \| \Pi_{n} e^{(t-r)\mA_n}K_j^x
  \|_{L^{\frac{2}{1 + \lambda - 3\kappa} } } \\
  & \lesssim \| \Pi_{n} e^{(t-r)\mA_n}K_j^x
  \|_{\mC_{1}^{1 - \lambda + 4 \kappa}} \lesssim  (t - r)^{\frac {1 - \kappa}{2}} \| K_j^x
  \|_{\mC_{1}^{ - \lambda + 5 \kappa}} \\
  & \lesssim  (t - r)^{\frac { \gamma} {2}} 2^{-j(\lambda - 5 \kappa)}.
\end{align*}
Overall, we have obtained that
\begin{align*}
\| \Pi_{n} \big\vert \Pi_{n} e^{(t-r)\mA_n}K_j^x \big\vert  \|_{L^{2}}
  & \lesssim n^{\lambda - \kappa}(t - r)^{\frac { \gamma} {2}} 2^{-j(\lambda -
5 \kappa)}.
\end{align*}
In this way, in both dimensions, substituting the estimate into
\eqref{eqn:proof_tightness_fkpp_qv_bound} one obtains
\begin{align*}
\EE \bigg[ \bigg\| \int_s^t \Pi_n e^{(t-r)\mA_n}\ud  M_r^n
  \bigg\|^2_{B^\alpha_{2,2}} \bigg\vert \mF_s\bigg] \lesssim |t-s|^{1-\gamma}.
\end{align*}
For  sufficiently small and  deterministic $T^{*}$, chosen uniform over all
parameters, inequality \eqref{eqn:first_step_tightness_fkpp} is then proven for
all \( s \leqslant t \) such that \( (t -s) \leq T^{*}\). 
Due to the presence of the conditional expectation, one can obtain the estimate
for all \(s \leqslant t \) via a Gronwall-type argument. Indeed, to extend the estimate to 
\(2T^*\), observe there exists a \(C(T^*)\) such that
\begin{align*}
  \sup_{t \in [s,s + 2T^*]} \EE \big[ \| \Pi_n X^n_{t} \|^2_{B^{\alpha}_{2,2}} \big\vert \mF_s \big]
  & \leq C(T^*) \bigg( 1 + \sup_{t \in [s, s+T^*]}\EE \big[ \|\Pi_n X^n_{t}\|_{B^\alpha_{2,2}}^2 \vert
  \mF_{s}\big] \bigg)\\
  & \le C(T^*) \bigg( 1 + C(T^*) \Big( 1 + \EE \big[ \|\Pi_n X^n_{s}\|_{B^\alpha_{2,2}}^2 
  \big] \Big)\bigg) .
\end{align*}
Iterating this argument yields the bound for arbitrary $T$.

\textit{Step 3.}
The next goal is a bound for the expectation of an increment. For this reason fix
$0<\beta<\alpha,$ with $\alpha$ as in Step~1. We shall prove
that there exists a \(\zeta > 0\) such that
\begin{equation}\label{eqn:second_step_tightness_fkpp}
\EE \Big[ \| \overline{X}^n_t - \overline{X}^n_s\|_{B^\beta_{2,2}}^2 \Big]
  \lesssim |t-s|^{4\zeta}.
\end{equation}
Indeed, arguments similar to those in Step~1 show that 
\begin{align*}
\EE \Big[ \| \overline{X}^n_t -  \overline{X}^n_s\|_{B^\beta_{2,2}}^2 \Big] 
& \lesssim \EE \Big[ \| \overline{X}^n_t - e^{(t-s)\mA_n}
\overline{X}^n_s\|_{B^\beta_{2,2}}^2 \Big] + \EE \Big[ \| e^{(t-s)\mA_n}
\overline{X}^n_s - \overline{X}^n_s\|_{B^\beta_{2,2}}^2 \Big] \\
& \lesssim \EE \Big[ \| \overline{X}^n_t - e^{(t-s)\mA_n}
\overline{X}^n_s\|_{B^\beta_{2,2}}^2 \Big] +
|t-s|^{\alpha-\beta} \EE \| \overline{X}^n_s\|_{B^\alpha_{2,2}}^2
\\
& \lesssim
|t-s|^{1-\gamma} (1 + \EE \| \overline{X}^n_s\|^2_{B^\beta_{2,2}})
+ |t-s|^{\alpha-\beta} \EE \| \overline{X}^n_s\|_{B^\alpha_{2,2}}^2,
\end{align*}
where the penultimate step follows from
Lemma~\ref{lem:schauder_short_time_contraction}.
This is enough to establish
\eqref{eqn:second_step_tightness_fkpp}.

\textit{Step 4.} Notice that 
\eqref{eqn:proof_tightness_fkpp_uniform_L^2_bound} and
\eqref{eqn:second_step_tightness_fkpp} together guarantee that
\begin{align*}
  \sup_{n \in \NN} \EE \Big[ \| \overline{X}^{n} \|_{L^{2}([0,T]; B^{\alpha}_{2,2})}^{2} + \|
  \overline{X}^{n} \|_{W^{2, \zeta} ( [0, T] ; B^{\beta}_{2, 2})} \Big] < \infty, 
\end{align*}
with   \(\zeta\) as in \eqref{eqn:second_step_tightness_fkpp}. This implies tightness in \(L^{2}([0,T]; B^{\alpha^{\prime}}_{2,2})\) for any
\(\alpha^{\prime} < \alpha\), which is still sufficient for the result, since
\(\alpha\) varies in an open set.

\end{proof}

Below we recall for convenience Simon's tightness criterion. Here the space $W^{2, \zeta}([0, T]; Y)\subset
L^2([0,T]; Y)$ is defined by the Sobolev-Slobodeckij norm
\begin{align*}
	\| f \|_{W^{2, \zeta}([0, T]; Y)} = \|f\|_{L^2([0,T]; Y)} + \bigg(
	\int_0^T\int_0^T \frac{\|f(t) -f(r)\|_Y^2}{|t-r|^{2 \zeta +1}} \ud t \ud r\bigg)^{1/2}.
\end{align*}
\begin{proposition}[Corollary 5, \cite{Simon1987ComactSetsInLp}]
\label{prop:simon_criterion}
Let $X,Y,Z$ be three Banach spaces such that $X \subset Y \subset Z$ with the
embedding $X \subset Y$ being compact. 
Then also the following embedding is compact, for any \(s>0\):
\[ L^{p}([0,T]; X) \cap W^{s, p}([0,T]; Z) \subseteq L^{p}([0,T]; Y).\] 
\end{proposition}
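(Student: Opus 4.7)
My strategy is the classical Ehrling-plus-Kolmogorov argument. Let $\{u_n\}$ be any bounded sequence in $L^p([0,T];X) \cap W^{s,p}([0,T];Z)$; I want to extract a subsequence converging in $L^p([0,T];Y)$. The first step is Ehrling's (Lions') lemma: because $X \hookrightarrow Y$ is compact and $Y \hookrightarrow Z$ continuous, for every $\varepsilon > 0$ there exists $C_\varepsilon > 0$ such that
\[
\|v\|_Y \leq \varepsilon \|v\|_X + C_\varepsilon \|v\|_Z, \qquad \forall v \in X.
\]
Raising to the $p$-th power and integrating in time yields, for any two indices $n,m$,
\[
\|u_n - u_m\|_{L^p([0,T];Y)} \lesssim \varepsilon \bigl( \|u_n\|_{L^p([0,T];X)} + \|u_m\|_{L^p([0,T];X)} \bigr) + C_\varepsilon \|u_n - u_m\|_{L^p([0,T];Z)}.
\]
Since the $L^p([0,T];X)$ norms are uniformly bounded, this reduces the problem to showing that $\{u_n\}$ is relatively compact in $L^p([0,T];Z)$: a diagonal extraction plus $\varepsilon \downarrow 0$ then produces a Cauchy subsequence in $L^p([0,T];Y)$.

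For relative compactness in $L^p([0,T];Z)$ I would apply the Riesz–Fr\'echet–Kolmogorov theorem in its vector-valued version. Two conditions must be checked. Uniform translation continuity in time follows directly from the Slobodeckij bound: for $h \in (0,T)$,
\[
\int_0^{T-h} \|u_n(t+h) - u_n(t)\|_Z^p \, dt \leq h^{sp} \int_0^{T-h}\!\!\int_0^{T-h} \frac{\|u_n(t+h) - u_n(t)\|_Z^p}{|h|^{sp+1}} \, dt \lesssim h^{sp} \|u_n\|_{W^{s,p}([0,T];Z)}^p,
\]
which tends to zero as $h \downarrow 0$ uniformly in $n$ thanks to the uniform $W^{s,p}$ bound. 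Pointwise (in time) precompactness in $Z$ of the set $\{u_n(t)\}$ follows by averaging: one observes that for a.e.\ $t$, $u_n(t) \in X$ with a uniform $L^p$-in-time bound, and after a small time-averaging (convolution with an approximate identity) the averaged values live in a bounded subset of $X$, hence in a precompact subset of $Y \hookrightarrow Z$; the time-average error is controlled by the same $W^{s,p}$-to-$L^p$ translation estimate.

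Combining both ingredients via the vector-valued Kolmogorov criterion yields a subsequence converging in $L^p([0,T];Z)$, and then the Ehrling estimate upgrades this to convergence in $L^p([0,T];Y)$. \textbf{The main obstacle} I anticipate is the pointwise precompactness step: unlike the scalar Kolmogorov theorem, one needs to reconcile the fact that the bound lies in $L^p([0,T];X)$ (not $L^\infty([0,T];X)$) with the requirement of compactness of $\{u_n(t)\}_n$ in $Z$ for a.e.\ $t$. The cleanest way around this is to argue directly at the level of time-averaged functions, using that convolution with a smooth kernel $\rho_\delta$ maps $L^p([0,T];X)$ into $C([0,T];X)$ with norm $\lesssim \delta^{-1/p}$, and then to estimate $\|u_n - \rho_\delta * u_n\|_{L^p([0,T];Z)}$ by the $W^{s,p}$-modulus; Arzel\`a–Ascoli applied to $\rho_\delta * u_n$ in $C([0,T];Y)$ (using the compact embedding $X \hookrightarrow Y$) closes the argument, and the final $\delta \downarrow 0$ limit together with a diagonal extraction delivers the desired convergent subsequence.
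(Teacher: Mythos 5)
The paper cites this statement as Corollary~5 of Simon~\cite{Simon1987ComactSetsInLp} and reproduces no proof, so there is no internal argument to compare against; you have to be judged against Simon's own proof. Your plan is correct and follows the same route in substance: Ehrling's lemma trades the compact embedding $X \hookrightarrow Y$ for the continuous one $Y \hookrightarrow Z$, the $W^{s,p}$ Slobodeckij seminorm in $Z$ controls time translations, and boundedness in $L^p([0,T];X)$ supplies ``value compactness'' after a small time average, which is exactly what goes into Simon's Theorem~1.

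Two remarks on the write-up. The display you give for the translation estimate is malformed (there is a double integral sign with a single $\ud t$, and $h$ appears where the Slobodeckij dummy variable should); the intended inequality $\|\tau_h u - u\|_{L^p(0,T-h;Z)} \lesssim |h|^s [u]_{W^{s,p}([0,T];Z)}$ is true but is usually proved by averaging over an auxiliary point $r \in [t,t+h]$ and using $|r-t|^{1+sp} \le h^{1+sp}$, rather than by a direct pointwise comparison with the seminorm integrand. Second, you announce a ``vector-valued Riesz--Fr\'echet--Kolmogorov theorem'' but then, quite rightly, supply the missing value-compactness ingredient yourself via Arzel\`a--Ascoli applied to $\rho_\delta * u_n$. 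This is cleaner than invoking a named vector-valued Kolmogorov criterion (which would need precisely this work to verify its hypotheses), and it is the mechanism behind Simon's proof; I would just restructure so the argument is: fix $\varepsilon$, use Ehrling; fix $\delta$, mollify and control the error in $L^p(Z)$ by $\delta^s$ times the $W^{s,p}$ bound; extract via Arzel\`a--Ascoli in $C([0,T];Y)$; diagonalize over $\varepsilon \downarrow 0$ and $\delta \downarrow 0$. You also need to say a word about making $\rho_\delta * u_n$ well-defined near $t=0$ and $t=T$ (extend by zero or restrict to interior subintervals and use the uniform $L^p$ bound to add back the boundary strips), which is routine but should not be silently omitted.

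Overall, your outline is sound and reproduces the standard Aubin--Lions--Simon argument; filling in the two displays you flagged would make it a complete proof.
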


At this point, the last step is to prove that any limit point satisfies the
required martingale problem (in \(d=1\)) or solves the required PDE (in
\(d=2\)).

\begin{proof}[Proof of Theorem~\ref{thm:convergence_fkpp}]

  As in all previous cases, we fix \(\omega \in \Omega\) and do not state
  explicitly the dependence on it. We treat the drift and the martingale part
  differently.

  \textit{Step 1.} We start with the drift, which is the same in both
  dimensions. Let \(X\) be any limit point of \(X^{n}\) in
  \(C([0,T]; \mM(\TT^{d}))\). The previous proposition guarantees that 
  \(X\) lies almost surely in \(L^{2}([0,T];B^{\alpha}_{2,2})\) for some
  \(\alpha >0\). In addition, through Skorohod representation, we assume
  that \(\Pi_{n} X^{n} \to X\) in \(L^{2}([0,T]; B^{\alpha}_{2,2})\) almost
  surely (in fact this is the reason why we prefer to work
  with \( \Pi_{n}X^{n} \) instead of \( X^{n} \) -- the latter converging only
  as a positive measure).
  In particular, for \(\varphi \in C^{\infty}(\TT^{d})\), define
  \begin{align*}
    N_{t}^{\varphi} = \langle X_{t,0}, \varphi \rangle - \int_{0}^{t} \langle
    X_{s}, \nu_{0} \Delta \varphi \rangle + \langle  \bar{\xi}
    ( X_{s}-X^2_s), \varphi  \rangle\ud s.
  \end{align*}
  Then, regarding the nonlinear term, since both $ X_{s}, \Pi_{n} X^{n}_{s} \in
  [0, 1]$, we can estimate:
  \begin{align*}
    \int_{0}^{t} \int_{\TT^{d}} | X_{s}^{2} - (\Pi_{n} X^{n})^{2}| \ud x
    \ud s \leq  \int_{0}^{t} \int_{\TT^{d}} 2 | X_{s} - \Pi_{n} X^{n}| \ud x
    \ud s \lesssim \| X_{s} - \Pi_{n} X^{n} \|_{L^{2}( [0,T];
    B^{\alpha}_{2,2})},
  \end{align*}
  so that applying Lemma~\ref{lem:convergence_A_ve_to_laplace}, we have almost surely:
  \begin{align*}
    N_{t}^{\varphi} &= \lim_{n \to \infty} \bigg[ \langle \Pi_{n}
      X^{n}_{t, 0}, \varphi \rangle - \int_{0}^{t} \langle \mA_{n}
      \Pi_{n} X^{n}_{s}, \varphi \rangle + \langle \Pi_{n}^{2}\overline{\xi} \big[
      \Pi_{n}^{3} X^{n}_{s} -  (\Pi_{n}^{3} X^{n}_{s})^{2}\big],
     \varphi \rangle \ud s \bigg]\\
    & =: \lim_{n \to \infty} N^{n, \varphi}_{t}.
  \end{align*}
  \textit{Step 2.} Now we prove that \(N_{t}^{\varphi}\) is a centered continuous martingale. In \(d=2\) the quadratic
  variation will be zero and hence \(N^{\varphi} \equiv 0\), proving that the limit is
  deterministic (conditional on the environment). Since \(N^{n,
  \varphi}_{t}\) is a sequence of martingales, by
  Lemma~\ref{lem:martingale_problem_discrete_fkpp}, the fact that also
  \(N^{\varphi}_{t}\) is a martingale follows from the uniform bound
  of Equation~\eqref{eqn:proof_tightness_fkpp_uniform_L^2_bound} (the
  continuity of $N^\varphi$ is as well a consequence of 
Proposition~\ref{prop:tightness_fkpp}). The quadratic
  variation of \(N^{n, \varphi}\) is 
  \begin{align*}
    \langle N^{n , \varphi} \rangle_{t} = n^{-\lambda}
  \int_0^{t} & \langle (1 {+} s_{n}) \Pi_{n}^{3}X^{n}_{r}, (
  \Pi_{n}^{2} \varphi)^2 {-} 2   \Pi_{n}^{2} (\varphi)
  \Pi_{n}(X^{n}_{r} \varphi) \rangle \\
  & + \langle \big( \Pi_{n}(X^{n}_{r} \Pi_{n}\varphi )
  \big)^2, 1 \rangle - \langle s_{n} (\Pi_{n}^{3}X^{n}_{r})^2,
  (\Pi_{n}^{2}\varphi)^2 {-} 2
  \Pi_{n}^{2}(\varphi) \Pi_{n}(X^{n}_{r} \Pi_{n}\varphi) \rangle \ud r,
  \end{align*}
  with \(\lambda = 0\) in \( d =1 \) and \( \lambda = \eta >0\) in \( d =2 \).
In the latter case (\( d=2, \lambda>0 \)) the bounds \( 0 \leqslant
X^{n} \leqslant 1, | s_{n} | \lesssim n^{-2} \) guarantee that
$ \lim_{n \to \infty} \langle N^{n, \varphi} \rangle_{t} = 0.$ 
Instead if \( d=1, \lambda=0 \) we have to take more care. As before, the bound \( | s_{n} | \lesssim
n^{-2} \) guarantees that all terms multiplied by \( s_{n} \) vanish in the
limit, so we are left with considering
\begin{align*}
\lim_{n \to \infty} \int_0^{t} \langle  \Pi_{n}^{3}X^{n}_{r}, (
  \Pi_{n}^{2} \varphi)^2 {-} 2   \Pi_{n}^{2} (\varphi)
  \Pi_{n}(X^{n}_{r} \varphi) \rangle + \langle \big( \Pi_{n}(X^{n}_{r} \Pi_{n}\varphi )
  \big)^2, 1 \rangle  \ud r.
\end{align*}
We can rewrite the quantity in the limit as:
\begin{align*}
\int_0^{t} \langle &  \Pi_{n}^{3}X^{n}_{r}, (
  \Pi_{n}^{2} \varphi)^2 {-} 2   (\Pi_{n}^{2} \varphi)
  [(\Pi_{n}X^{n}_{r}) \varphi ] \rangle + \langle \big( (\Pi_{n}X^{n}_{r}) \Pi_{n}\varphi 
  \big)^2, 1 \rangle  \ud r \\
& + \int_{0}^{t}-2\langle \Pi_{n}^{3}X^{n}_{r}, (\Pi_{n}^{2} \varphi)
  [D^{\Pi, n}(X^{n}_{r}, \varphi)] \rangle + \langle ( D^{\Pi,
n}(X^{n}_{r}, \Pi_{n} \varphi))^2, 1 \rangle \\
& \qquad + 2 \langle D^{\Pi, n }(X^{n}_{r}, \Pi_{n} \varphi),
(\Pi_{n}X^{n}_{r}) \Pi_{n}\varphi \rangle \ud r,
\end{align*}
where we have defined the commutator (cf.
Lemma~\ref{lem:commutator_with_averaging} for a similar construction)
\[ D^{\Pi, n}( \psi, \psi^{\prime}) = \Pi_{n}(\psi \cdot \psi^{\prime} ) - (\Pi_{n} \psi )
\cdot \psi^{\prime} . \] 
Now we observe that for \( \delta \in [0, 1]:\)
\begin{align*}
 \sup_{x \in \TT^{d}} | D^{\Pi, n}(\psi, \psi^{\prime} ) | (x) & = \sup_{x \in
\TT^{d}} \Big\vert \mint{-}_{B_{n}(x)} \psi(y) (\psi^{\prime}  (y) -
\psi^{\prime} (x)) \ud y \Big\vert  \lesssim n^{-\delta} \| \psi \|_{L^{\infty}} \| \psi^{\prime} \|_{\mC^{\delta}}.
\end{align*}
We can apply this bound to our quadratic variation, observing that \(\varphi
\in C^{\infty}(\TT^{d})\) and \( \| X^{n} \|_{\infty} \leqslant 1 \), so that:
\begin{align*}
\lim_{n \to \infty} & \int_{0}^{t} -2\langle \Pi_{n}^{3}X^{n}_{r}, (\Pi_{n}^{2} \varphi)
  [D^{\Pi, n}(X^{n}_{r}, \varphi)] \rangle + \langle ( D^{\Pi,
n}(X^{n}_{r}, \Pi_{n} \varphi))^2, 1 \rangle \\
& \qquad \qquad \qquad \qquad \qquad \qquad + 2 \langle D^{\Pi, n }(X^{n}_{r}, \Pi_{n} \varphi),
(\Pi_{n}X^{n}_{r}) \Pi_{n}\varphi \rangle \ud r = 0.
\end{align*}
Finally we are left with computing the limit
\begin{align*}
 \lim_{n \to \infty} \langle N^{n , \varphi} \rangle_{t}  & = \lim_{n \to \infty} \int_0^{t} \langle  \Pi_{n}^{3}X^{n}_{r}, (
  \Pi_{n}^{2} \varphi)^2 {-} 2   (\Pi_{n}^{2} \varphi)
  [(\Pi_{n}X^{n}_{r}) \varphi ] \rangle + \langle \big( (\Pi_{n}X^{n}_{r}) \Pi_{n}\varphi 
  \big)^2, 1 \rangle  \ud r \\
& =\int_{0}^{t} \langle X_{r}, \varphi^{2} - 2 X_{r} \varphi^{2} \rangle +
    \langle X^{2}_{r}, \varphi^{2} \rangle \ud r = \int_{0}^{t} \langle X_{r}(1 - X_{r}),
    \varphi^{2} \rangle \ud r.
\end{align*}
Here the second equality follows by calculations analogous to those in Step \(
1 \), since now the quadratic nonlinearity is a function of \( \Pi_{n}
X^{n} \) and the latter is converging in \( L^{2}([0, T];
B^{\alpha}_{2,2,}) \).

Finally, since the martingale \( (N^{n , \varphi}_{t})^{2} - \langle N^{n, \varphi}
\rangle_{t}\) is bounded (using that \( 0 \leqslant X^{n} \leqslant 1 \)), also the limiting process \( (N^{\varphi}_{t})^{2} - \lim_{n \to
\infty} \langle N^{n, \varphi} \rangle_{t} \) is a martingale, implying that \(\langle N^{\varphi}
    \rangle_{t} = \lim_{n \to \infty} \langle N^{n, \varphi} \rangle_{t}\). 
  Hence the quadratic variation is of the required form for Theorem~\ref{thm:convergence_fkpp}.

  So far we have proven that any limit point solves the required equation. To
  deduce the convergence, we have to prove that such solutions are unique. In $d=2$, that for every $\omega \in \Omega$ there exists a unique solution to the equation
  \[\partial_t X = \nu_0 \Delta X +  \overline{\xi}(\omega) X(1-X),\qquad X(0) = X_0\]
follows from classical solution theory. Instead in $d=2$ uniqueness in law can be established via a Girsanov transform, as we show in Lemma~\ref{lem:uniqueness-martingale-problem-fkpp} below.

\end{proof}

\begin{lemma}\label{lem:uniqueness-martingale-problem-fkpp}
In $d=1$ and under Assumption~\ref{assu:smoothened_noise_fkpp}, solutions to the stochastic Fisher-KPP equation as in Definition~\ref{def:Fisher-KPP_in_rough_potential} are unique in distribution.
\end{lemma}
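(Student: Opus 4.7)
The plan is to apply a Girsanov change of measure to absorb the drift $\overline{\xi}(\omega) X(1{-}X)$ into the martingale part, thereby reducing uniqueness in law to that of the driftless stochastic Fisher--KPP equation
\begin{align*}
\partial_t Z = \nu_0 \Delta Z + \sqrt{Z(1{-}Z)}\,\widetilde{\xi}, \qquad Z(0)=X_0,
\end{align*}
for which uniqueness in law is a standard consequence of duality with coalescing Brownian motions, in the spirit of \cite{KonnoShiga1988} (the duality argument carries verbatim from $\RR$ to $\TT$, since coalescing Brownian motions are well-defined on the torus). Throughout I fix $\omega \in \Omega$ and argue under $\overline{\PP}^\omega$; integrating in $\omega$ gives uniqueness of the joint law.

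Given any solution $X$ of the martingale problem from Definition~\ref{def:Fisher-KPP_in_rough_potential}(i), the family $\{N^\varphi\}_\varphi$ is an orthogonal collection of continuous square-integrable martingales whose predictable covariation has density $X_s(1{-}X_s)$. A routine extension of the probability space via Walsh's martingale-measure construction \cite{Walsh1986} produces a space--time white noise $\widetilde{\xi}$ with
\begin{align*}
N_t^\varphi = \int_0^t\!\!\int_{\TT} \sqrt{X_s(1{-}X_s)(y)}\,\varphi(y)\,\widetilde{\xi}(\ud s,\ud y), \qquad \forall \varphi \in C^\infty(\TT).
\end{align*}
Because $\overline{\xi}(\omega) \in \mS(\TT)$ is bounded and $0 \leq X \leq 1$, the integrand $\overline{\xi}\sqrt{X(1{-}X)}$ is uniformly bounded in $(s,y)$, so Novikov's condition is satisfied on every $[0,T]$ and
\begin{align*}
D_t = \exp\!\Big( {-}\!\!\int_0^t\!\!\int_{\TT}\overline{\xi}(y)\sqrt{X_s(1{-}X_s)(y)}\,\widetilde{\xi}(\ud s,\ud y) - \tfrac12\!\int_0^t\!\!\int_{\TT}\overline{\xi}^2(y)X_s(1{-}X_s)(y)\,\ud y\,\ud s \Big)
\end{align*}
is a genuine martingale. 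Setting $\ud\mathbb{Q}/\ud\overline{\PP}^\omega|_{\overline{\mF}_T} = D_T$, Girsanov's theorem shows that $\widetilde{\xi}^{\mathbb{Q}}(\ud s,\ud y) := \widetilde{\xi}(\ud s,\ud y) + \overline{\xi}(y)\sqrt{X_s(1{-}X_s)(y)}\,\ud s\,\ud y$ is a space--time white noise under $\mathbb{Q}$, and a direct substitution shows that under $\mathbb{Q}$ the process $X$ satisfies the driftless martingale problem above; by duality its $\mathbb{Q}$-law is thus uniquely determined by $X_0$.

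To transfer uniqueness back to $\overline{\PP}^\omega$, I would substitute $\widetilde{\xi} = \widetilde{\xi}^{\mathbb{Q}} - \overline{\xi}\sqrt{X(1{-}X)}\,\ud s\,\ud y$ into the expression for $D_T$, obtaining
\begin{align*}
D_T^{-1} = \exp\!\Big( N_T^{\mathbb{Q},\overline{\xi}} - \tfrac12\!\int_0^T \!\langle \overline{\xi}^2, X_s(1{-}X_s)\rangle\,\ud s \Big),
\end{align*}
where $N_t^{\mathbb{Q},\overline{\xi}} = \langle X_t,\overline{\xi}\rangle - \langle X_0,\overline{\xi}\rangle - \int_0^t \langle X_s, \nu_0\Delta\overline{\xi}\rangle\,\ud s$ is precisely the driftless martingale tested against the frozen function $\overline{\xi}(\omega)$, hence a measurable functional of $X$ alone. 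Consequently the $\overline{\PP}^\omega$-law of $X$ equals the $\mathbb{Q}$-law reweighted by the $\sigma(X)$-measurable density $D_T^{-1}$, and uniqueness of the former follows from uniqueness of the latter. The step I expect to require the most care is precisely this verification that $D_T^{-1}$ is $\sigma(X)$-measurable, because the Walsh enlargement a priori introduces extra randomness that could otherwise render the change-of-measure argument circular; the identity above closes that loop cleanly.
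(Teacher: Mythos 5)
Your argument is correct and follows the same route as the paper: represent the martingale part as a Walsh stochastic integral against a space--time white noise on an enlarged space, apply Girsanov to remove the bounded drift $\overline{\xi}X(1-X)$, then invoke duality (Shiga-type) for the driftless stochastic Fisher--KPP equation. The one place you go beyond the paper's terse write-up is the explicit verification that $D_T^{-1}$ is a measurable functional of the path of $X$ alone (via the identity $D_T^{-1}=\exp(N_T^{\mathbb{Q},\overline{\xi}}-\tfrac12\int_0^T\langle\overline{\xi}^2,X_s(1-X_s)\rangle\,\mathrm{d}s)$); the paper delegates this transfer-back step to its cited Girsanov references, so your observation correctly closes a step that is only implicit there.
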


\begin{proof}
As usual, the argument works for fixed $\omega \in \Omega$, so we omit writing the dependence on it.
First, the same calculations as in Proposition~\ref{prop:tightness_fkpp} prove
that any solution $X$ to the martingale problem of the stochastic Fisher-KPP
equation lives in $L^2([0, T]; B^{\alpha}_{2,2})$, for some $\alpha>0$ and
arbitrary $T>0$. Then, following the same arguments as in the proof of
\cite[Theorem 2.18]{PerkowskiRosati2019RSBM}, we see that (up to enlarging the probability space) $X$ is a solution to the SPDE:
\begin{align*}
	\partial_t X = \nu_0 \Delta X + \overline{\xi} X(1-X)+ \sqrt{X(1-X)} \tilde{\xi}, \qquad X(0) = X_0,
\end{align*}
where $\tilde{\xi}$ is a space time white noise. Here we mean solutions in the sense that for any $\varphi \in C^{\infty}(\TT^d)$ and $t \in [0, T]$:
\begin{align*}
 \langle X_t , \varphi \rangle = \langle X_0 , \varphi \rangle = & \int_0^t \langle X_s, \nu_0 \Delta \varphi \rangle+ \langle \overline{\xi} X_s(1-X_s), \varphi \rangle \ud s \\
 & \ \  + \int_0^t\int_{\TT^d} \sqrt{X_s(x)(1-X_s(x))} \varphi(x) \ud \tilde{\xi}(s, x),
\end{align*}
where the latter is understood as an integral against a martingale measure, in the sense of Walsh \cite{Walsh1986}. Now we can use a Girsanov transform \cite[Theorem 5.1]{Dawson1978GeostochasticCalculus} (see also \cite[Theorem IV.1.6]{Perkins2002SuperprocessesStFlour} and  \cite[Section 2.2]{MuellerMytnikRyzhik2019SpeedRandomFront} for more recent accounts). Let us denote with $\PP$ the law of $X$ on $L^2([0, T]; B^{\alpha}_{2,2})$ and define the measure $\QQ$ by:
\[\frac{\ud \QQ}{\ud \PP} = \exp \bigg( -\int_0^T \int_{\TT^d} \frac{\overline{\xi}(x) X_s(x)(1-X_s(x))}{\sqrt{X_s(x)(1-X_s(x))}} \ud \tilde{\xi}(s,x)- \frac{1}{2} \int_0^T \int_{\TT^d} \frac{\big(\overline{\xi}(x) X_s(x)(1-X_s(x))\big)^2}{X_s(x)(1-X_s(x))} \ud s \ud x \bigg).\]
Clearly, this transformation defines a change of measure, since
\[\int_0^T \int_{\TT^d} \frac{\big(\overline{\xi}(x) X_s(x)(1-X_s(x))\big)^2}{X_s(x)(1-X_s(x))} \ud s \ud x \leq T\| \overline{\xi}\|_{\infty}^2.\]
Under this change of measure, for every $\varphi \in C^{\infty}(\TT^d)$, the process:
\[\langle X_t ,\varphi \rangle -\langle X_0 ,\varphi \rangle - \int_0^t \langle X_s ,\nu_0 \Delta \varphi \rangle \ud s =: L^{\varphi}_t\]
is a continuous $\QQ-$martingale with quadratic variation:
$\langle L^{\varphi} \rangle_t = \int_0^t \langle X_s(1-X_s), \varphi^2 \rangle \ud s.$
This means that under $\QQ$, the process $X_t$ is the unique (in law) solution to the SPDE:
\begin{align*}
	\partial_t X = \nu_0 \Delta X + \sqrt{X(1-X)} \tilde{\xi}, \qquad X(0) =X_0.
\end{align*}
The uniqueness in law of solutions to the latter equation follows by duality,
see e.g. \cite{Shiga1988SteppingStone}.
\end{proof}

\section{Schauder estimates}
\label{sec:schauder_estimates}

This section is devoted to the proof of
Theorem~\ref{thm:regularization-estimates-main-results} and other similar
results. Since the central object in this section, the semidiscrete Laplace
operator \(\mA_{n}\), is defined through convolutions with characteristic
functions, the following result collects some information that will be useful in
the upcoming discussion.

\begin{lemma}\label{lem:taylor_expansion_and_decay_ft_characteristic_function}
  Let $(D \varphi)_{i} = \frac{d \varphi}{d x_{i}}$ and \((D^{2}
  \varphi)_{i,j} = \frac{d^{2} \varphi}{d x_{i}d x_{j}}\) indicate the gradient
  and the Hessian matrix of a smooth function \(\varphi \colon \RR^{d} \to \RR\)
  respectively. Recall that \(\hat{\chi}_{n}(k) = \hat{\chi}(n^{-1} k) = \mF_{\RR^{d}}
  (n^{d} 1_{\{B_{n}(0)\}})(k)\). Then:
  \[ 
    D \hat{\chi} (0) = 0, \qquad D^2 \hat{\chi} (0) = {-} \frac{(2 \pi)^2}{4}
    \nu_0 \mathrm{Id},
  \]
  with $ \nu_{0} $ as in \eqref{eqn:for-nu-0}. Next recall that
$\vt_{n}(k) = n^{2} \big( \hat{\chi}^{4}_{n}(k) -1 \big).$
Then for any choice of constants $c<1<C,$ there exists a $\kappa(c,C)>0$ such that
  \begin{align*}
  c \leq \frac{\vt_n( k )}{-(2\pi)^{2}\nu_0 |k|^2} \leq C, \qquad \forall k \colon \ |k| n^{-1} \leq \kappa(c, C).
  \end{align*}
  Finally, the decay of \( \hat{\chi}\) can be controlled as follows for any
  \(n \in \NN \cup \{ 0 \}\) and \(i_{1}, \dots, i_{n} \in \{1, \dots, d\}\):
  \begin{align*}
    \Big\vert \frac{d^{n} \hat{\chi} (k)}{d x_{i_{1}} \cdots dx_{i_{n}}}
    \Big\vert \lesssim_{n} (1 {+}
    |k|)^{{-} \frac{d {+} 1}{2}}.
  \end{align*}
\end{lemma}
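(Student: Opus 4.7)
The plan is to derive all three statements directly from the integral representation $\hat\chi(k) = \int_{B_1(0)} e^{-2\pi i k \cdot x}\,\ud x$, where $B_1(0) \subset \RR^d$ denotes the Euclidean ball of unit volume.

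\emph{Gradient and Hessian at the origin.} Differentiating under the integral, I obtain $D_j\hat\chi(0) = -2\pi i \int_{B_1(0)} x_j\,\ud x$ and $D^2_{jl}\hat\chi(0) = -(2\pi)^2 \int_{B_1(0)} x_j x_l\,\ud x$. The first vanishes by the reflection $x\mapsto -x$, and the rotational symmetry of $B_1(0)$ forces the Hessian to be a scalar multiple of the identity. The remaining scalar $\int_{B_1(0)} x_1^2\,\ud x$ is evaluated explicitly: it equals $\tfrac{1}{12}$ in $d=1$ over $[-\tfrac12,\tfrac12]$ and $\tfrac{1}{4\pi}$ in $d=2$ via polar coordinates over the disc of area one, both matching $\tfrac{\nu_0}{4}$ for the stated values of $\nu_0$.

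\emph{Quadratic behaviour of $\vt_n$.} Because $\chi$ is compactly supported and even, $\hat\chi$ is smooth with vanishing odd-order derivatives at $0$, and a fourth-order Taylor expansion yields $\hat\chi(u) = 1 + \tfrac12 u^\top D^2\hat\chi(0) u + R(u)$ with $|R(u)| \leq C|u|^4$ on a fixed neighbourhood of zero. Raising to the fourth power, substituting $u = n^{-1}k$, and multiplying by $n^2$ gives $\vt_n(k) = c_0\bigl(-(2\pi)^2\nu_0|k|^2\bigr) + O(n^{-2}|k|^4)$ for a positive constant $c_0$ coming from the binomial factor in the fourth-power expansion. In particular the ratio $\vt_n(k)/\bigl(-(2\pi)^2\nu_0|k|^2\bigr)$ converges uniformly to $c_0$ as $|k|n^{-1} \to 0$, and choosing $\kappa(c,C)$ so that the quartic remainder is dominated by $\min(c_0-c,C-c_0)(2\pi)^2\nu_0$ delivers the claimed two-sided bound.

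\emph{Polynomial decay of derivatives.} First, $\partial^\alpha\hat\chi(k) = \mF_{\RR^d}\bigl((-2\pi i x)^\alpha \mathbf{1}_{B_1(0)}\bigr)(k)$, so the claim reduces to estimating the Fourier transform of a bounded, compactly supported function. For $|k|$ bounded the trivial bound $\int_{B_1(0)}|x|^{|\alpha|}\,\ud x < \infty$ suffices. For large $|k|$, I would apply the identity $e^{-2\pi i k\cdot x} = \tfrac{-1}{2\pi i |k|^2}\,k\cdot\nabla_x e^{-2\pi i k\cdot x}$ together with the divergence theorem to convert the volume integral into a surface integral over $\partial B_1(0)$, picking up a prefactor $|k|^{-2}$ and an amplitude of order $|k|$ from $k\cdot n(x)$. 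The remaining phase $x\mapsto k\cdot x$ on the sphere has two nondegenerate stationary points at $\pm r_d k/|k|$, so the classical stationary phase lemma gives the surface-integral decay $O(|k|^{-(d-1)/2})$; assembling the three factors delivers $|\partial^\alpha\hat\chi(k)| \lesssim_\alpha (1+|k|)^{-(d+1)/2}$. The main technical point is this stationary phase step, which is explicit from $\hat\chi(k)=\sin(\pi k)/(\pi k)$ in $d=1$ but in $d=2$ requires either the standard stationary phase lemma on the circle or the explicit Bessel representation $\widehat{\mathbf{1}_{B_R}}(k) = R^{d/2}|k|^{-d/2}J_{d/2}(2\pi R|k|)$ combined with the asymptotic $J_\nu(t)\sim\sqrt{2/(\pi t)}\cos(t-\pi\nu/2-\pi/4)$.
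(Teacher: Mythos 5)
For the gradient, Hessian, and the Taylor-expansion bound on $\vartheta_n$, you proceed exactly as the paper does: differentiate under the integral, use symmetry, Taylor expand, and absorb the quartic remainder. One coefficient is worth computing explicitly rather than leaving as ``a positive constant from the binomial factor'': raising to the fourth power contributes $4$, the quadratic Taylor term contributes $\tfrac12$, and the normalisation $D^2\hat\chi(0)=-\tfrac{(2\pi)^2}{4}\nu_0\,\mathrm{Id}$ contributes $\tfrac14$, so $c_0 = 4\cdot\tfrac12\cdot\tfrac14 = \tfrac12$; since your choice of $\kappa(c,C)$ requires $c < c_0 < C$, you should reconcile this value with the claim $c<1<C$ and with how the bound is invoked downstream (the paper applies it with $c=\tfrac12$, $C=\tfrac32$). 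For the decay estimate you propose a genuinely different presentation: integration by parts to a surface integral over $\partial B_1(0)$ followed by stationary phase on the sphere. The paper instead uses the exact Bessel identity $\hat\chi(k)=C|k|^{-d/2}J_{d/2}(\pi |k|/2)$, establishes $\sup_{\rho\geq 1}\rho^{1/2}|J_\nu(\rho)|<\infty$ by a contour rotation (the same steepest-descent computation you defer to the literature), and handles $k$-derivatives by the recursions $J_n'=\tfrac12(J_{n-1}+J_{n+1})$ and $J_{-n}=(-1)^nJ_n$. Your version avoids special-function facts but needs a short induction for the derivative bounds: with the polynomial amplitude $(-2\pi i x)^\alpha$ the divergence theorem produces, in addition to the boundary term, an interior term that is again the Fourier transform of a (lower-degree) polynomial times $1_{B_1(0)}$, so one must iterate. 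The Bessel recursion collapses this in one line. Both routes produce the exponent $(d+1)/2$.
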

The proof of this result is deferred to
Appendix~\ref{app:prf-of-lemma}. Instead, we pass to the
central result of this section, from which all other will follow.
Recall that \(\mA_{n}\) is a Fourier multiplier, therefore also the
exponential \(e^{t \mA_{n}}\) and the resolvent \( (- \mA_{n} +
\lambda)^{-1}\) (for \(\lambda > 1\)) are naturally defined as Fourier
multipliers.
As explained already in other points, the action of \(\mA_{n}\) is different
on large and small Fourier modes. 
\begin{proposition}\label{prop:crucial_for_schauder}

  For some, and hence for all, \(\kappa_{0}>0\) the following holds. 
  For any \(p \in [1, \infty], \alpha \in \RR\) and \(j \geq {-} 1\) there
  exists a \(c>0\) such that uniformly over \(n \in \NN, t \geq 0, j \geq
  -1\) and \(\varphi \in \mC^{\alpha}_{p}\) one can bound:
  \begin{equation}\label{eqn:crucial-for-schauder-laplacian}
  \begin{aligned}
    \| \Delta_{j} \mA_{n} \varphi \|_{L^{p}(\TT^{d})} & \lesssim 2^{{-}
    (\alpha {-} 2)j} \| \varphi \|_{\mC^{\alpha}_{p}}, \ \text{
    if } \ 2^{j} \leq \kappa_{0} n, \\
    \| \Delta_{j} \mA_{n} \varphi \|_{L^{p}(\TT^{d})} & \lesssim
    n^{2} 2^{{-} \alpha j} \| \varphi \|_{\mC^{\alpha}_{p}}, \  \text{ if
    } 2^{j} > \kappa_{0} n.
  \end{aligned}
\end{equation}
  And similarly for the exponential: 
    \begin{equation}\label{eqn:crucial-for-schauder-exponential}
	\begin{aligned}
      \| \Delta_{j} e^{t \mA_{n}} \varphi \|_{L^p(\TT^d)} & \lesssim e^{{-} c
      t 2^{2j}} 2^{{-} \alpha j} \| \varphi \|_{\mC^{\alpha}_p}, \ \text{ for }
      \ 2^{j}  \leq \kappa_{0} n, \\ 
      \| \Delta_{j} e^{t \mA_{n}} \varphi \|_{L^p(\TT^d)} & \lesssim e^{{-} c
      t n^{2}} 2^{{-} \alpha j} \| \varphi \|_{\mC^{\alpha}_{p}}, \ \text{
      for } \ 2^j  > \kappa_{0} n,  
    \end{aligned}
    \end{equation} 
    and for the resolvent (uniformly over \(\lambda>1)\):
      \begin{equation}\label{eqn:curcial-for-schauder-resolvent}
	\begin{aligned}
	  \| \Delta_{j} (-\mA_{n}+ \lambda)^{-1} \varphi \|_{L^p(\TT^d)} & \lesssim 
	  \frac{1}{2^{2j} + \lambda} 2^{- \alpha j} \| \varphi \|_{\mC^{\alpha}_p}, \ \text{ for }
      \ 2^{j} \leq \kappa_{0} n, \\ 
      \| \Delta_{j} (- \mA_{n} + \lambda)^{-1} \varphi \|_{L^p(\TT^d)} & \lesssim 
      \frac{1}{n^{2} + \lambda} 2^{- \alpha j} \| \varphi \|_{\mC^{\alpha}_{p}}, \ \text{
      for } \ 2^j  > \kappa_{0} n,  
	\end{aligned}
      \end{equation}

\end{proposition}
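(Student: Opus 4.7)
The plan is to reduce all six estimates of \eqref{eqn:crucial-for-schauder-laplacian}--\eqref{eqn:curcial-for-schauder-resolvent} to $L^{1}$ bounds on Fourier kernels. Since $\Delta_{j}$ and $m(D)$ both commute as Fourier multipliers, if $\widetilde{\varrho}_{j}$ denotes a smooth fattened bump equal to $1$ on the support of $\varrho_{j}$ and still localised at scale $2^{j}$, then $\Delta_{j} m(D)\varphi = \mF^{-1}(\widetilde{\varrho}_{j} m) \ast \Delta_{j}\varphi$, and Young's convolution inequality (together with Poisson summation to pass from kernels on $\RR^{d}$ to kernels on $\TT^{d}$, for which the assumption $n \geq c(d)$ suffices) yields
\[
\|\Delta_{j} m(D)\varphi\|_{L^{p}(\TT^{d})} \lesssim \|\mF_{\RR^{d}}^{-1}(\widetilde{\varrho}_{j} m)\|_{L^{1}(\RR^{d})} \cdot 2^{-\alpha j}\|\varphi\|_{\mC^{\alpha}_{p}}.
\]
The problem thus splits into estimating, uniformly in $n$ and $j$, the kernel $L^{1}$ norm for each of $m(k) \in \{\vartheta_{n}(k),\, e^{t\vartheta_{n}(k)},\, (-\vartheta_{n}(k)+\lambda)^{-1}\}$. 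The standard Cauchy--Schwarz/Plancherel bound
\[
\|\mF^{-1}(\widetilde{\varrho}_{j} m)\|_{L^{1}(\RR^{d})} \lesssim \|(1-\Delta_{k})^{N/2}(\widetilde{\varrho}_{j} m)\|_{L^{2}(\RR^{d})}, \qquad N > d/2,
\]
then reduces everything to pointwise control of $m$ and its derivatives on the region $\{|k| \sim 2^{j}\}$.

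In the \emph{macroscopic} regime $2^{j} \leq \kappa_{0} n$, for $\kappa_{0}$ sufficiently small, Lemma~\ref{lem:taylor_expansion_and_decay_ft_characteristic_function} together with the Taylor expansion of the even, analytic function $\widehat{\chi}$ at the origin gives $\vartheta_{n}(k) \asymp -|k|^{2}$ and $|D^{\beta}\vartheta_{n}(k)| \lesssim |k|^{2-|\beta|}$ uniformly in $n$. After the dyadic rescaling $k = 2^{j} k'$, the three rescaled symbols $\vartheta_{n}(2^{j}\cdot)/2^{2j}$, $e^{t\vartheta_{n}(2^{j}\cdot)}$ and $(2^{2j}+\lambda)(-\vartheta_{n}(2^{j}\cdot)+\lambda)^{-1}$ lie in a bounded set of Schwartz seminorms independently of $n$ and $j$, provided one reabsorbs the factors of $t$ and $\lambda$ via the elementary inequalities $t^{m}|k'|^{2m} e^{-ct|k'|^{2} 2^{2j}} \lesssim 1$ and $\lambda(|k'|^{2} 2^{2j}+\lambda)^{-1} \leq 1$. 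This yields exactly the required kernel $L^{1}$ bounds $2^{2j}$, $e^{-ct 2^{2j}}$ and $(2^{2j}+\lambda)^{-1}$; at these scales $\vartheta_{n}$ is indistinguishable from a Laplacian symbol and the three estimates amount to the usual heat kernel and resolvent bounds.

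The \emph{microscopic} regime $2^{j} > \kappa_{0} n$ is the heart of the statement. On the support of $\widetilde{\varrho}_{j}$, the argument $n^{-1} k$ of $\widehat{\chi}$ satisfies $|n^{-1}k| \gtrsim 1$ (after choosing $\kappa_{0}$ appropriately and handling a finite borderline range of $j$ by hand), so the decay bound from Lemma~\ref{lem:taylor_expansion_and_decay_ft_characteristic_function} ensures $|\widehat{\chi}^{4}(n^{-1}k)|$ is bounded strictly away from $1$, whence $\vartheta_{n}(k) \in [-2n^{2}, -c n^{2}]$. Rescaling now by $n$ rather than by $2^{j}$ and exploiting the polynomial decay $|D^{\beta}\widehat{\chi}(k)| \lesssim (1+|k|)^{-(d+1)/2}$, one verifies that $|D^{\beta}\vartheta_{n}(k)| \lesssim n^{2-|\beta|}$ on this region, so the Plancherel argument delivers the kernel $L^{1}$ bounds of order $n^{2}$, $e^{-c t n^{2}}$ and $(n^{2}+\lambda)^{-1}$. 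The main technical obstacle is transferring pointwise control of $\vartheta_{n}$ to derivatives of the exponential and resolvent symbols uniformly in $t$ and $\lambda$: for the exponential one uses the reabsorption $t^{m} |D^{\beta_{1}}\vartheta_{n} \cdots D^{\beta_{m}}\vartheta_{n}|\, e^{t\vartheta_{n}} \lesssim C_{m,\beta}\, e^{t\vartheta_{n}/2}$, valid since $|D^{\beta}\vartheta_{n}| \lesssim |\vartheta_{n}|$ on the support of $\widetilde{\varrho}_{j}$, and for the resolvent the analogous identity $|D^{\beta}\vartheta_{n}|/(-\vartheta_{n}+\lambda)^{k+1} \lesssim (-\vartheta_{n}+\lambda)^{-1}$. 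A secondary subtlety is that $\widehat{\chi}$ is only polynomially decaying (not Schwartz), but the rate $(1+|k|)^{-(d+1)/2}$ furnished by Lemma~\ref{lem:taylor_expansion_and_decay_ft_characteristic_function} is precisely what keeps all Plancherel norms finite and uniform in $n$ after rescaling, and this closes the argument.
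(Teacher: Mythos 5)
Your high-level architecture is essentially the one the paper uses: reduce matters to an $L^{1}$ bound on the kernel $\mF_{\RR^{d}}^{-1}(\widetilde{\varrho}_{j}m)$ via Young and Poisson summation, and then run a dyadic-rescaling argument on each of the three symbols, split at the threshold $2^{j}\sim n$. Your choice to use Cauchy--Schwarz/Plancherel rather than the paper's weighted $L^{\infty}$ estimate $\sup_{x}(1+|x|^{d+1}+2^{-j(d+1)}|x|^{2(d+1)})|\mF^{-1}[\dots]|$ is a minor and perfectly legitimate variant. The macroscopic step is fine: at those scales $\vartheta_{n}$ is an honest second-order elliptic symbol with $n$-uniform seminorms after rescaling by $2^{j}$, and the $t$- and $\lambda$-reabsorption identities you write down are exactly what is needed.

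The problem is in the microscopic regime, and it is not cosmetic. You assert that "exploiting the polynomial decay" one verifies $|D^{\beta}\vartheta_{n}(k)|\lesssim n^{2-|\beta|}$, "so the Plancherel argument delivers" the desired $L^{1}$ bounds. Two things are wrong here. First, the bound $|D^{\beta}\vartheta_{n}(k)|\lesssim n^{2-|\beta|}$ does not use the decay of $\widehat{\chi}$ at all: it is an immediate consequence of $D^{\beta}\vartheta_{n}(k)=n^{2-|\beta|}\,D^{\beta}[\widehat{\chi}^{4}](n^{-1}k)$ together with boundedness of $D^{\beta}[\widehat{\chi}^{4}]$. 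Second, and more seriously, this bound is too weak to close the Plancherel step. After rescaling the symbol by $n$, as you propose, the support of $\widetilde{\varrho}_{j}(n\cdot)$ sits on an annulus of radius $\sim 2^{j}/n$, not on a fixed annulus; a Plancherel bound at the correct (support-adapted) scale $R'=2^{j}/n$ requires $|D^{\beta}_{k'}[\vartheta_{n}(nk')]|\lesssim n^{2}(n/2^{j})^{|\beta|}$, which in the original variable reads $|D^{\beta}\vartheta_{n}(k)|\lesssim n^{2-|\beta|}(n/2^{j})^{|\beta|}$. Your stated bound misses the extra factor $(n/2^{j})^{|\beta|}$, and if one runs Cauchy--Schwarz without it the $L^{2}$ volume of the annulus contributes an extraneous factor $(2^{j}/n)^{d/2}$, which is not summable in $j$ after the dyadic reassembly and does not cancel against anything. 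The missing factor is precisely where the polynomial decay $|D^{\gamma}\widehat{\chi}(u)|\lesssim(1+|u|)^{-(d+1)/2}$ enters: it produces, on $|k'|\sim 2^{j}/n$, the decay $|D^{\beta}[\widehat{\chi}^{4}](k')|\lesssim(n/2^{j})^{2(d+1)}$, which dominates $(n/2^{j})^{|\beta|}$ for $|\beta|\le 2(d+1)$ and is what actually saves the microscopic estimate. The paper's entire Step~3 is a careful bookkeeping of exactly this balance inside Fa\`a di Bruno (the factors $(2^{j}n^{-1})^{r m_{r}}$ from the chain rule against $(1+2^{j}n^{-1})^{-2(d+1)m_{r}}$ from the decay of $\widehat{\chi}^{3}\partial\widehat{\chi}$), and the same bookkeeping is needed whichever form of the kernel $L^{1}$ estimate you use. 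Your closing sentence gestures at this ("the rate \dots is precisely what keeps all Plancherel norms finite"), but it does not repair the derivation, because the estimate you actually propagate into the exponential and resolvent reabsorptions is the decay-free bound $|D^{\beta}\vartheta_{n}|\lesssim n^{2-|\beta|}\lesssim|\vartheta_{n}|$. You need to carry the additional factor $(n/2^{j})^{|\beta|}$ (equivalently, $(1+|2^{j}n^{-1}|)^{-2(d+1)}$ per derivative of $\widehat{\chi}$) through the Fa\`a-di-Bruno expansion for both $e^{t\vartheta_{n}}$ and $(-\vartheta_{n}+\lambda)^{-1}$ to make the argument correct.
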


\begin{proof}
If the estimates hold for a certain $\kappa_0>0$, it is evident that they hold for all $\kappa_0>0$ (up to changing proportionality constants). In fact, for $2^j \simeq n$ the first and second estimate in every pair are equivalent.

  Since all of the estimates follow the same pattern and the first one is
  particularly simple, we will mainly discuss the proof of 
  the inequalities in~\eqref{eqn:crucial-for-schauder-exponential}, pointing out how to
  adapt the calculations to the other cases. We also restrict to the case
  $ j \geq 0,$ 
  since the case \(j = -1\) is immediate.
  We begin by restating the inequalities for distributions on \(\RR^{d}\). This
  is useful because on the entire space we can use scaling arguments.
  Then we examine the behaviour on large and small scales separately.
  The precise separation of modes is chosen based on
  Lemma~\ref{lem:taylor_expansion_and_decay_ft_characteristic_function}. 
   
  \textit{Step 1.} To restate the problem on \(\RR^{d}\) we extend
  distributions on the torus periodically.
  Let  \(\pi \colon \mS^{\prime}(\TT^{d}) \to \mS^{\prime}(\RR^{d})\) denote
  such periodic extension operator mapping distributions on $\TT^{d}$ to
  distributions on the full
  space. Its adjoint is the operator $\pi^{*}
  \colon \mS(\RR^{d}) \to \mS(\TT^{d})$, given by 
  \[  \pi^{*} \varphi (\cdot) = \sum_{k \in \ZZ^{d}} \varphi(\cdot + k). \] 
  We observe that \(\pi (\mA_{n} \varphi)  = \mA_{n} \pi(\varphi)\), where
  with a slight abuse of notation we have extended \(\mA_{n}\) to act on
  distributions on the whole space (simply through
  Equation~\eqref{eqn:A_ve-definition} -- and note that it is
still a Fourier multiplier, since for \(\varphi \colon \RR^{d} \to
\RR\), \(\mA_{n} \varphi = \mF_{\RR^{d}}^{-1} \vt_{n} \mF_{\RR^{d}} \varphi\)).
  Similarly, by the Poisson summation formula of
  Lemma~\ref{lem:poisson_summation_formula} we have
  \(\pi(\Delta_{j} \varphi) = \Delta_{j} \pi( \varphi)\).
  As a consequence of  this last observation, and since $\pi(\Delta_j \varphi)$ is periodic, for any \(a > \frac dp\)
  (or \(a \geq 0\) if \(p = \infty\)): 
  \[ \| \Delta_{j} \pi( \varphi) \|_{L^{p}( \RR^{d}; p(a))} \simeq_{a, p} \| \Delta_{j} \varphi \|_{L^{p}(\TT^{d})},\]
  where \(\| f \|_{L^{p}(\RR^{d}; p(a))} = \| f( \cdot) /(1 {+} |
  \cdot|^{2})^{\frac a2} \|_{L^{p}(\RR^{d})}\). Therefore in order to show
  \eqref{eqn:crucial-for-schauder-exponential} it is sufficient to show that
  for all \(\varphi \in \mS^{\prime}(\RR^{d})\) and setting \(a = d+1\):
  \begin{align*} 
    \| \Delta_{j} e^{t \mA_{n}} \varphi \|_{L^p(\RR^d; p(d+1))} & \lesssim e^{{-} c
    t 2^{2j}} \| \Delta_{j} \varphi \|_{L^{p}(\RR^{d}; p(d+1))}, \ \text{ for } \ 2^{j} \leq \kappa_{0}  n,\\
      \| \Delta_{j} e^{t \mA_{n}} \varphi \|_{L^p(\RR^d; p(d+1))} & \lesssim e^{{-} c
    t n^{2}} \| \Delta_{j} \varphi
    \|_{L^{p}(\RR^{d}; p(d+1))}, \ \text{ for } \ 2^j > \kappa_{0} n.
  \end{align*} 
  The same holds for \eqref{eqn:crucial-for-schauder-laplacian} and
  \eqref{eqn:curcial-for-schauder-resolvent}, with natural changes.
  Hence, from now on let us consider all functions and operators defined on
  \(\RR^{d}\).
  Let \(\psi\) be a smooth radial function with compact support in an annulus
  (i.e. \(\psi(k)=0\) if \(|k| \leq c_1\) or \(|k| \geq c_{2}\) for some
  \(0< c_{1} < c_{2}\)) such that \(\rho
  \psi = \rho\) (here \( \rho \) is associated to the dyadic partition of the
  unity through which we define Besov spaces: see
the notations section). By Young's inequality for
  convolutions and by estimating uniformly over \(x,y \in \RR^{d}\)
  \[ (1 +|x|^{2})^{-\frac{(d+1)}{2}} \lesssim  (1 +|y|^{2})^{-\frac{(d+1)}{2}}  (1
  +|x-y|^{2})^{\frac{d+1}{2}},\]
  one obtains:
  \begin{equation*}
    \| \Delta_j e^{t \mA_{n}} \varphi \|_{L^{p}(\RR^{d}; p(d+1))}  \lesssim \|
    \mF^{{-} 1}_{\RR^{d}} ( e^{t \vt_{n}( \cdot) } \psi(2^{{-} j}
    \cdot) ) \|_{L^1(\RR^d; p(-d-1))} \| \Delta_{j} \varphi
    \|_{L^{p}(\RR^{d}; p(d+1))}. 
  \end{equation*}
  In this way, through a change of variables, we reduced the problem to a bound for
  \begin{equation}\label{eqn:proof-crucial-for-schauder-reduced-problem}
    \int_{\RR^{d}} (1 + 2^{-2j} |x|^2)^{\frac{d+1}{2} }
    \Big\vert \mF_{\RR^{d}}^{-1} \Big[ e^{t \vt_{n}(2^{j} \cdot)} \psi(\cdot)\Big](x)\Big\vert \ud x
  \end{equation}
  (and similarly for
    \eqref{eqn:crucial-for-schauder-laplacian} and
    \eqref{eqn:curcial-for-schauder-resolvent}, with \(e^{t \vt_{n}}\)
    replaced by \(\vt_{n}\) and \( (- \vt_{n} + \lambda)^{-1}\) respectively).
  Before we move on, we finally observe that by
  Lemma~\ref{lem:taylor_expansion_and_decay_ft_characteristic_function}, there
  exists a \(\kappa_{0} > 0\) such that for \(2^j n^{-1} \leq \kappa_{0}\): 
  \[ 
    \frac{1}{2}  \leq \frac{\vartheta_{n}(2^j k )}{{-} (2 \pi)^2
    \nu_0 2^{2j}|k|^2} \leq \frac{3}{2}, \qquad \forall k \in \supp (\psi).
  \]

  \textit{Step 2.}
  We now estimate \eqref{eqn:proof-crucial-for-schauder-reduced-problem} on
  large scales, i.e. \( 2^{j} n^{-1} \leq \kappa_{0}\). In this case the term can
  be bounded by: 
  \begin{align*}
    \Big\| \mF^{{-} 1}_{\RR^{d}}[ e^{t \vt_{n}( 2^j \cdot)} \psi( \cdot )
    ]+ \sum_{i = 1}^{d} & \big\vert\mF^{{-} 1}_{\RR^{d}}[
      \partial_{k_{i}}^{2(d+1)} e^{t \vt_{n}( 2^j \cdot)} \psi( \cdot )
    ]\big\vert \Big\|_{L^{\infty}(\RR^{d})} \\
    & \lesssim \sup_{k \in \supp
    (\psi)} \Big[ \big\vert  e^{t \vt_{n}( 2^j k)} \psi(k)
      \big\vert + \sum_{i =1}^{d} \big\vert \partial_{k_{i}}^{2(d+1)}
    e^{t \vt_{n}(2^{j} k)} \psi(k) \big\vert\Big].
  \end{align*} 
    To bound the term involving  derivatives we observe that:
  \[ D [t \vt_{n}(2^j \cdot)](k) = f(2^j n^{-1} k) t 2^{2j} | k |, \qquad
  f(k) = 4 \hat{\chi}^{3}(k) \frac{D \hat{\chi}(k)}{|k|}.\] 
  where 
  \(f\) is smooth on \(\RR^d\), again by
  Lemma~\ref{lem:taylor_expansion_and_decay_ft_characteristic_function}.
  In particular, since \(2^{j} \lesssim n\), taking higher order
  derivatives one has for any \(\l \in \NN\): \(\big\vert \partial_{k_{i}}^{\l} [t
  \vt_{n}(2^{j} \cdot )]\big\vert(k) \lesssim t 2^{2j}\) for \(k \in \supp
  (\psi)\). Now recall Fa\'a di Bruno's formula for $\l \in \NN$:
  \begin{align*}
  \partial_{x}^{\l} f(g(x)) = \sum_{\{m\}}C(\{m\}, \l) (\partial_{x}^{m_{1} + \dots +
    m_{\l}} f) (g(x)) \prod_{r = 1}^{\l} \Big( \partial_{x}^{r} g (x)
    \Big)^{m_{j}},
  \end{align*}
  where the sum runs over all \( \{m\}:= (m_{1}, \ldots, m_{\l})\) such that
  \(m_{1}+2 m_{2}+ \dots + \l m_{\l}=\l\).
  Applying this formula and by our choice of \(\kappa_{0}\), there exists a constant \(c >0\) such that:
  \[ 
    \sup_{k \in \supp (\psi)} \Big[ | e^{ t \vt_{n}(2^j k) \psi(k) }
      | + \sum_{i = 1}^{d} \big\vert \partial_{k_{i}}^{2(d+1)} e^{t
    \vt_{n}(2^{j} k)} \psi(k) \big\vert \Big] \lesssim e^{{-} \frac 1 2  (2 \pi)^2
    \nu_0 t  2^{2j}} (1 {+} t 2^{2j})^{2(d+1)} \lesssim e^{ {-} c (t 2^{2j})}. 
  \]
  This concludes the proof of the large-scale bound in
  \eqref{eqn:crucial-for-schauder-exponential}. For the resolvent equation
  one similarly has to bound:
  \[
    \sup_{k \in \supp (\psi)} \Big[ \Big\vert \frac{\psi(k)}{-
      \vt_{n}(2^{j} k) + \lambda}  \Big\vert + \sum_{i =1}^{d} \Big\vert \partial_{k_{i}}^{2(d+1)}
    \frac{\psi(k)}{- \vt_{n}(k) + \lambda}  \Big\vert\Big]. 
   \] 
   Here as before, for the derivative term one has, through the choice of
   \(\kappa_{0}\):
   \begin{align*}
     \bigg\vert &\partial_{k_{i}}^{\l} \frac{1}{- \vt_{n}(k) + \lambda}
     \bigg\vert  \lesssim \sum_{ \{m\}} \bigg\vert \frac{1}{- \vt_{n}(k) + \lambda}
     \bigg\vert^{1+m_{1}+ \dots + m_{\l}} \prod_{r = 1}^{\l} \Big(
     2^{2j}\Big)^{m_{r}} \\
     & \lesssim \sum_{ \{m\}} \bigg\vert \frac{1}{\frac{3}{2} (2
     \pi)^{2} \nu_{0} 2^{2j} + \lambda }  \bigg\vert^{1 + m_{1} + \dots +
     m_{\l}} (2^{2 j})^{m_{1}+ \dots + m_{\l}} \lesssim \frac{1}{\frac{1}{2} (2 \pi)^{2} \nu_{0} 2^{2j}+ \lambda}
     \lesssim \frac{1}{2^{2j} + \lambda}, 
   \end{align*}
   as requested for \eqref{eqn:curcial-for-schauder-resolvent}.
   The estimate \eqref{eqn:crucial-for-schauder-laplacian} follows similarly.

  \textit{Step 3.} 
  We pass to the small-scale estimates, namely for \(j\) such that
  \(2^{j} n^{-1} > \kappa_{0}\). Here we will need
  tighter control on the decay of \( \hat{\chi}(k)\): 
  since \(\chi\) is not smooth, the decay at infinity is not faster than any
  polynomial and is quantified in
  Lemma~\ref{lem:taylor_expansion_and_decay_ft_characteristic_function}. We now
  estimate \eqref{eqn:proof-crucial-for-schauder-reduced-problem} by
  \begin{align*}
    \bigg( &\int_{\RR^{d}}  \frac{1}{(1 + |x|)^{d+1}} \ud x \bigg) \sup_{x \in \RR^{d}}
    \bigg[ (1 + |x|^{d+1} + 2^{-j(d+1)} |x|^{2(d+1)}) \Big\vert \mF^{-1}_{\RR^{d}} \Big[
    e^{t \vt_{n}( 2^{j} \cdot)} \psi (\cdot) \Big]\Big\vert(x) \bigg] \\
    & \lesssim \| e^{t \vt_{n}(2^{j} \cdot)} \psi(\cdot) \|_{L^{\infty}} +
    \| (1- \Delta)^{\frac {d+1} {2}} e^{t \vt_{n}(2^{j} \cdot)} \psi( \cdot)
    \|_{L^p(\RR^{d})} + \sum_{i = 1}^{d} 2^{-j(d+1)}  \| \partial_{k_{i}}^{2(d+1)}
    e^{t \vt_{n}(2^{j} \cdot)} \psi(\cdot) \|_{L^{\infty}},
  \end{align*}
  for any \(p \in (1, \infty)\). As for the first term, since \(
  |\hat{\chi}(k)| < 1\) for \(k \neq 0\) and it
  decays to zero at infinity, up to reducing the value of \(c>0\) we can assume
  that:
  \[ \vt_{n}(2^j k) \leq {-} c n^{2}. \] 
  This is sufficient to show $\| e^{t \vt_{n}(2^j \cdot)} \psi(\cdot)  \|_{L^{\infty}} \lesssim e^{{-} ct n^{2}},$
  which is a bound of the required order. 
  
	   Now bounding these derivatives is similar to
  bounding the last term:
  \begin{align*}
    \sum_{i = 1}^{d} 2^{-j(d+1)} \| \partial_{k_{i}}^{2(d+1)}  e^{t
    \vt_{n}(2^{j} \cdot)} \psi(\cdot)\|_{L^{\infty}},
  \end{align*}
  so we concentrate on the latter, which has the added difficulty of containing
  derivatives of higher order, counterbalanced by the factor \(2^{-j(d+1)}\).
  Here observe that for \(1 \leq \l \leq 2(d+1)\):
  \[ \partial_{k_i}^{\l}  e^{t \vt_{n}(2^j k)} =
    \partial_{k_i}^{\l-1} \big[ e^{t \vt_{n}(2^j k)} 4 \hat{\chi}^{3}(2^j n^{-1} k)
  [ \partial_{k_i} \hat{\chi}] (2^j n^{-1} k) \big] \cdot (2^j n^{-1}) \cdot (t
n^{2}).\] 
Iterating the above procedure, we apply Fa\'a Di Bruno's formula again to obtain
\[ \big\vert 2^{{-} j(d {+} 1)}\partial_{k_i}^{\l}   e^{t \vt_{n}(2^j
  k)} \big\vert \lesssim 2^{{-} j(d {+} 1)} e^{t \vt_{n}(2^j k)}
  \sum_{ \{m\}} \prod_{r =1}^{\l} \big( \partial_{k_i}^{r {-} 1} \big[ 4
\hat{\chi}^{3}( \cdot ) [ \partial_{k_i} \hat{\chi}( \cdot)] \big]\big\vert_{2^{j} n^{-1} k} \cdot
(2^{j} n^{-1})^{r} \big)^{m_{r}} \cdot (t n^{2})^{m_r}.\] 
 In view of
 Lemma~\ref{lem:taylor_expansion_and_decay_ft_characteristic_function}, for any $r \in \NN$: 
\[ \sup_{ k \in \supp (\psi)} |\partial_{k_i}^{r {-} 1} \big[ 4
\hat{\chi}^{3}(
    \cdot ) [ \partial_{k_i} \hat{\chi}( \cdot)] \big]\big\vert_{2^{j} n^{-1} k}|
  \lesssim \frac{1}{1 {+} |2^j n^{-1} |^{2(d {+} 1)}}.  \] 
  Hence, as before up to further reducing the value of \(c>0\): 
  \begin{align*}  
    \| \partial_{k_i}^{\l} e^{t \vt_{n}(2^j \cdot)} \|_{L^\infty} & \lesssim e^{{-} c t n^{2}} 2^{{-} j(d {+}
    1)}(2^{j} n^{-1})^{\l} \sum_{\{m\}} \prod_{r = 1}^{\l} (1  +|2^{j}
n^{-1}|)^{{-} 2m_{r}(d {+} 1)} \\
    & \lesssim e^{{-} c t n^{2}} 2^{- j (d+1)} (2^{j} n^{-1})^{\l - 2(d+1)}
    \lesssim e^{-c t n^{2}},
  \end{align*}
since at least one of the elements of the sequence \(m_{r}\) is strictly
positive and since \(\l \leq 2(d+1)\). This concludes the proof of
\eqref{eqn:crucial-for-schauder-exponential}. Regarding the resolvent, one can
follow mutatis mutandis the previous discussion until one has, as before, to bound:
\begin{align*}
  \sum_{i = 1}^{d} 2^{- j(d+1)}\Big\| \partial_{k_{i}}^{2(d+1)} \frac{\psi(\cdot)}{-
  \vt_{n}(2^{j} \cdot) + \lambda}  \Big\|_{\infty} \lesssim \sum_{i =
  1}^{d} \sum_{\l = 0}^{2(d+1)} 2^{- j (d+1)}\Big\| \partial_{k_{i}}^{\l} \frac{1}{-
  \vt_{n}(2^{j} \cdot) - \lambda} \Big\|_{L^{\infty}}.
\end{align*}
Then again, with Fa\'a di Bruno's formula:
\begin{align*}
  \Big\vert \partial_{k_{i}}^{\l} \frac{1}{- \vt_{n}(2^{j}k) + \lambda}
  \Big\vert & \lesssim \sum_{\{m\}} \Big\vert \frac{1}{- \vt_{n}(2^{j}k) +
  \lambda} \Big\vert^{1 + m_{1} + \dots + m_{\l}} \prod_{r = 1}^{\l} \big\vert
  \partial_{k_{i}}^{r -1} ( \hat{\chi}^{3} (\cdot) \partial_{k_{i}}
  \hat{\chi}( \cdot) ) \vert_{2^{j} n^{-1} k} \big\vert^{m_r} \cdot (2^{j}
  n^{-1})^{r m_{r}} \\
  & \lesssim \frac{1}{ n^{2} + \lambda} \sum_{\{m\}} \Big\vert \frac{1}{ n^{2}+
  \lambda} \Big\vert^{ m_{1} + \dots + m_{\l}} \prod_{r = 1}^{\l}
  \Big( \frac{1}{1 + |2^{j} n^{-1}|} \Big)^{2 m_{r}(d+1)}  (2^{j} n^{-1})^{r m_{r}}
  \\
  & \lesssim \frac{1}{ n^{2} + \lambda} 2^{j (d+1)}.
\end{align*}
Plugging this into the previous formula provides us the correct bound.
Similarly one can also treat the small-scale estimate for
\eqref{eqn:crucial-for-schauder-laplacian}.
\end{proof}

The previous proposition motivates the introduction of cut-off operators as
follows.

\begin{definition}\label{def:cut_off_operators}
  Let \(\daleth \colon \RR^{d} \to \RR\) be a smooth radial function with compact
  support. Let us define the annulus \(A_{r}^{R} = \{ x \in \RR^{d} \colon r
  \leq |x| \leq R\}\) for \(0<r<R\) and additionally assume that
  $ \daleth(x) = 1, \ \forall x \in
  A_{0}^{r},$ and $\daleth(x) = 0, \ \forall x \in A_{R}^{\infty},$
  for some \( 0 < r < R < \infty \). Then define
  \[ \mP_{n} = \daleth( n^{-1} D), \qquad \mQ_{n} = (1 - \daleth)(n^{-1} D). \] 
  We say that \(\mP_{n}\) is a projection on \textbf{large scales}, since
  those Fourier modes describe a function macroscopically, whereas
  \(\mQ_{n}\) is a projection on \textbf{small scales}. 
\end{definition}

The next lemma states that the cut-off operators are bounded.

\begin{lemma}\label{lem:boundedness_cut_off_operators}
  Consider \(\alpha \in \RR\) and \(p \in [1, \infty]\). For \(\daleth\) as in Definition~\ref{def:cut_off_operators} one can bound\
  uniformly over \(n \in \NN\):
  \[ 
    \| \mP_{n} \varphi \|_{\mC^{\alpha}_{p}} \lesssim \| \varphi
    \|_{\mC^{\alpha}_{p}}, \qquad \| \mQ_{n} \varphi
    \|_{\mC^{\alpha}_{p}} \lesssim \| \varphi \|_{\mC^{\alpha}_{p}}.  
  \] 
\end{lemma}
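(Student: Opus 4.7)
The plan is to reduce everything to the boundedness of $\mP_n$ alone and then prove that via Young's convolution inequality applied to a kernel with uniformly bounded $L^1$ norm. First observe that the symbols satisfy $\daleth(n^{-1}k) + (1-\daleth)(n^{-1}k) \equiv 1$, so that as Fourier multipliers
\[
\mP_n + \mQ_n = \mathrm{Id},
\]
and hence it suffices to prove the estimate for $\mP_n$; the estimate for $\mQ_n$ follows by the triangle inequality $\|\mQ_n \varphi\|_{\mC^\alpha_p} \leq \|\varphi\|_{\mC^\alpha_p} + \|\mP_n \varphi\|_{\mC^\alpha_p}$.

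To estimate $\mP_n$, I will use that Fourier multipliers commute with the Littlewood--Paley blocks $\Delta_j$, so that
\[
\|\mP_n \varphi\|_{\mC^\alpha_p} = \sup_{j \geq -1} 2^{\alpha j} \|\mP_n \Delta_j \varphi\|_{L^p(\TT^d)}.
\]
Writing $\mP_n \Delta_j \varphi = K_n \ast \Delta_j \varphi$, where $K_n \in \mS'(\TT^d)$ is the convolution kernel associated to the Fourier multiplier $\daleth(n^{-1}\cdot)$, Young's inequality on the torus gives
\[
\|\mP_n \Delta_j \varphi\|_{L^p(\TT^d)} \leq \|K_n\|_{L^1(\TT^d)} \|\Delta_j \varphi\|_{L^p(\TT^d)}.
\]
The goal is thus reduced to showing $\sup_{n \geq c(d)} \|K_n\|_{L^1(\TT^d)} < \infty$.

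The last step is where all the work happens, but it is essentially just scaling plus Poisson summation. Since $\daleth \in C_c^\infty(\RR^d)$, its inverse Fourier transform $\check{\daleth} = \mF_{\RR^d}^{-1}\daleth$ lies in $\mS(\RR^d)$, and the kernel of the full-space multiplier $\daleth(n^{-1}D)$ is $n^d \check{\daleth}(n\cdot)$, which has $L^1(\RR^d)$-norm equal to $\|\check{\daleth}\|_{L^1(\RR^d)}$, independent of $n$. By the Poisson summation formula (available in the excerpt as Lemma~\ref{lem:poisson_summation_formula}), the torus kernel is the periodization
\[
K_n(x) = \sum_{m \in \ZZ^d} n^d \check{\daleth}\bigl(n(x + m)\bigr),
\]
so that by Tonelli
\[
\|K_n\|_{L^1(\TT^d)} \leq \int_{\TT^d} \sum_{m \in \ZZ^d} n^d |\check{\daleth}(n(x+m))| \ud x = \int_{\RR^d} n^d |\check{\daleth}(ny)| \ud y = \|\check{\daleth}\|_{L^1(\RR^d)},
\]
which is finite and uniform in $n$. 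Combining the two displays and taking the supremum over $j$ yields the claim for $\mP_n$, and hence for $\mQ_n$. The only mildly delicate point is the interchange of sum and integral, which is justified by the Schwartz decay of $\check{\daleth}$.
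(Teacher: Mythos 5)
Your proof is correct and takes essentially the same route as the paper: Young's inequality block by block, Poisson summation to reduce the torus kernel to the full-space one, and scaling to get $n$-uniformity. Your handling of $\mQ_n$ via the identity $\mP_n+\mQ_n=\mathrm{Id}$ and the triangle inequality is actually a bit cleaner than the paper's one-line remark that ``the same argument'' applies, since $1-\daleth$ is not Schwartz and the kernel argument does not apply to it directly.
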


\begin{proof}
Define the inverse Fourier transform \( \widehat{\daleth}(x) = \mF^{-1}_{\RR^{d}} \daleth
  (x)\).
  By an application of the Poisson summation formula
  (Lemma~\ref{lem:poisson_summation_formula}) and a scaling argument
  \begin{align*}
    \| \daleth( n^{-1} D) \varphi\|_{\mC^{\alpha}_{p}} & = \sup_{j \geq -1}
    2^{j \alpha} \| (\mF^{-1}_{\TT^{d}} [\daleth (n^{-1} \cdot)]) *
    \Delta_{j} \varphi \|_{L^{p}} \lesssim \| \mF^{-1}_{\TT^{d}} [\daleth
    ( n^{-1} \cdot) ] \|_{L^{1}(\TT^{d})} \| \varphi \|_{\mC^{\alpha}_{p}} \\
    & \lesssim \| n^{d} \widehat{\daleth} ( n \cdot) \|_{L^{1}(\RR^{d})} \| \varphi
    \|_{\mC^{\alpha}_{p}} \lesssim \| \varphi \|_{\mC^{\alpha}_{p}}.
  \end{align*}
  The same argument shows that \((1 - \daleth(a \cdot))\) is bounded.
\end{proof}

\subsection{Elliptic regularity}\label{subsec:elliptic_regularity}

In this subsection we prove
Theorem~\ref{thm:regularization-estimates-main-results}. This theorem is a
direct consequence of the lemma and the proposition that follow.

\begin{lemma}\label{lem:convergence_A_ve_to_laplace}
  Fix any \(\alpha \in \RR, \zeta>0, p \in [1, \infty]\). Uniformly over \(\varphi \in
  \mC^{\alpha}_{p}\) and \(n \in \NN\):
  \[ \| \mA_{n} \mP_{n} \varphi \|_{\mC^{\alpha -2}_{p}} \lesssim \|
  \varphi \|_{\mC^{\alpha}_{p}}.\] 
  Moreover, as $n \to \infty$ and with $ \nu_{0} $ as in \eqref{eqn:for-nu-0}
  \[ 
    \mA_{n} \varphi \to  \nu_{0} \Delta \varphi \ \
    \text{in} \ \ \mC^{\alpha -2- \zeta}_{p}.
  \] 
  
\end{lemma}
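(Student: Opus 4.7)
The first estimate is essentially a direct corollary of Proposition~\ref{prop:crucial_for_schauder} and Lemma~\ref{lem:boundedness_cut_off_operators}. Since the proposition is valid for every $\kappa_0>0$, I would choose $\kappa_0$ larger than the radius $R$ from Definition~\ref{def:cut_off_operators} governing the support of $\daleth$; then $\mP_n$ has Fourier support inside $\{|k|\leq Rn\}$, so $\Delta_j\mP_n=0$ for all $j$ with $2^j$ beyond $\kappa_0 n$. For the remaining dyadic blocks the large-scale inequality in \eqref{eqn:crucial-for-schauder-laplacian}, applied to $\mP_n\varphi$, combined with $\|\mP_n\varphi\|_{\mC^\alpha_p}\lesssim \|\varphi\|_{\mC^\alpha_p}$, gives $\|\Delta_j\mA_n\mP_n\varphi\|_{L^p}\lesssim 2^{-(\alpha-2)j}\|\varphi\|_{\mC^\alpha_p}$, and a supremum in $j$ yields the claim.

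For the convergence, both $\mA_n$ and $\nu_0\Delta$ are Fourier multipliers, so the plan is to compare their symbols dyadically. Let $m_n(k)=\vt_n(k)+(2\pi)^2\nu_0|k|^2$ and split $\mA_n-\nu_0\Delta$ according to large and small scales via $\mP_n+\mQ_n=\mathrm{Id}$. On the \emph{low-mode regime} $2^j\leq\kappa_0 n$, since $\hat{\chi}$ is analytic (being the Fourier transform of a compactly supported function) and since Lemma~\ref{lem:taylor_expansion_and_decay_ft_characteristic_function} gives $\hat{\chi}(0)=1$, $D\hat{\chi}(0)=0$ and the explicit value of $D^2\hat{\chi}(0)$, a third-order Taylor expansion at the origin yields $|\hat{\chi}^4(u)-1+(2\pi)^2\nu_0|u|^2|\lesssim |u|^4$ uniformly on any bounded set; inserting $u=k/n$ gives the pointwise symbol bound $|m_n(k)|\lesssim n^{-2}|k|^4$ whenever $|k|\lesssim n$. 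Repeating the same Poisson-summation plus Young's-inequality argument as in Step~2 of the proof of Proposition~\ref{prop:crucial_for_schauder} (with derivatives of $m_n$ controlled via Faà di Bruno and analyticity of $\hat{\chi}$) transfers this to $\|\Delta_j(\mA_n-\nu_0\Delta)\varphi\|_{L^p}\lesssim n^{-2}2^{(4-\alpha)j}\|\varphi\|_{\mC^\alpha_p}$. Multiplying by $2^{(\alpha-2-\zeta)j}$ and taking the supremum over $2^j\leq\kappa_0 n$ produces a bound of order $n^{-(\zeta\wedge 2)}\|\varphi\|_{\mC^\alpha_p}$.

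On the \emph{high-mode regime} $2^j>\kappa_0 n$ there is no need to exploit cancellation: the a priori bound \eqref{eqn:crucial-for-schauder-laplacian} gives $\|\Delta_j\mA_n\varphi\|_{L^p}\lesssim n^2 2^{-\alpha j}\|\varphi\|_{\mC^\alpha_p}$, while a direct estimate yields $\|\Delta_j\nu_0\Delta\varphi\|_{L^p}\lesssim 2^{(2-\alpha)j}\|\varphi\|_{\mC^\alpha_p}$. Since $n^2\lesssim 2^{2j}$ in this regime, both terms are dominated by $2^{(2-\alpha)j}\|\varphi\|_{\mC^\alpha_p}$; multiplying by $2^{(\alpha-2-\zeta)j}$ and taking the supremum over $2^j>\kappa_0 n$ gives a contribution bounded by $n^{-\zeta}\|\varphi\|_{\mC^\alpha_p}$, which vanishes as $n\to\infty$. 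Combining the two regimes yields $\|\mA_n\varphi-\nu_0\Delta\varphi\|_{\mC^{\alpha-2-\zeta}_p}\lesssim n^{-(\zeta\wedge 2)}\|\varphi\|_{\mC^\alpha_p}\to 0$.

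The main technical obstacle is the passage from the pointwise symbol bound $|m_n(k)|\lesssim n^{-2}|k|^4$ to the block-wise $L^p$ estimate; this is where I would have to control not just $m_n$ but also enough of its derivatives on $\mathrm{supp}(\rho_j)$ to apply the kernel $L^1$ bound used in Proposition~\ref{prop:crucial_for_schauder}. Everything else is essentially an accounting exercise on the two scales built into $\mA_n$.
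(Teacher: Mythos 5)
Your proof of the first estimate coincides with the paper's: apply Proposition~\ref{prop:crucial_for_schauder} on the dyadic blocks that $\mP_n$ does not annihilate, together with Lemma~\ref{lem:boundedness_cut_off_operators}.

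For the convergence, however, you take a genuinely different route. The paper argues softly: it first shows $\{\mA_n\mP_n\varphi\}_n$ is bounded in $\mC^{\alpha-2}_p$ (so precompact in $\mC^{\alpha-2-\zeta}_p$ by Besov embedding), then that $\|\mQ_n\mA_n\varphi\|_{\mC^{\alpha-2-\zeta}_p}\lesssim n^{-\zeta}\|\varphi\|_{\mC^\alpha_p}\to 0$, and finally identifies the unique limit point of $\mA_n\varphi$ by observing that the symbol $\daleth(n^{-1}k)\,\vt_n(k)$ converges pointwise to $-(2\pi)^2\nu_0|k|^2$. No Taylor remainder, no derivative control beyond what Proposition~\ref{prop:crucial_for_schauder} already supplies. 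You instead go for a \emph{quantitative} bound on $\|\mA_n\varphi-\nu_0\Delta\varphi\|_{\mC^{\alpha-2-\zeta}_p}$: on low blocks you Taylor-expand the difference symbol $m_n(k)=\vt_n(k)+(2\pi)^2\nu_0|k|^2$ to $O(n^{-2}|k|^4)$ (using that $\hat\chi$, hence $\hat\chi^4$, is radial, so the cubic term is absent — a point worth spelling out) and then propagate this to block-wise $L^p$ estimates by the same $L^1$-kernel and Fa\`a di Bruno machinery as in Step~2 of Proposition~\ref{prop:crucial_for_schauder}; on high blocks you dominate both $\mA_n$ and $\nu_0\Delta$ crudely and absorb $n^2\lesssim 2^{2j}$. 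The outcome is a rate $n^{-(\zeta\wedge 2)}$, which the paper never states. Your approach is correct and strictly more informative; the price is that you must re-run the kernel-derivative argument for $m_n$ rather than reusing the already-proved operator bounds, which is exactly the technical obstacle you flag. The paper's compactness argument is cheaper because it only needs the pointwise value of the symbol, not its derivatives on an annulus — but it yields no rate. Both proofs are valid; yours is the stronger, more self-contained one.
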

\begin{proof}
  
  On large scales, Proposition~\ref{prop:crucial_for_schauder} and
  Lemma~\ref{lem:boundedness_cut_off_operators} imply that
  \begin{align*}
    \| \mA_{n} \mP_{n} \varphi \|_{\mC^{\alpha-2}_{p}} \lesssim \|
    \mP_{n} \varphi \|_{\mC^{\alpha}_{p}} \lesssim \| \varphi
    \|_{\mC^{\alpha}_{p}}.
  \end{align*}
  Moreover on small scales the same results guarantee that for any
  \(\zeta \geq 0\):
  \begin{align*}
    \| \mQ_{n} \mA_{n} \varphi \|_{\mC^{\alpha - 2 - \zeta}_{p}} \lesssim
    n^{2} \sup_{2^j \gtrsim n} 2^{j(\alpha - 2 - \zeta)}\| \Delta_{j}
    \mQ_{n} \varphi \|_{L^{p}} \lesssim n^{-\zeta} \| \varphi \|_{\mC^{\alpha}_{p}},
  \end{align*}
  which tends to $0$ as $n$ tends to $\infty$ if \(\zeta>0\).
  Combining these two observations provides the first bound and 
  guarantees compactness in \(\mC^{\alpha -2-\zeta}_{p}\). Convergence
  follows since, by
  Lemma~\ref{lem:taylor_expansion_and_decay_ft_characteristic_function}, for
any \( k \in \ZZ^{d} \):
  \begin{align*}
    \mF_{\TT^{d}} [\mA_{n} \mP_{n} \varphi](k) = \daleth(n^{-1} k)
    n^{2} \big( \hat{\chi}^{2}(n^{-1} k) {-}
    1 \big) \hat{\varphi}(k) \to - (2 \pi)^{2} \nu_{0} |k|^{2}
    \hat{\varphi} (k) = \mF_{\TT^{d}}[\nu_{0} \Delta \varphi](k).
  \end{align*}

\end{proof}

The regularity gain provided by the operator
\(\mA_{n}\) can be described as follows (for the proof of
Theorem~\ref{thm:regularization-estimates-main-results} we require the result
only for \(\delta=0\)).

\begin{proposition}\label{prop:elliptic_schauder_estimates}
Fix any \(\alpha \in \RR, \ \delta \in [0,1]\) and \(p \in [1, \infty].\)
Uniformly over \(\lambda \geqslant 1, n \in \NN\) and \(\varphi \in \mC^{\alpha}_{p}\) the following estimates hold:
  \[ 
    \lambda^{\delta} \| \mP_{n} ( - \mA_{n} + \lambda)^{-1} \varphi
    \|_{\mC^{\alpha + 2(1 -
    \delta)}_{p}} + \lambda^{\delta} n^{2(1 - \delta)}\| \mQ_{n} (- \mA_{n} +
  \lambda)^{-1} \varphi\|_{\mC^{\alpha}_{p}} \lesssim \| \varphi
  \|_{\mC^{\alpha}_{p}}. \] 
  Moreover, as $n \to \infty$,
  \begin{align*}
  \mP_{n}(-\mA_{n} + \lambda )^{-1} \varphi \to (-\nu_{0}\Delta+
    \lambda )^{-1} \varphi
  \end{align*}
  where the convergence is in $\mC^{\alpha+2-\zeta}_{p}$ for any \(\zeta>0\) and
  $\nu_{0}$ is as in Lemma~\ref{lem:convergence_A_ve_to_laplace}.
\end{proposition}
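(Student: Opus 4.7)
The proof splits cleanly into three independent pieces: a large-scale Schauder estimate, a small-scale Schauder estimate, and the convergence statement. All three rest on Proposition~\ref{prop:crucial_for_schauder} together with Lemma~\ref{lem:convergence_A_ve_to_laplace} and Lemma~\ref{lem:boundedness_cut_off_operators}.

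The key elementary inequality behind the dependence on $\delta\in[0,1]$ is
\[
\frac{1}{a+b}\;\le\;\frac{1}{a^{1-\delta}\,b^{\delta}},\qquad a,b>0,
\]
which follows from $a+b\ge a^{1-\delta}b^{\delta}$ (Young's inequality). For the large-scale estimate, observe that if we choose $\kappa_0$ in Proposition~\ref{prop:crucial_for_schauder} smaller than the parameter $R$ in Definition~\ref{def:cut_off_operators}, then $\Delta_j\mP_n=0$ for $2^j\gtrsim n$, so that only the first estimate of~\eqref{eqn:curcial-for-schauder-resolvent} applies. Combining this with the displayed inequality above gives
\[
2^{j(\alpha+2(1-\delta))}\,\|\Delta_j\mP_n(-\mA_n+\lambda)^{-1}\varphi\|_{L^p}
\lesssim \frac{2^{2j(1-\delta)}}{2^{2j}+\lambda}\,\|\varphi\|_{\mC^\alpha_p}
\lesssim \lambda^{-\delta}\,\|\varphi\|_{\mC^\alpha_p},
\]
uniformly in $j\ge -1$, which after taking the supremum yields the $\mP_n$-bound. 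For the small-scale estimate one instead picks $\kappa_0$ smaller than the parameter $r$ in Definition~\ref{def:cut_off_operators} so that $\Delta_j\mQ_n=0$ for $2^j\lesssim n$, and applies the second estimate of~\eqref{eqn:curcial-for-schauder-resolvent} together with $\tfrac{1}{n^2+\lambda}\le\tfrac{1}{n^{2(1-\delta)}\lambda^\delta}$. The boundedness of $\mP_n,\mQ_n$ on Besov spaces from Lemma~\ref{lem:boundedness_cut_off_operators} ensures the cut-offs do not produce extra factors.

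For the convergence, set $u_n=(-\mA_n+\lambda)^{-1}\varphi$ and $u=(-\nu_0\Delta+\lambda)^{-1}\varphi$. Using $\mP_n+\mQ_n=\mathrm{Id}$ decompose
\[
\mP_n u_n - u \;=\; \mP_n(u_n-u)\;-\;\mQ_n u.
\]
The second term is supported in Fourier modes $|k|\gtrsim n$, so Lemma~\ref{lem:boundedness_cut_off_operators} applied in the form $\|\mQ_n\psi\|_{\mC^{\alpha+2-\zeta}_p}\lesssim n^{-\zeta}\|\psi\|_{\mC^{\alpha+2}_p}$ together with the classical Schauder estimate for $(-\nu_0\Delta+\lambda)^{-1}$ gives $\|\mQ_n u\|_{\mC^{\alpha+2-\zeta}_p}\lesssim n^{-\zeta}\|\varphi\|_{\mC^\alpha_p}\to 0$. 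For the first term use the resolvent identity
\[
u_n - u \;=\; (-\mA_n+\lambda)^{-1}\bigl(\mA_n-\nu_0\Delta\bigr)u,
\]
and apply the large-scale Schauder estimate just proven (with $\delta=0$) to $\psi:=(\mA_n-\nu_0\Delta)u\in\mC^{\alpha-\zeta}_p$, obtaining
\[
\|\mP_n(u_n-u)\|_{\mC^{\alpha+2-\zeta}_p}
\;\lesssim\;\|(\mA_n-\nu_0\Delta)u\|_{\mC^{\alpha-\zeta}_p}.
\]
Since $u\in\mC^{\alpha+2}_p$, Lemma~\ref{lem:convergence_A_ve_to_laplace} guarantees the right-hand side tends to zero, completing the proof.

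The only subtle point is the compatibility of the scale $\kappa_0$ in Proposition~\ref{prop:crucial_for_schauder} with the cut-off radii $r,R$ defining $\mP_n,\mQ_n$; this is the main obstacle and is handled by invoking the remark at the start of the proof of Proposition~\ref{prop:crucial_for_schauder} that the estimates hold for any $\kappa_0>0$, so that we may freely choose $\kappa_0<r<R$. Everything else is routine manipulation of Littlewood--Paley blocks.
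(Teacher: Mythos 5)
Your argument for the two Schauder estimates is essentially identical to the paper's: Proposition~\ref{prop:crucial_for_schauder} gives the decay $(2^{2j}+\lambda)^{-1}$ on large scales and $(n^2+\lambda)^{-1}$ on small scales, and then Young's inequality $\tfrac{1}{a+b}\le a^{-(1-\delta)}b^{-\delta}$ produces the $\lambda^\delta$ and $n^{2(1-\delta)}$ factors. One detail is reversed, though: you write that choosing $\kappa_0<r<R$ makes only the large-scale bound of \eqref{eqn:curcial-for-schauder-resolvent} apply on the support of $\mP_n$. That is not so: with $\kappa_0<R$, the modes with $\kappa_0 n<2^j\le Rn$ survive $\mP_n$ but fall in the small-scale regime of Proposition~\ref{prop:crucial_for_schauder}. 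The correct fixes are either to take $\kappa_0>R$ for the $\mP_n$ estimate (and a different, smaller $\kappa_0<r$ for the $\mQ_n$ estimate), or to observe, as the first line of the proof of Proposition~\ref{prop:crucial_for_schauder} does, that for $2^j\simeq n$ the two resolvent bounds agree up to constants, so the choice of cutoff is immaterial. Your conclusion is unaffected, but the stated justification does not go through as written.

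For the convergence statement, you take a genuinely different route from the paper. The paper (by reference to the proof of Lemma~\ref{lem:convergence_A_ve_to_laplace}) combines the uniform bound in $\mC^{\alpha+2}_p$ with the $n^{-\zeta}$ decay on small scales to deduce compactness in $\mC^{\alpha+2-\zeta}_p$, and then identifies the limit by testing each Fourier mode. You instead use the resolvent identity $u_n-u=(-\mA_n+\lambda)^{-1}(\mA_n-\nu_0\Delta)u$, feed $(\mA_n-\nu_0\Delta)u\to 0$ in $\mC^{\alpha-\zeta}_p$ from Lemma~\ref{lem:convergence_A_ve_to_laplace} through the $\delta=0$ case of the very estimate just proven, and handle $\mQ_nu$ directly by the frequency-support argument. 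This is quantitative and bypasses the compactness step, which is arguably cleaner; the price is relying on the classical Schauder estimate for $(-\nu_0\Delta+\lambda)^{-1}$ to place $u$ in $\mC^{\alpha+2}_p$, but that is standard. Both arguments are correct. A minor caveat: Lemma~\ref{lem:boundedness_cut_off_operators} as stated only gives boundedness of $\mQ_n$, not the $n^{-\zeta}$ gain you quote; that gain follows from the support of $\mQ_n$ in frequencies $\gtrsim n$, which you should make explicit rather than attribute to the lemma.
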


\begin{proof}
  Consider the large-scale estimate.
  Proposition~\ref{prop:crucial_for_schauder} and
  Lemma~\ref{lem:boundedness_cut_off_operators} guarantee that for \(2^j \lesssim
  n\):
  \begin{align*}
    \| \Delta_{j} \mP_{n}(- \mA_{n} + \lambda)^{-1} \varphi
    \|_{L^{p}} \lesssim \frac{1}{2^{2j} + \lambda}2^{-\alpha j} \| \mP_{n}
    \varphi \|_{\mC^{\alpha}_{p}} \lesssim 2^{-2j (1 - \delta) - \alpha j}
    \lambda^{- \delta} \| \varphi \|_{\mC^{\alpha}_{p}},
  \end{align*}
  which is a bound of the correct order. All other bounds follow similarly, and
  the proof of the convergence is analogous to the one in
  Lemma~\ref{lem:convergence_A_ve_to_laplace}.
\end{proof}

\subsection{Parabolic regularity}

In this subsection we study the regularization effect of the semigroup
\(e^{t \mA_{\ve}}\). This discussion requires certain spaces of time-dependent
functions, which we introduce in the following. Let us fix \(T>0\) an arbitrary
time horizon. All function spaces will implicitly depend on \(T\).
For time dependent functions taking values in a Banach space \(\mX\) 
the $\alpha$-H\"older norm (with \(\alpha \in (0,1)\)) is defined as
\[ \| f \|_{C^{\alpha}\mX} = \sup_{t \in [0,T]} \| f(t) \|_{\mX} +
\sup_{t, s \in [0,T]} \frac{ \| f(t) {-} f(s) \|_{\mX}}{|t {-} s |^{\alpha}}.\] 
It is convenient to incorporate a blow-up at time \(t=0\). This reflects the
fact that the regularization of the semigroup occurs only at strictly positive
times.
\[ \mM^{\gamma} \mC^{\alpha}_{p} = \{ f \colon (0, T] \to
\mC^{\alpha}_{p} \ | \ \| f \|_{\mM^{\gamma} \mC^{\alpha}_{p}} =
\sup_{t \in [0,T]} t^{\gamma} \| f(t) \|_{\mC^{\alpha}_{p}} < \infty\}, \]
and one can combine the previous spaces in the following way:
\[ \LLL^{\gamma, \alpha}_{p} = \{ f \in \mM^{\gamma} \mC^{\alpha}_{p} \ | \ \|
f \|_{\LLL^{\gamma, \alpha}_{p}} = \| f \|_{\mM^{\gamma}
\mC^{\alpha}_{p}} + \| t \mapsto t^{\gamma}f(t) \|_{C^{\alpha/2}L^{p}} < \infty \}.\] 
Now we state the main result of this section, the   parabolic
Schauder estimates.

\begin{proposition}\label{prop:schauder_estimates}
  Fix \(p \in [1, \infty], T >0, \gamma \in [0,1)\) and \(\alpha \in ({-} 2, 0), \beta
  \in [\alpha, \alpha {+} 2) \cap (0, 2) \). 
  Uniformly over \(\varphi \in \mC^{\alpha}_{p}\) and \(f \in
  \mM^{\gamma} \mC^{\alpha}_{p}\) and locally uniformly over \(T>0\):
  \begin{align}
   \label{eq:par_schauder_est_spatial_large}
   \| t \mapsto \mP_{n} e^{t \mA_{n}} \varphi \|_{\LLL^{(\beta {-}
    \alpha)/2, \beta}_{p}} \lesssim \| \mP_{n} \varphi\|_{\mC^{\alpha}_{p}},
    \end{align}
    \begin{align}
     \label{eq:par_schauder_est_temporal_large}
     \ \ \Big\| t \mapsto \int_{0}^{t} \mP_{n} e^{(t {-} s)
    \mA_{n}} f(s) \ud s \Big\|_{\LLL^{\gamma, \alpha {+} 2 }_{p}} \lesssim \|
    \mP_{n} f \|_{\mM^{\gamma}\mC^{\alpha}_{p}}.
    \end{align}
  Next let $\zeta_{1}, \zeta_{2} \in [0,1)$ be such that \(\zeta_{1} {+}
  \zeta_{2} <1\) and  \(\delta_{1},\delta_{2},\delta_{3} \in [0,1]\) such that \(\delta_{1} {+}
  \delta_{2} {+} \delta_{3} =1\). Then:
  \begin{align}
   \label{eq:par_schauder_est_spatial_small} 
    \| t \mapsto t^{\zeta_{1}+ \zeta_{2}}\mQ_{n} e^{t \mA_{n}} \varphi
    \|_{C^{\zeta_{1} } \mC^{\alpha}_{p}} \lesssim n^{-2 \zeta_{2}}\| \mQ_{n} \varphi\|_{\mC^{\alpha}_{p}},
     \end{align}
    \begin{align}
     \label{eq:par_schauder_est_temporal_small}
     \ \ \Big\| t \mapsto t^{\gamma}
    \int_{0}^{t} e^{(t {-} s) \mA_{n}} \mQ_{n}f(s) \ud s \Big\|_{C^{
    \delta_{1}} \mC^{\alpha}_{p}} \lesssim n^{-2
    \delta_{2}}T^{\delta_{3}}\| \mQ_{n} f \|_{\mM^{\gamma}
    \mC^{\alpha}_{p}}.
    \end{align}
     with constants independent of $f,\varphi,T$. 
\end{proposition}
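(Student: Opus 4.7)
All four inequalities will be deduced from the Littlewood--Paley bounds of Proposition~\ref{prop:crucial_for_schauder}. The key feature is that these bounds split the behaviour of $e^{t\mA_n}$ into two regimes: the classical parabolic decay $e^{-ct 2^{2j}}$ on large scales $2^j \lesssim n$, and a scale-independent decay $e^{-c t n^2}$ without any regularising gain on small scales $2^j \gtrsim n$. Since $\mA_n$, $\mP_n$, $\mQ_n$ and $e^{t\mA_n}$ are all Fourier multipliers, they commute freely and one may work blockwise. Consequently, \eqref{eq:par_schauder_est_spatial_large}--\eqref{eq:par_schauder_est_temporal_large} will be proven exactly as in the classical heat-kernel setting, while \eqref{eq:par_schauder_est_spatial_small}--\eqref{eq:par_schauder_est_temporal_small} will be proven by exploiting the uniform exponential decay in the small-scale regime.

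\textbf{Large-scale estimates.} For the pointwise-in-time part of \eqref{eq:par_schauder_est_spatial_large}, I plan to insert Proposition~\ref{prop:crucial_for_schauder} block-by-block to obtain $2^{\beta j}\|\Delta_j e^{t\mA_n} \mP_n \varphi\|_{L^p} \lesssim 2^{(\beta-\alpha)j} e^{-c t 2^{2j}} \|\mP_n \varphi\|_{\mC^\alpha_p}$, and then exploit $\sup_{x > 0} x^{(\beta-\alpha)/2} e^{-c x} < \infty$ to trade the $2^{(\beta-\alpha) j}$ factor for $t^{-(\beta-\alpha)/2}$. For the $C^{(\beta-\alpha)/2} L^p$ component of the $\LLL$-norm, I will use the representation $e^{t \mA_n} - e^{s \mA_n} = \int_s^t \mA_n e^{r \mA_n}\,\mathrm{d} r$ together with the analogous Littlewood--Paley bound on $\mA_n e^{r \mA_n}$ (which carries an extra factor of $2^{2 j}$ coming from \eqref{eqn:crucial-for-schauder-laplacian}). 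The mild estimate \eqref{eq:par_schauder_est_temporal_large} then follows by interchanging the dyadic sum with the Duhamel integral, splitting $\int_0^t = \int_0^{t/2} + \int_{t/2}^t$ to absorb the $s^{-\gamma}$ weight, and applying the same $\sup_x x^\gamma e^{-c x}$ trick in each subinterval.

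\textbf{Small-scale estimates.} The starting observation is that summing the bound from Proposition~\ref{prop:crucial_for_schauder} over $2^j \gtrsim n$ yields $\|e^{t \mA_n} \mQ_n \varphi\|_{\mC^\alpha_p} \lesssim e^{-c t n^2} \|\mQ_n \varphi\|_{\mC^\alpha_p}$, with no regularity gain. The factors $n^{-2\zeta_2}$ and $n^{-2\delta_2}$ will then come from the elementary identity $\sup_{t > 0} t^a e^{-c t n^2} \simeq n^{-2 a}$ for $a \geq 0$. For the pointwise part of \eqref{eq:par_schauder_est_spatial_small}, this directly gives $t^{\zeta_1+\zeta_2} e^{-c t n^2} \lesssim n^{-2(\zeta_1+\zeta_2)} \leq n^{-2\zeta_2}$. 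For the time-H\"older piece, I will decompose $t^a e^{t\mA_n} \mQ_n \varphi - s^a e^{s\mA_n}\mQ_n\varphi$ (with $a = \zeta_1 + \zeta_2$) into $(t^a - s^a) e^{t\mA_n}\mQ_n\varphi$ and $s^a(e^{t\mA_n} - e^{s\mA_n})\mQ_n\varphi$, use $|t^a - s^a| \lesssim (t-s)^{\zeta_1} t^{\zeta_2}$, and for the second term combine $e^{t\mA_n} - e^{s\mA_n} = \int_s^t \mA_n e^{r\mA_n}\,\mathrm{d}r$ with the elementary bound $\|\mA_n \mQ_n\|_{\mC^\alpha_p \to \mC^\alpha_p} \lesssim n^2$ (itself a direct consequence of Proposition~\ref{prop:crucial_for_schauder} applied on small scales). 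The inequality $\min(1,x)^{\zeta_1} \leq x^{\zeta_1}$ then produces a factor $n^{2\zeta_1} e^{-c s n^2}$, which after integration of the time-weight yields the desired $n^{-2\zeta_2}$.

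\textbf{Main obstacle: the three-parameter bound.} The estimate \eqref{eq:par_schauder_est_temporal_small} is the most delicate and will be the focus of the argument. My plan is to establish the three corner cases of the simplex $\{\delta_1 + \delta_2 + \delta_3 = 1,\ \delta_i \geq 0\}$ separately, and then interpolate: the case $\delta_2 = 1$ by the direct computation $t^\gamma \int_0^t e^{-c (t-s) n^2} s^{-\gamma}\,\mathrm{d}s \lesssim n^{-2}$, obtained by splitting at $t/2$ and using $\sup_x x e^{-cx} < \infty$; the case $\delta_1 = 1$ by differentiating the convolution in time and applying the $\|\mA_n \mQ_n\| \lesssim n^2$ bound above; and the trivial case $\delta_3 = 1$ by bounding the Duhamel integral by $T$ times the uniform estimate. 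Real interpolation (or direct elementary interpolation of the three pointwise inequalities) will then produce the full statement. The principal bookkeeping difficulty is to track simultaneously the $t^\gamma$ blow-up at zero, the time-H\"older exponent $\delta_1$, and the small-scale gain $n^{-2\delta_2}$; in particular the H\"older difference must be split into $(t^\gamma - s^\gamma) I(t)$ and $s^\gamma(I(t) - I(s))$, each requiring a slightly different treatment depending on whether $t-s$ is small compared to $n^{-2}$ or not.
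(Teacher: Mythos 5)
Your blueprint for \eqref{eq:par_schauder_est_spatial_large}--\eqref{eq:par_schauder_est_temporal_large} and for the spatial small-scale bound \eqref{eq:par_schauder_est_spatial_small} matches the paper's proof essentially line by line: block-wise insertion of Proposition~\ref{prop:crucial_for_schauder}, the elementary inequality $\sup_{x>0}x^{a}e^{-cx}<\infty$ (equivalently $\sup_t t^a e^{-ctn^2}\simeq n^{-2a}$), the identity $e^{t\mA_n}-e^{s\mA_n}=\int_s^t \mA_n e^{r\mA_n}\,\mathrm{d}r$ for the time-H\"older part, and the splitting $\int_0^{t/2}+\int_{t/2}^t$ for the Duhamel term (the paper outsources this last point to \cite[Lemma A.9]{GubinelliImkellerPerkowski2015} and \cite[Lemma 6.6]{GubinelliPerkowski2017KPZ}, but the content is what you describe).

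Where you diverge is \eqref{eq:par_schauder_est_temporal_small}. The paper handles it \emph{directly} for an arbitrary admissible triple $(\delta_1,\delta_2,\delta_3)$: it writes, with $V(t)=\int_0^t e^{(t-s)\mA_n}\mQ_n f(s)\,\mathrm{d}s$,
\[
t^\gamma V(t)-s^\gamma V(s)=(t^\gamma-s^\gamma)V(t)+s^\gamma\int_s^t e^{(t-r)\mA_n}\mQ_n f(r)\,\mathrm{d}r+s^\gamma\bigl(e^{(t-s)\mA_n}-\mathrm{Id}\bigr)V(s),
\]
and bounds each summand carrying the three exponents along. Your plan — prove the three extreme cases $\delta_i=1$ and then use $\min(a,b,c)\le a^{\delta_1}b^{\delta_2}c^{\delta_3}$ — is a legitimate alternative and buys you a cleaner bookkeeping for the generic triple, but two of the corner cases need more care than the sketch acknowledges. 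First, for the corner $\delta_1=1$, ``differentiating the convolution'' is formally problematic because $f\in\mM^\gamma\mC^\alpha_p$ is merely measurable in time with a weighted $\sup$ bound, so $V$ need not be classically differentiable; you should argue by finite differences as the paper does. Moreover, that corner requires both a priori bounds $\|V(t)\|_{\mC^\alpha_p}\lesssim n^{-2}t^{-\gamma}$ and $\|V(t)\|_{\mC^\alpha_p}\lesssim t^{1-\gamma}$ used in different places: the term $\gamma t^{\gamma-1}V(t)$ only closes with the second, while $t^\gamma\mA_n V(t)\lesssim n^2\|V(t)\|$ only closes with the first. Second, when controlling $(t^\gamma-s^\gamma)\,\|V(t)\|$ one must avoid the mean-value estimate $t^\gamma-s^\gamma\le\gamma s^{\gamma-1}(t-s)$, which produces a spurious $s^{\gamma-1}$ blow-up; the usable inequality is $1-(s/t)^\gamma\le 1-(s/t)$, i.e.\ $(t^\gamma-s^\gamma)t^{-\gamma}\le (t-s)/t$, together with $\|V(t)\|\lesssim\min(n^{-2},t)\,t^{-\gamma}$. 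Finally, recall that $C^{\delta_1}\mC^\alpha_p$ includes the $\sup_t$ piece, which does not interpolate through the H\"older-seminorm argument and should be handled separately (it is controlled by $\min(n^{-2},T)$, which suffices given the local uniformity in $T$).
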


In many steps the proof mimics proofs in 
\cite{GubinelliImkellerPerkowski2015} and \cite{GubinelliPerkowski2017KPZ}, to
which we will often refer.

\begin{proof}

  \textit{Step 1.} 
  We begin with large scales, namely \eqref{eq:par_schauder_est_spatial_large}.
  By Proposition~\ref{prop:crucial_for_schauder}: 
  \begin{align*}
    \sup_{j \geq {-} 1} 2^{\beta j} \| \Delta_{j} \mP_{n} e^{t
    \mA_{n}} \varphi \|_{L^{p}(\TT^{d})} & \lesssim \sup_{j \geq -1}e^{{-} c t
    2^{2j}} 2^{(\beta {-} \alpha)j} \| \mP_{n}\varphi
    \|_{\mC^{\alpha}_{p}}\\
    & = t^{{-} \frac{\beta {-} \alpha}{2}  } \sup_{j \geq {-} 1 } e^{{-} c t
    2^{2j}} (t 2^{2j})^{\frac{\beta {-} \alpha}{2} } \| \mP_{n} \varphi
    \|_{\mC^{\alpha}_{p}} \lesssim t^{{-} \frac{\beta {-} \alpha}{2} } \|
    \mP_{n} \varphi \|_{\mC^{\alpha}_{p}}.
  \end{align*}
    Therefore 
  $ \| t \mapsto \mP_{n}e^{t \mA_{n}} \varphi \|_{\mM^{(\beta {-}
  \alpha)/2} \mC^{\beta}_{p}} \lesssim \| \mP_{n} \varphi \|_{\mC^{\alpha}_{p}}. $
  Similarly, for \eqref{eq:par_schauder_est_temporal_large}
  \begin{align*}
    \sup_{j \geq {-} 1} 2^{j(\alpha {+} 2)} \Big\| \int_{0}^{t}
    \Delta_{j} e^{(t {-} s) \mA_{n}} \mP_{n} f(s) \ud s
    \Big\|_{L^{p}(\TT^{d})} \lesssim \| \mP_{n} f
    \|_{ \mM^{\gamma} \mC^{\alpha}_{p}} \sup_{j \geq {-} 1}2^{j 2}
    \int_{0}^{t} e^{{-} c s 2^{2j}} (t {-} s)^{{-} \gamma}\ud s,
  \end{align*}
  which can be bounded by \(\| \mP_{n} f \|_{\mM^{\gamma}
  \mC^{\alpha}_{p}}\) by the same arguments as in the proof of
  \cite[Lemma A.9]{GubinelliImkellerPerkowski2015}.
  We still need to address the temporal regularity for both terms. 
  By Proposition~\ref{prop:crucial_for_schauder} 
  \begin{equation}\label{eqn:proof-schauder-large-scale-time}
    \begin{aligned}
    \| (e^{t \mA_{n}} {-} \mathrm{Id}) \mP_{n} \varphi \|_{L^{p}(\TT^{d})}
    & = \Big\| \int_{0}^{t} e^{s \mA_{n}} \mA_{n} \mP_{n} \varphi \ud s
    \Big\|_{L^{p}(\TT^{d})} \\
    & \lesssim \int_{0}^{t} s^{{-} 1 {+}
    \frac{\alpha}{2} } \| \mP_{n} \varphi \|_{\mC^{\alpha}_{p}}\ud s \simeq
    t^{\frac{\alpha}{2} } \| \mP_{n} \varphi \|_{\mC^{\alpha}_{p}}.
    \end{aligned}
  \end{equation}
  To conclude the proof of both \eqref{eq:par_schauder_est_spatial_large} and
  \eqref{eq:par_schauder_est_temporal_large}  it is now sufficient to follow
  the same steps as in \cite[Lemma 6.6]{GubinelliPerkowski2017KPZ}.

  \textit{Step 2.} 
  We  turn our attention to the small scale bounds \eqref{eq:par_schauder_est_spatial_small} and
  \eqref{eq:par_schauder_est_temporal_small}.
  Fix \(\zeta_{1} = \delta_{1} = 0\) first. 
  With calculations in the same spirit as in the Step~1, we arrive at 
  \begin{align*}
    \| \mQ_{n} e^{t \mA_{n}} \varphi \|_{\mC^{\alpha}_{p}} = \sup_{j \geq
    {-} 1} 2^{\alpha j} \| \Delta_{j} \mQ_{n} e^{t \mA_{n}} \varphi
    \|_{L^{p}(\RR^{d})} \lesssim e^{{-} c t n^{2}} \| \mQ_{n} \varphi
    \|_{\mC^{\alpha}_{p}} \lesssim (t n^{2})^{{-} \zeta_2 } \| \mQ_{n}
    \varphi \|_{\mC^{\alpha}_{p}}.
  \end{align*}
  For \eqref{eq:par_schauder_est_temporal_small}, if  \(
  \delta_{3} >0\)  the spatial bound follows from the previous result. If
  \(\delta_{3}=0\), we bound
  \begin{align*}
    \Big\| \int_{0}^{t} \mQ_{n} e^{(t {-} s) \mA_{n}} f(s) \ud s
    \Big\|_{\mC^{\alpha}_{p}} \lesssim \| \mQ_{n} f
    \|_{\mM^{\gamma}\mC^{\alpha}_{p}} \int_{0}^{t} e^{{-} c s n^{
    2}}(t {-} s)^{{-} \gamma} \ud s \lesssim n^{-2} t^{{-} \gamma} \| \mQ_{n} f
    \|_{\mM^{\gamma} \mC^{\alpha}_{p}}.
  \end{align*}
  The last bound in the above inequality is obtained in the same way as
  \cite[Lemma A.9]{GubinelliImkellerPerkowski2015}. 
  \begin{align*}
    \int_{0}^{t} e^{{-} c s n^{2}}(t {-} s)^{{-} \gamma}\ud s \lesssim
    t^{1 {-} \gamma} \lesssim t^{{-} \gamma} n^{-2}.
  \end{align*}

  \textit{Step 3.} We now investigate the full temporal regularity for \eqref{eq:par_schauder_est_spatial_small} and \eqref{eq:par_schauder_est_temporal_small},
that is, we allow for  \(\zeta_{1}, \delta_{1}>0\). 
 We first observe  that for \(\delta \in
  [0,1)\)
  \begin{equation}\label{eqn:proof-schauder-small-scale-time}
  \begin{aligned}
    \| ( e^{t \mA_{n}} {-} \mathrm{Id}) \mQ_{n} \varphi
    \|_{\mC^{\alpha}_{p}} &= \Big\| \int_{0}^{t} e^{s \mA_{n}} \mA_{n}
    \mQ_{n} \varphi \ud s \Big\|_{\mC^{\alpha}_{p}} \\
    & \lesssim \| \mQ_{n} \varphi \|_{\mC^{\alpha}_{p}} \int_{0}^{t}(s n^{2})^{{-} \delta}
    n^{{-} 2} \ud s =  \| \mQ_{n} \varphi \|_{\mC^{\alpha}_{p}}
    n^{-2(\delta {-} 1)} t^{1 {-} \delta}.
    \end{aligned}
  \end{equation}
  Hence for \(\zeta = \zeta_{1} {+} \zeta_{2} \in [0, 1)\),
  the temporal regularity of the first terms can be established via 
  \begin{align*}
    \| t^{\zeta}e^{t \mA_{n}} & \mQ_{n} \varphi {-} s^{\zeta} e^{s
    \mA_{n}} \mQ_{n} \varphi \|_{\mC^{\alpha}_{p}} \lesssim (t^{\zeta} {-}
    s^{\zeta}) t^{- \zeta_{2}} n^{-2 \zeta_{2}} \| \mQ_{n} \varphi
    \|_{\mC^{\alpha}_{p}} + s^{\zeta} \| ( e^{(t {-} s)
    \mA _{n}} {-} \mathrm{Id}) e^{s \mA_{n}} \mQ_{n} \varphi
    \|_{\mC^{\alpha}_{p}} \\
    & \lesssim (t^{\zeta} {-} s^{\zeta}) t^{- \zeta_{2}} n^{-2
    \zeta_{2}} \|
    \mQ_{n} \varphi \|_{\mC^{\alpha}_{p}} +
    s^{\zeta}(t {-} s)^{1 {-} \delta} n^{-2( \delta {-} 1)}
    \| e^{s \mA_{n}} \mQ_{n} \varphi \|_{\mC^{\alpha}_{p}} \\
    & \lesssim [(t^{\zeta} {-} s^{\zeta}) t^{- \zeta_{2}} n^{-2
      \zeta_{2}}  +
    (t {-} s)^{1 {-} \delta} n^{-2( \delta {-} 1)} n^{-2
  \zeta} ] \| \mQ_{\ve} \varphi \|_{\mC^{\alpha}_{p}} \\
  & \lesssim ( t {-}
  s)^{\zeta_{1}} n^{-2 \zeta_{2}} \| \mQ_{n} \varphi
    \|_{\mC^{\alpha}_{p}}, 
  \end{align*}
  where in the last step we set \(\delta = 1 {-} \zeta_{1}\)
  and notice that  \( (t^{\zeta} {-} s^{\zeta}) t^{- \zeta_{2}} \lesssim
  (t {-} s)^{\zeta_{1}}\). 
  
  The bound for \eqref{eq:par_schauder_est_temporal_small} follows similar pattern. For simplicity write \(V(t) =
  \int_{0}^{t} e^{(t {-} s) \mA_{n}} \mQ_{n} f(s) \ud s\):
  \begin{align*}
    \| t^{\gamma} V(t) {-} s^{\gamma}V(s) \|_{\mC^{\alpha}_{p}}\! \leq\!
    (t^{\gamma} {-} s^{\gamma})\| V(t) \|_{\mC^{\alpha}_{p}}\!+ \!s^{\gamma} \Big\|\!
    \int_{s}^{t} e^{(t {-} r)\mA_{n} } \mQ_{n} f(r)\! \ud r
    \Big\|_{\mC^{\alpha}_{p}} {+} s^{\gamma} \!\| (e^{(t {-} s) \mA_{n}} {-}
    \mathrm{Id}) V(s)\|_{\mC^{\alpha}_{p}}.
  \end{align*}
  The only term for which the estimation does not follow the already
  established pattern is the one in the middle, for which we observe that  
  \begin{align*}
    s^{\gamma} \Big\| &\int_{s}^{t} e^{(t {-} r) \mA_{n}} \mQ_{n} f(r) \ud r
    \Big\|_{\mC^{\alpha}_{p}}  \lesssim s^{\gamma} \int_{s}^{t} ( (t {-} r)
    n^{2})^{ {-} \delta_{2}}r^{ {-}\gamma} \ud r \| \mQ_{n} f
    \|_{\mM^{\gamma} \mC^{\alpha}_{p}} \\
    & \lesssim \| \mQ_{n} f \|_{\mM^{\gamma} \mC^{\alpha}_{p}}
    n^{-2 \delta_{2}} s^{\gamma} t^{{-} \delta_{2} {-} \gamma + 1} \int_{s/t}^{1}
    (1 {-} r)^{{-} \delta_{2}} r^{{-} \gamma}\ud r \\
    & \lesssim \| \mQ_{n} f
    \|_{\mM^{\gamma} \mC^{\alpha}_{p}} n^{-2 \delta_{2}} t^{1
    {-} \delta_{2} } \int_{s/t}^{1}(1 {-} r)^{{-} \delta_{2}} \ud r  \lesssim \| \mQ_{n} f \|_{\mM^{\gamma} \mC^{\alpha}_{p}}
    n^{-2 \delta_{2}} t^{1 {-} \delta_{2}} (1 {-}
    s/t)^{1 {-} \delta_{2}} \\
    & \leq \| \mQ_{n}f \|_{\mM^{\gamma}
    \mC^{\alpha}_{p}} n^{-2 \delta_{2}}(t {-} s)^{1 {-} \delta_{2}} \leq \| \mQ_{n}f \|_{\mM^{\gamma}
    \mC^{\alpha}_{p}} n^{-2 \delta_{2}} T^{\delta_{3}}(t {-}
    s)^{\delta_{1}},
  \end{align*}
  which completes the proof of the proposition.
\end{proof}

The following result is essentially a by-product of the previous proof.

\begin{lemma}\label{lem:schauder_short_time_contraction}
  Consider \(\alpha, \beta \in \RR\) and \(p \in [1, \infty]\) with \(\gamma :=\alpha - \beta \in [0, 2]\). Then
  uniformly over \(\varphi \in \mC^{\alpha}_{p}\) one can estimate
   $ \| (e^{t \mA_{n}} - \mathrm{Id}) \varphi \|_{\mC^{\beta}_{p}}
    \lesssim t^{\frac \gamma 2} \| \varphi \|_{\mC^{\alpha}_{p}}$.
\end{lemma}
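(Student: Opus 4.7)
The plan is to split via the projections $\mathrm{Id} = \mP_n + \mQ_n$ and, on each piece, to run the argument already carried out inside the proof of Proposition~\ref{prop:schauder_estimates} in equations \eqref{eqn:proof-schauder-large-scale-time} and \eqref{eqn:proof-schauder-small-scale-time}. The key identity is the integral representation
\[
(e^{t\mA_n} - \mathrm{Id})\varphi = \int_0^t e^{s\mA_n} \mA_n \varphi \, ds,
\]
which, combined with the Littlewood--Paley block estimates of Proposition~\ref{prop:crucial_for_schauder}, gives pointwise control on each dyadic scale. The only real task is to bookkeep the powers of $t$ and $n$ so that a single exponent $t^{\gamma/2}$ emerges uniformly in $n$ across the full range $\gamma=\alpha-\beta\in[0,2]$.

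On macroscopic scales $2^j \lesssim n$, inserting the estimates $\|\Delta_j e^{s\mA_n}\mA_n \mP_n\varphi\|_{L^p} \lesssim e^{-c s 2^{2j}} 2^{-(\alpha-2)j}\|\varphi\|_{\mC^\alpha_p}$ into the integral yields
\[
\| \Delta_j (e^{t\mA_n} - \mathrm{Id}) \mP_n \varphi \|_{L^p}
\lesssim 2^{-\alpha j}\min\!\bigl(1,\,t\,2^{2j}\bigr)\,\|\varphi\|_{\mC^\alpha_p}.
\]
Multiplying by $2^{\beta j} = 2^{(\alpha-\gamma)j}$ and using the elementary interpolation inequality $\min(A,B)\le A^{\gamma/2}B^{1-\gamma/2}$ with $A=t\,2^{2j}$ and $B=1$ produces $t^{\gamma/2}\,2^{j[(2-\gamma)\gamma/2 - \gamma(1-\gamma/2)]} = t^{\gamma/2}$, uniformly in $j$.

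On microscopic scales $2^j \gtrsim n$, the analogous Fourier multiplier estimates of Proposition~\ref{prop:crucial_for_schauder} give
\[
\| \Delta_j (e^{t\mA_n} - \mathrm{Id}) \mQ_n \varphi \|_{L^p}
\lesssim 2^{-\alpha j}\min\!\bigl(1,\,t\,n^{2}\bigr)\,\|\varphi\|_{\mC^\alpha_p},
\]
so that after multiplication by $2^{\beta j}$ one is reduced to bounding $2^{-\gamma j}\min(1, t n^2) \lesssim n^{-\gamma}\min(1, t n^2)$. A quick case-split confirms that $n^{-\gamma}\min(1,tn^2)\lesssim t^{\gamma/2}$ whenever $\gamma\in[0,2]$: if $tn^2\ge 1$ this is just $n^{-\gamma}\le(tn^2)^{\gamma/2}n^{-\gamma}=t^{\gamma/2}$, and if $tn^2\le 1$ one writes $n^{-\gamma}\cdot tn^2 = n^{2-\gamma}\,t^{\gamma/2}\,t^{1-\gamma/2}\le t^{\gamma/2}$ using $t^{1-\gamma/2}\le n^{-(2-\gamma)}$.

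Taking the supremum over $j\ge -1$ and combining the two regimes gives the claimed bound. I do not anticipate any serious obstacle: both scale estimates are already essentially present in the previous proof, and the only subtle point is choosing the interpolation exponent so that the powers of $2^j$ and of $n$ cancel exactly; this works precisely because the range $\gamma\in[0,2]$ matches the ``parabolic'' scaling $t\sim 2^{-2j}$ (resp.\ $t\sim n^{-2}$) at which the transition $\min(1,t\,2^{2j})$ (resp.\ $\min(1,tn^2)$) occurs.
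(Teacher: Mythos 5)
Your proof is correct and uses essentially the same strategy as the paper: decompose by Littlewood--Paley blocks, split into the macroscopic ($2^j\lesssim n$) and microscopic ($2^j\gtrsim n$) regimes via $\mP_n,\mQ_n$, and deduce the block bounds from Proposition~\ref{prop:crucial_for_schauder} together with the integral representation $(e^{t\mA_n}-\mathrm{Id})\varphi=\int_0^t e^{s\mA_n}\mA_n\varphi\,\ud s$. The only difference is cosmetic: the paper quotes the intermediate displays \eqref{eqn:proof-schauder-large-scale-time} and \eqref{eqn:proof-schauder-small-scale-time} from the proof of Proposition~\ref{prop:schauder_estimates}, whereas you re-derive equivalent $\min(1,t\,2^{2j})$ and $\min(1,t\,n^{2})$ bounds and then interpolate, a packaging that is marginally more self-contained and handles the endpoints $\gamma\in\{0,2\}$ without any fuss.
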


\begin{proof}
  The proof follows from   Proposition~\ref{prop:schauder_estimates}.
  Indeed, Equation~\eqref{eqn:proof-schauder-large-scale-time} implies that for
  \(2^j \lesssim n\) one has:
  \[ 2^{j \beta}\| (e^{t \mA_{n}} - \mathrm{Id}) \Delta_{j} \varphi
    \|_{L^{p}} \lesssim t^{\frac \gamma 2} 2^{ j \beta} \|
    \Delta_{j} \varphi \|_{\mC^{\gamma}_{p}} \lesssim t^{\frac \gamma 2}\| \varphi
   \|_{\mC^{\alpha}_{p}}.\]
   While a slight modification (to \(L^{p}\) spaces) of
   \eqref{eqn:proof-schauder-small-scale-time} guarantees that for \(2^j \gtrsim n\):
   \[ 
     2^{j \beta}\| (e^{t \mA_{n}} - \mathrm{Id}) \Delta_{j}\varphi
     \|_{L^{p}} \lesssim t^{\frac \gamma 2} 2^{ j \beta} n^{\gamma} \|
     \Delta_{j} \varphi \|_{L^{p}} \lesssim t^{\frac \gamma 2} 2^{j \alpha} \|
     \Delta_{j} \varphi \|_{L^{p}} \lesssim t^{\frac \gamma 2}\| \varphi
     \|_{\mC^{\alpha}_{p}}.
   \] 
   This concludes the proof.
\end{proof}

\subsection{Besov spaces \& characteristic functions}\label{subsec:appendix-characteristic-functions}

In this subsection we collect some facts regarding Besov spaces and the
regularity of characteristic functions.
Let us begin by stating the Poisson summation formula (a proof is left to the
reader, or can be found in many textbooks and web pages).

\begin{lemma}\label{lem:poisson_summation_formula}
  For \(\varphi \in \mS(\RR^{d})\) it holds that:
  \( \mF^{-1}_{\TT^{d}} \varphi (x) = \sum_{z \in \ZZ^{d}} \mF_{\RR^{d}}^{-1}
  \varphi (x +z)\). In particular, this implies for \(\varphi \in \mS (\RR^{d})\) the bound:
  \(\| \mF_{\TT^{d}}^{-1} \varphi \|_{L^{1}(\TT^{d})} \leq \|
    \mF^{-1}_{\RR^{d}} \varphi \|_{L^{1}(\RR^{d})}.\) 
\end{lemma}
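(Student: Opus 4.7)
The plan is to prove the identity first, and then deduce the $L^1$ bound as a direct consequence. For the identity, I would introduce the periodization
\[ F(x) := \sum_{z \in \ZZ^d} \mF^{-1}_{\RR^d} \varphi(x + z), \]
which is well-defined and smooth on $\TT^d$ because $\mF^{-1}_{\RR^d}\varphi$ is Schwartz (hence the sum converges absolutely and uniformly on compact sets, with all derivatives). The aim is to show that $F(x) = \mF^{-1}_{\TT^d}\varphi(x)$, where in this statement $\mF^{-1}_{\TT^d}\varphi$ is interpreted as $\sum_{k \in \ZZ^d} \varphi(k) e^{2\pi \iota k \cdot x}$ (i.e.\ the torus inverse Fourier transform applied to the restriction of $\varphi$ to the lattice $\ZZ^d$).

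To establish $F = \mF^{-1}_{\TT^d}\varphi$, I would compute the $k$-th Fourier coefficient of $F$ on $\TT^d$. By dominated convergence (again using Schwartz decay to justify the interchange of sum and integral),
\[ \widehat{F}(k) = \int_{\TT^d} e^{-2\pi \iota k \cdot x} F(x) \ud x = \sum_{z \in \ZZ^d} \int_{\TT^d - z} e^{-2\pi \iota k \cdot y} \mF^{-1}_{\RR^d}\varphi(y) \ud y = \int_{\RR^d} e^{-2\pi \iota k \cdot y} \mF^{-1}_{\RR^d}\varphi(y) \ud y = \varphi(k), \]
where the last equality is the Fourier inversion formula on $\RR^d$. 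Since $F$ is smooth, its Fourier series converges pointwise to $F$, giving $F(x) = \sum_{k \in \ZZ^d} \varphi(k) e^{2\pi \iota k \cdot x} = \mF^{-1}_{\TT^d}\varphi(x)$.

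For the norm bound, I would apply the triangle inequality to the identity just established and unfold:
\[ \|\mF^{-1}_{\TT^d}\varphi\|_{L^1(\TT^d)} \leqslant \int_{\TT^d} \sum_{z \in \ZZ^d} \bigl|\mF^{-1}_{\RR^d}\varphi(x + z)\bigr| \ud x = \int_{\RR^d} \bigl|\mF^{-1}_{\RR^d}\varphi(y)\bigr| \ud y = \|\mF^{-1}_{\RR^d}\varphi\|_{L^1(\RR^d)}, \]
where the middle equality uses that $\{\TT^d - z\}_{z \in \ZZ^d}$ tiles $\RR^d$ up to measure zero (this is where the choice $n \geqslant c(d)$ made earlier in the notation section is not needed, since here we are just unfolding $\TT^d$-translates). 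There is no real obstacle: the entire argument is standard, and the only point requiring a moment's care is the justification of Fubini/dominated convergence, which is immediate from $\mF^{-1}_{\RR^d}\varphi \in \mS(\RR^d)$.
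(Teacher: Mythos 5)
Your proof is correct and is the standard Poisson summation argument (periodize $\mF^{-1}_{\RR^d}\varphi$, compute its Fourier coefficients on $\TT^d$ by unfolding, invoke smoothness to sum the Fourier series). The paper itself omits the proof, deferring to the literature, so there is nothing to compare against; your argument is exactly what a reader would be expected to supply.
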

Recall that the Besov spaces \(B^{\alpha}_{p, q}(\TT^{d})\) are
defined via a dyadic partition of the unity \(\{\varrho_{j}\}_{j \geq -1}\)
such that for \(j \geq 0\), \(\varrho_{j} = \varrho(2^{j} \cdot)\) for a smooth function
\(\varrho\) with compact support in an annulus.
\begin{proposition}[Besov embeddings]
\label{prop:besov_embedding}
For any $1 \leq p_{1} \leq p_{2} \leq \infty$ and $1 \leq q_{1} \leq q_{2} \leq \infty$
the embedding $B_{p_{1},q_{1}}^{\alpha} \subseteq B_{p_{2},q_{2}}^{\alpha - d(1/p_{1} - 1/p_{2})}$ is continuous.
In addition, for \(\alpha^{\prime} < \alpha\) the embedding \(B^{\alpha}_{p_2,
  q_2} \subseteq B^{\alpha^{\prime}}_{p_1, q_1}\) is compact.
\end{proposition}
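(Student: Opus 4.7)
The plan is to prove the two embeddings separately, both relying on Bernstein-type inequalities applied to Littlewood--Paley blocks.

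For the continuous embedding, I would apply Bernstein's inequality: since $\Delta_j \varphi$ has Fourier spectrum localised in an annulus of radius $\simeq 2^j$, writing it as a convolution with a kernel whose Fourier transform is $1$ on that support and using Young's inequality with a scaling argument (transferred from $\RR^d$ to $\TT^d$ via the Poisson summation formula of Lemma~\ref{lem:poisson_summation_formula}) yields
\[ \| \Delta_j \varphi \|_{L^{p_2}(\TT^d)} \lesssim 2^{jd(1/p_1 - 1/p_2)} \| \Delta_j \varphi \|_{L^{p_1}(\TT^d)}. \]
Multiplying by $2^{j(\alpha - d(1/p_1 - 1/p_2))}$, taking the $\ell^{q_2}$ norm in $j$, and using the inclusion $\ell^{q_1} \hookrightarrow \ell^{q_2}$ (valid for $q_1 \leq q_2$) gives the continuous embedding.

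For the compact embedding, I would first use Hölder's inequality on the torus, $L^{p_2}(\TT^d) \hookrightarrow L^{p_1}(\TT^d)$, to reduce to the case $p_1 = p_2 =: p$; it then suffices to show that $B^\alpha_{p, q_2} \hookrightarrow B^{\alpha'}_{p, q_1}$ is compact for $\alpha' < \alpha$. The key observation is that for each fixed $j \geq -1$, the block $\Delta_j \varphi$ is a trigonometric polynomial with frequencies in the finite set $\supp \rho_j \cap \ZZ^d$, and its $L^p$ norm is bounded by $2^{-j\alpha}\|\varphi\|_{B^\alpha_{p, q_2}}$. Hence for any bounded sequence $\{\varphi_n\}_n \subseteq B^\alpha_{p, q_2}$ and each $j$, the set $\{\Delta_j \varphi_n\}_n$ lies in a bounded subset of a finite-dimensional space and is relatively compact in $L^p$. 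A diagonal extraction then produces a subsequence $\{\varphi_{n_k}\}$ such that $\Delta_j \varphi_{n_k}$ converges in $L^p$ for every $j$ simultaneously, and one defines the distributional limit $\varphi = \sum_j g_j$ with $g_j = \lim_k \Delta_j \varphi_{n_k}$, which lies in $B^\alpha_{p, q_2}$ by lower semicontinuity of the norm.

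It remains to upgrade this block-wise convergence to convergence in $B^{\alpha'}_{p, q_1}$, which is where the condition $\alpha' < \alpha$ enters. For $J \geq -1$, split
\[ \| \varphi_{n_k} - \varphi \|_{B^{\alpha'}_{p, q_1}} \lesssim \Bigl\| \bigl( 2^{j \alpha'} \|\Delta_j(\varphi_{n_k} - \varphi)\|_{L^p} \bigr)_{j \leq J} \Bigr\|_{\ell^{q_1}} + \Bigl\| \bigl( 2^{j \alpha'} \|\Delta_j(\varphi_{n_k} - \varphi)\|_{L^p} \bigr)_{j > J} \Bigr\|_{\ell^{q_1}}. \]
The first (finite) term tends to zero as $k \to \infty$ for each fixed $J$. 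For the tail, I would factor $2^{j\alpha'} = 2^{-j(\alpha - \alpha')} \cdot 2^{j\alpha}$ and apply Hölder in $\ell^q$: when $q_1 < q_2$, with exponents $1/q_1 = 1/r + 1/q_2$ the geometric sequence $2^{-j(\alpha-\alpha')}$ has arbitrarily small $\ell^r$-norm on $\{j > J\}$ as $J \to \infty$, while $(2^{j\alpha}\|\Delta_j(\varphi_{n_k} - \varphi)\|_{L^p})_j$ stays bounded in $\ell^{q_2}$ uniformly in $k$. The borderline case $q_1 = q_2$ is the one subtle point and is handled by the elementary bound $\| b \cdot c \|_{\ell^{q_1}_{j>J}} \leq \sup_{j > J}|b_j| \cdot \|c\|_{\ell^{q_1}}$. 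Choosing $J$ large enough to make the tail uniformly small and then $k$ large enough to make the head small, one concludes that $\{\varphi_{n_k}\}$ is Cauchy in $B^{\alpha'}_{p, q_1}$, which gives compactness. The main technical care throughout is bookkeeping of the exponents in the Hölder step on $\ell^q$; there is no deeper obstacle.
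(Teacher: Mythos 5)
The paper states Proposition~\ref{prop:besov_embedding} without proof, treating it as a classical fact from Besov/Littlewood--Paley theory (the sort of thing cited from Bahouri--Chemin--Danchin or Triebel). Your argument is the standard textbook derivation: Bernstein's inequality on Paley blocks plus $\ell^{q_1}\hookrightarrow\ell^{q_2}$ for the continuous embedding, and for compactness the reduction to $p_1=p_2$ via H\"older on a finite-measure space, finite-dimensionality of frequency-truncated trigonometric polynomials on the torus, a diagonal extraction, and the split-at-$J$ estimate exploiting the geometric gain $2^{-j(\alpha-\alpha')}$. This is correct and coincides with what the paper implicitly relies on; the one spot worth being explicit about, which you do address, is that $\ell^{q_2}\not\hookrightarrow\ell^{q_1}$ when $q_1<q_2$, so the factorization $2^{j\alpha'}=2^{-j(\alpha-\alpha')}\cdot 2^{j\alpha}$ and the H\"older step in $\ell^q$ (with $\ell^\infty$ in the borderline case $q_1=q_2$) are genuinely needed. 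No gap.
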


In certain cases, it will be convenient to use the following alternative
characterization of certain Besov spaces.
\begin{proposition}[Sobolev-Slobodeckij norm]\label{prop:sobolev_slobodeckij_equivalence}
For every $\alpha \in \RR_{+} \setminus\NN$ and for every $p \in [1,\infty)$ define
the Sobolev-Slobodeckij norm for \(\varphi \in \mS^{\prime}(\TT^{d})\) as:
 \[ \| \varphi \|_{W^{\alpha}_{p}} := \| \varphi \|_{L^{p}} + \sum_{|m| = \lfloor \alpha \rfloor} \bigg( \int_{\TT^{ d} \times
      \TT^{d}} \frac{| D^{m} \varphi(x) - D^{m} \varphi(y)|^{p}}{|x - y|^{d +
    (\alpha - \lfloor \alpha \rfloor)p}}  \ud x \ud y \bigg)^{ 1 /p} \in [0,
  \infty]. \] 
  There exist constants a pair of constants \(c(p), C(p) >0\) such that for
  $\varphi \in \mS^{\prime}(\TT^{d})$
  \[ c \| \varphi \|_{B^{\alpha}_{p, p}} \leq \| \varphi
  \|_{W^{\alpha}_{p}} \leq C \| \varphi \|_{B^{\alpha}_{p, p}}. \] 
\end{proposition}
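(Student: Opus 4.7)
The result is a classical equivalence between Besov spaces with equal integrability and smoothness indices and the Sobolev--Slobodeckij spaces, found for example in Triebel's \emph{Theory of Function Spaces}, Section 2.5.12, or in Bahouri--Chemin--Danchin. My plan is to reduce to the case $\alpha \in (0,1)$ and then prove that case using the Littlewood--Paley characterization.

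First, I would observe that if $\alpha = k + s$ with $k = \lfloor \alpha \rfloor$ and $s = \alpha - k \in (0,1)$, then for each multi-index $m$ with $|m| = k$ the derivative $D^m$ is a bounded operator from $B^\alpha_{p,p}$ to $B^s_{p,p}$ (this follows directly from the definition of Besov norms using that $D^m \Delta_j = \Delta_j D^m$ and that $\| D^m \Delta_j \varphi \|_{L^p} \lesssim 2^{jk} \| \Delta_j \varphi \|_{L^p}$ by Bernstein). Together with an analogous lower bound summing over $|m|=k$, this reduces the proposition to the case $\alpha \in (0,1)$, i.e. to showing that for $s \in (0,1)$,
\[
\| \varphi \|_{B^s_{p,p}} \simeq \| \varphi \|_{L^p} + \bigg( \int_{\TT^d \times \TT^d} \frac{ | \varphi(x) - \varphi(y) |^p}{|x-y|^{d + sp}} \ud x \ud y \bigg)^{1/p}.
\]

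For the embedding $B^s_{p,p} \hookrightarrow W^s_p$ I would rewrite the Sobolev--Slobodeckij double integral via translations $\tau_h \varphi(x) = \varphi(x + h) - \varphi(x)$ as $\int \|\tau_h \varphi\|_{L^p}^p |h|^{-d - sp} \ud h$, and then use the Littlewood--Paley decomposition $\varphi = \sum_{j \geq -1} \Delta_j \varphi$. For each fixed $h$, I split the sum at $j^\ast \simeq \log_2(1/|h|)$: on low-frequency blocks $j \leq j^\ast$, Bernstein's inequality and the mean value theorem give $\| \tau_h \Delta_j \varphi\|_{L^p} \lesssim |h| \, 2^j \| \Delta_j \varphi \|_{L^p}$, while on high frequencies $j > j^\ast$ one uses the trivial bound $\| \tau_h \Delta_j \varphi \|_{L^p} \lesssim \| \Delta_j \varphi \|_{L^p}$. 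Substituting, exchanging sum and integral, and computing the two resulting integrals $\int_{|h| \leq 2^{-j}} |h|^{p - d - sp}\ud h$ and $\int_{|h| > 2^{-j}} |h|^{-d-sp} \ud h$ yields the $\ell^p$ norm of $2^{js} \| \Delta_j \varphi\|_{L^p}$, as required.

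For the reverse embedding $W^s_p \hookrightarrow B^s_{p,p}$ I would exploit that, for $j \geq 0$, $\varrho_j$ vanishes near the origin, so $K_j := \mF^{-1}_{\TT^d} \varrho_j$ has zero integral and hence $\Delta_j \varphi(x) = \int K_j(y) (\varphi(x - y) - \varphi(x)) \ud y$. Applying H\"older's inequality with the splitting $|K_j(y)| = (|K_j(y)| |y|^{d/p + s}) \cdot |y|^{-d/p - s}$ and using standard scaling/Poisson-summation estimates $\int |K_j(y)|^{p'} |y|^{(d + sp)(p'-1)} \ud y \lesssim 2^{-jsp \cdot (p'/p)}$ (plus a trivial $L^p$ bound for $\Delta_{-1}$), one gets
\[
2^{jsp} \|\Delta_j \varphi\|_{L^p}^p \lesssim \int_{\TT^d \times \TT^d} \frac{|\varphi(x) - \varphi(x-y)|^p}{|y|^{d + sp}} \ud y \ud x,
\]
uniformly in $j$, which immediately gives $\| \varphi \|_{B^s_{p,p}} \lesssim \| \varphi \|_{W^s_p}$. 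The main technical annoyance will be bookkeeping the Poisson-summation bounds for $K_j$ on the torus (rather than $\RR^d$) and pushing the $\ell^p$ aggregation through the split at $j^\ast$ in the direct estimate; both are routine once set up carefully.
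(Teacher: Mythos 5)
The paper does not prove this proposition at all: immediately after the statement it simply writes ``For a proof consult e.g.\ \cite{Triebe2010} Theorem~2.5.7 and the discussion in Section~2.2.2.'' So you cannot be ``taking a different route from the paper's proof'' --- the paper outsources it. What you have written is essentially the textbook argument (the one found in Triebel, or in Bahouri--Chemin--Danchin for $\RR^d$), and it is correct in outline.

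A few remarks on where the real work hides in your sketch. The reduction from $\alpha = k+s$ to $s\in(0,1)$ needs the two-sided Bernstein estimate $\|\Delta_j\varphi\|_{L^p} \simeq 2^{-jk}\sum_{|m|=k}\|D^m\Delta_j\varphi\|_{L^p}$ for $j\ge0$, which uses that $\varrho_j$ is supported in an annulus; the $j=-1$ block is handled by the $\|\varphi\|_{L^p}$ term. For the embedding $B^s_{p,p}\hookrightarrow W^s_p$, after bounding $\|\tau_h\Delta_j\varphi\|_{L^p}\lesssim\min(1,|h|2^j)\|\Delta_j\varphi\|_{L^p}$ you still have to pass from a sum over $j$ inside the $p$-th power to the $\ell^p$ norm of $2^{js}\|\Delta_j\varphi\|_{L^p}$; this is a Schur-test/weighted-Young step using that the kernel $\min((|h|2^j)^{-s},(|h|2^j)^{1-s})$ decays exponentially in $|\,j - \log_2(1/|h|)\,|$ precisely because $s\in(0,1)$. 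You flag this as the ``main technical annoyance,'' which is the right instinct --- it is exactly where the restriction $s\notin\{0,1\}$ (hence $\alpha\notin\NN$) enters. The reverse embedding via the zero-mean of $K_j$ for $j\ge0$, the weight splitting in H\"older, and the Poisson-summation bound on the torus (the paper's Lemma~\ref{lem:poisson_summation_formula}) are all standard and correctly identified. In short: routine but real work; the paper avoids it by citation, you sketch it correctly.
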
 
For a proof consult e.g. \cite{Triebe2010} Theorem 2.5.7
and the discussion in Section 2.2.2. The next result states the regularizing properties of convolutions.

\begin{lemma}\label{lem:regularity_of_convolutions}
   
 For \(p,q,r \in [1,
  \infty]\) satisfying \(\frac{1}{r} = \frac{1}{p} {+} \frac{1}{q} {-} 1\) and for any \(\varphi, \psi \in \mS^{\prime}(\TT^{d})\): 
  \[ \| \varphi * \psi
  \|_{C^{\alpha {+} \beta}_r} \lesssim \|f\|_{ C^{\alpha}_{p} } \| g \|_{
\mC^{\beta}_q}. \] 

\end{lemma}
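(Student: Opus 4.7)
The plan is to perform a Littlewood--Paley decomposition on both arguments and exploit the spectral support properties of convolutions. Writing
\[
\Delta_k(\varphi * \psi) = \sum_{i, j \geq -1} \Delta_k(\Delta_i \varphi * \Delta_j \psi),
\]
observe that $\mF_{\TT^d}(\Delta_i \varphi * \Delta_j \psi) = \varrho_i \widehat{\varphi}\cdot \varrho_j \widehat{\psi}$, so this summand vanishes unless $\supp \varrho_i \cap \supp \varrho_j \neq \emptyset$. Since $\varrho_i, \varrho_j$ are supported in dyadic annuli (resp.\ a ball if the index is $-1$), their supports overlap only when $|i-j| \leq 1$, and the resulting product has Fourier support in an annulus around $2^{\max(i,j)}$. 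Applying $\Delta_k$ then forces $|k - \max(i,j)| \leq 2$, so for each $k$ only $O(1)$ pairs $(i,j)$ contribute, all with $i, j \sim k$.

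Next I would apply Young's inequality for convolutions, using the hypothesis $1/r = 1/p + 1/q - 1$, together with the boundedness of the Fourier multiplier $\Delta_k$ on $L^r$ (which follows from Lemma~\ref{lem:poisson_summation_formula} and the Young inequality applied to the kernel $\mF_{\TT^d}^{-1} \varrho_k$). This yields
\[
\|\Delta_k(\Delta_i \varphi * \Delta_j \psi)\|_{L^r} \lesssim \|\Delta_i \varphi\|_{L^p} \|\Delta_j \psi\|_{L^q} \lesssim 2^{-i\alpha} 2^{-j\beta} \|\varphi\|_{\mC^\alpha_p} \|\psi\|_{\mC^\beta_q}.
\]

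Summing over the $O(1)$ surviving pairs, all of which satisfy $i,j \sim k$, gives
\[
\|\Delta_k(\varphi * \psi)\|_{L^r} \lesssim 2^{-k(\alpha + \beta)} \|\varphi\|_{\mC^\alpha_p} \|\psi\|_{\mC^\beta_q},
\]
and taking the supremum in $k$ (weighted by $2^{k(\alpha+\beta)}$) concludes the proof. The only bookkeeping point is the low-frequency regime $i=-1$ or $j=-1$, where the supports are balls instead of annuli; the same overlap argument still restricts the sum to finitely many indices per $k$, so no genuine obstacle arises. The argument is self-contained and follows the standard paraproduct-free treatment of convolution in Besov spaces.
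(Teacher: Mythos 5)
Your proof is correct and rests on the same key observation as the paper's: the Fourier support of a convolution lies in the \emph{intersection} of the two supports, so $\Delta_k(\varphi*\psi)$ only sees blocks $\Delta_i\varphi,\Delta_j\psi$ with $i,j\sim k$, after which Young's inequality with $1/r=1/p+1/q-1$ gives the bound. The paper condenses this into a single identity $\Delta_i(f*g)=\Delta_i f * \overline{\Delta}_i g$ with a fattened block $\overline{\Delta}_i$ satisfying $\varrho_i\overline{\varrho}_i=\varrho_i$, avoiding the double sum over $(i,j)$, but the underlying argument is the same.
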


\begin{proof} 

By Young's convolution inequality
\begin{equation}\label{eqn:convolution_single_paley_block} 
  \| \Delta_{i} (f * g)
  \|_{L^r} = \| \Delta_{i} f * \overline{\Delta} _{i} g\|_{L^r} \lesssim \|
  \Delta_{i} f\|_{L^p} \| \overline{\Delta}_{i}g \|_{L^q},  
\end{equation}
where \( \overline{\Delta}_{i}\) is associated with a dyadic partition
 of the unity different from the one we use for most of the proofs. Namely we require that it satisfies \( \{ \overline{\varrho_{j}}\}_{j \geq -1}\) such that
\(\varrho_{j} \overline{\varrho_{j}} = \varrho_{j}\). Then the bound follows
immediately, since the Besov norms associated to different dyadic partitions
are equivalent, cf. \cite[Remark~2.17]{BahouriCheminDanchin2011FourierAndNonLinPDEs}.  
\end{proof}
The following lemma is a special case of results obtained by \cite{sickel:1999}.
The proof is included for completeness.
\begin{lemma}
\label{lem:besov_regularity_characteristic_function}
Fix $ p \in [1, \infty), \ \zeta \in [0,\frac{1}{p})$. Then
 $\sup_{n \in \NN } n^{-\zeta -d+\frac d p} \| \chi_n \|_{W^{\zeta}_{p}} <
\infty.$
\end{lemma}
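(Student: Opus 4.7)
The plan is to use the Sobolev--Slobodeckij characterization of Proposition~\ref{prop:sobolev_slobodeckij_equivalence}, since $\chi_n$ is (up to normalization) a characteristic function and direct estimates on dyadic blocks seem awkward. The two contributions $\|\chi_n\|_{L^p}$ and the fractional seminorm must be treated separately.

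For the $L^p$ term, the normalization $\chi_n = n^d 1_{B_n(0)}$ together with $|B_n(0)| = n^{-d}$ gives immediately $\|\chi_n\|_{L^p}^p = n^{d(p-1)}$, so after multiplying by the prefactor $n^{-\zeta - d + d/p}$ one gets $n^{-\zeta}$, which is bounded for $\zeta \geq 0$.

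For the seminorm
\begin{equation*}
I_n := \int_{\TT^d \times \TT^d} \frac{|\chi_n(x) - \chi_n(y)|^p}{|x-y|^{d + \zeta p}} \ud x \ud y,
\end{equation*}
the integrand vanishes unless exactly one of $x,y$ lies in $B_n(0)$, which yields $I_n = 2 n^{dp} \int_{B_n(0)} \int_{B_n(0)^c} |x-y|^{-(d+\zeta p)} \ud y \ud x$. I would then use that for fixed $x \in B_n(0)$ one has $|x-y| \geq \mathrm{dist}(x,\partial B_n(0)) =: \delta(x)$ whenever $y \in B_n(0)^c$, so that (since integrating $|x-y|^{-(d+\zeta p)}$ over $\{|x-y|>\delta(x)\}$ inside the unit cube of $\RR^d$ gives $\lesssim \delta(x)^{-\zeta p}$ when $\zeta p > 0$, and a trivial bound $\lesssim 1$ when $\zeta = 0$) the inner integral is controlled by $\delta(x)^{-\zeta p}$.

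The outer integral becomes, with $r \simeq n^{-1}$ the radius of $B_n(0)$,
\begin{equation*}
\int_{B_n(0)} \delta(x)^{-\zeta p} \ud x \simeq \int_0^r (r-t)^{-\zeta p} t^{d-1} \ud t \simeq r^{d - \zeta p} \simeq n^{-(d-\zeta p)},
\end{equation*}
where finiteness of the radial integral is exactly the condition $\zeta p < 1$, i.e.\ $\zeta < 1/p$. Putting things together gives $I_n \lesssim n^{dp} \cdot n^{-d + \zeta p} = n^{d(p-1) + \zeta p}$, hence $I_n^{1/p} \lesssim n^{d - d/p + \zeta}$, which is exactly matched by the prefactor $n^{-\zeta - d + d/p}$. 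The only subtle step -- and the main obstacle -- is verifying that the inner integral over $B_n(0)^c$ really is bounded by $\delta(x)^{-\zeta p}$ uniformly in $n$; this is where one must keep in mind that for $n \geq c(d)$ the ball $B_n(0)$ sits inside $(-1/2,1/2)^d$, so the ambient domain of integration can be replaced by a fixed bounded neighbourhood of $B_n(0)$ without loss, and the tail $\{|x-y| \gtrsim 1\}$ contributes only an $O(1)$ term that is absorbed by the $L^p$ contribution.
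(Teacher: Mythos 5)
Your proposal is correct and follows essentially the same route as the paper: Sobolev--Slobodeckij characterization, reduction to the integral over $B_n(0)\times B_n(0)^c$, the distance-to-boundary bound $\lesssim \delta(x)^{-\zeta p}$, and the radial integral $\int_0^{c/n}(c/n-r)^{-\zeta p}r^{d-1}\ud r \simeq n^{\zeta p - d}$ converging precisely when $\zeta p < 1$. The only stylistic difference is that you make explicit the $\zeta=0$ degenerate case and the remark about the tail being absorbed; the paper leaves these implicit.
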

\begin{proof}
We shall make use of the characterization of fractional Sobolev spaces in terms
of Sobolev-Slobodeckij norms. 
A direct computation shows that 
\begin{align*}
\|\chi_{n}\|_{W^{\zeta}_{p}} 
& =\|\chi_{n}\|_{L^{p}} + \bigg( \int_{\TT^{d}\times\TT^{d}}  n^{dp} \frac{|1_{B_{n}}(x)-1_{B_{n}}(y)|^p}{|x {-} y|^{d {+} \zeta p}} \mathrm{d}x\mathrm{d}y \bigg)^{1/p}
\\
& \leq n^{d-\frac d p} + \bigg( 2\int_{B_{n}}\int_{\TT^{d} \setminus B_{n}}  n^{dp}\frac{|1_{B_{n}}(x)-1_{B_{n}}(y)|^p}{|x {-} y|^{d {+} \zeta p}} \mathrm{d}x\mathrm{d}y \bigg)^{1/p}.
\end{align*}
Now let $d_n(z)$ be the Euclidean distance of a point $z$ from the boundary $\partial B_n$ and let $\overline{B}_{d_n(z)}(y)$ be the ball of radius $d_n(z)$ about $y$. Then the previous integral can be estimated by:
\begin{align*}
    \bigg( \int_{B_{n}}\int_{\TT^{d} \setminus B_{n}} & n^{dp}\frac{|1_{B_{n}}(x)-1_{B_{n}}(y)|^p}{|x {-} y|^{d {+} \zeta p}} \mathrm{d}x\mathrm{d}y \bigg)^{1/p}  \leq \bigg( \int_{B_{n}}\int_{\TT^{d} \setminus \overline{B}_{d_n(y)}(y)}  n^{dp}\frac{1}{|x-y|^{d {+} \zeta p}} \mathrm{d}x\mathrm{d}y \bigg)^{1/p}  \\
    & =\bigg( \int_{B_{n}}\int_{\TT^{d} \setminus \overline{B}_{d_n(y)}(0)}  n^{dp}\frac{1}{|
    x|^{d {+} \zeta p}} \mathrm{d}x\mathrm{d}y \bigg)^{1/p}  \lesssim \bigg( \int_{B_{n}} n^{dp} d_n(y)^{-\zeta p }\mathrm{d}y \bigg)^{1/p} \\
    & \lesssim \bigg( \int_0^{\frac c n} n^{dp} (c/n-r)^{-\zeta p } r^{d-1}\mathrm{d}r \bigg)^{\frac 1 p} \lesssim n^{d} \Big( n^{\zeta p - d}\Big)^{1/p} \leq n^{d + \zeta -d/p}.
\end{align*}
\end{proof}

\begin{corollary}\label{cor:regularity_gain_convolution_with_characteristic_functions}
   Recall that we define the operator \( \Pi_{n} \) by
$ \Pi_{n} \varphi (x) = \chi_{n} * \varphi (x).$
Then, for \(\zeta \in [0,1), p\in[1, \infty]\) and \(\alpha \in \RR\) we have
   $ 
     \sup_{n \in \NN} n^{-\zeta} \| \Pi_{n} \varphi \|_{\mC^{\alpha {+}
     \zeta}_{p}} \lesssim \| \varphi \|_{\mC^{\alpha}_{p}}.  
   $ 
\end{corollary}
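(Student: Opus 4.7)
The proof is a direct combination of the convolution inequality of Lemma~\ref{lem:regularity_of_convolutions} with the Sobolev regularity of $\chi_n$ established in Lemma~\ref{lem:besov_regularity_characteristic_function}. The case $\zeta = 0$ is immediate: since $\|\chi_n\|_{L^1(\TT^d)} = 1$, Young's inequality (or Lemma~\ref{lem:regularity_of_convolutions} with $q = 1$, $r = p$, $\beta = 0$) gives $\|\Pi_n \varphi\|_{\mC^\alpha_p} \lesssim \|\varphi\|_{\mC^\alpha_p}$ uniformly in $n$. Hence from now on we may assume $\zeta \in (0,1)$.

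For $\zeta \in (0,1)$, apply Lemma~\ref{lem:regularity_of_convolutions} with the exponent choice $q = 1$ (so that $\tfrac{1}{r} = \tfrac{1}{p} + 1 - 1 = \tfrac{1}{p}$, i.e.\ $r = p$) and with $\beta = \zeta$:
\[
\| \Pi_n \varphi \|_{\mC^{\alpha+\zeta}_p} = \| \chi_n * \varphi \|_{\mC^{\alpha+\zeta}_p} \lesssim \| \chi_n \|_{\mC^{\zeta}_1} \, \| \varphi \|_{\mC^{\alpha}_p}.
\]
It remains to bound $\|\chi_n\|_{\mC^\zeta_1} = \|\chi_n\|_{B^\zeta_{1,\infty}}$ by $n^\zeta$. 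Since $B^\zeta_{1,1} \hookrightarrow B^\zeta_{1,\infty}$ (trivially, as $\ell^1 \subseteq \ell^\infty$), Proposition~\ref{prop:sobolev_slobodeckij_equivalence} (applicable because $\zeta \in (0,1) \subset \RR_+ \setminus \NN$) yields
\[
\| \chi_n \|_{\mC^\zeta_1} \lesssim \| \chi_n \|_{B^\zeta_{1,1}} \lesssim \| \chi_n \|_{W^\zeta_1}.
\]
Now Lemma~\ref{lem:besov_regularity_characteristic_function}, applied with $p=1$ (so that $\zeta < 1/p = 1$ holds, and $d - d/p = 0$), gives exactly $\|\chi_n\|_{W^\zeta_1} \lesssim n^\zeta$. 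Combining the three displayed estimates concludes the proof.

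There is no real obstacle here: the only point requiring minimal care is the choice of exponents ($q = 1$) in the Young-type convolution inequality, which is what allows the loss of one derivative to be absorbed at the $L^1$ level where the $W^\zeta_1$ bound on $\chi_n$ is sharp and produces precisely the factor $n^\zeta$. Note that the restriction $\zeta < 1$ is intrinsic: $\chi_n$ is merely a rescaled indicator function, so no regularity gain beyond Hölder exponent $1$ (in the integrability scale $p=1$) can be expected from its Sobolev regularity, and this matches the range $\zeta \in [0, 1/p)$ in Lemma~\ref{lem:besov_regularity_characteristic_function} with $p = 1$.
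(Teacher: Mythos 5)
Your proof is correct and takes the same approach as the paper, which simply declares the corollary to be ``a direct consequence of Lemma~\ref{lem:regularity_of_convolutions} and \ref{lem:besov_regularity_characteristic_function} (the latter with $p=1$)''. You have merely spelled out the bookkeeping the authors left implicit: the harmless $\zeta=0$ case via $\|\chi_n\|_{L^1}=1$, the exponent choice $q=1$, $r=p$ in the convolution inequality, and the chain $\mC^\zeta_1 = B^\zeta_{1,\infty} \hookleftarrow B^\zeta_{1,1} \simeq W^\zeta_1$ (via Proposition~\ref{prop:sobolev_slobodeckij_equivalence}) that translates the $W^\zeta_1$ bound of Lemma~\ref{lem:besov_regularity_characteristic_function} into the required $\|\chi_n\|_{\mC^\zeta_1}\lesssim n^\zeta$.
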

\begin{proof}
This is now a direct consequence of Lemma~\ref{lem:regularity_of_convolutions} and \ref{lem:besov_regularity_characteristic_function} (the latter with $p=1$).

\end{proof}

\section{Semidiscrete parabolic Anderson model}\label{sec:semidiscrete_PAM}

This section is devoted to the proof of 
Theorem~\ref{thm:semidiscrete-pam-approximation}. This theorem is an
approximation result for the continuous Anderson Hamiltonian in dimensions
\(d=1\) and \(d=2\). 
The Anderson Hamiltonian was introduced in $d=1$ by \cite{Fukushima1976}, in
$d=2$ by \cite{AllezChouk2015AndersonHamiltonian2D} and $d=3$ by
\cite{Labbe2018}. In the last two cases the construction relies on theories
from singular stochastic PDEs \cite{Hairer2014, GubinelliImkellerPerkowski2015},
which is why the proof of the theorem concentrates on the two-dimensional case. In
dimensions $d=4$ or higher these solution theories do not work,
because the noise becomes too rough (a problem known as \textit{supercriticality} \cite{Hairer2014}). 

In the construction of the
Hamiltonian in \(d=2\) we follow the results in
\cite{AllezChouk2015AndersonHamiltonian2D} that rely on paracontrolled
calculus (we refer the reader to
\cite{GubinelliImkellerPerkowski2015} and \cite{GubinelliPerkowski2017KPZ} for
a more in-depth discussion). Our main result states that semidiscrete
approximations converge in the \emph{resolvent sense} to the continuous
Anderson Hamiltonian.

\subsection{Density of the domain}

We start with some results regarding the continuous Anderson Hamiltonian, which imply Proposition~\ref{prop:continuous-pam-operator}.

\begin{lemma}\label{lem:semidiscrete-pam:continuous-pam-operator}
     Consider a probability space $(\Omega, \mF, \PP)$ supporting a space white noise $\xi \colon \Omega \to \mS^\prime (\TT^d)$. Fix any $\kappa>0$. The following hold true for almost all $\omega\in\Omega$.
  The Anderson Hamiltonian
  \[\mH^\omega = \nu_0\Delta +
  \xi(\omega)\] associated to \(\xi(\omega)\) is defined,
  as constructed in \cite{Fukushima1976} in \(d=1\) and
  \cite{AllezChouk2015AndersonHamiltonian2D} in \(d=2\). The Hamiltonian, as an
  unbounded selfadjoint operator on $L^2(\TT^d)$, has a discrete
  spectrum given by pairs of eigenvalues and eigenfunctions
  \(\{(\lambda_{k}(\omega), e_{k}(\omega))\}_{k \in \NN}\) such that:
  \[ 
    \lambda_{1} (\omega) > \lambda_{2} (\omega) \geq \lambda_{3} (\omega) \geq
    \ldots, \qquad \lim_{k \to \infty} \lambda_{k}(\omega) = {-} \infty, \qquad
    e_{1}(\omega, x)>0, \forall x \in \TT^{d}.\]
\end{lemma}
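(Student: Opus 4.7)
The plan is to invoke the cited constructions for the existence and self-adjointness of $\mH^\omega$ and then deduce the spectral properties by abstract arguments. In $d=1$, the noise $\xi(\omega)$ lies in $\mC^{-\frac 12 - \kappa}$ and $\mH^\omega$ is a classical Schr\"odinger operator with distributional potential, treated in \cite{Fukushima1976}. In $d=2$, one has $\xi(\omega) \in \mC^{-1-\kappa}$, and the product $\xi \cdot \varphi$ is only meaningful after renormalisation: the construction in \cite{AllezChouk2015AndersonHamiltonian2D} produces, almost surely in $\omega$, a dense paracontrolled domain $\mD_\omega \subseteq L^2(\TT^d)$ and a self-adjoint operator $\mH^\omega$ on $L^2(\TT^d)$ which is bounded above, i.e.\ there is an almost surely finite $\Lambda(\omega)$ with $\langle \varphi, \mH^\omega \varphi\rangle \leqslant \Lambda(\omega) \|\varphi\|_{L^2}^2$ for all $\varphi \in \mD_\omega$.

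Next I would show discreteness of the spectrum via compactness of the resolvent. The cited constructions provide, for any $\lambda > \Lambda(\omega)$, a bounded inverse $(\lambda - \mH^\omega)^{-1} \colon L^2(\TT^d) \to \mD_\omega$, and $\mD_\omega$ embeds continuously into some space of positive Besov regularity (in $d=2$, paracontrolled functions that coincide, up to a paraproduct with the stochastic data, with elements of $\mC^{2-\frac d 2 - \kappa}$). By Besov embedding (Proposition~\ref{prop:besov_embedding}) this space embeds compactly into $L^2(\TT^d)$, so $(\lambda - \mH^\omega)^{-1}$ is a compact self-adjoint operator on $L^2(\TT^d)$. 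The spectral theorem then yields a sequence $\mu_k(\omega) \to 0$ of eigenvalues of finite multiplicity, from which one recovers eigenpairs $(\lambda_k(\omega), e_k(\omega))$ of $\mH^\omega$ with $\lambda_k(\omega) = \lambda - \mu_k(\omega)^{-1}$, listed in decreasing order, accumulating only at $-\infty$, with eigenfunctions forming an orthonormal basis.

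It remains to argue the strict inequality $\lambda_1 > \lambda_2$ together with strict positivity of $e_1(\omega, \cdot)$. The natural approach is a Krein--Rutman / Perron--Frobenius argument: for each $t>0$, the operator $e^{t \mH^\omega}$ is compact, self-adjoint and positivity improving, in the sense that $e^{t\mH^\omega} \varphi > 0$ pointwise for every non-negative, non-zero $\varphi \in L^2(\TT^d)$. Granted this, the largest eigenvalue $e^{t\lambda_1(\omega)}$ of $e^{t \mH^\omega}$ is simple and its eigenspace is spanned by a strictly positive function, which then serves as $e_1(\omega, \cdot)$ and gives $\lambda_1 > \lambda_2$. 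Positivity improvement in $d=1$ is straightforward via a Feynman--Kac representation; in $d=2$ I would obtain it by approximating $\xi$ by smooth (Wick-renormalised) mollifications $\xi_\epsilon - c_\epsilon$, whose semigroups admit Feynman--Kac formulas that are positivity improving uniformly in $\epsilon$ (since subtracting a scalar multiple of the identity preserves positivity), then passing to the limit using the convergence of the resolvents established in \cite{AllezChouk2015AndersonHamiltonian2D} together with a Harnack-type lower bound preserved under the limit.

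The main obstacle will be precisely this last point in $d=2$: ensuring that the positivity improving property survives the singular limit defining $\mH^\omega$. I would handle it by combining strong resolvent convergence of the regularised Hamiltonians with the uniform strict positivity of the associated heat kernels on compact sets (since the renormalisation is merely a deterministic scalar shift), which transfers the positivity of $e^{t \mH^\omega_\epsilon} \varphi$ to the limit $e^{t \mH^\omega} \varphi$ and then, via the semigroup property, to strict positivity. Once this is in place, the Perron--Frobenius step is entirely standard and the conclusions of the lemma follow.
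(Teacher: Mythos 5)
Your overall structure matches the paper's: discreteness of the spectrum via compactness of the resolvent, then Krein--Rutman applied to the compact, self-adjoint, positivity improving semigroup to get simplicity of $\lambda_1$ and strict positivity of $e_1$. The resolvent-compactness sketch (embedding the paracontrolled domain into a Besov space of positive regularity and using compact Besov embedding) is essentially what the cited \cite[Proposition 4.13]{AllezChouk2015AndersonHamiltonian2D} proves, so citing it directly as the paper does is equivalent.

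The genuine gap is in your $d=2$ argument for the positivity improving property of $e^{t\mH^\omega}$. You propose to get it from mollified approximations $\xi_\epsilon - c_\epsilon$, whose Feynman--Kac semigroups are positivity improving, and to pass to the limit via strong resolvent convergence together with a ``uniform strict positivity of the associated heat kernels on compact sets.'' Strong (or even norm) resolvent convergence only gives $L^2$ convergence of $e^{t\mH^\omega_\epsilon}\varphi$, which preserves non-negativity in the limit but not strict positivity; the whole burden therefore falls on the claimed uniform lower bound. That bound is not established, and it is far from obvious: the potential $\xi_\epsilon$ has $L^\infty$ norm diverging like $\epsilon^{-1}$, while the scalar shift $c_\epsilon\simeq\log(1/\epsilon)\to\infty$ multiplies the semigroup by the degenerate factor $e^{-tc_\epsilon}\to 0$; showing that the growth from rare high peaks of $\xi_\epsilon$ in the Feynman--Kac exponential compensates this degeneration, uniformly in $\epsilon$ and uniformly over compact sets, amounts to re-deriving (a quantitative form of) the strong maximum principle for the singular equation rather than merely invoking standard facts. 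The paper avoids this by citing directly the strong maximum principle for singular SPDEs proved in \cite[Theorem 5.1 and Remark 5.2]{Cannizzaro2017Malliavin}, which rests on an entirely different mechanism (Malliavin calculus / controllability of the regularity-structures lift rather than Feynman--Kac approximation). You should replace your approximation step with that citation, or supply an actual proof of the uniform heat kernel lower bound, which would be a substantial result in its own right.
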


\begin{proof}

The Hamiltonian $\mH^\omega$ has been constructed in dimension
\(d=1\) in \cite{Fukushima1976} (albeit with Dirichlet boundary conditions, but
the construction for periodic boundary conditions is identical)
and in dimension \(d=2\) in \cite{AllezChouk2015AndersonHamiltonian2D}, for
almost all \(\omega \in \Omega\). In both
cases \(\mH^\omega\) is an unbounded, selfadjoint
operator on $L^{2}$, that is:
\begin{align*}
  \mH^\omega \colon \mD(\mH^\omega) \subset L^2 \to L^2.
\end{align*}
In particular, in \(d=2\) \cite[Proposition 4.13]{AllezChouk2015AndersonHamiltonian2D}
implies that the operator $\mH^\omega$ admits compact resolvents (cf.
\cite[Section 2]{Fukushima1976} for the analogous discussion in \(d=1\)). This means that for some $\overline{\lambda}(\omega)>0$ for all $\lambda \geq \overline{\lambda}(\omega)$ the operator $\mH^{\omega} - \lambda$ is invertible, and $(\mH^{\omega} - \lambda)^{-1}$ is a compact operator on $L^2$. Hence
the spectrum of $\mH^\omega$ is discrete and the eigenvalues converge to $-\infty$.
By a classical result, see  \cite[Theorem 3.3]{Pazy1983Semigroups}, the
semigroup generated by $\mH^\omega$, denoted by $e^{t\mH^\omega}$, is compact. 
Moreover, as a consequence of strong maximum principle (in \(d=2\) such a result for singular stochastic
PDEs is proven in  \cite[Theorem 5.1 and Remark 5.2]{Cannizzaro2017Malliavin}),
the semigroup  $e^{t\mH^\omega}$ is strictly positive:
that is, for any non-zero continuous function $f$ that is positive (i.e.\ $f(x) \geq 0, \ \forall x \in \TT^d$), it holds that $e^{t\mH^\omega}f(x) > 0, \ \forall x \in \TT^d$. 
Therefore since $e^{t\mH^\omega}$ is a compact, strictly positive operator, the
Krein-Rutman Theorem \cite[Theorem~19.3]{Deimling1985} implies that the largest eigenvalue of \(\mH^\omega\) has
multiplicity one and the associated eigenfunction is strictly positive.

\end{proof}

\begin{lemma}\label{lem:domain_for_anderson_hamiltonian}
  Fix \(\omega \in
  \Omega\) and consider the Anderson Hamiltonian \(\mH^\omega\)
  as in the previous lemma. Define the domain:
  \[ \mD_{\omega} = \{ \text{Finite linear combinations of }
  \{e_{k}(\omega)\}_{k \in \NN}\}.\] 
  The domain $\mD_{\omega}$ is dense in \(C(\TT^{d})\). Moreover, for arbitrary $\zeta \in (0, 1)$ and all \(\varphi \in
  C^{\infty}\), there exists a sequence \(\varphi^{k} \in
  \mD_{\omega}\) with \(\lim_{k \to \infty} \varphi^{k}= \varphi\) in
  \(\mC^{\zeta}\).
\end{lemma}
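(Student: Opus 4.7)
The strategy is a diagonal approximation using the spectral decomposition of $\mH^\omega$ combined with the smoothing effect of the semigroup $e^{t\mH^\omega}$. The first claim—density in $C(\TT^d)$—follows from the second one via the embedding $\mC^\zeta \hookrightarrow C(\TT^d)$ together with the classical uniform density of $C^\infty$ in $C(\TT^d)$. I therefore focus on approximating a fixed $\varphi \in C^\infty(\TT^d)$ in $\mC^\zeta$.

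For $t>0$ and $N \in \NN$, set
\[
\varphi_t := e^{t\mH^\omega}\varphi = \sum_{k \geq 1} e^{t\lambda_k}\langle \varphi, e_k\rangle e_k, \qquad \varphi_t^N := \sum_{k=1}^N e^{t\lambda_k}\langle\varphi, e_k\rangle e_k \in \mD_\omega,
\]
the first expansion being an $L^2$-identity by the spectral theorem. I will choose a diagonal sequence $t_N \to 0^+$ such that $\varphi^N_{t_N} \to \varphi$ in $\mC^\zeta$, splitting
\[
\|\varphi - \varphi^N_{t_N}\|_{\mC^\zeta} \leq \|\varphi - \varphi_{t_N}\|_{\mC^\zeta} + \|\varphi_{t_N} - \varphi^N_{t_N}\|_{\mC^\zeta}.
\]
For the tail I use $|\langle \varphi, e_k\rangle| \leq \|\varphi\|_{L^2}$ together with the eigenfunction bound $\|e_k\|_{\mC^\zeta} \lesssim 1 + |\lambda_k|$, which follows from the resolvent identity $e_k = (\lambda - \lambda_k)(\lambda - \mH^\omega)^{-1} e_k$ (for $\lambda$ larger than $\lambda_1$) combined with the elliptic Schauder estimate $(\lambda - \mH^\omega)^{-1} \colon L^2 \to \mC^{2-d/2-\kappa}$; this is valid for any $\zeta < 2-d/2-\kappa$, in particular any $\zeta \in (0,1)$ since $2-d/2-\kappa > 1$ in $d=1$ and equals $1-\kappa$ in $d=2$. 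Since $e^{s\mH^\omega}$ is trace class for every $s>0$ (by compactness of the resolvent together with standard heat kernel bounds), $\sum_k e^{t\lambda_k}(1 + |\lambda_k|) < \infty$ for every $t > 0$, so the tail $\|\varphi_t - \varphi_t^N\|_{\mC^\zeta}$ vanishes as $N \to \infty$ at a rate depending on $t$.

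The main obstacle is the smoothing estimate $\|\varphi - \varphi_t\|_{\mC^\zeta} \to 0$ as $t \to 0^+$. In $d=1$ this is classical: $\varphi \in C^\infty \subset H^2 \subset \mD(\mH^\omega)$, so $\varphi_t - \varphi = \int_0^t e^{s\mH^\omega}\mH^\omega\varphi\,ds$ and parabolic Schauder estimates on the semigroup deliver the bound. In $d=2$, smooth functions do not belong to the paracontrolled domain of $\mH^\omega$, so one decomposes
\[
\varphi_t - \varphi = (e^{t\nu_0\Delta}\varphi - \varphi) + (e^{t\mH^\omega} - e^{t\nu_0\Delta})\varphi.
\]
The first summand converges to zero in $\mC^\zeta$ by standard heat kernel estimates, while the second is handled via a Duhamel-type representation together with the paracontrolled Schauder estimates used in the construction of $\mH^\omega$ in \cite{AllezChouk2015AndersonHamiltonian2D}. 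Finally, choosing $t_N \to 0$ slowly enough that the $N$-dependent tail does not blow up (the tail rate is controlled by the minimum of $t$ and $|\lambda_N|$), the diagonal sequence $\varphi^N_{t_N}$ approximates $\varphi$ in $\mC^\zeta$, concluding the proof.
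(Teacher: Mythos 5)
Your proof shares the paper's skeleton — smooth $\varphi$ with the Anderson semigroup, then truncate the spectral expansion to land in $\mD_\omega$ — but the two technical pillars you rest this on are different from the paper's, and both have gaps that the paper's choices are specifically designed to avoid.

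\textbf{The smoothing step.} You want $e^{t\mH^\omega}\varphi \to \varphi$ in $\mC^\zeta$ and, in $d=2$, handle this via a Duhamel decomposition $e^{t\mH^\omega} - e^{t\nu_0\Delta}$. But in $d=2$ the operator $\mH^\omega$ is \emph{not} $\nu_0\Delta + \xi$; it is a renormalised limit, and a naive Duhamel identity $e^{t\mH} - e^{t\nu_0\Delta} = \int_0^t e^{(t-s)\mH}\,\xi\, e^{s\nu_0\Delta}\,ds$ would carry the divergent counterterm with it. Making this rigorous requires re-running a paracontrolled fixed-point argument for the Cauchy problem, which you gesture at but do not supply; it is a non-trivial chunk of analysis. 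The paper avoids Duhamel entirely: it uses the averaged mollifier $\frac{1}{t}\int_0^t e^{s\mH}\varphi\,ds$, observes that the family is bounded in $\mC^{\zeta'}$ for $\zeta' \in (\zeta, 2-d/2)$ by the stated semigroup regularity (Equation~\eqref{eqn:proof-density-second-regularization}), extracts a limit by compact embedding $\mC^{\zeta'}\hookrightarrow\mC^\zeta$, and identifies the limit as $\varphi$ by pairing against each $e_k$ in $L^2$. No decomposition into ``Laplacian part plus remainder'' is needed.

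\textbf{The tail step.} Your bound $\|e_k\|_{\mC^\zeta}\lesssim 1+|\lambda_k|$ via the resolvent identity and elliptic regularity is correct, but you then close the tail estimate by invoking that $e^{s\mH^\omega}$ is trace class, so that $\sum_k e^{t\lambda_k}(1+|\lambda_k|)<\infty$. Trace-class for the Anderson semigroup is not established in the paper nor in the cited sources; compactness of the resolvent gives discreteness of the spectrum, but the summability $\sum_k e^{t\lambda_k}<\infty$ requires quantitative eigenvalue growth (a Weyl-type estimate), which is an additional input your argument silently assumes. The paper's tail step sidesteps eigenvalue asymptotics entirely: it bounds $\|\frac{1}{t}\int_0^t e^{s\mH}\psi\,ds\|_{\mC^\zeta}\lesssim t^{-1+\delta}\|\psi\|_{L^2}$ directly from the semigroup's $L^2\to\mC^\zeta$ regularisation, applies this with $\psi = \Pi_{\le N}\varphi - \varphi$, and uses only the trivial $L^2$-convergence of spectral projections. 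That is both leaner and fully rigorous with the material already in hand.

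In short, the strategy is recognisable and the conclusion would follow if both of your auxiliary claims were established, but as written they are stated rather than proved, and the paper's route is chosen precisely to make those two facts unnecessary.
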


\begin{proof}
  Since \(\omega \in \Omega\) is fixed, we avoid writing the dependence on it
  to lighten the notation. As the statement regarding the approximation of \(\varphi\) in
  \(\mC^{\zeta}\) implies density in \(C(\TT^{d})\) we restrict to proving the
  approximation. First, we require some better understanding of the parabolic Anderson
  semigroup. Here we make use of some known regularization results.
  
  \textit{Step 1.} 
  Consider the operator \(\mH\) as in
  the previous lemma and the associated semigroup:
  \[ e^{t \mH} \colon L^{2}(\TT^{d}) \to L^{2}(\TT^{d}).\] 
  This semigroup inherits some of the regularizing properties of the heat
  semigroup, namely, for \(T >0\) and \(p \in [1, \infty]\) it can be extended so that:
  \begin{equation}\label{eqn:proof-density-first-regularization} 
    \sup_{0 < t \leq T } t^{\gamma}\| e^{t \mH} \varphi \|_{\mC^{\alpha}_{p}}
    \lesssim \| \varphi \|_{\mC^{\beta}_{p}}, 
  \end{equation}
  for \(\alpha, \beta\) and $\gamma$ satisfying:
  \[ \gamma> \frac{\alpha- \beta}{2} , \qquad \beta +2 > \frac{d}{2}, \qquad \alpha < 2 - \frac{d}{2}, \qquad \alpha
  > \beta.\]
  The first constraint is essentially identical to the one appearing in
  Schauder estimates (cf.  Proposition~\ref{prop:schauder_estimates}), the
  second one guarantees that the product \(e^{t \Delta} \varphi \cdot \xi\) is a
  well-defined product of distributions, while the third constraint is due to
  the fact that \(\smallint_0^t e^{(t-s) \Delta} \xi \ud s\) has always worse
  regularity than \(2 - \frac{d}{2}\). 
  Similarly, for \(\beta>2-
  \frac{d}{2}\) and \(\zeta< 2 - \frac{d}{2}\) one has:
  \begin{equation}\label{eqn:proof-density-second-regularization} 
    \sup_{0 \leq t \leq T} \| e^{t \mH} \varphi \|_{\mC^{\zeta}_{p}} \lesssim \| \varphi
    \|_{\mC^{\beta}_{p}}.
  \end{equation}
  We will not prove these results. Instead we refer to \cite[Section 6]{GubinelliPerkowski2017KPZ} for the study of singular SPDEs with irregular initial conditions.
  
  \textit{Step 2.} Applying iteratively 
  Equation~\eqref{eqn:proof-density-first-regularization} and Besov embedding
  implies that \(e_{k} \in \mC^{2 - \frac d 2 - \kappa}\) for any \(\kappa>0\).
  Hence the embedding \(\mD_{\omega} \subseteq \mC^{2 - \frac d 2 - \kappa}\)
  holds. Now we prove the statement regarding the approximability of
  \(\varphi\). For any \(\varphi \in C^{\infty}\) and \(\zeta = 1 - \kappa <
  1\) (for some \(\kappa>0\)) one has:
  \[\lim_{t \to 0^+}  \frac{1}{t} \int_{0}^{t} e^{s \mH} \varphi \ud s = \varphi
  \ \ \text{ in } \ \ \mC^{\zeta}.\] 
  This result can be seen as follows:
  Equation~\eqref{eqn:proof-density-second-regularization} implies that
  \[ \sup_{0 \leq t \leq T} \bigg\| \frac{1}{t} \int_{0}^{t} e^{s \mH} \varphi
    \ud s \bigg\|_{\mC^{\zeta^{\prime}}}
  < \infty, \] 
  for \(\zeta < \zeta^{\prime} < 2 - \frac{d}{2}.\) The estimate above implies compactness
  in \(\mathcal{C}^{\zeta}.\) Projecting on the eigenfunctions \(e_{k}\) one
  sees that any limit point is necessarily \(\varphi\).
  Hence fix any \(\ve>0\) and choose \(t(\ve)\) such that
  \[ \bigg\| \frac{1}{t(\ve)} \int_{0}^{t (\ve)} e^{s \mH} \varphi \ud s -
  \varphi \bigg\|_{\mC^{\zeta}} < \frac{\ve}{2}.\]
  Define \(\Pi_{\leq N} \varphi = \sum_{k =0}^{N} \langle \varphi,
  e_{k} \rangle e_{k} \). Since the projection commutes with the operator, the
  proof is complete if we can show that there exists an \(N(\ve)\) such that:
  \[ \bigg\| \frac{1}{t(\ve)} \int_{0}^{t (\ve)}e^{s \mH} (\Pi_{\leq
  N(\ve)} \varphi- \varphi) \ud s \bigg\|_{\mC^{\zeta}} \leq \frac \ve 2.\] 
  Here we use \eqref{eqn:proof-density-first-regularization} to bound
  for general \(\psi \in L^{2}\):
  \begin{align*}
    \bigg\| \frac{1}{t(\ve)} \int_{0}^{t (\ve)}e^{s \mH} \psi \ud s
    \bigg\|_{\mC^{\zeta}} & \lesssim \frac{1}{t(\ve)} \int_{0}^{t(\ve)} \Big(
    \frac{s}{2}  \Big)^{-\big( \frac 1 2- \frac \kappa 4\big)} \| e^{\frac s 2 \mH} \psi
    \|_{\mC^{- \frac \kappa 2}} \ud s \\
    & \lesssim \frac{1}{t(\ve)} \int_{0}^{t(\ve)} \Big(
    \frac{s}{2}  \Big)^{-\big(\frac 1 2- \frac \kappa 4\big)} \| e^{\frac s 2 \mH} \psi
    \|_{\mC^{\frac d 2 - \frac \kappa 2}_{2}} \ud s \\
    & \lesssim \bigg( \frac{1}{t(\ve)} \int_{0}^{t(\ve)} s^{-1 + \frac \kappa 4 +
    \frac \kappa 8} \ud s \bigg) \| \psi \|_{L^{2}} \lesssim t(\ve)^{-1 + \frac
    {3\kappa}{8}} \| \psi \|_{L^{2}},
  \end{align*}
  where we additionally applied Besov embedding. Choosing
  \(N(\ve)\) such that \(\| \Pi_{\leq N} \varphi - \varphi \|_{L^{2}} \lesssim
  t(\ve)^{1- \frac{3 \kappa}{8} } \frac \ve 2,\) the proof is complete.
  \end{proof}

\subsection{Convergence of eigenfunctions} 
  Before we move on to study semidiscrete approximations of the Anderson
  Hamiltonian, we recall and adapt a result by Kato concerning the convergence of
  eigenvalues and (in a generalized sense) the convergence of eigenfunctions of
  a sequence of closed linear operators
  on a Hilbert space \(H\) with norm \(\| \cdot \| = \sqrt{ \langle \cdot,
  \cdot  \rangle}\). We will denote with
  \[ \sigma (A), \varrho(A) \subseteq \CC \] 
  the spectrum and the resolvent sets of a closed linear operator \(A\) on
  \(H\) respectively. If \(A\) is bounded, we denote with \(\| A \| = \sup_{\| x \| = 1} \| A x \|\)
  its operator norm. We write \(B(H)\) for the space of bounded operators,
  endowed with operator norm. Moreover, we denote with \(\mathrm{Rng}(A)\) the image \(A(H)\) of a
closed operator on \(H\).\\
  Now, consider a bounded set \(\Omega \subseteq \CC\)
  such that the boundary \(\Gamma = \partial \Omega\) is a smooth curve
  satisfying \(\Gamma \subseteq \varrho(A)\). We write \(R(A, \zeta) =
  (A - \zeta)^{-1}\) for the resolvent of \(A\) at \(\zeta \in \varrho(A)\). Then
  we introduce the Riesz projection
  \[ P (\Omega, A) = - \frac{1}{2 \pi \iota} \int_{\Gamma} R(A, \zeta) \ud
  \zeta,\]
  which for all our purposes coincides with the projection on certain
  eigenspaces, as described in the following lemma, which is proven for example
  in \cite[Proposition 6.3]{HislopSigal-IntroductionSpectralTheory}.
  \begin{lemma}\label{lem:riesz-projection}
    Let \(A\) be a selfadjoint operator on \(H\). Suppose that
    \(\Omega\) (with boundary \(\Gamma\) as above) contains only isolated points of the spectrum:
    \(\Omega \cap \sigma(A) = \{\lambda_{i}\}_{i = 1}^{m}\). Then \(P(\Omega, A)\) coincides
    with the orthogonal projection on the space:
    \[ \bigcup_{i = 1}^{m} \mathrm{Ker}(A- \lambda_{i}). \] 
  \end{lemma}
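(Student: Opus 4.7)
The plan is to prove the statement in four steps, establishing in turn that $P := P(\Omega, A)$ is idempotent, commutes with $A$, is selfadjoint, and has range equal to the specified subspace. The main tools are the first resolvent identity $R(A, \zeta) R(A, \zeta^{\prime}) = (\zeta^{\prime} - \zeta)^{-1}[R(A, \zeta) - R(A, \zeta^{\prime})]$ and Cauchy's integral formula.

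For idempotency I would deform $\Gamma$ to a slightly larger smooth curve $\Gamma^{\prime} \subseteq \varrho(A)$ also enclosing $\Omega$ and write
\[ P^2 = \frac{1}{(2\pi\iota)^2} \int_\Gamma \int_{\Gamma^{\prime}} R(A, \zeta) R(A, \zeta^{\prime}) \ud \zeta^{\prime} \ud \zeta .\]
Substituting the resolvent identity splits the integrand, and since $\Gamma^{\prime}$ lies outside $\Gamma$, Cauchy's formula gives $\int_\Gamma (\zeta^{\prime} - \zeta)^{-1} \ud \zeta = 0$, while $\int_{\Gamma^{\prime}} (\zeta^{\prime} - \zeta)^{-1} \ud \zeta^{\prime} = 2 \pi \iota$ for $\zeta$ inside $\Gamma^{\prime}$; Fubini then collapses the double integral back to $P$. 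Commutativity with $A$ is immediate from commutativity of each $R(A, \zeta)$ with $A$, and in particular gives $\mathrm{Rng}(P) \subseteq \mathcal{D}(A)$ and $A\bigl(\mathrm{Rng}(P)\bigr) \subseteq \mathrm{Rng}(P)$. For selfadjointness, since $\sigma(A) \subset \RR$ and $\Omega \cap \sigma(A)$ is a finite real set, one may shrink $\Omega$ to a disjoint union of small disks centered at the $\lambda_i$ and choose $\Gamma$ symmetric about the real axis; then $R(A, \zeta)^* = R(A, \bar{\zeta})$ and a change of contour variable $\zeta \mapsto \bar{\zeta}$ yields $P^* = P$.

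The last step, identifying $\mathrm{Rng}(P)$ with $\bigcup_i \mathrm{Ker}(A - \lambda_i)$, is the main content. For the easy inclusion, any eigenvector $e$ with $A e = \lambda_i e$ and $\lambda_i \in \Omega$ satisfies $R(A, \zeta) e = (\lambda_i - \zeta)^{-1} e$, and Cauchy's formula gives $P e = e$. Conversely, introduce the bounded operator $B := \frac{1}{2\pi\iota}\int_\Gamma \zeta R(A, \zeta) \ud \zeta$, which one checks equals $AP = PA$ on $\mathcal{D}(A)$; hence $A|_{\mathrm{Rng}(P)}$ extends to a bounded operator on $\mathrm{Rng}(P)$, and by the previous step it is selfadjoint there. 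A parallel contour computation shows that for any $\lambda \in \CC \setminus \{\lambda_1, \dots, \lambda_m\}$ the bounded operator $\frac{1}{2\pi\iota}\int_\Gamma (\lambda - \zeta)^{-1} R(A, \zeta) \ud \zeta$ inverts $(\lambda - A)$ on $\mathrm{Rng}(P)$, so $\sigma(A|_{\mathrm{Rng}(P)}) \subseteq \{\lambda_1, \ldots, \lambda_m\}$. The spectral theorem for bounded selfadjoint operators then decomposes $\mathrm{Rng}(P)$ orthogonally into the eigenspaces, yielding the claim.

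The main obstacle is making this final identification rigorous: one must verify that $A|_{\mathrm{Rng}(P)}$ is genuinely bounded as an operator on $\mathrm{Rng}(P)$ (rather than only densely defined there) and that its spectrum is exhausted by the finite list $\{\lambda_i\}$, before invoking the bounded spectral theorem on the invariant subspace. A more streamlined route would be to invoke the Borel functional calculus for the unbounded selfadjoint operator $A$: the contour integral representation identifies $P$ with $\mathbf{1}_\Omega(A)$, and the claim then follows directly from the basic properties of the functional calculus.
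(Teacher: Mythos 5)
The paper does not give its own argument here: the sentence immediately preceding the lemma defers the proof to \cite[Proposition 6.3]{HislopSigal-IntroductionSpectralTheory}. So there is nothing of the paper's to compare against in detail; one can only ask whether your argument is correct and how it sits relative to the standard treatment in the cited reference. On that score you are essentially right, and this is the classical Riesz-projection computation. Idempotency via the resolvent identity and a nested contour, commutation with $A$, invariance of $\mathrm{Rng}(P)$, and selfadjointness via $R(A,\zeta)^{*}=R(A,\bar\zeta)$ on a conjugation-symmetric contour are all standard and sound. The eigenvector computation $Pe=e$ for $Ae=\lambda_i e$, $\lambda_i\in\Omega$, is the easy inclusion. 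For the converse, your identification of $AP$ with the bounded operator $\frac{1}{2\pi\iota}\int_\Gamma \zeta R(A,\zeta)\,\ud\zeta$ and the ensuing restriction to the invariant closed subspace $\mathrm{Rng}(P)$ is the right idea; the only place to be careful is that your claim that $(\lambda-A)$ is invertible on $\mathrm{Rng}(P)$ for every $\lambda\notin\{\lambda_1,\dots,\lambda_m\}$ requires a small case split (for $\lambda$ exterior to $\Gamma$ use the contour operator you wrote down and the identity $(A-\lambda)R(A,\zeta)=\mathrm{Id}+(\zeta-\lambda)R(A,\zeta)$; for $\lambda$ interior but in $\varrho(A)$ simply use $R(A,\lambda)P$). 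Granted that, the restricted operator is bounded, selfadjoint, with finite spectrum, and the spectral theorem finishes the job — using that at an isolated point of the spectrum of a bounded selfadjoint operator the spectral projection equals the kernel projection.

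Your closing remark is in fact the cleanest route and is worth making primary rather than an afterthought: substitute the spectral representation $R(A,\zeta)=\int_{\RR}(\mu-\zeta)^{-1}\,E(\ud\mu)$ into $P(\Omega,A)=-\frac{1}{2\pi\iota}\int_\Gamma R(A,\zeta)\,\ud\zeta$, exchange the order of integration, and evaluate $-\frac{1}{2\pi\iota}\int_\Gamma(\mu-\zeta)^{-1}\ud\zeta=1_\Omega(\mu)$ pointwise by Cauchy's formula; this identifies $P(\Omega,A)=E(\Omega\cap\sigma(A))=\sum_i E(\{\lambda_i\})$, each summand being the orthogonal projection onto $\mathrm{Ker}(A-\lambda_i)$, and disposes of all four of your steps at once. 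This is almost certainly the proof one finds in the cited source, and it makes transparent why the selfadjointness hypothesis is what turns the Riesz projection into an \emph{orthogonal} projection onto the stated eigenspaces.
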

  Next we recall that Riesz projections are continuous with
  respect to convergence in the resolvent sense. This is a weaker version of a 
  result by Kato \cite[Theorem IV.3.16]{Kato1995}.
  \begin{proposition}
    Let $A_{n}$ be a sequence of closed self-adjoint operators on \(H\). Let
    \(A\) be a closed self-adjoint operator such that, for some
    \(\zeta_{0} \in \varrho(A)\):
    \[ \zeta_{0} \in \varrho(A_{n}), \ \ \forall n \in \NN, \qquad \text{and}
    \qquad \lim_{n \to \infty} \| R(A^{n}, \zeta_{0}) - R(A, \zeta_{0}) \| =0. \] 
    Let \(\lambda\) be an isolated eigenvalue of \(A\) and consider a smooth curve
    \(\Gamma= \partial \Omega\) around \(\lambda\), such that \(\Gamma \subseteq
    \varrho (A), \ \Omega \cap \sigma(A) = \{\lambda\}.\)
    Then $\lim_{n \to \infty}  \| P( \Omega, A_{n}) - P( \Omega, A) \| = 0.$ 
\end{proposition}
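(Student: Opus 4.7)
The strategy is to reduce the convergence of Riesz projections to a uniform convergence of resolvents on the compact contour $\Gamma$. Once one establishes that $\Gamma \subseteq \varrho(A_n)$ for all sufficiently large $n$ and that $R(A_n, \zeta) \to R(A, \zeta)$ uniformly for $\zeta \in \Gamma$ in operator norm, interchanging the limit with the contour integral immediately yields $P(\Omega, A_n) \to P(\Omega, A)$ in $B(H)$.

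The key analytic tool is the factorisation
$$A_n - \zeta = (A_n - \zeta_0)\bigl[I + (\zeta_0 - \zeta) R(A_n, \zeta_0)\bigr] =: (A_n - \zeta_0)\, T_n(\zeta),$$
valid on $D(A_n)$, together with the analogous $T(\zeta) := I + (\zeta_0 - \zeta) R(A, \zeta_0)$ on $D(A)$. This shows that $\zeta \in \varrho(A_n)$ if and only if $T_n(\zeta)$ is invertible in $B(H)$, in which case $R(A_n, \zeta) = T_n(\zeta)^{-1} R(A_n, \zeta_0)$, and identically for $A$. In this way the entire problem is reduced to comparing the operator-valued maps $T_n$ and $T$ on $\Gamma$, which depend on $n$ only through the single quantity $R(A_n, \zeta_0)$ that we already control by assumption.

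Next, since $\Gamma$ is compact and $\Gamma \subseteq \varrho(A)$, the map $\zeta \mapsto T(\zeta)^{-1}$ is continuous on $\Gamma$ and hence uniformly bounded there, say by $M < \infty$. Moreover,
$$\sup_{\zeta \in \Gamma}\|T_n(\zeta) - T(\zeta)\| \leq \Bigl(\sup_{\zeta \in \Gamma}|\zeta - \zeta_0|\Bigr)\, \|R(A_n, \zeta_0) - R(A, \zeta_0)\| \longrightarrow 0.$$
Writing $T_n(\zeta) = T(\zeta)\bigl[I + T(\zeta)^{-1}(T_n(\zeta) - T(\zeta))\bigr]$ and expanding by a Neumann series yields invertibility of $T_n(\zeta)$ for all $n$ large enough, uniformly in $\zeta \in \Gamma$, together with $T_n(\zeta)^{-1} \to T(\zeta)^{-1}$ uniformly in $\zeta \in \Gamma$. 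Combined with the norm convergence $R(A_n, \zeta_0) \to R(A, \zeta_0)$, this gives $R(A_n, \zeta) \to R(A, \zeta)$ uniformly on $\Gamma$, and the desired conclusion follows.

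The main subtlety is precisely this propagation of norm-resolvent convergence from the single point $\zeta_0$ to the whole compact contour $\Gamma$: the factorisation above is what permits one to leverage convergence at one point of $\varrho(A)$ into control of the resolvent along any compact curve in $\varrho(A)$, and the compactness of $\Gamma$ together with continuity of inversion on the invertible elements of $B(H)$ upgrades pointwise invertibility and pointwise convergence into their uniform analogues. Self-adjointness plays no essential role in the argument beyond justifying the spectral picture used in the statement (real discrete spectrum, simple contour encircling an isolated eigenvalue).
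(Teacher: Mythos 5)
Your proof is correct, and it reconstructs essentially the standard argument one finds in Kato's book; the paper itself does not prove this proposition but cites it directly as a weaker version of \cite[Theorem IV.3.16]{Kato1995}, so your write-up supplies a complete and self-contained proof where the paper supplies only a reference.

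One small point worth making explicit: you assert that $\zeta\in\varrho(A_n)$ if and only if $T_n(\zeta)$ is invertible in $B(H)$, but the only direction actually used is that invertibility of $T_n(\zeta)$ implies $\zeta\in\varrho(A_n)$ together with $R(A_n,\zeta)=T_n(\zeta)^{-1}R(A_n,\zeta_0)$. That direction follows directly from the factorisation, since for $y\in H$ the candidate $x=T_n(\zeta)^{-1}R(A_n,\zeta_0)y$ indeed lies in $D(A_n)$ (because $T_n(\zeta)x=R(A_n,\zeta_0)y\in D(A_n)$ and $x=T_n(\zeta)x-(\zeta_0-\zeta)R(A_n,\zeta_0)x$ is then a sum of two elements of $D(A_n)$) and satisfies $(A_n-\zeta)x=y$; injectivity of $A_n-\zeta$ follows since $(A_n-\zeta)x=0$ forces $T_n(\zeta)x=0$. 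The converse direction, while true (with $T_n(\zeta)^{-1}=I+(\zeta-\zeta_0)R(A_n,\zeta)$), is not needed. Your closing remark that self-adjointness plays no role is also accurate — the argument only uses closedness of the operators and the existence of the single common resolvent point $\zeta_0$.
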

The previous result allows us to deduce the following.
\begin{corollary}\label{cor:convergence-eigenfunctions}
  In the setting of the previous proposition, let $\{
  e_j \}_{j = 1}^{m(\lambda)}$ be orthonormal eigenfunctions associated to the
  eigenvalue $\lambda$ of the operator \(A\) (here \(m (\lambda)\) is the multiplicity of
  \(\lambda\)).
  There exists an $n(\lambda) \in \NN$ such that for all
  $n \geq n(\lambda)$ the following statements hold.
  \begin{itemize}
    \item[i)] \( \dim ( \mathrm{Rng} (P (\Omega, A_{n})) ) = \dim ( \mathrm{Rng}(P (\Omega,
  A))) = m(\lambda).\)
\item[ii)] For every $j \in \{ 1, \ldots m(\lambda) \}$ there exists an
  $e_j^{n} \in \mD(A_{n})$ (the domain of \(A_{n}\)) satisfying:
  \[ e_j^{n} \to e_j \quad \text{in} \quad H, \qquad
     A_{n} e^{n}_j \to \lambda e_j \quad 
     \text{in} \quad H. \]
   \item[iii)] For every \(j \in \{1, \dots, m(\lambda)\}\), \(e_{j}^{n}\) has a representation of the form
     \begin{align*}
       e^{n}_{j}= \sum_{i = 1}^{m(\lambda)} \alpha_{ij}^{n}
       \overline{e}^{n}_{i},\qquad \sum_{i = 1}^{m (\lambda)}
       (\alpha_{ij}^{n})^{2} = 1,
     \end{align*}
     with \(\{ \overline{e}_{i}^{n}\}_{i = 1, \dots, m (\lambda)}\) a set of
     eigenfunctions of \(A_{n}\). That is, for every \(i \in \{1, \dots,
       m(\lambda)\}\): 
     \[A_{n} \overline{e}^{n}_{i} =
       \lambda^{n}_{i} \overline{e}^{n}_{i}, \quad \text{for some} \quad
       \lambda^{n}_{i} \in \RR \ \text{s.t.} \
     \lim_{n \to \infty} \lambda^{n}_{i} = \lambda.\]
   \item[iv)] If \(\lambda\) is a simple eigenvalue, then \(e_{1}^{n}\) is an eigenfunction of \(A_{n}\), with eigenvalue
     \(\lambda_{1}^{n} \to \lambda\).
 \end{itemize}
\end{corollary}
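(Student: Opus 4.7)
The plan is to deduce everything from the operator-norm convergence of Riesz projections $P(\Omega, A_n) \to P(\Omega, A)$ provided by the preceding proposition, and then use finite-dimensional spectral theory on the common range.

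For (i), fix a smooth closed curve $\Gamma = \partial \Omega$ isolating $\lambda$ from the rest of $\sigma(A)$. The previous proposition gives $\|P(\Omega, A_n) - P(\Omega, A)\| \to 0$. A standard consequence of norm convergence of projections is that for large $n$ the ranges have the same dimension: once $\|P(\Omega, A_n) - P(\Omega, A)\| < 1$, the restriction of $P(\Omega, A_n)$ to $\mathrm{Rng}(P(\Omega, A))$ (and vice versa) is an isomorphism onto its image. Combined with Lemma~\ref{lem:riesz-projection}, this yields $\dim \mathrm{Rng}(P(\Omega, A_n)) = m(\lambda)$.

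For (ii), set $\widetilde{e}_j^n := P(\Omega, A_n) e_j$. Since $P(\Omega, A) e_j = e_j$ by Lemma~\ref{lem:riesz-projection}, we get $\widetilde{e}_j^n \to e_j$ in $H$, and for $n$ large enough $\|\widetilde{e}_j^n\| \geq 1/2$, so we may define $e_j^n := \widetilde{e}_j^n/\|\widetilde{e}_j^n\|$, which still converges to $e_j$. To control $A_n e_j^n$, I would upgrade the pointwise resolvent convergence at $\zeta_0$ to uniform operator-norm convergence on $\Gamma$ via the resolvent identity $R(A_n,\zeta) = [1-(\zeta-\zeta_0)R(A_n,\zeta_0)]^{-1} R(A_n,\zeta_0)$ and a compactness argument on $\Gamma$. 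Then the representation
\[
A_n P(\Omega, A_n) = -\frac{1}{2\pi \iota} \int_\Gamma \zeta\, R(A_n, \zeta)\, d\zeta
\]
shows that $A_n P(\Omega, A_n) \to A P(\Omega, A)$ in operator norm. Applied to $e_j$ this gives $A_n \widetilde{e}_j^n \to A e_j = \lambda e_j$, and after normalization the same holds for $A_n e_j^n$.

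For (iii), the space $V_n := \mathrm{Rng}(P(\Omega, A_n))$ lies in $\mathcal{D}(A_n)$ and is $A_n$-invariant, hence $A_n|_{V_n}$ is a self-adjoint operator on the finite-dimensional Hilbert space $V_n$. Diagonalising yields an orthonormal basis $\{\overline{e}_i^n\}_{i=1}^{m(\lambda)}$ of eigenvectors with real eigenvalues $\lambda_i^n \in \Omega \cap \sigma(A_n)$. Since $\Omega$ can be taken to be an arbitrarily small neighbourhood of $\lambda$ (the bound on $n(\lambda)$ depending on its size), we deduce $\lambda_i^n \to \lambda$. Expanding the unit vector $e_j^n \in V_n$ in this basis gives $e_j^n = \sum_i \alpha_{ij}^n \overline{e}_i^n$ with $\sum_i (\alpha_{ij}^n)^2 = \|e_j^n\|^2 = 1$. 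Finally, (iv) is immediate: if $m(\lambda)=1$ then $V_n$ is one-dimensional, so $e_1^n = \pm \overline{e}_1^n$ is itself an eigenfunction of $A_n$, with eigenvalue $\lambda_1^n \to \lambda$.

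The main technical point is the upgrade from pointwise resolvent convergence at $\zeta_0$ to the uniform convergence on $\Gamma$ needed for the operator-norm limit of $A_n P(\Omega, A_n)$; the rest is essentially linear algebra and bookkeeping once the Riesz projection machinery is in place.
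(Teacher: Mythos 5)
Your proposal is correct but follows a genuinely different route than the paper for two of the four parts. For part (i), you invoke the standard fact that orthogonal projections with $\|P-Q\|<1$ have equal rank, whereas the paper constructs $m(\lambda)$ approximately-orthonormal vectors in $\mathrm{Rng}(P(\Omega,A_n))$ by hand (establishing $m_n\geq m$), then argues $m_n\leq m$ by contradiction, building an extra almost-orthogonal unit vector that would violate projection convergence; your argument is cleaner but requires knowing that fact, while the paper's is entirely self-contained. For part (ii), the key difference is how you get $A_n e_j^n\to\lambda e_j$: you pass through the contour representation $A_nP(\Omega,A_n)=-\frac{1}{2\pi\iota}\int_\Gamma\zeta R(A_n,\zeta)\,d\zeta$, which requires you to upgrade norm resolvent convergence at the single point $\zeta_0$ to convergence that is uniform over $\Gamma$; as you flag, this is the technical hinge of your approach, and note it needs not only the resolvent identity plus a covering argument but also that $\Gamma\subseteq\varrho(A_n)$ for $n$ large (true, but deserving of a remark in the self-adjoint case since $\Gamma$ crosses $\RR$ at two points). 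The paper instead stays entirely at the level of the projections: it expands $A_n\tilde e_j^n=\sum_i\lambda_i^n\langle e_j,\overline e_i^n\rangle\overline e_i^n$, bounds $\|A_n\tilde e_j^n-\lambda\tilde e_j^n\|\leq\big(\sum_i(\lambda-\lambda_i^n)^2\big)^{1/2}$, and closes with upper semicontinuity of the spectrum (Kato IV.3.1) to get $\lambda_i^n\to\lambda$, so it never needs uniform resolvent convergence on $\Gamma$. A smaller deviation: the paper Gram--Schmidt-orthogonalizes the $\tilde e_j^n$ so that $\{e_j^n\}_{j=1}^{m(\lambda)}$ are mutually orthonormal; you only normalize each one separately. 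The corollary as stated only needs $\|e_j^n\|=1$, so your version satisfies the literal claim, but the downstream use in Theorem~\ref{thm:semidiscrete-pam-approximation} asks for an orthonormal family, so the Gram--Schmidt step is worth preserving. Parts (iii) and (iv) are essentially identical in both treatments.
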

\begin{proof}
  Consider $m_{n} (\lambda) = \dim ( \mathrm{Rng} (P (\Omega,
  A_{n})) )$ and $\{ \overline{e}_j^{n} \}_{j = 1}^{m_n (\lambda)}$ an
  orthonormal basis for the subspace on
  which $P (\Omega, A_{n})$ projects. In particular, in view of
  Lemma~\ref{lem:riesz-projection}, we can choose \(
  \overline{e}_{j}^{n}\) to be eigenfunctions for \(A_{n}\), each associated to an eigenvalue
  \(\lambda^{n}_{j}\). According to the same lemma, one has:
  \[ P (\Omega, A_{n}) v = \sum_{i = 1}^{m_n (\lambda)}
    \langle v, \overline{e}_j^{n} \rangle \overline{e}_j^{n}, \qquad \forall v \in
H. \]
Define for $j = 1, \ldots, m(\lambda)$:
  $ \tilde{e}^{n}_j = \sum_{i = 1}^{m_n (\lambda)} \langle e_j,
    \overline{e}_i^{n} \rangle \overline{e}_i^{n} = P (\Omega,
     A_{n}) e_j. $
  From the convergence
  $ \| P (\Omega, A_{n}) - P (\Omega, A) \| \rightarrow 0,$
  which is the content of the previous proposition, we obtain that for $j = 1,
  \ldots, m(\lambda)$:
  \[ \lim_{n \rightarrow \infty} \tilde{e}_j^{n} : =
     \lim_{n \rightarrow \infty} \sum_{i = 1}^{m_n (\lambda)} \langle
     e_j, \overline{e}_i^{n} \rangle \overline{e}_i^{n} = e_j \quad \text{in}
   \quad H. \]
   Hence we can assume that \(n(\lambda)\) is sufficiently large, so that
  \[ \Big\vert \| \tilde{e}_j^{n} - \sum_{i =
     1}^{j - 1} \langle \tilde{e}_j^{n}, \tilde{e}_i^{n}
   \rangle \tilde{e}_i^{n} \| - 1 \Big\vert \geq \frac{1}{2} >0, \qquad \forall j=
 1, \dots, m(\lambda).\] 
   Then we can define (via a Gram-Schmidt procedure)
  \[ e_j^{n} = \frac{\tilde{e}_j^{n} - \sum_{i =
     1}^{j - 1} \langle \tilde{e}_j^{n}, \tilde{e}_i^{n}
     \rangle \tilde{e}_i^{n}}{\left\| \tilde{e}_j^{n} -
     \sum_{i = 1}^{j - 1} \langle \tilde{e}_j^{n},
     \tilde{e}_i^{n} \rangle \tilde{e}_i^{n} \right\|}, \]
     and we obtain a set $\{ e_j^{n} \}_{j = 1}^{m(\lambda)}$ of orthonormal functions with
  \[ \lim_{n \rightarrow \infty} e_j^{n} = e_j \quad \text{in}
     \quad H, \qquad P (\Omega, A_{n})
     e_j^{n} = e_j^{n}. \]
     In particular, $m_n (\lambda) \geqslant m(\lambda).$ Suppose $m_n
     (\lambda) > m(\lambda)$ on a subsequence \(n_{k}\) of $n$ that converges to
  \(\infty\). Choose, along that subsequence, a unit element $e_{m(\lambda) +
  1}^{n_{k}} \in H$ with
  \[ P(\Omega, A_{n_{k}}) e^{n_{k}}_{m ( \lambda) +1} = e^{n_{k}}_{m(\lambda)+1}, \qquad \langle e_{m (\lambda) + 1}^{n_{k}}, e_j^{n_{k}} \rangle = 0,
  \qquad \forall j = 1, \ldots, m(\lambda). \]
  We can then assume for arbitrary $\delta$ (provided $n_{k}$ is large enough),
  that
  $ \sum_{j = 1}^{m (\lambda) } \| e_j^{n_{k}} - e_j \| < \delta .$
  Then
  \begin{align*}
    \| P (\Omega, A_{n_k}) (e_{m(\lambda) + 1}^{n_k}) - P
    (\Omega, A) (e_{m(\lambda) + 1}^{n_{k}}) \| & \geqslant 1 - \left\|
    \sum_{j = 1}^{m(\lambda)} \langle e_{m(\lambda) + 1}^{n_{k}},
    e_j^{n_{k}} - e_j \rangle e_j \right\| \geqslant  1 - \delta .
  \end{align*}
  Since \(\delta\) is arbitrarily small this contradicts the convergence of the
  projections. \\
  Let us pass to the convergence of \(A_{n} e^{n}_{j}\). Observe that
  $ A_{n} \tilde{e}_j = \sum_{i = 1}^{m (\lambda)}
    \lambda_{i}^{n} \langle e_j, \overline{e}_i^{n} \rangle
  \overline{e}_i^{n}.$ Hence
  \begin{align*}
    \| A_{n} \tilde{e}_j^{n} - \lambda e_j \| & \leqslant  \| A_{n} \tilde{e}_j^{n}
    - \lambda \tilde{e}_j^{n} \| + \lambda \| \tilde{e}_j^{n} - e_j \|\\
    & \leqslant \sqrt{\sum_{i = 1}^{m(\lambda)} (\lambda - \lambda_{i}^{n})^2
    \langle e_j, \overline{e}_i^{n} \rangle^2} + \lambda \|
    \tilde{e}_j^{n} - e_j \|  \leqslant \sqrt{\sum_{i = 1}^{m_n} (\lambda -
    \lambda_{i}^{n})^2}+ \lambda \| \tilde{e}_j^{n} - e_j \|,
  \end{align*}
  and the last two terms converge to zero, provided that for each \(i\)
  $ \lim_{n \to \infty} \lambda_{i}^{n} = \lambda. $
  This follows from the upper semicontinuity of the
  spectrum proven in \cite[Theorem IV.3.1]{Kato1995}. If we now use the definition of
  $e_j^{n}$ we obtain similarly that:
  \begin{align*}
    \lim_{n \rightarrow \infty} A_{n} e_j^{n} &
    = \lim_{n \rightarrow \infty} \frac{A_{n}
    \tilde{e}_j^{n} - \sum_{i = 1}^{j - 1} \langle
    \tilde{e}_j^{n}, \tilde{e}_i^{n} \rangle
    A_{n} \tilde{e}_i^{n}}{\left\|
    \tilde{e}_j^{n} - \sum_{i = 1}^{j - 1} \langle
    \tilde{e}_j^{n}, \tilde{e}_i^{n} \rangle
    \tilde{e}_i^{n} \right\|} = \lambda e_j .
  \end{align*}
  To conclude the proof, note that the representation of \(e^{n}_{j}\) in terms
  of the basis \( \{\overline{e}_{i}^{n}\}\) follows from the fact that the
  latter consists of orthonormal functions and that \(\| e_{j}^{n} \| = 1\).
  Clearly, if \(m( \lambda) = 1\) we can choose \(e^{n}_{1} =
  \overline{e}^{n}_{1}\).
\end{proof}

\subsection{Convergence in resolvent sense}

This section describes the general idea behind the convergence that we will
prove in the upcoming subsection. 
\begin{proposition}\label{prop:convergence-resolvent-sense}
  Consider a sequence of selfadjoint operators \(A_{n}\) on a Hilbert space
  \(H\). Assume there exists a \(\lambda_{0} \in \RR\) and an operator
  \(B_{\lambda_{0}} \in B(H)\) such that:
  \[ \lambda_{0} \in \varrho(A_{n}) \ \forall n \in \NN, \qquad \lim_{n \to
  \infty} \| R(A_{n}, \lambda_{0}) - B_{\lambda_{0}} \|=0, \] 
  and satisfying
  $ \mathrm{Ker}(B_{\lambda_{0}}) = \{0\}. $ 
  Then there exists a unique selfadjoint operator \(A\) on \(H\) defined by:
  \[ D(A)= \mathrm{Rng}(B_{\lambda_{0}}),  \qquad \text{and} \qquad A =
  B_{\lambda_{0}}^{-1} x + \lambda_{0}x, \quad x \in D(A).\]
  The domain \(D(A)\) and the operator \(A\) do not depend on the choice of
  \(\lambda_{0}\). Moreover, \(A\) satisfies 
  $ B_{\lambda_{0}} = R(A, \lambda_{0})$.
\end{proposition}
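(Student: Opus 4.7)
The plan is to exploit the fact that the resolvents $R(A_n, \lambda_0)$ are bounded selfadjoint operators (since $A_n$ is selfadjoint and $\lambda_0 \in \RR$) and that their norm limit $B_{\lambda_0}$ inherits both selfadjointness and the usual algebraic identities. First I would note that norm convergence of bounded selfadjoint operators preserves selfadjointness, so $B_{\lambda_0} = B_{\lambda_0}^*$. Combined with $\ker(B_{\lambda_0}) = \{0\}$, this yields
\[ \overline{\mathrm{Rng}(B_{\lambda_0})} = \ker(B_{\lambda_0}^*)^\perp = \ker(B_{\lambda_0})^\perp = H, \]
so the proposed domain $D(A) := \mathrm{Rng}(B_{\lambda_0})$ is dense in $H$.

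Next, since $B_{\lambda_0}$ is injective on $H$, the operator $Ax := B_{\lambda_0}^{-1} x + \lambda_0 x$ is unambiguously defined on $D(A)$, and the defining formula makes uniqueness of $A$ a tautology once one knows $D(A)$. Selfadjointness of $A$ follows from the classical fact that the (in general unbounded) inverse of a bounded, injective, selfadjoint operator defined on its range is itself selfadjoint (see e.g.\ Reed-Simon, Vol.~I). Hence $B_{\lambda_0}^{-1}$ is selfadjoint on $D(A)$, and adding the bounded selfadjoint perturbation $\lambda_0 I_{D(A)}$ preserves this property. By construction $(A - \lambda_0) B_{\lambda_0} = I$ on $H$ and $B_{\lambda_0}(A - \lambda_0) = I$ on $D(A)$, so $\lambda_0 \in \varrho(A)$ and $B_{\lambda_0} = R(A, \lambda_0)$.

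To establish that $A$ does not depend on the choice of $\lambda_0$, suppose another $\lambda_1 \in \RR$ satisfies the same hypotheses with a limit $B_{\lambda_1}$. The first resolvent identity for each $A_n$ reads
\[ R(A_n, \lambda_0) - R(A_n, \lambda_1) = (\lambda_0 - \lambda_1) R(A_n, \lambda_0) R(A_n, \lambda_1), \]
and passing to the norm limit yields $B_{\lambda_0} - B_{\lambda_1} = (\lambda_0 - \lambda_1) B_{\lambda_0} B_{\lambda_1}$. A short manipulation of this identity shows that $B_{\lambda_1}(H) \subseteq \mathrm{Rng}(B_{\lambda_0}) = D(A)$ and that $(A - \lambda_1) B_{\lambda_1} = I$ on $H$; hence $B_{\lambda_1} = R(A, \lambda_1)$, so the operator obtained from the prescription with $\lambda_1$ in place of $\lambda_0$ coincides with $A$.

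The main obstacle is really only step~3, the rigorous verification of selfadjointness (as opposed to mere symmetry) of $A$; this must be handled via the standard characterisation of selfadjoint inverses of bounded selfadjoint operators, paying careful attention to domains. The remaining steps -- density, the resolvent identity, and $\lambda_0$-independence -- are essentially bookkeeping once that point is settled.
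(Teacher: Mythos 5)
Your proof is correct and follows essentially the same route as the paper: well-definedness via injectivity of $B_{\lambda_0}$, selfadjointness via the classical result on inverses of bounded injective selfadjoint operators (the paper simply cites \cite[Proposition~8.2]{Taylor2011PDEsI} for this), and $\lambda_0$-independence via the first resolvent identity. The only cosmetic difference is that for $\lambda_0$-independence you pass to the limit in the resolvent identity for $A_n$, whereas the paper invokes the resolvent identity for $A$ itself once it is built; both instantiate the same idea.
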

\begin{proof}
  First, note that if \(x \in D(A)= \mathrm{Rng}(B_{\lambda_{0}})\), then the
  preimage \(B^{-1}_{\lambda_{0}} x\) is uniquely defined, since we assumed
  that \(\mathrm{Ker}(B_{\lambda_{0}}) = \{0\}\). It remains to check that
  \(A\) is a self-adjoint operator: for this we refer, for example, to
  \cite[Proposition 8.2]{Taylor2011PDEsI}. By construction we have that
  \(B_{\lambda_{0}} = R(A, \lambda_{0})\) and through the resolvent identity
  (for all \(\lambda \in \varrho(A)\)):
  \[ R(A, \lambda) = R(A, \lambda_{0}) +(\lambda - \lambda_{0})R(A,
  \lambda_{0}) R(A, \lambda),\]
  we see that the domain does not depend on the choice of \(\lambda_{0}\).
\end{proof}
At this point, we can describe the structure of the proof of
Theorem~~\ref{thm:semidiscrete-pam-approximation} as follows:
\begin{itemize}
  \item[i)] The crux of the argument is to show that for a fixed \(\lambda \in \RR\) the resolvents
    \(R(\mH_{n}, \lambda)\) converge:
    \[ \lim_{n \to \infty} R(\mH_{n}, \lambda) = B_{\lambda},\]
    for some bounded injective \(B_{\lambda}\).
  \item[ii)] The previous proposition then guarantees the existence of a selfadjoint operator
    $\mH$ such that \(B_{\lambda} = R(\mH, \lambda)\).
  \item[iii)] Finally, the convergence of eigenfunctions and eigenvalues follows from
    Corollary~\ref{cor:convergence-eigenfunctions}.
\end{itemize}
\begin{remark}\label{rem:on-the-domain}
  This argument does not require an
  explicit construction of the operator \(\mH\) or of its domain \(D(\mH)\).
  It will appear clearly from the proof that the limiting resolvent
  \(R(\mH, \lambda)\) coincides with the resolvent constructed in
  \cite{AllezChouk2015AndersonHamiltonian2D} (although the article treats only
  the case \(d=2\), a similar but simpler construction works also in
  \(d=1\)). In particular, the latter
  article explicitly describes the range of the resolvent (i.e. the domain of
  the operator \(\mH\)), as a space of \emph{strongly paracontrolled}
  distributions and it provides an explicit representation of \(\mH\) on this
  domain.
\end{remark}

\subsection{Proof of Theorem
\ref{thm:semidiscrete-pam-approximation}}\label{subsection:proof-of-pam-theorem}
 
The paracontrolled approach in \cite{AllezChouk2015AndersonHamiltonian2D} to
construct the Anderson Hamiltonian in $d=2$ follows the Ansatz that the
solution \(\psi\) to the resolvent equation
\[ ( \nu_{0} \Delta  + \xi - \lambda) \psi = \varphi.\]
for $ \varphi \in L^{2} $ is of the form \(\psi = \psi^{\prime} \para
X_{\lambda} + \psi^{\sharp}\), the previous being a paraproduct as defined in
Lemma~\ref{lem:paraproduct-estimates}, with \(X_{\lambda}\) solving \( (-
  \nu_{0} \Delta + \lambda)X_{\lambda} =
\xi\), and \(\psi^{\sharp} \in \mC^{1 + 2\kappa}\) (we will call a \(\psi\) of this
form \textit{paracontrolled}). This should be interpreted as a ``Taylor
expansion'' in terms of functionals of the noise, and the reason why the rest
term is expected to be of better regularity is encoded in the concept of
subcriticality, introduced in \cite{Hairer2014}.  Now, for paracontrolled
\(\psi\) the previously ill-defined product can be rewritten as \(\psi \xi =
(\psi^{\prime} \para X) \xi + \psi^{\sharp} \xi\).  While the last term is
now well-defined (recall that if $d=2$, $\xi \in \mC^{-1-\kappa}$), a commutator estimate (see Lemma~\ref{lem:commutator_resonan}) guarantees that
the resonant product can be approximated as \((
\psi^{\prime} \para X) \reso \xi \simeq \psi^{\prime}( X \reso
\xi)\). The latter resonant product \(X \reso \xi\) remains still
ill-defined in terms of regularity, but one can make sense of it through
some Gaussian computations (since \(X_{\lambda}\) and $\xi$ are both Gaussian fields), up to
renormalisation. By this we mean that the product lives in two levels of the
Wiener chaos. While the second chaos part turns out to be well-defined,
the zeroth chaos is diverging. 
Eventually, one can rigorously define a distribution \(X \diamond \xi\) that formally
can be written as \(X \reso \xi - \infty = X \reso \xi - \EE \big[
X \reso \xi \big]\), which lives in the second Wiener chaos and
explains the \(\infty\) appearing in the equation. This explains why in $d=2$
the Hamiltonian can be written as
$\nu_0 \Delta + \xi - \infty,$
where the latter \lqm $\infty$\rqm comes from the renormalisation.

In the cartoon we have just sketched, we hope to explain that theories for
singular stochastic PDEs have two critical ingredients. First, some stochastic computations
guarantee the existence of certain products of random distributions. Second,
given a realization of these distributions, a purely analytic argument, based
on regularity estimates and a Taylor-like expansion guarantees the
existence of a solution to the PDE.

In the present setting we concentrate on semidiscrete approximations of the
Anderson Hamiltonian, that is we will prove that \(\psi\) as above is the limit
\(\psi = \lim_{n \to \infty} \psi_{n}\), with\( (- \mA_{n} +
    \lambda)\psi_{n}= \Pi_{n}^{2}(\xi^{n}
- c_{n} 1_{\{d=2\}}) \Pi_{n}^{2}\psi_{n} - \varphi\). Following the previous explanation we will
first state some stochastic estimates and then pass to the
main analytic result. The next definition introduces the space in which we
will control the stochastic terms.
\newcommand{\bx}{\boldsymbol\xi}

\begin{definition}\label{def:norm-enhanced-noise}
  Consider \(d=2\) and fix any \(\kappa \in (0, \frac{1}{2}) \). For any \(n \in \NN\) we will call an enhanced noise
a vector of distributions
\[ \boldsymbol{\xi}_{n} = ( \xi^{n}, Y_{n}) \in \mS^\prime(\TT^2)
\times C([1, \infty) ; \mS^{\prime} (\TT^{2})),\]
where \( Y_{n} \) is a map
$ [1, \infty) \ni \lambda \mapsto Y_{n, \lambda} \in \mS^{\prime} (\TT^{2}). $ 
For \( \bx_{n} \) we introduce the following norm, with $ X_{n, \lambda} = (-\mA_{n} + \lambda)^{-1} \xi^{n}$:
\begin{align*}
  \opnorm{ \boldsymbol{\xi}_{n} }_{n, \kappa}: =  & \sup_{\zeta \in [0,1]
}\left\{  n^{-\zeta}   \|\xi^n\|_{\mC^{- (1-\zeta) -\frac \kappa 2}}\right\}   +
n^{-1}\|\xi^n\|_{L^\infty} + n^{-1- \kappa}\| \xi^n\|_{\mC^{\kappa}_{\frac{1}{2
\kappa}}} \\
 &+ \sup_{\lambda \geqslant 1} \left\{ n \|\mQ_n X_{n, \lambda}\|_{L^\infty} +
\lambda^{-\frac{\kappa}{4}}\| Y_{n, \lambda}
\|_{\mC^{-\frac \kappa 2}} \right\}. 
\end{align*}
\end{definition}
We can immediately bound some further quantities related to \(
\bx_{n} \).
\begin{lemma}\label{lem:deterministic-estimates-on-X-n-lambda}
For \( n \in \NN \) and \( \lambda \geqslant 1 \) consider an enhanced noise \( \bx_{n} \) as in
Definition~\ref{def:norm-enhanced-noise}. Then we can bound, for any \(
\kappa \in (0, \frac{1}{2} ), \delta \in [0, 1] \) and uniformly over \( n, \lambda \):
\begin{align*}
\sup_{\zeta \in [0,1]} \lambda^{\delta} n^{- \zeta} \big\{ \| \mP_n X_{n, \lambda}\|_{\mC^{-
(1-\zeta) +2(1 - \delta)-\frac \kappa 2}}
+ n^{2(1 - \delta)} \| \mQ_n X_{n, \lambda}\|_{\mC^{- (1-\zeta) - \frac \kappa 2}} \big\} \lesssim
\opnorm{ \bx_{n}}_{n, \kappa}.
\end{align*}
\end{lemma}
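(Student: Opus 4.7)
The proof plan is a direct reduction to the elliptic Schauder estimates of Proposition~\ref{prop:elliptic_schauder_estimates}, combined with the bound on $\xi^n$ encoded in the definition of $\opnorm{\bx_n}_{n,\kappa}$. Fix $\zeta \in [0,1]$ and $\delta \in [0,1]$, and set $\alpha := -(1-\zeta) - \kappa/2$. By definition $X_{n,\lambda} = (-\mA_n + \lambda)^{-1} \xi^n$, so applying Proposition~\ref{prop:elliptic_schauder_estimates} with $p=\infty$ and exponent $\alpha$ yields
\[
\lambda^{\delta}\|\mP_n X_{n,\lambda}\|_{\mC^{\alpha + 2(1-\delta)}} + \lambda^{\delta} n^{2(1-\delta)} \|\mQ_n X_{n,\lambda}\|_{\mC^{\alpha}} \lesssim \|\xi^n\|_{\mC^{\alpha}},
\]
uniformly in $n,\lambda$.

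The second step is simply to read off the bound on $\|\xi^n\|_{\mC^{-(1-\zeta)-\kappa/2}}$ from the first contribution to $\opnorm{\bx_n}_{n,\kappa}$, namely
\[
\|\xi^n\|_{\mC^{-(1-\zeta)-\kappa/2}} \leq n^{\zeta}\, \opnorm{\bx_n}_{n,\kappa}.
\]
Multiplying the Schauder inequality by $n^{-\zeta}$ and inserting this estimate gives
\[
\lambda^{\delta} n^{-\zeta}\Big(\|\mP_n X_{n,\lambda}\|_{\mC^{-(1-\zeta)+2(1-\delta)-\kappa/2}} + n^{2(1-\delta)}\|\mQ_n X_{n,\lambda}\|_{\mC^{-(1-\zeta)-\kappa/2}}\Big) \lesssim \opnorm{\bx_n}_{n,\kappa},
\]
with a constant independent of $\zeta \in [0,1]$. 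Taking the supremum over $\zeta$ concludes the proof.

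There is no serious obstacle here: both ingredients have already been established, and the scaling in $n^{-\zeta}$ is designed precisely so that the Schauder estimates and the tracking of the noise regularity match. The only minor point to check is that the implicit constant from Proposition~\ref{prop:elliptic_schauder_estimates} can be chosen uniformly for $\alpha$ ranging in a bounded interval (here $\alpha \in [-1-\kappa/2, -\kappa/2]$) and $\delta \in [0,1]$; this follows from inspection of the proof of that proposition, where the constants depend continuously on these parameters.
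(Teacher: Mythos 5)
Your proof is correct and takes essentially the same approach as the paper, which simply cites the elliptic Schauder estimates of Proposition~\ref{prop:elliptic_schauder_estimates} without further elaboration. Your version properly unpacks that one-line argument, including the substitution $\alpha = -(1-\zeta)-\kappa/2$, the multiplication by $n^{-\zeta}$, and the reduction to the first term in $\opnorm{\bx_n}_{n,\kappa}$.
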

\begin{proof}
This is a consequence of the elliptic Schauder estimates of
Proposition~\ref{prop:elliptic_schauder_estimates}.
\end{proof}
Now, the following stochastic estimates hold true.
\begin{proposition}\label{prop:stochastic_estimates}
  Let $(\Omega, \mF, \PP)$ be a probability space supporting a
  sequence of random functions \(\xi^{n}\colon \TT^d \to \RR\) as in
  Assumption~\ref{ass:selection_coefficient}. In dimension
  \(d=2\), for $\lambda \geqslant 1$, define
  \[  X_{n,\lambda} = (-\mA_{n}+\lambda)^{-1}\xi^{n}, \qquad \xi^{n} \diamond \Pi_{n}^2
      X_{n, \lambda} = \xi^{n} \reso \Pi_{n}^2 X_{n, \lambda} - c_{n},\]
  where
  \begin{equation*}
    \begin{aligned}
      c_{n} = \sum_{k \in \ZZ^2}  \frac{\widehat{\chi}^2( n^{-1} k)
      \widehat{\chi}_Q(n^{-1} k)}{-\vt_n (k) + 1}, \ \ \text{ with } \ \
      c_n \simeq \log {n}.
    \end{aligned}
  \end{equation*}
  If \( d=1 \) one can bound for any \( \kappa \in (0, \frac{1}{2}) \):
  \[ \sup_{n \in \NN} \EE \big[ \sup_{\zeta \in [0, 1]} n^{-\frac \zeta
      2} \| \xi^{n} \|_{\mC^{-\frac 1 2 (1-\zeta )- \frac{\kappa}{2}}} + n^{-1} \|
  \xi^{n} \|_{L^{\infty}}\big] < \infty.\] 
  If \(d=2\) define the enhanced noise
$ \bx_{n} = (\xi^{n}, (\xi^{n} \reso \Pi_{n}^2 X_{n, \lambda} -
c_{n})_{\lambda \geqslant 1} ),$
taking values in the space of Definition~\ref{def:norm-enhanced-noise}. 
For any \(\kappa>0\) one can bound
  $ \sup_{n \in \NN} \EE \big[ \opnorm{ \boldsymbol\xi_n}_{n,
  \kappa} \big] < \infty$.  
  Moreover, for any fixed \( \kappa \in (0, \frac{1}{2}) \) there exists a probability space $(\overline{\Omega},
  \overline{\mF}, \overline{\PP})$, supporting space white noise $\xi$ on
  $\TT^d$, and a sequence of random functions $\overline{\xi}^n \colon \TT^d
  \to \RR$ such that $\xi^n = \overline{\xi}^n$ in distribution
  and such that for almost all \(\omega \in \overline{\Omega}\):
  \begin{align*}
    \overline{\xi}^{n} (\omega) \to \xi(\omega) \ \ \text{in} \ \ \mC^{- \frac d 2 - \kappa}.
  \end{align*}
  In dimension \(d=2\), for any \( \lambda \geqslant 1 \) there exists also a random distribution
  \( \xi \diamond X_{\lambda}\) such that:
  \begin{align*}
    \mP_n(-\mA_n +\lambda)^{-1}\overline{\xi}^n (\omega) & \to (-\Delta +\lambda)^{-1}
    \xi (\omega)  \ \ & \text{ in } \ \ \mC^{2 - \frac d 2 - \kappa}, \\
    \overline{\xi}^{n} \diamond \Pi_{n}^2 X_{n, \lambda} & \to \xi \diamond
    X_{\lambda} (\omega)\ \ & \text{ in } \ \ \mC^{-\kappa}.
  \end{align*}
  Finally, again in \( d=2 \) and for almost all \( \omega \in
  \overline{\Omega} \), one an bound
  $\sup_{n \in \NN} \opnorm{\bx_{n}(\omega)}_{n, \kappa} < \infty.$
\end{proposition}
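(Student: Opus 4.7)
The plan is a standard enhanced-noise / renormalisation analysis adapted to the semidiscrete, non-Gaussian setting: uniform-in-$n$ moment bounds for every component of $\opnorm{\boldsymbol\xi_n}_{n,\kappa}$, identification of any limit via a CLT-type argument, and a Skorohod transfer to upgrade distributional convergence to almost sure convergence on a new probability space.

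I would begin with the bounds on $\xi^n$ alone. Since $|Z_n(x)|<2$ deterministically, one has $\|\xi^n\|_{L^\infty}\lesssim n^{d/2}$, which handles $n^{-1}\|\xi^n\|_{L^\infty}$ in $d\leq 2$ and, by Besov embedding, also $n^{-1-\kappa}\|\xi^n\|_{\mC^{\kappa}_{1/(2\kappa)}}$. For the regularity endpoint $\zeta=0$, writing
\[ \widehat{\xi^n}(k) \;=\; \widehat\chi_Q(n^{-1}k)\, n^{-d/2}\!\sum_{x\in\ZZ^d_n} Z_n(x)\, e^{-2\pi\iota k\cdot x} \]
and applying Marcinkiewicz--Zygmund to a sum of i.i.d.\ bounded mean-zero variables yields $\EE|\widehat{\xi^n}(k)|^p \lesssim_p |\widehat\chi_Q(n^{-1}k)|^p$ for every $p\geq 2$; a Littlewood--Paley computation with $p$ large, followed by Besov embedding, gives $\EE\|\xi^n\|_{\mC^{-d/2-\kappa/2}} \lesssim 1$. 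Besov interpolation between these two endpoints produces the full family $n^{-\zeta}\|\xi^n\|_{\mC^{-(1-\zeta)-\kappa/2}}\lesssim 1$ for $\zeta\in[0,1]$, and the bound on $n\|\mQ_n X_{n,\lambda}\|_{L^\infty}$ is then the small-scale elliptic Schauder estimate of Theorem~\ref{thm:regularization-estimates-main-results}(ii) applied to $\xi^n\in L^\infty$.

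The main obstacle is the resonant product $Y_{n,\lambda} = \xi^n \reso \Pi_n^2 X_{n,\lambda} - c_n$ in $d=2$, where the sum of regularities is negative and the classical product is ill-defined. Expanding in paraproduct cells and inserting the resolvent kernel of $\mA_n$ gives
\[ (\xi^n \reso \Pi_n^2 X_{n,\lambda})(z) \;=\; \sum_{x,y\in\ZZ^d_n} Z_n(x) Z_n(y)\, \Phi_{n,\lambda}(z,x,y) \]
with an explicit kernel $\Phi_{n,\lambda}$. The diagonal contribution $x=y$ has deterministic mean which, evaluated in Fourier variables, coincides exactly with $c_n$ from \eqref{eqn:renormalization_constant-slfv}, because $\EE[Z_n(x)^2]=1$. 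After subtracting $c_n$ one is left with the centred diagonal $(Z_n(x)^2-1)$ and the off-diagonal bilinears $Z_n(x)Z_n(y)$, $x\neq y$, which together form a \emph{discrete second chaos}. Since $Z_n$ is bounded, hypercontractivity (equivalently, Marcinkiewicz--Zygmund for $U$-statistics of order two) upgrades $L^2$ to $L^p$ control for any $p<\infty$, and the estimate reduces to
\[ \EE|\Delta_j Y_{n,\lambda}(z)|^2 \;\lesssim\; \lambda^{\kappa/2}\, 2^{-j\kappa}, \]
uniformly in $n,z$ and $\lambda\geq 1$; this follows by contracting $\Phi_{n,\lambda}$ against itself and invoking the Schauder bound for $(-\mA_n+\lambda)^{-1}$ in Theorem~\ref{thm:regularization-estimates-main-results}(ii), with the $\lambda^{\kappa/4}$ growth arising from trading a fraction of the $\lambda^{-1}$ resolvent gain against a power of the dyadic frequency.

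With the moment bounds in hand, Chebyshev with $p$ large and Kolmogorov's continuity criterion in $\lambda$ give tightness of $\{\boldsymbol\xi_n\}_{n\in\NN}$ in the space of Definition~\ref{def:norm-enhanced-noise}. A multivariate CLT for the Fourier coefficients of $\xi^n$ identifies any weak limit uniquely as space white noise $\xi$, and convergence of the second-chaos covariance kernel identifies the limit of $Y_{n,\lambda}$ as the Wick renormalised product $\xi\diamond X_\lambda$ of \cite{AllezChouk2015AndersonHamiltonian2D}. The convergence $\mP_n(-\mA_n+\lambda)^{-1}\overline\xi^n \to (-\Delta+\lambda)^{-1}\xi$ in $\mC^{2-d/2-\kappa}$ is then a direct consequence of the resolvent convergence in Proposition~\ref{prop:elliptic_schauder_estimates} coupled with $\overline\xi^n\to\xi$. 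Uniqueness of the limit together with tightness yields convergence in distribution of $\boldsymbol\xi_n$, and Skorohod representation on a product space $\overline\Omega$ produces the sequence $\overline\xi^n$ with the stated almost sure convergence.
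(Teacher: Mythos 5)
Your overall strategy follows the paper's proof closely: deterministic $L^\infty$ bounds on $\xi^n$, negative-regularity moment bounds via discrete Burkholder/Marcinkiewicz--Zygmund combined with Besov embedding and interpolation, small-scale resolvent estimates for $X_{n,\lambda}$, a discrete second-chaos computation for the resonant product with the renormalisation constant $c_n$ identified with the diagonal, convergence via a CLT-type criterion, and a Skorohod transfer. All of this matches the paper, which carries the discrete chaos steps through \cite[Lemmas 5.1 and 5.4]{MartinPerkowski2019Bravais}.

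There is, however, one genuine gap. You claim that the deterministic bound $\|\xi^n\|_{L^\infty}\lesssim n^{d/2}$ yields, \emph{by Besov embedding}, the estimate $n^{-1-\kappa}\|\xi^n\|_{\mC^{\kappa}_{1/(2\kappa)}}\lesssim 1$. This cannot be correct: Besov embeddings only trade integrability against regularity in the direction that \emph{loses} smoothness, so $L^\infty$ control gives nothing of positive regularity. On the scale $\mC^\kappa_p$ with $\kappa>0$ the correct bound $\|\xi^n\|_{\mC^{\kappa}_{p}}\lesssim n^{d/2+\kappa}$ does hold, but only because $\xi^n$ is piecewise constant on boxes of side $1/n$; the paper establishes this via the Sobolev--Slobodeckij characterization
(Proposition~\ref{prop:sobolev_slobodeckij_equivalence}, following the pattern of
Lemma~\ref{lem:besov_regularity_characteristic_function}), where the double integral of $|\xi^n(x)-\xi^n(y)|^p/|x-y|^{d+\kappa p}$ is finite precisely because $\xi^n$ has jumps only across box boundaries. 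Without this structural input, no $L^\infty$-based argument gives positive Besov regularity. This estimate is not cosmetic: it is exactly what is used in the resonant-product analysis (and again later inside the resolvent fixed point), where the paper explicitly remarks that positive regularity of $\xi^n$ is "not possible in the $L^\infty$ scale of spaces, so we have to introduce an additional integrability parameter."

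A more minor imprecision: you bound $n\|\mQ_n X_{n,\lambda}\|_{L^\infty}$ by directly invoking Theorem~\ref{thm:regularization-estimates-main-results}(ii) with $\varphi=\xi^n\in L^\infty$, but that theorem is stated on the $\mC^\alpha_p=B^\alpha_{p,\infty}$ scale, and $\mC^0_\infty$ control does not immediately return $L^\infty$ control of $\mQ_n X_{n,\lambda}$. The paper instead proves this bound by computing the resolvent kernel in Fourier variables and controlling its $L^1$ norm with Poisson summation and the decay of $\widehat\chi$. Your approach can be made to work since the underlying kernel estimate from Proposition~\ref{prop:crucial_for_schauder} does act on $L^\infty$, but the appeal to the stated form of the theorem glosses over this step.
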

The proof of this result is mostly technical, and for the sake of readability
deferred to Apppendix~\ref{app:stochastic-bouns}. In view of the previous result we will work under the
following assumption.
\begin{assumption}\label{assu:on-the-probability-space-PAM}
  Consider \( \kappa \in (0, \frac{1}{2}) \) fixed. Up to changing the probability space
  \((\Omega, \mF, \PP)\), we assume that for all \(\omega \in \Omega\) outside a null-set
  \(N\) the convergences in
  Proposition~\ref{prop:stochastic_estimates} hold true. If \(d=2\) in addition
  $\sup_{n \in \NN} \opnorm{ \boldsymbol\xi_{n} (\omega) }_{n, \kappa} <
  \infty$.
\end{assumption}
Having fixed the correct probability space and having explained our method, we are now in position to prove
Theorem~\ref{thm:semidiscrete-pam-approximation}. The next result proves that the operators \( \mH_{n} \) converge in resolvent
sense.
\begin{proposition}\label{prop:semidiscrete-resolvent-convergence}
  Under Assumption~\ref{assu:on-the-probability-space-PAM} fix \(\omega \in
  \Omega \setminus N\). Consider, for \(n \in \NN\), the bounded selfadjoint operators
  \[ \mH_{n}^{\omega} \colon L^2 \to L^2, \qquad \mH_{n}^{\omega} \psi =
  (\mA_n + \Pi_{n}^{2} (\xi^n{-}c_n) \Pi^2_{n}) \psi.\] 
  There exists a \( \overline{\lambda}(\omega) \in [1, \infty)\) such that 
  \( - \mH^{\omega}_{n} + \lambda(\omega) \) is invertible for all \(n \in
\NN\) and \( \lambda(\omega) \geqslant \overline{\lambda}(\omega) \), and
  there exists an operator \(B_{\lambda}(\omega) \in B(H)\) such that 
  \[ \lim_{n \to \infty} (- \mH_{n}^{\omega} + \lambda(\omega))^{-1} =
    B_{\lambda}(\omega) \ \ \text{in} \ \ B(L^{2}(\TT^{d})). \] 
\end{proposition}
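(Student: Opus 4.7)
The plan is to construct $(-\mH_n^\omega + \lambda)^{-1}$, for $\lambda$ sufficiently large, as the solution operator of the elliptic equation
\begin{equation*}
(-\mA_n + \lambda)\psi_n \;=\; \Pi_n^{2}(\xi^n - c_n 1_{\{d=2\}})\Pi_n^{2} \psi_n + \varphi,
\end{equation*}
via a paracontrolled fixed-point argument adapted to the semidiscrete setting. Throughout, $\omega \in \Omega \setminus N$ is fixed and all estimates are required to be uniform in $n$, which is possible thanks to Assumption~\ref{assu:on-the-probability-space-PAM}. In $d=1$ the argument is soft: since $\xi^n(\omega) \in \mC^{-1/2-\kappa}$ uniformly and the product $\xi^n \cdot \Pi_n^{2}\psi_n$ is classically defined as soon as $\Pi_n^{2}\psi_n \in \mC^{1/2+\kappa}$, the two-scale Schauder estimates of Theorem~\ref{thm:regularization-estimates-main-results} yield a contraction in $\mC^{1/2+\kappa}$ for $\lambda$ large, with constants independent of $n$.

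In $d=2$ the singularity forces the paracontrolled Ansatz $\psi_n = \psi_n' \para \Pi_n^{2} X_{n,\lambda} + \psi_n^\sharp$, where $\psi_n' = \Pi_n^{2}\psi_n$ and $X_{n,\lambda} = (-\mA_n+\lambda)^{-1}\xi^n$. Substituting, decomposing $\xi^n \cdot \Pi_n^{2}\psi_n = \xi^n \para \Pi_n^{2}\psi_n + \xi^n \reso \Pi_n^{2}\psi_n + \xi^n \rpara \Pi_n^{2}\psi_n$, and invoking a commutator identity in the spirit of standard paracontrolled calculus to reduce
\begin{equation*}
(\psi_n' \para \Pi_n^{2} X_{n,\lambda}) \reso \xi^n \;\approx\; \psi_n' \cdot (\Pi_n^{2} X_{n,\lambda} \reso \xi^n) + \text{(smoother remainder)},
\end{equation*}
the renormalisation replaces the ill-defined resonant product by $\xi^n \diamond \Pi_n^{2} X_{n,\lambda}$, which is controlled by $\opnorm{\boldsymbol\xi_n(\omega)}_{n,\kappa}$. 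One arrives at a fixed-point equation for $\psi_n^\sharp$ of the shape
\begin{equation*}
\psi_n^\sharp \;=\; (-\mA_n+\lambda)^{-1}\bigl[F_n(\psi_n',\psi_n^\sharp;\boldsymbol\xi_n) + \varphi\bigr],
\end{equation*}
where $F_n$ is linear in $(\psi_n',\psi_n^\sharp)$ and continuous in $\boldsymbol\xi_n$. The two-scale Schauder estimates of Theorem~\ref{thm:regularization-estimates-main-results} give a gain of $2-\kappa$ derivatives on large scales together with the factor $\lambda^{-\delta}$ and an $n^{-2}$ factor on small scales, which is precisely enough to close the fixed point for $\psi_n^\sharp$ in $\mC^{1+\kappa}$ uniformly in $n$, provided $\lambda \geqslant \overline\lambda(\omega)$.

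The convergence of the resolvents is then obtained by exploiting that the fixed-point map depends continuously on the noise data. By Proposition~\ref{prop:stochastic_estimates}, $\boldsymbol\xi_n(\omega)$ converges in the relevant topology to the enhanced limiting noise $(\xi, \xi \diamond X_\lambda)(\omega)$, and by Corollary~\ref{cor:regularity_gain_convolution_with_characteristic_functions} the operators $\Pi_n^{2}$ converge to the identity on distributions of positive regularity. Hence $\psi_n \to \psi$ in $L^2$, where $\psi$ solves the continuous paracontrolled equation of \cite{AllezChouk2015AndersonHamiltonian2D}, and one sets $B_\lambda(\omega)\varphi = \psi$; the identification $B_\lambda = R(\mH^\omega,\lambda)$ alluded to in Remark~\ref{rem:on-the-domain} then falls out of the construction. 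The main obstacle is the bookkeeping of scales: the averaging $\Pi_n^{2}$ both smooths and truncates, so the commutator and paraproduct estimates must be carried out with $\Pi_n^{2}$ factors weaving through every term, and the small-scale piece $\mQ_n X_{n,\lambda}$, controlled via $n\|\mQ_n X_{n,\lambda}\|_{L^\infty} \lesssim \opnorm{\boldsymbol\xi_n(\omega)}_{n,\kappa}$, has to be shown to contribute only terms that either vanish as $n\to\infty$ or are absorbed by the $\lambda^{-\delta}$ gain of the resolvent.
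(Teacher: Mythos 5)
Your overall strategy --- a paracontrolled fixed point for the resolvent equation $(-\mA_n+\lambda)\psi_n = \Pi_n^2(\xi^n - c_n)\Pi_n^2\psi_n + \varphi$, closed uniformly in $n$ via the two-scale Schauder estimates and the enhanced-noise bounds from Assumption~\ref{assu:on-the-probability-space-PAM}, then passing to the limit using Proposition~\ref{prop:stochastic_estimates} --- matches the paper. However, there is a genuine gap in how you treat the small-scale part of the remainder $\psi_n^\sharp$, and this is precisely the place where convergence in $B(L^2,L^2)$ could fail.

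You assert that the $n^{-2}$ factor on small scales is ``precisely enough to close the fixed point for $\psi_n^\sharp$ in $\mC^{1+\kappa}$ uniformly in $n$.'' This is not correct. On small scales, Theorem~\ref{thm:regularization-estimates-main-results} gives a factor $n^{-2}$ but \emph{no regularity gain}: $\mQ_n(-\mA_n+\lambda)^{-1}$ does not lift $\mC^{-1+2\kappa}$ to $\mC^{1+\kappa}$. The right-hand side of the fixed point for $\psi_n^\sharp$ contains products with $\xi^n$ of regularity $\approx -1-\kappa$, so after applying the resolvent you only get $\mQ_n\psi_n^\sharp$ with a uniform bound in the much weaker space $\mC^{-1+2\kappa}_p$, weighted by a power of $n$. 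This is exactly the structure of the norm $\|\cdot\|_{\mD_n^\lambda}$ in the paper, which deliberately puts $\mP_n\psi^\sharp$ in $\mC^{1+\kappa}_p$ but $\mQ_n\psi^\sharp$ only in $\mC^{-1+2\kappa}_p$ with an $n^{2-\kappa}$ weight. The difficulty is that, for $p=2$, $n^{2-\kappa}\|\mQ_n\psi^\sharp\|_{\mC^{-1+2\kappa}_2}$ does \emph{not} control $\|\mQ_n\psi^\sharp\|_{L^2}$ (the sum $\sum_j 2^{2j(1-2\kappa)}$ over frequencies $2^j \gtrsim n$ diverges), so the fixed-point estimate alone does not yield resolvent convergence in $B(L^2,L^2)$. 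You touch on small-scale bookkeeping at the end but locate the difficulty in $\mQ_n X_{n,\lambda}$ (the noise functional), whereas the unresolved issue is the small-scale part of the solution itself.

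The missing ingredient is the additional $L^2$ input: one must use that $\varphi \in L^2$ (not merely $\varphi \in \mC^{-1+2\kappa}_2$) to derive the improved estimate $\|\mQ_n\psi_n^\sharp\|_{L^2} \lesssim n^{-\kappa}(\|\varphi\|_{L^2} + \|\psi_n\|_{\mD_n^\lambda})$. This requires a direct Plancherel bound $\|(-\mA_n+\lambda)^{-1}\mQ_n\varphi\|_{L^2} \lesssim n^{-2}\|\varphi\|_{L^2}$ (exploiting $\hat\chi^4(n^{-1}k)<1$ on the support of $1-\daleth$) for the pure inhomogeneity, combined with re-running the commutator and resonant-product estimates in $\mC^{1+\kappa/2}_2$ after exploiting the extra $\Pi_n^2$ smoothing for the remaining terms. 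Without this step, $\mQ_n\mH_{n,\lambda}^{-1}\varphi$ is not shown to vanish in $L^2$ uniformly over $\|\varphi\|_{L^2}\leq 1$, and the conclusion $\lim_n(-\mH_n^\omega+\lambda)^{-1}=B_\lambda$ in $B(L^2)$ does not follow.
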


\begin{proof}

  The strategy of the proof is a perturbation of the proof in
\cite{AllezChouk2015AndersonHamiltonian2D} and is based on a fixed point argument. 
In Step~1 we describe the space in which we can solve the resolvent equation
through a fixed point argument,
uniformly over \(n\) and \(\lambda\) large enough (throughout the proof the
realization \(\omega\) is fixed and omitted to keep the notation clean). 
The estimates that will allow us to apply Banach's fixed point theorem are
discussed in Steps~2 through 4. The
convergence as \(n \to \infty\) is established in Steps 5 and 6.
Throughout the proof the parameter \(\kappa \in (0, \frac{1}{2}) \) will be chosen small enough,
so that all computations hold.

\textit{Step 1.} 
  Fix $p \in [1, \infty]$ as well as \(\varphi\in \mC^{-1+2\kappa}_{p}\). In dimension $d=1$, solving the resolvent equation
  \( (-\mH_{n} + \lambda)\psi = \varphi \) 
  is equivalent to solving (with \( c_{n} = 0 \)) the fixed point problem
  \begin{equation}\label{eqn:fixed_point_reolvent_equation}
    \psi = M_{\varphi,\lambda} (\psi) := ( -\mA_{n} + \lambda)^{- 1}
    [\Pi_{n}^{2} [\xi^{n} - c_{n}]  \Pi_{n}^2 \psi  + \varphi ].
  \end{equation} 
  In dimension $d=2$ we will not prove directly that \(M_{\varphi, \lambda}\) is
  a contraction (while in \(d=1\) this is possible: the arguments that follow are
  then superfluous and Proposition~\ref{prop:elliptic_schauder_estimates} allows to
find a fixed point \(\psi\)). Instead, to find the fixed point we look for a
paracontrolled solution. Consider a space \(\mD_n^{\lambda} \subseteq
\mS^{\prime}(\TT^d) \times \mS^{\prime}(\TT^d)\) which consists of pairs
     \( (\psi^{\prime}, \psi^{\sharp})\) and is characterized by the norm
  \begin{align*}
     \| (\psi^{\prime},\psi^{\sharp}) \|_{\mD_n^{\lambda}} :=  \|\psi^{\prime} \|_{\mC^{1- \kappa}_{p}}+\| \mP_n \psi^{\sharp}
    \|_{\mC^{1 + \kappa}_{p}} + n^{2-\kappa}\| \mQ_n \psi^\sharp \|_{\mC^{-1+2\kappa}_{p}}, 
  \end{align*}
  where we used the operators $\mP_n, \mQ_n$ as in
Definition~\ref{def:cut_off_operators}. The norm does not depend on \( \lambda
\), but to every pair $(\psi^\prime, \psi^\sharp) \in \mD^{\lambda}_{n}$ we associate a function $\psi$ by
  \[ \psi = \Pi_{n}^{2} \big\{ \psi^{\prime} \para [ (- \mA_{n} +
    \lambda)^{-1} \xi^{n} ] \big\} + \psi^{\sharp}.
  \]
  With an abuse of notation, we identify the pair \((
  \psi^{\prime},\psi^{\sharp})\) with the function \(\psi\) and write $\| \psi
\|_{\mD_n^{\lambda}} = \|(\psi^\prime, \psi^\sharp)\|_{\mD_n^{\lambda}}$.
Define the map \( \overline{M}_{\varphi,\lambda} \colon \mD_n^{\lambda } \to L^p\) as
  \[ \overline{M}_{\varphi,\lambda}(\psi) := ( -\mA_{n} + \lambda)^{- 1} [
  \Pi_{n}^{2} \xi^{n}  \Pi_{n}^2 \psi  -c_{n} \Pi_{n}^{2} \psi^{\prime} +\varphi].\] 
  The map $ \overline{M}_{\varphi,\lambda} $ can be extended to a map from
\(\mD_n^{\lambda}\) into itself by defining:
  \begin{align*} 
    \mM_{\varphi,\lambda} ( \psi)  & = ( 
    M^{\prime}_{\varphi,\lambda}(\psi), \ M^{\sharp}_{\varphi,\lambda}(\psi))\\
    & := (\Pi_{n}^2  \psi, \ \overline{M}_{\varphi,\lambda}(\psi) -
      \Pi_{n}^{2} \{ ( \Pi_{n}^2
    \psi) \para [  (- \mA_{n} + 1)^{-1} \xi^{n}] \} ) \in \mD_{n}^{\lambda}.
  \end{align*}
Any fixed point of \(\mM_{\varphi, \lambda}\) is also a fixed point for $\overline{M}_{\varphi, \lambda}$ and since the
  fixed point  satisfies $\psi^{\prime} = \Pi_{n}^2 \psi,$
  it solves also the fixed point equation
  \eqref{eqn:fixed_point_reolvent_equation} for $M_{\varphi, \lambda}$.
Similarly, if \( \psi \in L^{p} \) solves
Equation~\eqref{eqn:fixed_point_reolvent_equation}, then \( \psi \in
\mD^{\lambda}_{n} \) (for fixed \( n \in \NN \) the embedding \( L^{p}
\subseteq \mD^{\lambda}_{n} \) is continuous) and \( \psi \) is a fixed point for \(
\mM_{\varphi, \lambda}\). We
conclude that solutions \( \psi \in L^{p} \) to $(- \mH_{n}+ \lambda) \psi = \varphi$ are equivalent to fixed
points of $\mM_{\varphi, \lambda}$.
We will show that for \(\lambda\) sufficiently large \( \mM_{\varphi, \lambda} \) admits a unique
  fixed point for all \( \varphi \in \mC_{p}^{-1 + 2 \kappa} \).  

  Throughout the proof we will repeatedly make use of the elliptic
 Schauder estimates of Proposition~\ref{prop:elliptic_schauder_estimates}, the
regularization properties of \( \Pi_{n} \) of
Corollary~\ref{cor:regularity_gain_convolution_with_characteristic_functions},
the estimates on \( X_{n , \lambda} \) of
Lemma~\ref{lem:deterministic-estimates-on-X-n-lambda} \emph{which crucially allow us to gain
powers of $ \lambda $ and $ n $} and the paraproduct estimates of
Lemma~\ref{lem:paraproduct-estimates}, without stating them explicitly every
time.

  \textit{Step 2.} 
  Our aim is to control (paying particular attention to the dependence on
\(\lambda\) and the uniformity over \( n \)) the quantity: 
    \begin{align*}
	   \| \mM_{\varphi, \lambda} (\psi) \|_{\mD^{\lambda}_{n}}  = \|\Pi_{n}^2  \psi \|_{\mC^{1- \kappa}_{p}}+ \| \mP_n \ M^{\sharp}_{\varphi,\lambda}(\psi)
	    \|_{\mC^{1 + \kappa}_{p}} +\| \mQ_n \
	    M^{\sharp}_{\varphi,\lambda}(\psi) \|_{\mC^{-1+2\kappa}_{p}},
    \end{align*}
    in terms on \(\| \psi \|_{\mD_{n}^{\lambda}}\) and \( \| \varphi
\|_{\mC^{-1 + 2 \kappa}_{p}} \).
As for the first term, \( \| \Pi_{n}^{2} \psi
\|_{\mC^{1- \kappa}_{p}} \), we observe that
\begin{equation}\label{eqn:for-proof-resolvent-Pi2n-psi-estimate}
\begin{aligned}
\| \Pi_{n}^{2} \psi \|_{\mC^{1- \kappa}_{p}}  & = \big\| \Pi_{n}^{4} \big\{
\psi^{\prime} \para  X_{n , \lambda} \big\} + \Pi_{n}^{2} \psi^{\sharp} \big\|_{\mC^{1- \kappa}_{p}} \\
&  \lesssim \| \psi^{\prime} \|_{\mC^{1 - \kappa}_{p}} \Big( \| \mP_{n}
X_{n, \lambda} \|_{\mC^{1 - \kappa }} + n^{2}\| \mQ_{n} X_{n, \lambda}
\|_{\mC^{-1 - \kappa }} \Big) \\
& \qquad + \| \mP_{n} \psi^{\sharp}  \|_{\mC^{1 + \kappa}_{p}} + n^{2 - 3\kappa}\|
\mQ_{n} \psi^{\sharp}\|_{\mC^{-1 + 2 \kappa}_{p}} \\
& \lesssim \lambda^{-\frac{\kappa}{4}} \| \psi \|_{\mD^{\lambda}_{n}} (1 +
\opnorm{ \bx}_{n, \kappa}) + \| \mP_{n} \psi^{\sharp}  \|_{\mC^{1 + \kappa}_{p}} + n^{2 - \kappa}\|
\mQ_{n} \psi^{\sharp}\|_{\mC^{-1 + 2 \kappa}_{p}}.
\end{aligned}
\end{equation}
 To tackle the norms involving $M^\sharp,$ first rewrite
 \[ M^{\sharp}_{\varphi, \lambda}(\psi) = M^{\sharp, 1}_{\varphi}(\psi) + M^{\sharp,
  2}_{\varphi}(\psi),\]
  where to clean the notation we have omitted the dependence on $ \lambda $ and with
  \begin{equation*}
  \begin{aligned}
    M^{\sharp, 1}_{\varphi}(\psi)  = & (- \mA_{n}+ \lambda)^{-1} \Big\{ \varphi + \Pi_{n}^{2}[
      \xi^{n} \reso \Pi_{n}^2 \psi^{\sharp} ] +  \Pi_{n}^{2}\big\{ \xi^{n} \reso
    [\Pi_{n}^2 (  \psi^{\prime} \para X_{n, \lambda})] - c_{n} \psi^{\prime} \big\} +
  \Pi_{n}^{2}\big\{ \xi^{n} \para \Pi_{n}^2 \psi \big\} \Big\} 
  \end{aligned}
  \end{equation*}
  and $M^{\sharp, 2}_{\varphi}(\psi)  =  \Pi_{n}^{2}  C_{n, \lambda} ( \Pi^2_n
  \psi, \xi^n)$,
  with $ C_{n, \lambda} (\Pi^2_n \psi, \xi^n)$ the commutator
  \[ 
    C_{n, \lambda} (\Pi^2_n \psi, \xi^n) = (- \mA_{n} + \lambda)^{-1}
    [ (\Pi_{n}^2 \psi) \para \xi^{n} ] - [ (\Pi_{n}^2
      \psi) \para (- \mA_{n} + \lambda )^{-1}(\xi^{n})]. 
  \] 
   For clarity we divide the estimates for the two terms \(M^{\sharp,
  1}_{\varphi}, M^{\sharp, 2}_{\varphi}\) in two distinct steps.

  \textit{Step 3: Estimates for \(M^{\sharp, 1}_{\varphi}\).} Combining the Schauder estimates with the smoothing properties of
  \(\Pi_{n}\) and the paraproduct estimates one finds that
  \begin{align*}
    \lambda^{\frac \kappa 2} \big( &\| \mP_n M^{\sharp, 1}_{\varphi}(\psi) 
      \|_{\mC^{1+ \kappa}_{p}}  + n^{2-\kappa}  \| \mQ_n M^{\sharp, 1}_{\varphi}(\psi)
  \|_{\mC^{-1+ 2\kappa}_{p}}\big) \\
  & \lesssim  \| \varphi  \|_{\mC^{-1 + 2 \kappa}_{p}} + \| \Pi_n^2 \psi^{\sharp} \|_{\mC^{1 +
    \kappa}_{p}} \| \xi^{n} \|_{\mC^{-1 - \frac \kappa 2}} + \| \xi^{n} \reso
    [\Pi_{n}^2 (  \psi^{\prime} \para X_{n, \lambda})] - c_{n}
    \psi^{\prime}\|_{\mC^{-1+ 2 \kappa}_{p}}.
  \end{align*}
  To treat
    $\| \xi^{n} \reso [\Pi_{n}^2 (  \psi^{\prime} \para X_{n, \lambda}) - c_{n}
    \psi^{\prime}]\|_{\mC^{-1+ 2 \kappa}_{p}}$,
 we introduce (cf. Definition~\ref{def:all_commutators_at_once}) the commutators
  $$C^\Pi_n(f, g)= \Pi_n^2 (f \para g) - f\para \Pi_n^2 g, \qquad C^{\reso}(f,g,h) = f \reso (g \para h) - g(f \reso h).$$ 
  Then the previous resonant product can be split into:
  \begin{equation}\label{eqn:proof-semidiscrete-AM-support-commutators}
  \begin{aligned}
	  \| \xi^{n} \reso [\Pi_{n}^2 (  \psi^{\prime} &\para X_{n, \lambda}) - c_{n} \psi^{\prime}]\|_{\mC^{-1+ 2
    \kappa}_{p}} \leq  \| \xi^n \reso C^\Pi_n(\psi^\prime, X_{n,
    \lambda})\|_{\mC^{-1+2\kappa}_{p}} \\ 
    & + \|C^{\reso}(\xi^n, \psi^\prime,
    \Pi_n^2 X_{n, \lambda})\|_{\mC^{-1+ 2\kappa}_{p}} 
    + \| \psi^{\prime} (\xi^{n} \reso \Pi^{ 2}_{n} X_{n, \lambda} -
    c_{n}) \|_{\mC^{-1+2 \kappa}_{p}}.
  \end{aligned}
  \end{equation}
  Starting with the first term, by Lemma~\ref{lem:commutator_with_averaging}
  \begin{align*}
  \| \xi^n \reso C^\Pi_n(\psi^\prime, & X_{n, \lambda}) 
\|_{\mC^{-1+2\kappa}_{p}} \\
   &\lesssim \|\xi^n\|_{\mC^{-1-\frac \kappa 2}} \|\mP_n C^\Pi_n(\psi^\prime,
   X_{n, \lambda})\|_{\mC^{1+\kappa}_{p}} + \|\xi^n\|_{\mC^{-1+ \kappa }}\|\mQ_n
   C^\Pi_n(\psi^\prime,\mP_n X_{n, \lambda})\|_{\mC^{1-\frac \kappa 2}_p} \\
   & \ \ \ \ + \|\xi^n \reso \mQ_n C^\Pi_n(\psi^\prime, \mQ_n X_{n, \lambda})\|_{\mC^{-1+2\kappa}_{p}}\\
   & \lesssim \|\psi^\prime\|_{\mC^{1-\kappa}_p} \opnorm{ \boldsymbol\xi_n
   }_{n, \kappa}^2 + \|\xi^n \reso \mQ_n C^\Pi_n(\psi^\prime, \mQ_n X_{n,
\lambda})\|_{\mC^{-1+2\kappa}_{p}}.
  \end{align*}
  The last quantity requires a bit of attention, since at first sight none of
  the two terms involved in the product has positive regularity: while the
  commutator guarantees us powers of $n$, it does not guarantee regularization
  on small scales. For this we need the estimate of $\xi^n$ in spaces of
  positive regularity. Since $\xi^n$ is constant on boxes this is not possible
  in the $L^\infty$ scale of spaces, so we have to introduce an additional
  integrability parameter. For this we assume that $\kappa$ is small enough so
  that $\frac 1 r = \frac 1 p + 2 \kappa \leq 1$ and $-1 + 2 \kappa \leq -
  \frac{7 \kappa}{2}$. Then:
  \begin{align*}
   \|\xi^n \reso \mQ_n &C^\Pi_n(\psi^\prime, \mQ_n
   X_{n})\|_{\mC^{-1+2\kappa}_{p}}  \leq \|\xi^n \reso \mQ_n
   C^\Pi_n(\psi^\prime, \mQ_n X_{n})\|_{\mC^{-\frac{7 \kappa}{2}}_{p}} \\
   & \lesssim \|\xi^n \reso \mQ_n C^\Pi_n(\psi^\prime, \mQ_n X_{n})\|_{\mC^{\frac{\kappa}{2}}_{r}}
    \lesssim \|\xi^n\|_{\mC^{\kappa}_{\frac{1}{2 \kappa}}} \|\mQ_n
   C^\Pi_n(\psi^\prime, \mQ_n X_{n})\|_{\mC^{-\frac{\kappa}{2}}_p}
  \end{align*}
  where in the second step we used Besov embedding and in the last step we used
  the resonant product estimate with arbitrary integrability parameters from
  Lemma~\ref{lem:paraproduct-estimates}. Overall:
\begin{align*}
	\|\xi^n \reso \mQ_n C^\Pi_n(\psi^\prime, \mQ_n
	X_{n})\|_{\mC^{-1+2\kappa}_{p}} & \lesssim n^{1 + \kappa} \opnorm{
	\boldsymbol\xi_n } n^{-(1 - 2 \kappa)}
	\|\psi^\prime\|_{\mC^{1-\kappa}_p} \| \mQ_n X_{n}\|_{\mC^{-\frac{\kappa}{2}}_p} \\
	& \lesssim \|\psi^\prime\|_{\mC^{1-\kappa}_p}  \opnorm{ \boldsymbol\xi_n }^2.
\end{align*}  
As for the second term in \eqref{eqn:proof-semidiscrete-AM-support-commutators}, by Lemma~\ref{lem:commutator_resonan} 
\begin{align*}
  \|C^{\reso}(\xi^n, \psi^\prime, \Pi_n^2 X_{n})\|_{\mC^{-1+ 2\kappa}_{p}}
  & \leq \|C^{\reso}(\xi^n, \psi^\prime, \Pi_n^2 X_{n})\|_{\mC^{-\kappa}_{p}} \\
  & \lesssim \| \xi^{n} \|_{\mC^{-1- \frac \kappa 2}} \| \psi^{\prime}
  \|_{\mC^{1- \kappa}_{p}} \| \Pi^{2}_{n} X_{n}
  \|_{\mC^{1- \frac \kappa 2}} \lesssim \| \psi^{\prime} \|_{\mC^{1 -
  \kappa}_{p}} \opnorm{ \boldsymbol\xi_{n}}^{2}_{n, \kappa}.
\end{align*}
Here we estimated, via Lemma~\ref{lem:deterministic-estimates-on-X-n-lambda}:
\begin{align*}
 \| \Pi_{n}^{2} X_{n , \lambda} \|_{\mC^{1 - \frac{\kappa}{2}}} & \leqslant \|
\Pi_{n}^{2} \mP_{n} X_{n, \lambda}\|_{\mC^{1 - \frac{\kappa}{2}}} + \|
\Pi_{n}^{2} \mQ_{n} X_{n, \lambda} \|_{\mC^{1 - \frac{\kappa}{2}}} \\
&   \lesssim \opnorm{\bx_{n}}_{n, \kappa} + n \| X_{n, \lambda} \|_{\mC^{-
\frac{\kappa}{2}}}  \lesssim  \opnorm{\bx_{n}}_{n, \kappa}.
\end{align*}
Similarly for the last term in
\eqref{eqn:proof-semidiscrete-AM-support-commutators}. Here we recall that in
the norm \( \opnorm{ \bx_{n}}_{n, \kappa} \) the term \( Y_{n, \lambda} =
\xi^{n} \reso \Pi_{n}^{2} X_{n, \lambda} - c_{n} \) is allowed to mildly
explode for \( \lambda \to \infty. \) We obtain:
\begin{equation}\label{eqn:proof-resolven-estimate-resonant-product-with-blowup}
\begin{aligned}
  \| \psi^{\prime} (\xi^{n} \reso \Pi^{ 2}_{n} X_{n , \lambda} - c_{n})
  \|_{\mC^{-1+2 \kappa}_{p}} & \lesssim \| \psi^{\prime} \|_{\mC^{1-
  \kappa}_{p}} \| \xi^{n} \reso \Pi^{2}_{n} X_{n} -
  c_{n} \|_{\mC^{-1 + 2 \kappa}} \\
& \lesssim \lambda^{\frac{\kappa}{4}} \| \psi^{\prime} \|_{\mC^{1- \kappa}_{p}} \opnorm{ \boldsymbol\xi_{n} }_{n, \kappa}.
\end{aligned}
\end{equation}

\textit{Step 4: Estimates for \(M^{\sharp, 2}_{\varphi}\).} Here we apply the
commutator estimate for $C_{n, \lambda}(\Pi^2_n \psi, \xi^n)$ from
Lemma~\ref{lem:commutator_resolvent}. We start by
estimating the large scales: 
\begin{align*}
  \| \mP_{n} M^{\sharp, 2}_{\varphi}(\psi) \|_{\mC^{1 + \kappa}_{p}} & = \|
  \Pi_{n}^{2} \mP_{n} C_{n , \lambda}(\Pi_{n}^{2} \psi, \xi^{n})
  \|_{\mC^{1+ \kappa}_{p}} \lesssim \| \mP_{n} C_{n , \lambda}(\Pi_{n}^{2} \psi, \xi^{n})
  \|_{\mC^{1+ \kappa}_{p}} \\
& \lesssim \lambda^{- \frac{\kappa}{2}} \| \Pi_{n}^{2} \psi \|_{\mC^{1 - \kappa}_{p}} \| \xi^{n}
\|_{\mC^{-1 - \frac{\kappa}{2}}} \lesssim \lambda^{- \frac{\kappa}{2}}  \| \psi \|_{\mD^{\lambda}_{n}}(1 + \opnorm{\xi_{n}}_{n,
\kappa})^{2},
\end{align*}
where we used that, provided \( \kappa \) is sufficiently small, \( (1 - \kappa) + (-1 - \kappa /2) + 2(1 - \kappa/2) > 1 + \kappa  \)
together with the estimate \eqref{eqn:for-proof-resolvent-Pi2n-psi-estimate}
for \( \Pi_{n}^{2} \psi \).
On small scales we find:
\begin{align*}
\lambda^{\frac{\kappa}{2}} n^{2 - \kappa} \| \mQ_{n} M^{ \sharp,
2}_{\varphi}(\psi) \|_{\mC^{-1 + 2\kappa}_{p}} & \lesssim \lambda^{\frac{\kappa}{2}} n^{2 - \kappa}  \| \mQ_{n} C_{n, \lambda}(\Pi_{n}^{2} \psi, \xi^{n})
\|_{\mC^{-1 + 2\kappa}_{p}} \\
& \lesssim  n^{-1} \big( \lambda^{\frac{\kappa}{2}
} n^{1 +2(1 - \kappa/2)} \| \mQ_{n} C_{n, \lambda}(\Pi_{n}^{2} \psi, \xi^{n})
\|_{\mC^{-1 + 2\kappa}_{p}} \big) \\
& \lesssim \| \Pi_{n}^{2} \psi
\|_{\mC^{1 - \kappa}_{p}} ( n^{-1} \| \xi^{n} \|_{\mC^{-1 + 3 \kappa}}) \lesssim \| \psi \|_{\mD^{\lambda}_{n}}
(1 + \opnorm{\bx_{n}}_{n, \kappa})^{2},
\end{align*}
where we once again used the estimates on \( \Pi_{n}^{2} \psi \)
from~\eqref{eqn:for-proof-resolvent-Pi2n-psi-estimate}.

\textit{Step 5: Collecting the estimates.}
The estimates of step 2 guarantee that there exists an increasing map
$ \mf{c} \colon [0, \infty) \to [1, \infty) $ 
such that
\begin{equation}\label{eqn:proof-resolvent-final-1}
\| M^{\prime}_{\varphi}(\psi) \|_{\mC^{1- \kappa}_{p}} \leqslant \mf{c}(
\opnorm{\bx_{n}}_{n, \kappa} ) \big( 
\lambda^{- \frac{\kappa}{2}} \| \psi^{\prime} \|_{\mC^{1 - \kappa}_{p}} + 
\| \mP_{n} \psi^{\sharp} \|_{\mC^{1+ \kappa}_{p}} + \| \mQ_{n} \psi^{\sharp}
\|_{\mC^{-1 + 2 \kappa}_{p}} \big).
\end{equation}
In addition, estimates of steps \( 3 \) and \( 4 \) guarantee that (up to
choosing a larger \( \mf{c} \)):
\begin{equation}\label{eqn:proof-resolvent-final-2}
\| \mP_{n} M^{\sharp}_{\varphi}(\psi) \|_{\mC^{1 + \kappa}_{p}} + n^{2 -
\kappa}\| \mQ_{n} M^{\sharp}_{\varphi}(\psi) \|_{\mC^{-1 + 2
\kappa}_{p}} \leqslant \lambda^{- \frac{\kappa}{4}}  \mf{c}(\opnorm{ \bx_{n}}_{n,
\kappa}) \big( \| \varphi \|_{\mC^{-1 + 2 \kappa}_{p}} +  \| \psi
\|_{\mD^{\lambda}_{n}} \big).
\end{equation}
Observe that the factor \( \lambda^{-\frac{\kappa}{4}} \), instead of \(
\lambda^{- \frac{\kappa}{2}}\), is not a typo: it follows from
\eqref{eqn:proof-resolven-estimate-resonant-product-with-blowup}, where we pay
a factor \( \lambda^{\frac{\kappa}{4}} \) to control the product \(
\xi^{n} \reso \Pi_{n}^{2} X_{n, \lambda} - c_{n} \).
Combined with the linearity of the map \( \mM_{\varphi} \) we find that:
\begin{align*}
\| \mM_{\varphi} (\psi)\|_{\mD_n} & \leq \mf{c} (\opnorm{ \bx_{n}}_{n,
\kappa})\Big[
\|\varphi\|_{\mC^{-1+2\kappa}_{p}} + \| \psi\|_{\mD_{n}}  \Big]\\
\big\| \big[ \mM_{\varphi} (\psi) - \mM_{\varphi} ( \tilde{\psi}) \big]^2
\big\|_{\mD_n} & \leq \mf{c}^{2}(\opnorm{ \bx_{n}}_{n,
\kappa})\Big[ \lambda^{-\frac \kappa 4} \| \psi
-\tilde{\psi}\|_{\mD_{n}} \Big].
\end{align*}
Note that we take the second power of the map in the last estimate, because in
\eqref{eqn:proof-resolvent-final-1} we do not have a small
factor \( \lambda^{- \frac{\kappa}{4}} \)  in front of the rest term with
$\psi^\sharp$. \\
In particular, we finally can conclude that there
exists a $\bar{\lambda}=\bar{\lambda}(\sup_n \opnorm{\bx_{n}}_{n, \kappa})$
(so it is independent of \( n \)) such that for $\lambda > \bar{\lambda}$ the map $\mM_{\varphi}$ admits a unique
fixed point, which we  denote by $\mH_{n, \lambda}^{-1} \varphi.$ Moreover,
by the Banach fixed point theorem
\begin{equation}\label{eqn:bound-resolvent-paracontrolled-norm}
\| \mH_{n, \lambda}^{-1} \varphi \|_{\mD_{n}} \lesssim \|
\mM_{\varphi}^2(0)\|_{\mD_{n}} \lesssim \mf{c}^{2}(\opnorm{\bx_{n}}_{n, \kappa}) \|
\varphi \|_{\mC^{-1+2\kappa}_{p}},
\end{equation}
implying that $\mH^{-1}_{n, \lambda} \in B(\mC^{-1+2\kappa}_{p},
\mD_{n}^{\lambda})$,
with the norm bounded uniformly in $n$. Similar, but less involved
calculations lead to a construction of the resolvent $\mH_{\lambda}^{-1} = (\mH
- \lambda)^{-1}$ in the continuum for $\lambda \geq \bar{\lambda}$ (in the continuous case no division of scales is required). The resolvent is then a
bounded operator $\mH_\lambda^{-1} \in B(\mC^{-1+2\kappa}_{p},
\mD^{\lambda})$, where
the latter is the Banach space defined by the norm (for $\psi = \psi^\prime
\para (-\Delta + \lambda)^{-1} \xi + \psi^\sharp$):
\begin{align*}
\| \psi \|_{\mD^{\lambda}} = \|\psi^\prime\|_{\mC^{1-\kappa}_{p}} + \|\psi^\sharp\|_{\mC^{1+\kappa}_{p}}.
\end{align*}
By linearity and computations on the line of those in the previous steps one
can then show that:
\begin{equation}\label{eqn:convergence_operators_in_paracontrolled_norm}
\begin{aligned}
	\lim_{n \to \infty} \sup_{\|\varphi\|_{\mC^{-1+2\kappa}_{p}}\leq 1}\Big\|
	( \mH_{n, \lambda}^{-1} \varphi)^\prime - ( \mH_\lambda^{-1} \varphi
	)^\prime \Big\|_{\mC^{1-\kappa}_{p}} + \Big\| \mP_{n}( \mH_{n, \lambda}^{-1}
	\varphi)^\sharp - ( \mH_\lambda^{-1} \varphi )^\sharp
	\Big\|_{\mC^{1+\kappa}_{p}} =0.
\end{aligned}
\end{equation}
Since $\mH_{n, \lambda}^{-1} \in B(\mC^{-1+2\kappa}_{p}, \mD_n^{\lambda})$, to
prove convergence of the resolvents in \( B(L^{2}, L^{2}) \) it would
be sufficient to show, in the particular case \(p=2\), that
$\mD_{n}^{\lambda} \hookrightarrow L^p$, in the sense that
\(\| \psi \|_{L^{p}} \lesssim \| \psi \|_{\mD_{n}^{\lambda}}\). Unfortunately, this is
not the case, because a priori \(\mQ_{n} \psi^{\sharp} \in \mC_{p}^{-1 + 2 \kappa}\).
So we need a better control on the regularity of $\psi^\sharp$, which we will
obtain by using that $\varphi \in L^2$.

\textit{Step 6: \( L^{2} \) estimates.} Let us fix \( p=2 \). We
want to improve our previous bound by showing that if \( \varphi \in
L^{2} \), then for every \( \psi \in \mD^{\lambda}_{n} \):
\begin{equation}\label{eqn:proof-resolvent-L2-small-scale}
\| \mQ_{n} M^{\sharp}_{\varphi}(\psi) \|_{L^{2}} \lesssim n^{- \kappa}
\mf{c}(\opnorm{\bx_{n}}_{n, \kappa}) (\| \varphi \|_{L^{2}} +\| \psi
\|_{\mD^{\lambda}_{n}}).
\end{equation}
Let us start with estimating by Plancherel (using the same notation as in
Section~\ref{sec:schauder_estimates}):
\begin{align*}
\| (- \mA_{n} + \lambda)^{-1} \mQ_{n} \varphi \|_{L^{2}}^{2} & \simeq \sum_{k \in \ZZ^{d}}
\bigg\vert \frac{ (1 - \daleth)(n^{-1} k)}{\lambda + n^{2}(1 -\hat{\chi}^{4}(n^{-1}
k))} \bigg\vert^{2} |\hat{\varphi}(k)|^{2} \\
& \lesssim \frac{1}{n^{4}} \sum_{k \in \ZZ^{d}} | (1 - \daleth)(n^{-1} k) \hat{\varphi}(k)
|^{2} \lesssim \frac{1}{n^{4}}  \| \mQ_{n} \varphi \|_{L^{2}}^{2},
\end{align*}
where we used that \( \hat{\chi}^{4}(n^{-1} k) < 1 \forall k \neq 0\), together
with the support properties of \( (1 - \daleth)(n^{-1} k) \). Hence we conclude
that
\begin{align*}
\| \mQ_{n} M^{\sharp}_{\varphi}(\psi) \|_{L^{2}} \lesssim n^{-2}\|
\mQ_{n}\varphi \|_{L^{2}} + \| \mQ_{n} \widetilde{M}^{\sharp, 1}_{\varphi}(\psi)  \|_{L^{2}} +
\| \mQ_{n} M^{\sharp, 2}_{\varphi}(\psi)  \|_{L^{2}},
\end{align*}
with \( M^{\sharp,2}_{\varphi}(\psi)  \) as in step \( 2 \) and 
\begin{align*}
\widetilde{M}^{\sharp, 1}_{\varphi} (\psi) = (- \mA_{n}+ \lambda)^{-1}
\Pi_{n}^{2} \Big\{ \xi^{n} \reso \Pi_{n}^2 \psi^{\sharp}  +   \xi^{n} \reso
    [\Pi_{n}^2 (  \psi^{\prime} \para X_{n, \lambda})] - c_{n} \psi^{\prime}  +
   \xi^{n} \para \Pi_{n}^2 \psi  \Big\}. 
\end{align*}
The smoothing effect of \( \Pi_{n}^{2} \) and the elliptic Schauder estimates guarantee that
\begin{align*}
n^{\frac{\kappa}{2}} \| \mQ_{n} &\widetilde{M}^{\sharp, 1}_{\varphi}(\psi)
\|_{\mC^{1 + \frac{\kappa}{2}}_{2}} \\
& = n^{\frac{\kappa}{2}}\big\| \Pi_{n}^{2}\mQ_{n}(- \mA_{n} + \lambda)^{-1} \big(\xi^{n} \reso \Pi_{n}^2 \psi^{\sharp}  +   \xi^{n} \reso
    [\Pi_{n}^2 (  \psi^{\prime} \para X_{n, \lambda})] - c_{n} \psi^{\prime}  +
   \xi^{n} \para \Pi_{n}^2 \psi \big) \big\|_{\mC^{1 + \frac{\kappa}{2}}_{2}} \\
& \lesssim n^{2 - \kappa}\big\| \mQ_{n}(- \mA_{n} + \lambda)^{-1} \big(\xi^{n} \reso \Pi_{n}^2 \psi^{\sharp}  +   \xi^{n} \reso
    [\Pi_{n}^2 (  \psi^{\prime} \para X_{n, \lambda})] - c_{n} \psi^{\prime}  +
   \xi^{n} \para \Pi_{n}^2 \psi \big) \big\|_{\mC^{-1 +2
\kappa}_{2}} \\
& \lesssim \big\|\xi^{n} \reso \Pi_{n}^2 \psi^{\sharp}  +   \xi^{n} \reso
    [\Pi_{n}^2 (  \psi^{\prime} \para X_{n, \lambda})] - c_{n} \psi^{\prime}  +
   \xi^{n} \para \Pi_{n}^2 \psi \big\|_{\mC^{-1 +2 \kappa}_{2}}.
\end{align*}
Now we can follow verbatim the estimates of step \( 3 \) to obtain, up to
slightly increasing \( \mf{c} \):
\begin{equation}\label{eqn:proof-proposition-bound-M-tilde-sharp-1}
n^{\kappa}\| \mQ_{n} \widetilde{M}^{\sharp, 1}_{\varphi}(\psi)
\|_{\mC^{1 + \frac{\kappa}{2}}_{2}} \leqslant
\mf{c}(\opnorm{\bx_{n}}_{n, \kappa}) \| \psi \|_{\mD_{n}^{\lambda}}.
\end{equation}
Similarly for \( M^{\sharp, 2}_{\varphi}(\psi) \), where we find:
\begin{equation}\label{eqn:proof-proposition-bound-M-tilde-sharp-2}
\begin{aligned}
n^{\frac{\kappa}{2}}\| \mQ_{n} M^{\sharp, 2}_{\varphi}(\psi)
\|_{\mC^{1 + \frac{\kappa}{2}}_{2}} &= n^{\frac{\kappa}{2}} \| \mQ_{n}
\Pi_{n}^{2} C_{n, \lambda}( \Pi_{n}^{2} \psi, \xi^{n}) \|_{\mC^{1 +
\frac{\kappa}{2}}_{2}} \\
 & \lesssim n^{2 - \kappa} \| \mQ_{n} C_{n, \lambda}( \Pi_{n}^{2} \psi, \xi^{n}) \|_{\mC^{-1 + 2
\kappa}_{2}} \\
& \leqslant \mf{c}(\opnorm{\bx_{n}}_{n, \kappa}) \| \psi
\|_{\mD_{n}^{\lambda}},
\end{aligned}
\end{equation}
where in the last step we followed verbatim the calculations in step \( 4
\). In particular, we have concluded the proof of~\eqref{eqn:proof-resolvent-L2-small-scale}.
The bound~\eqref{eqn:proof-resolvent-L2-small-scale} allows us in particular to
conclude that
\[ \lim_{n \to \infty} \sup_{ \| \varphi \|_{L^{2}} \leqslant 1} \| \mQ_{n} \mH_{n, \lambda}^{-1}
\varphi \|_{L^{2}} = 0. \]
Together with~\eqref{eqn:convergence_operators_in_paracontrolled_norm} we
conclude that
  \begin{align*}
    \lim_{n \to \infty} \sup_{\| \varphi \|_{L^{2}} \leqslant 1} \| &\mH_{n ,
\lambda}^{-1} \varphi  - \mH_{\lambda}^{-1} \varphi \|_{L^{2}} \\
 & \leq   \lim_{n \to \infty} \sup_{\| \varphi \|_{L^{2}} \leqslant 1 } \Big\{ \| (\mH_{n , \lambda}^{-1} \varphi )^{\prime} \para
    X_{n, \lambda} -  (\mH_{\lambda}^{-1} \varphi)^{\prime} \para (- \Delta + \lambda)^{-1} \xi  \|_{L^{2}} \\
     & \qquad \qquad \quad  + \| \mP_{n} (\mH_{n , \lambda}^{-1} \varphi)^{\sharp} - (\mH_{\lambda}^{-1} \varphi )^{\sharp}\|_{L^{2}} + \|
    \mQ_{n} (\mH_{n , \lambda}^{-1} \varphi )^{\sharp} \|_{L^{2}} \Big\} = 0,
  \end{align*} 
thus proving the convergence of the resolvents.

\end{proof}

Having established convergence in resolvent sense of the operator \(
\mH_{n} \) we complete the proof of
Theorem~\ref{thm:semidiscrete-pam-approximation} by showing that the
eigenfunctions of the operators converge in an appropriate sense.

\begin{proof}[Proof of Theorem~\ref{thm:semidiscrete-pam-approximation}]

As usual, let us fix \(\omega \in \Omega\), the latter satisfying
Assumption~\ref{assu:on-the-probability-space-PAM} and to lighten the
notation we avoid writing explicitly the dependence on \(\omega\) in what
follows. Also, as in the previous proof we restrict to discussing the case \(
d=2 \), which is more complicated.

To complete the proof of the theorem we collect all the previous results.
Proposition~\ref{prop:semidiscrete-resolvent-convergence} guarantees that \(
\mH_{n} \) converges to \( \mH \) in the resolvent sense, as a sequence of operators on \(
L^{2}(\TT^{d}) \). In particular,
Corollary~\ref{cor:convergence-eigenfunctions} guarantees that, for any
eigenvalue \( \lambda \) of \( \mH \) with multiplicity \( m(\lambda) \in \NN
\) and associated orthogonal eigenfunctions \( \{ e_{j} \}_{j = 1}^{m(\lambda)}
\), there exists a sequence \( \{ e_{j}^{n} \}_{j =1}^{m(\lambda)} \subseteq
L^{2}(\TT^{d}) \), for \( n \geqslant n(\lambda) \) with \( n(\lambda) \) sufficiently large, such that:
\[ e^{n}_{j} \to e_{j}, \qquad \mH_{n} e^{n}_{ j} \to \mH e_{j}, \qquad \text{
in } L^{2}(\TT^{d}). \] 
Moreover any \( e^{n}_{j} \) can be represented as
\[ e^{n}_{j} = \sum_{i = 1}^{m (\lambda)} \alpha_{i j}^{n}
\overline{e}^{n}_{i}, \qquad \sum_{i = 1}^{m(\lambda)} (\alpha_{i
j}^{n})^{2} = 1 , \]
where \( \overline{e}_{i}^{n} \) are eigenfunctions for \( \mH_{n} \) with
eigenvalue \( \lambda^{n}_{i} \) such that \( \lim_{n \to \infty}
\lambda^{n}_{i} = \lambda. \) To conclude the proof we will show the
following additional convergences, for any \( \kappa \in (0, \frac{1}{2}
) \) sufficiently small:
\begin{equation*}
\begin{aligned}
\Pi_{n} e^{n}_{j} \to e_{j}, \qquad \Pi_{n} \mH_{n} e^{n}_{j} \to \lambda e_{j}
\qquad \text{ in } \mC^{\kappa}(\TT^{d}),
\end{aligned}
\end{equation*}
for \( \kappa > 0 \) sufficiently small.
In the previous discussion we already have explained the convergences above in \(
L^{2}(\TT^{d}) \). By compact embedding \( \mC^{\kappa}(\TT^{d}) \subseteq
\mC^{\kappa^{\prime}} (\TT^{d})  \) for
\( \kappa^{\prime} < \kappa \), and since \( \kappa \) is arbitrary, it thus
suffices to prove the bounds:
\begin{equation*}
\begin{aligned}
\sup_{n \geqslant n(\lambda)} \Big\{ \| \Pi_{n} e^{n}_{j} \|_{\mC^{\kappa}} +
\| \Pi_{n} \mH_{n} e^{n}_{j} \|_{\mC^{\kappa}} \Big\} < \infty.
\end{aligned}
\end{equation*}
By our previous considerations, observing that
$\mH_{n} e^{n}_{j} = \sum_{i =1 }^{m(\lambda)} \lambda^{n}_{i} \alpha_{ij}
\overline{e}^{n}_{i},$
we can further reduce the problem to proving that
\begin{equation}\label{eqn:proof-theorem-bound-eigenfunctions}
\begin{aligned}
\sup_{n \geqslant n(\lambda)} \| \Pi_{n} \overline{e}^{n}_{i}
\|_{\mC^{\kappa}}  < \infty, \qquad \forall i = 1, \ldots, m(\lambda).
\end{aligned}
\end{equation}
Now we fix \( i \) and make use of the fact that \( \overline{e}^{n}_{i} \) is an eigenfunction
of \( \mH_{n} \) with eigenvalue \( \lambda_{i}^{n} \to \lambda\). To lighten
the notation, since \( i \) is fixed, let us write
$ e^{n} = \overline{e}^{n}_{i}.$
We find that for \( \mu>1 \) sufficiently large such that
Proposition~\ref{prop:semidiscrete-resolvent-convergence} applies (with \(
\lambda \) replaced by \( \mu \), and following the notations introduced by the
proposition and its proof) and defining \( v^{n} = (\mu - \lambda^{n}_{i})
e^{n} \):
\begin{align*}
e^{n} & =  \mH_{n,
\mu}^{-1} v^{n} = \Pi_{n}^{2} \big\{ (e^{n})^{\prime} \para X_{n, \mu}
\big\} + ( e^{n})^{\sharp}
\end{align*}
The bound~\eqref{eqn:bound-resolvent-paracontrolled-norm} now guarantees
that
\begin{align*}
\| e^{n} \|_{\mD^{\mu}_{n}} = \| (e^{n})^{\prime}  \|_{\mC^{1 - \kappa}_{2}} + \| \mP_{n}
( e^{n} )^{\sharp}   \|_{\mC^{1 + \kappa}_{2}} + n^{2 - \kappa}
\| \mQ_{n} ( e^{n})^{\sharp} \|_{\mC^{-1 + 2 \kappa}_{2}} \lesssim \|
e^{n} \|_{L^{2}} \lesssim 1.
\end{align*}
This bound is sufficient for large scales, but small scales need more care.
Here we observe that
\begin{align*}
(e^{n})^{\sharp} = (- \mA_{n} + \mu)^{-1} v^{n}  + \widetilde{M}^{\sharp,
1}_{v^{n}}(e^{n}) + M^{\sharp, 2}_{v^{n}}(e^{n}),
\end{align*}
where \( \widetilde{M}^{\sharp, 1}, M^{\sharp, 2}\) have been introduced in
Step 6 of Proposition~\ref{prop:semidiscrete-resolvent-convergence} and
satisfy, following~\eqref{eqn:proof-proposition-bound-M-tilde-sharp-1} and
\eqref{eqn:proof-proposition-bound-M-tilde-sharp-2}:
\begin{align*}
\| \mQ_{n} \widetilde{M}^{\sharp, 1}_{v^{n}} (e^{n})  \|_{\mC^{1 +
\frac{\kappa}{2}}_{2}} + \| \mQ_{n} M^{\sharp, 2}_{v^{n}} (e^{n})
\|_{\mC^{1 + \frac{\kappa}{2}}_{2}} \lesssim \| e^{n} \|_{\mD^{\mu}_{n}}
\lesssim 1.
\end{align*}
Now we are in position to conclude our estimate. By Besov embedding, since we
are considering the case \( d=2 \) (note that in \( d=1 \) we loose less
regularity, so the estimates simplify) we have
\begin{align*}
\| \varphi \|_{\mC^{\alpha - 1}} \lesssim \| \varphi \|_{\mC^{\alpha}_{2}},
\qquad \forall \varphi \in \mC^{\alpha}_{2}.
\end{align*}
In particular we find that
\begin{align*}
\sup_{n \geqslant n(\lambda)} \| (e^{n})^{\prime} \|_{\mC^{- \kappa}} \lesssim
\sup_{n \geqslant n(\lambda)} \| e^{n} \|_{\mD^{\mu}_{n}}  < \infty,
\end{align*}
so that (note that the term \( \Pi_{n}^{3} \) appears because we want to
estimate the norm of \( \Pi_{n} e^{n} \): for this estimate the presence of the
additional \( \Pi_{n} \) does not matter):
\begin{align*}
\sup_{n \geqslant n(\lambda)} \Big\| \Pi_{n}^{3} \big\{ (e^{n})^{\prime} \para X_{n, \mu}
\big\} \Big\|_{\mC^{1 - 3 \kappa}} & \lesssim \sup_{n \geqslant n(\lambda)}  n^{2 - \kappa} \| (e^{n})^{\prime}
\para X_{n, \mu} \|_{\mC^{-1 -2\kappa}} \\
& \lesssim \sup_{n \geqslant n(\lambda)}  \| (e^{n})^{\prime}
\|_{\mC^{- \kappa}} \| X_{n, \mu} \|_{\mC^{-1- \kappa}} \\
& \lesssim \sup_{n \geqslant n(\lambda)} \| e^{n} \|_{\mD^{\mu}_{n}}
\opnorm{ \bx_{n}}_{n, \kappa} < \infty. 
\end{align*}
Next we control the rest term:
\begin{align*}
\| \Pi_{n} (e^{n})^{\sharp} \|_{\mC^{\frac{\kappa}{2}}} & \lesssim \| \Pi_{n}
(e^{n})^{\sharp} \|_{\mC^{1 + \frac{\kappa}{2}}_{2}} \\
& \lesssim \| \mP_{n} \Pi_{n} ( e^{n})^{\sharp} \|_{\mC^{1+ \frac{\kappa}{2}
}_{2}} + \| \mQ_{n} \Pi_{n} (- \mA_{n} + \mu)^{-1} v^{n} \|_{\mC^{1 +
\frac{\kappa}{2}}_{2}} + \|
\mQ_{n} \Pi_{n} ( \widetilde{M}^{\sharp, 1}_{v^{n}}(e^{n}) + M^{\sharp,
2}_{v^{n}} (e^{n})  ) \|_{\mC^{1 + \frac{\kappa}{2}}_{2}}\\
& \lesssim\| \mQ_{n} \Pi_{n} (- \mA_{n} + \mu)^{-1} v^{n} \|_{\mC^{1 +
\frac{\kappa}{2}}_{2}} + \| \mP_{n} ( e^{n})^{\sharp} \|_{\mC^{1+
\frac{\kappa}{2}}_{2}} +  \| \mQ_{n} ( \widetilde{M}^{\sharp, 1}_{v^{n}}(e^{n}) + M^{\sharp,
2}_{v^{n}} (e^{n})  ) \|_{\mC^{1 + \frac{\kappa}{2}}_{2}} \\
& \lesssim \| \mQ_{n} \Pi_{n} (- \mA_{n} + \mu)^{-1} v^{n} \|_{\mC^{1 +
\frac{\kappa}{2} }_{2}} + \| e^{n} \|_{\mD^{\mu}_{n}},
\end{align*}
where in the last step we used all the previous estimates. Observe that so far
we did non use the smoothing effect of the additional term \( \Pi_{n} \). We
use this effect in the following last step, where we estimate the only
remaining term:
\begin{align*}
\| \mQ_{n} \Pi_{n} (- \mA_{n} + \mu)^{-1} v^{n}   \|_{\mC^{1 +
\frac{\kappa}{2}}_{2}} \lesssim \big\|  (1 + | \cdot|^{2})^{\frac{d + 1}{4}}
\mF_{\TT^{d}} \big(\mQ_{n} \Pi_{n} (- \mA_{n} + \mu)^{-1} e^{n} \big)
\big\|_{L^{2}(\ZZ^{d})}.
\end{align*}
Here we used that for \( \kappa \) sufficiently small and since \( d=2 \): \( 1
+ \frac{\kappa}{2}  \leqslant \frac{d +1}{2} \). Then we used one of the many definitions of fractional Sobolev spaces, via the
norm (for \( \alpha>0 \)):
\[ \| \varphi \|_{H^{\alpha}} = \| (1 - \Delta)^{\frac{\alpha}{2}} \varphi
\|_{L^{2}(\TT^{d})} \simeq \| (1 + | \cdot |^{2} )^{\frac{\alpha}{2}}
\mF_{\TT^{d}} \varphi \|_{L^{2}}, \] 
together with the embedding (see for example \cite[Section 2.3.5]{Triebe2010}):
\[ \| \varphi \|_{\mC^{\alpha}_{2}} \lesssim \| \varphi \|_{H^{\alpha}}, \qquad
\forall \varphi \in H^{\alpha}. \]
Hence we conclude with the following estimate (here we follow the notations of
Section~\ref{sec:schauder_estimates}):
\begin{align*}
\big\|  (1 + | \cdot|^{2})^{\frac{d + 1}{4}} & \mF_{\TT^{d}} \big(\mQ_{n} \Pi_{n}
(- \mA_{n} + \mu)^{-1} e^{n} \big) \big\|_{L^{2}(\ZZ^{d})}^{2} \\
& = \sum_{k \in \ZZ^{d}} \bigg\vert \frac{ (1 + |k|^{2})^{\frac{d + 1}{4}} }{\mu +
n^{2}(1 - \hat{\chi}^{4}(n^{-1} k)))} \bigg\vert^{2} | \hat{\chi}(n^{-1} k)
(1 - \daleth)(n^{-1} k)) |^{2} | \hat{e}^{n} (k)|^{2} \\
& \lesssim  \left( \frac{1}{\mu + n^{2}} \right)^{2} \sum_{k \in \ZZ^{d}} (1 + | k |)^{d + 1} | \hat{\chi}(n^{-1} k)
(1 - \daleth)(n^{-1} k)) |^{2} | \hat{e}^{n} (k)|^{2} \\
& \lesssim \left( \frac{1}{\mu + n^{2}} \right)^{2} \sum_{k \in \ZZ^{d}}
\frac{(1 + | k |)^{d + 1}}{(1 + |n^{-1} k|)^{d +1}} | (1 - \daleth)(n^{-1} k))
|^{2} | \hat{e}^{n} (k)|^{2}\\
& \lesssim \left( \frac{1}{\mu + n^{2}} \right)^{2} n^{d +1} \sum_{k \in
\ZZ^{d}\setminus \{ 0 \}}
\frac{(1 + | k |)^{d + 1}}{ | k|^{d +1}}  | \hat{e}^{n} (k)|^{2} \\
& \lesssim n^{(d + 1) - 4}\| e^{n} \|_{L^{2}}^{2} \lesssim \| e^{n}
\|_{L^{2}}^{2} \lesssim 1.
\end{align*}
Here we used the fact that \( \hat{\chi}(k) < 1 \) for \( k \neq 0 \) together
with the support properties of \( \daleth \) to bound
\begin{align*}
\frac{1}{\mu + n^{2}(1 - \hat{\chi}^{4}(n^{-1} k))} \lesssim
\frac{1}{n^{2}}
\end{align*}
uniformly over \( n \) and \( k \) such that \( (1 - \daleth)(n^{-1} k) \neq 0
\). We also applied the bound
\begin{align*}
| \hat{\chi}(k) | \lesssim \frac{1}{(1 + | k |)^{\frac{d+1}{2}}}
\end{align*}
from Lemma~\ref{lem:taylor_expansion_and_decay_ft_characteristic_function}.
This concludes the proof of the theorem, since we have proven~\eqref{eqn:proof-theorem-bound-eigenfunctions} with \( \kappa \)
replaced by \( \frac{\kappa}{2} \) (but this does not matter since \( \kappa
\) is arbitrarily small).

Before we conclude, let us observe that in the last bound we used that \( d=2
\) to bound \( 1 + \frac{\kappa}{2} \leqslant \frac{d+1}{2}\). If \( d=1
\) this fails, but we actually need less, since by Besov embedding 
$\| \Pi_{n} e^{n}  \|_{\mC^{\kappa}(\TT^{1})} \lesssim \| \Pi_{n} e^{n}
\|_{\mC^{\frac{1}{2} + \kappa}_{2}(\TT^{1})}.$
In particular, following all the previous steps we can bound, for \( \kappa
\) sufficiently small such that \( \frac{1}{2} + \kappa \leqslant
\frac{d +1}{2} =1\):
\begin{align*}
\| \mQ_{n} \Pi_{n} (- \mA_{n} + \mu) v^{n} \|_{\mC^{\kappa}} & \lesssim \| \mQ_{n} \Pi_{n} (- \mA_{n} + \mu)^{-1} e^{n}   \|_{\mC^{\frac{1}{2} +
\frac{\kappa}{2}}_{2}} \\
& \lesssim \big\|  (1 + | \cdot|^{2})^{\frac{d + 1}{4}}
\mF_{\TT^{d}} \big(\mQ_{n} \Pi_{n} (- \mA_{n} + \mu)^{-1} e^{n} \big)
\big\|_{L^{2}(\ZZ^{d})},
\end{align*}
and from here we can follow, for example, the same calculations as above.
\end{proof}

\begin{remark}\label{rem:on-bound-for-convergence-of-eigenfunctions}
We observe that in the last bound for \( (e^{n})^{\sharp}\) we used \( \Pi_{n} \) to gain \(
\frac{d+1}{2} \) regularity. In dimension \( d=2 \) this is crucially larger
than \( 1 \). This statement is in apparent contradiction with
Corollary~\ref{cor:regularity_gain_convolution_with_characteristic_functions},
where we show a possible regularity gain of at most \( 1 \). While
the latter corollary works for any integrability parameter \( p \) and extends
to other characteristics functions (than just those of balls), the improvement
we see in the proof depends on the choice \( p=2 \) and our exact computations
for the decay of the Fourier transform \( \hat{\chi}\).
\end{remark}

\subsection{Commutator estimates}\label{subsec:pam:paraproducts_and_commutator_estimates}

This section is devoted to products of distributions and commutator estimates, starting with the decomposition in paraproducts (through the symbol $\para$) and resonant products ($\reso$). For \(\varphi, \psi \in
\mS^{\prime}(\TT^{d})\) set
\[ 
  S_{i}\varphi := \sum_{j = -1}^{i -1} \Delta_{j} \varphi, \qquad \varphi
  \para \psi := \sum_{i \geq -1} S_{i -1} \varphi \Delta_{i}
  \psi, \qquad \varphi \reso \psi := \sum_{|i {-} j | \leq 1} \Delta_{j} \varphi
  \Delta_{i} \psi,
\]
where the latter sum might not be well defined. Then, an \textit{a priori} ill-posed
product of \(\varphi\) and \(\psi\) can be written as
\( \varphi \cdot \psi = \varphi \para \psi + \varphi \reso \psi + \varphi \rpara
\psi.\) 
The following estimates are classical, see e.g.
\cite[Lemmata 2.82 and 2.85]{BahouriCheminDanchin2011FourierAndNonLinPDEs} and guarantee that the product is actually well-defined if the regularities $\alpha$ and $\beta$ of $\varphi$ and $\psi$ satisfy $\alpha+\beta>0$.

\begin{lemma}\label{lem:paraproduct-estimates}
   Fix \(\alpha, \beta \in \RR\) and \(p, q, r \in [1, \infty]\) such that
   \(\frac{1}{r}  = \frac{1}{p} + \frac{1}{q} \). Then, for all \(\varphi, \psi \in
  \mS^{\prime}(\TT^{d})\) the following estimates are satisfied:
  \begin{align*}
    \| \varphi \para \psi \|_{\mC^{\alpha}_r} \lesssim \| \varphi
    \|_{L^{p}} \| \psi \|_{\mC^{\alpha}_{q}}, & \qquad \| \varphi \para \psi \|_{\mC^{\alpha {+} \beta}_{r}}\lesssim \| \varphi
    \|_{\mC^{\beta}_{p}} \| \psi \|_{\mC^{\alpha}_{q}}, \ \ \text{if} \ \ \beta  <
    0, \\
    \| \varphi \reso \psi \|_{\mC^{\alpha {+} \beta}_{r}} & \lesssim \| \varphi
    \|_{\mC^{\beta}_{p}} \| \psi \|_{\mC^{\alpha}_{q}} \ \ \text{if} \ \ \alpha
    {+} \beta > 0.
  \end{align*}
\end{lemma}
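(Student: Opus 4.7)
The plan is to prove this by standard Littlewood-Paley arguments, exploiting the support properties in frequency space of the building blocks of the paraproduct and resonant product, together with Bernstein-type estimates and H\"older's inequality.

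First, for the paraproduct $\varphi \para \psi = \sum_i S_{i-1}\varphi \cdot \Delta_i\psi$, I would observe that $S_{i-1}\varphi$ has Fourier spectrum in a ball of radius $\lesssim 2^{i-1}$ while $\Delta_i\psi$ has spectrum in an annulus of inner and outer radii comparable to $2^i$. The product $S_{i-1}\varphi \cdot \Delta_i\psi$ therefore has spectrum in an annulus of size $\sim 2^i$, so there is a universal $N_0$ with $\Delta_j(S_{i-1}\varphi \cdot \Delta_i\psi) = 0$ whenever $|i-j| > N_0$. Combined with H\"older's inequality in $L^p \times L^q \to L^r$ and the boundedness of Littlewood-Paley blocks, this gives
\[ \| \Delta_j (\varphi \para \psi)\|_{L^r} \lesssim \sum_{|i-j|\leq N_0} \|S_{i-1}\varphi\|_{L^p}\|\Delta_i\psi\|_{L^q}. \]
The first estimate then follows at once from $\|S_{i-1}\varphi\|_{L^p} \lesssim \|\varphi\|_{L^p}$ and $\|\Delta_i\psi\|_{L^q} \lesssim 2^{-i\alpha}\|\psi\|_{\mC^{\alpha}_q}$.

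For the second estimate (where $\beta<0$) I would instead write
\[ \|S_{i-1}\varphi\|_{L^p} \leq \sum_{k \leq i-1} 2^{-k\beta}\bigl(2^{k\beta}\|\Delta_k \varphi\|_{L^p}\bigr) \lesssim \|\varphi\|_{\mC^{\beta}_p} \sum_{k\leq i-1} 2^{-k\beta} \lesssim 2^{-i\beta}\|\varphi\|_{\mC^{\beta}_p}, \]
where the geometric series converges precisely because $-\beta>0$. Substituting, $2^{j(\alpha+\beta)}\|\Delta_j(\varphi\para\psi)\|_{L^r}$ is bounded uniformly in $j$ by $\|\varphi\|_{\mC^{\beta}_p}\|\psi\|_{\mC^{\alpha}_q}$, since the relevant $i \sim j$.

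For the resonant product $\varphi \reso \psi = \sum_{|i-k|\leq 1}\Delta_i\varphi \cdot \Delta_k\psi$, each summand has spectrum inside a \emph{ball} of radius $\lesssim 2^k$. Consequently $\Delta_j$ kills all contributions with $k < j - N_0$, leaving
\[ \|\Delta_j(\varphi \reso \psi)\|_{L^r} \lesssim \sum_{k \geq j - N_0}\|\Delta_k \varphi\|_{L^p}\|\widetilde\Delta_k\psi\|_{L^q} \lesssim \|\varphi\|_{\mC^{\beta}_p}\|\psi\|_{\mC^{\alpha}_q}\sum_{k\geq j-N_0} 2^{-k(\alpha+\beta)}, \]
where $\widetilde\Delta_k = \Delta_{k-1}+\Delta_k+\Delta_{k+1}$. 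The hypothesis $\alpha+\beta>0$ is exactly what is needed to sum the geometric series, yielding the factor $2^{-j(\alpha+\beta)}$, which is the desired bound.

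There is no serious obstacle here: everything reduces to frequency-localization plus H\"older. The only subtlety worth flagging is the exact role of the sign of $\beta$ in the second paraproduct estimate (needed to make $\sum_{k\leq i-1} 2^{-k\beta}$ converge to a geometric tail at scale $2^{-i\beta}$) and the sharpness of $\alpha+\beta>0$ for the resonant product, which reflects the well-known fact that the genuine product $\varphi \cdot \psi$ cannot in general be defined when the regularities sum to a non-positive number. Extending from $\mC^{\alpha}_p = B^{\alpha}_{p,\infty}$ to full Besov spaces $B^{\alpha}_{p,q}$ (as used elsewhere in the paper) is a routine matter of applying $\ell^q$-summability on the Littlewood-Paley side.
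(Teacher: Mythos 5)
Your proof is correct. Note, however, that the paper does not prove this lemma at all: it is stated as a classical fact with a citation to \cite[Lemmata 2.82 and 2.85]{BahouriCheminDanchin2011FourierAndNonLinPDEs}, where the argument is essentially the one you reconstructed. The three key points you isolate are indeed the ones that make the machine run: the Fourier support of $S_{i-1}\varphi\,\Delta_i\psi$ sits in an annulus of scale $2^i$, so only a bounded window of $j$ survives in $\Delta_j(\varphi\para\psi)$; when $\beta<0$, the bound $\|S_{i-1}\varphi\|_{L^p}\lesssim 2^{-i\beta}\|\varphi\|_{\mC^\beta_p}$ exchanges the low-frequency sum for a single geometric factor; and for $\varphi\reso\psi$ the spectrum of each summand lies in a ball, so $\Delta_j$ forces a lower bound $k\gtrsim j$ and the tail series converges precisely when $\alpha+\beta>0$. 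Your Hölder step $L^p\times L^q\to L^r$ with $\frac1r=\frac1p+\frac1q$ is likewise the right bookkeeping. I would only add, for completeness, that the uniform $L^p$-boundedness of $S_{i-1}$ follows from the scale-invariant $L^1$ bound on its convolution kernel (Young's inequality), and that the indexing $\sum_{k\le i-1}$ versus $\sum_{k\le i-2}$ is harmless since the cutoffs overlap by at most one dyadic band. These are details, not gaps; the proof stands.
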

The rest of this subsection  deals with the following commutators.
\begin{definition}\label{def:all_commutators_at_once}
  For distributions \(\varphi, \psi, \sigma \in \mS^{\prime}(\TT^{d})\) we define the (a-priori ill-posed) 
  commutators
  \begin{align*}
    C^{\reso}(\varphi, \psi, \sigma) & := \varphi \reso ( \psi \para \sigma) -
    \psi ( \varphi \reso \sigma), \\
    C^{\Pi}_{n} ( \varphi, \psi) & := \Pi_{n}^2( \varphi \para \psi) - \varphi
    \para \Pi_{n}^2 \psi,\\
    C_{n, \lambda}(\varphi, \psi)  &:= (- \mA_{n} + \lambda)^{-1} (\varphi \para
    \psi) - \varphi \para (- \mA_{n} + \lambda)^{-1} \psi.
  \end{align*}
\end{definition}

The first commutator estimate is crucial, but by now well-known.

\begin{lemma}[\cite{GubinelliPerkowski2015LectureNotes}, Lemma 14]\label{lem:commutator_resonan}
  For \(\varphi, \psi, \sigma \in \mS^{\prime}(\TT^{d})\), \(\alpha, \beta,
  \gamma \in \RR\) with \(\alpha+ \beta + \gamma>0\) and \(p \in [1, \infty]\):
  \[ \| C^{\reso} (\varphi, \psi , \sigma) \|_{\mC^{\alpha +
  \gamma}_{p}} \lesssim \| \varphi \|_{\mC^{\alpha}} \| \psi
\|_{\mC^{\beta}_p} \| \sigma \|_{\mC^{\gamma}}.  \] 
\end{lemma}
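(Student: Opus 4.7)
The plan is to follow the classical Bony paraproduct approach, as in the referenced lecture notes: expand every product dyadically, exploit Fourier supports to reduce to a doubly-localised commutator, and then gain regularity by a Taylor-type expansion of $\psi$ inside the kernel. The point is that the two terms in $C^{\reso}(\varphi,\psi,\sigma)$ are not individually controllable when $\alpha + \gamma \leq 0$, but their difference is, because the ``$\psi$'' hidden in $\psi \para \sigma$ differs from the external $\psi$ only by something of size $\|\psi\|_{\mC^{\beta}_{p}}$ at the scale where $\sigma$ is localised, which is what produces the summability threshold $\alpha + \beta + \gamma > 0$.

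Concretely, I would write
\begin{align*}
  \varphi \reso (\psi \para \sigma)(x) = \sum_{|i-j|\leq 1} \sum_{k}
    \Delta_{i}\varphi(x) \, \Delta_{j}\!\big(S_{k-1}\psi \cdot \Delta_{k}\sigma\big)(x),
\end{align*}
and observe that by the Fourier support of $S_{k-1}\psi \cdot \Delta_{k}\sigma$ (contained in a ball of radius $\sim 2^{k}$) only the indices with $k \geq j - N_{0}$ contribute, and by the support of $\Delta_{j}$ the bulk is concentrated around $k \sim j$. Writing $\Delta_{j} = K_{j} *$ and inserting $\pm \psi(x)$ gives
\begin{align*}
  \Delta_{j}\!\big(S_{k-1}\psi \cdot \Delta_{k}\sigma\big)(x)
  = \int K_{j}(x-y)\big[S_{k-1}\psi(y) - \psi(x)\big]\Delta_{k}\sigma(y)\,\mathrm{d}y
    \,+\, \psi(x)\,\Delta_{j}\Delta_{k}\sigma(x).
\end{align*}
Summing the second piece over $k$ reconstructs $\psi(x)\,\Delta_{j}\sigma(x)$, which when multiplied by $\Delta_{i}\varphi(x)$ and summed over $|i-j|\leq 1$ yields exactly $\psi(x)(\varphi \reso \sigma)(x)$. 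Hence this half cancels with the second term of $C^{\reso}$, leaving
\begin{align*}
  C^{\reso}(\varphi,\psi,\sigma)(x)
   = \sum_{|i-j|\leq 1}\sum_{k \sim j} \Delta_{i}\varphi(x)\!
     \int K_{j}(x-y)\big[S_{k-1}\psi(y) - \psi(x)\big]\Delta_{k}\sigma(y)\,\mathrm{d}y.
\end{align*}

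The remaining task is then to control the bracket. I would split $S_{k-1}\psi(y)-\psi(x) = \big(S_{k-1}\psi(y) - S_{k-1}\psi(x)\big) + \big(S_{k-1}\psi(x) - \psi(x)\big)$; the second piece is bounded in $L^{p}$ by $2^{-(k-1)\beta}\|\psi\|_{\mC^{\beta}_{p}}$, while for the first piece I would use that $K_{j}$ is essentially supported at scale $2^{-j}$ and, combined with the finite-difference characterisation of $B^{\beta}_{p,\infty}$, contributes a factor $2^{-j\beta}\|\psi\|_{\mC^{\beta}_{p}}$ in $L^{p}$ (for $\beta \in (0,1)$; for $\beta$ outside this range the Taylor expansion has to be pushed further, cf.\ the iterated argument in \cite{GubinelliPerkowski2015LectureNotes}). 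Combining these bounds with $\|\Delta_{i}\varphi\|_{L^{\infty}}\lesssim 2^{-i\alpha}\|\varphi\|_{\mC^{\alpha}}$ and $\|\Delta_{k}\sigma\|_{L^{\infty}}\lesssim 2^{-k\gamma}\|\sigma\|_{\mC^{\gamma}}$, a single dyadic block $\Delta_{\ell} C^{\reso}(\varphi,\psi,\sigma)$ is bounded in $L^{p}$, via H\"older, by
\begin{align*}
  \sum_{j \gtrsim \ell}\sum_{k \sim j} 2^{-(j\wedge k)\beta}\,2^{-j\alpha}\,2^{-k\gamma}\,
   \|\varphi\|_{\mC^{\alpha}}\|\psi\|_{\mC^{\beta}_{p}}\|\sigma\|_{\mC^{\gamma}}
   \lesssim 2^{-\ell(\alpha+\gamma)}\|\varphi\|_{\mC^{\alpha}}\|\psi\|_{\mC^{\beta}_{p}}\|\sigma\|_{\mC^{\gamma}},
\end{align*}
where the geometric summation converges precisely under the assumption $\alpha+\beta+\gamma > 0$, giving the advertised $\mC^{\alpha+\gamma}_{p}$ norm.

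The main obstacle is really the bookkeeping of Fourier supports together with the fact that, in the range of parameters covered by the lemma, $\alpha$, $\gamma$ (and possibly $\beta$) can have any sign, so one must be careful that the ``finite difference in $\psi$'' estimate is done in an index-free way. Once the decomposition above is set up, the rest reduces to standard Besov interpolation; this is why I would simply cite Lemma 14 of \cite{GubinelliPerkowski2015LectureNotes} rather than redo the computation in full.
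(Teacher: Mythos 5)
Your proposal is correct and follows the same standard Bony-paraproduct commutator argument that the paper invokes by citing \cite{GubinelliPerkowski2015LectureNotes}, Lemma~14; the paper supplies no independent proof. You also correctly flag that the clean difference estimate requires $\beta \in (0,1)$ (indeed the situation in GP and in all of the paper's applications), so the formal statement here with $\beta \in \RR$ is a mild overclaim that is nevertheless harmless where the lemma is actually used.
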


We pass to the second commutator. Recall the operators $\mP_n, \mQ_n$ as in Definition \ref{def:cut_off_operators}.

\begin{lemma}\label{lem:commutator_with_averaging}
  For \(\varphi, \psi \in \mS^{\prime}(\TT^{d})\) and \(\alpha \in \RR, \beta
  >0, p \in [1, \infty]\) it holds for every \(\delta \in [0, \beta \wedge 1)\):
  \begin{align*}
    \| \mP_{n}C^{\Pi}_{n}(\varphi, \psi) \|_{\mC^{\alpha + \delta}_{p}} \lesssim \| \varphi
    \|_{\mC^{\beta}_{p}} \| \psi \|_{\mC^{\alpha}}, \qquad \| \mQ_{n} C^{\Pi}_{n} (\varphi, \psi) \|_{\mC^{\alpha}_{p}} \lesssim
    n^{-\delta} \| \varphi \|_{\mC^{\beta}_{p}} \| \psi \|_{\mC^{\alpha}}.
  \end{align*}
\end{lemma}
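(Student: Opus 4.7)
The plan is to combine Bony's paraproduct decomposition with a commutator estimate for the Fourier multiplier $\Pi_n^2$. Since $\Pi_n^2$ is a Fourier multiplier, it commutes with the Littlewood--Paley projectors $\Delta_i$, and the paraproduct structure gives
\[
C^{\Pi}_{n}(\varphi, \psi) = \sum_i \bigl[ \Pi_n^2 \bigl( S_{i-1}\varphi \cdot \Delta_i \psi \bigr) - S_{i-1}\varphi \cdot \Pi_n^2 \Delta_i \psi \bigr] = \sum_i [\Pi_n^2, M_{S_{i-1}\varphi}] \Delta_i \psi,
\]
where $M_f$ denotes multiplication by $f$. Each summand has Fourier support in an annulus of size $\sim 2^i$ (because $\Pi_n^2$ preserves Fourier support), so by the standard Bony localization argument $\Delta_j C^{\Pi}_{n}(\varphi, \psi)$ receives contributions only from indices $i$ with $|i-j| \leq N_0$ for some fixed $N_0$.

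For the key commutator estimate I would use the kernel representation
\[
[\Pi_n^2, M_f]g(x) = \int K_n^2(z) \bigl[ f(x-z) - f(x) \bigr] g(x-z) \, \ud z,
\]
where $K_n^2 = \chi_n * \chi_n$ is a probability density supported in a ball of radius $O(n^{-1})$. Minkowski's inequality in $L^p$ combined with H\"older (placing the $L^\infty$ norm on $g$) yields
\[
\bigl\| [\Pi_n^2, M_f]g \bigr\|_{L^p} \leq \Big( \int |K_n^2(z)| \, \| f(\cdot-z) - f\|_{L^p} \, \ud z \Big) \cdot \|g\|_{L^\infty}.
\]
For $\delta \in (0,1)$ the characterization of $\mC^{\delta}_p = B^{\delta}_{p, \infty}$ via first-order finite differences gives $\|f(\cdot-z) - f\|_{L^p} \lesssim |z|^\delta \|f\|_{\mC^{\delta}_p}$; combined with $|z| \lesssim n^{-1}$ on $\supp K_n^2$, the Besov embedding $\mC^{\beta}_p \hookrightarrow \mC^{\delta}_p$ for $\delta \leq \beta$, and $\|S_{i-1}\varphi\|_{\mC^{\delta}_p} \lesssim \|\varphi\|_{\mC^{\beta}_p}$, I would obtain
\[
\bigl\| [\Pi_n^2, M_{S_{i-1}\varphi}] \Delta_i\psi \bigr\|_{L^p} \lesssim n^{-\delta} \, 2^{-i\alpha} \, \|\varphi\|_{\mC^{\beta}_p} \, \|\psi\|_{\mC^{\alpha}}.
\]

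Summing over the finitely many $i$ with $|i-j| \leq N_0$ then yields the single dyadic estimate
\[
\|\Delta_j C^{\Pi}_{n}(\varphi, \psi)\|_{L^p} \lesssim n^{-\delta} \, 2^{-j\alpha} \, \|\varphi\|_{\mC^{\beta}_p} \, \|\psi\|_{\mC^{\alpha}},
\]
from which both inequalities follow. The bound on $\mQ_n C^{\Pi}_{n}$ in $\mC^{\alpha}_p$ is immediate from the definition of the Besov norm; for the $\mC^{\alpha+\delta}_p$ bound on $\mP_n C^{\Pi}_{n}$ the point is that $\mP_n$ restricts to frequencies $2^j \lesssim n$, so that $n^{-\delta} \lesssim 2^{-j\delta}$ on this range, making $2^{j(\alpha+\delta)}\|\Delta_j \mP_n C^{\Pi}_{n}\|_{L^p}$ uniformly bounded. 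The restriction $\delta \in [0,\beta \wedge 1)$ is exactly forced by the two ingredients: $\delta < 1$ is needed for the first-difference characterization of the Besov norm (higher $\delta$ would require second-order differences), and $\delta \leq \beta$ for the embedding $\mC^{\beta}_p \hookrightarrow \mC^{\delta}_p$. I do not foresee a serious obstacle; the most delicate points are simply verifying the $L^p$-based Besov finite-difference inequality with the correct integrability parameter and checking that the Fourier-support geometry of the paraproduct is preserved by the smoothing multiplier $\Pi_n^2$.
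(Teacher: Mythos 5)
Your proof is correct and follows essentially the same route as the paper: you localize via the paraproduct to an annulus of scale $2^i$, represent the commutator through the kernel $K_n^2 = \chi_n * \chi_n$ supported in $|z| \lesssim n^{-1}$, and use a finite-difference characterization of the Besov norm to gain the factor $n^{-\delta}$ before concluding the desired dyadic bound. The only cosmetic difference is that the paper estimates the translation difference via the Sobolev--Slobodeckij ($B^\delta_{p,p}$) double integral together with Jensen's inequality and the embedding $B^\beta_{p,\infty}\subset B^\delta_{p,p}$, whereas you invoke the first-order-difference characterization of $B^\delta_{p,\infty}$ after Minkowski and H\"older; both are standard and equivalent for $\delta\in(0,1)$. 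In fact your Minkowski/H\"older split is arguably cleaner than the paper's, which writes the commutator somewhat loosely as an average over a single ball $B_n(x)$ rather than the slightly larger support of $\chi_n*\chi_n$.
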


\begin{proof}

Note that for any $i\geq 0$ there exists an annulus \(\mA\) (that is a set of the form $\{ k \in \RR^d \ | \ r \leq |k| \leq R\}$ for some $0<r<R$) such that the Fourier transform of 
  \[ \Pi _{n}^2 [S_{i {-} 1} \varphi \Delta_{i} \psi] - S_{i {-} 1} \varphi
  \Pi_{n}^2 \Delta_{i} \varphi  \] 
  is supported in \(2^{i}\mA\). 
  It is therefore sufficient to show that 
  \begin{align}
  \label{eqn:commutator_bound}
    \big\| \Pi _{n}^2 [S_{i {-} 1}\varphi
    \Delta_{i} \psi] - S_{i {-} 1} \varphi \Pi_{n}^2 \Delta_{i} \varphi \big\|_{L^p} \lesssim n^{-\delta} \| \varphi \|_{\mathcal{C}^\beta_p} \|\Delta_i \psi\|_{L^\infty},
  \end{align}
  since this implies the required bound by estimating $n^{-\delta} \lesssim 2^{-\delta i}$ for $i$ such that $\mP_n \Delta_i \neq 0$.
  To obtain \eqref{eqn:commutator_bound},  recall the  Sobolev-Slobodeckij
  characterization of fractional spaces of
  Proposition~\ref{prop:sobolev_slobodeckij_equivalence}, which implies that
  for \(\delta \in [0, \beta \wedge 1)\)
  \begin{multline*}
    \| \Pi _{n}^2 [S_{i {-} 1} \varphi \Delta_{i} \psi] - S_{i {-} 1} \varphi
    \Pi_{n}^2 \Delta_{i} \varphi \|_{L^p} \leq \bigg( \int_{\TT^{d}}
    \Big\vert \mint{-}_{B_{n}(x)} [S_{i {-} 1} \varphi (y) {-} S_{i {-} 1}
    \varphi (x)] \Delta_{i} \psi(y) \ud y \Big\vert^p \ud x \bigg)^{ 1/p} \\
     \lesssim n^{-\delta} \bigg( \int_{\TT^{d}}
    \Big\vert \mint{-}_{B_{n}(x)} \frac{[S_{i {-} 1} \varphi (y) {-} S_{i {-} 1}
    \varphi (x)]}{|y-x|^{\delta}} \Delta_{i} \psi(y) \ud y \Big\vert^p \ud x \bigg)^{ 1/p} \\
	 \lesssim n^{-\delta} \bigg( \int_{\TT^{d}}
    \int_{B_{n}(x)} \frac{|S_{i {-} 1} \varphi (y) {-} S_{i {-} 1}
    \varphi (x)|^p}{|y-x|^{d+\delta p}} \ud y  \ud x \bigg)^{ 1/p} \|\Delta_{i} \psi\|_{\infty} 
    \lesssim n^{-\delta}
    \| S_{i {-} 1} \varphi \|_{\mC^{\beta}_p} 2^{{-} \alpha i} \| \psi\|_{\mC^{\alpha}},
  \end{multline*}
  where the first inequality follows by Jensen and we have used the embedding $B^{\beta}_{p, \infty}\subset B^\delta_{p,p}$. Now the result follows since
  $ \| S_{i {-} 1} \varphi \|_{\mC^{\beta}_p} \lesssim \| \varphi
  \|_{\mC^{\beta}_p}.$
\end{proof}

\begin{lemma}\label{lem:commutator_resolvent}
  For \(\alpha \in (0,1), \beta \in \RR, \lambda \geq 1\)
  and \(p \in [1, \infty]\) it holds that: 
  \[ \| \mP_{n} C_{n, \lambda}( \varphi, \psi) \|_{\mC^{\alpha + \beta +
  2}_{p}} \lesssim \| \varphi \|_{\mC^{\alpha}_{p}} \| \psi \|_{\mC^{\beta}}, \qquad
\forall \varphi \in \mC^{\alpha}_{p}, \psi \in \mC^{\beta}.\]
In addition there exists a \(k \in \NN\) such that for \(n \geq k\)
\[  n^{3} \|\mQ_n C_{n,\lambda}(\varphi, \psi)\|_{\mC^{\alpha +\beta-1}_p}
\lesssim \| \varphi \|_{\mC^{\alpha}_{p}} \| \mQ_{n-k} \psi \|_{\mC^{\beta}},
\qquad \forall \varphi \in \mC^{\alpha}_{p}, \psi \in \mC^{\beta}.\] 
\end{lemma}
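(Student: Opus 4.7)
The plan is to apply the classical Bony paraproduct commutator scheme, adapted to the semidiscrete setting by exploiting the two-scale behaviour of $R := (-\mA_n + \lambda)^{-1}$ established in Proposition~\ref{prop:crucial_for_schauder}. First I would decompose $C_{n,\lambda}(\varphi, \psi) = \sum_i T_i$ with $T_i := [R, M_{S_{i-1}\varphi}]\Delta_i\psi$, where $M_f$ denotes multiplication by $f$. Since $R$ is a Fourier multiplier, it commutes with the Littlewood--Paley blocks, so each $T_i$ has Fourier support in a fixed annulus of radius $\simeq 2^i$, and $\Delta_j T_i = 0$ unless $|i-j| \leq N$ for a universal $N$. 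Both claims then reduce to $L^p$ bounds on $\|T_i\|_{L^p}$, combined with $\|\Delta_i\psi\|_{L^\infty} \lesssim 2^{-i\beta}\|\psi\|_{\mC^\beta}$.

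The key algebraic step will be the identity $[R, M_f] = R\,[\mA_n, M_f]\,R$, which together with $\mA_n = n^2(\Pi_n^4 - \mathrm{Id})$ gives $[\mA_n, M_f] = n^2[\Pi_n^4, M_f]$. From the explicit kernel
\[
 ([\Pi_n, M_f]\eta)(x) \,=\, \mint{-}_{B_n(x)}[f(x) - f(y)]\,\eta(y)\,\ud y,
\]
I would then extract, by telescoping over the four copies of $\Pi_n$, two complementary bounds for $f = S_{i-1}\varphi$ with $\alpha \in (0,1)$: a \emph{direct} estimate $\|[\Pi_n^4, M_{S_{i-1}\varphi}]\eta\|_{L^p} \lesssim n^{-\alpha}\|\varphi\|_{\mC^\alpha_p}\|\eta\|_{L^\infty}$, obtained from the Sobolev--Slobodeckij characterisation of $\mC^\alpha_p$ together with $|f(x)-f(y)| \lesssim n^{-\alpha}\|f\|_{\mC^\alpha}$ for $y \in B_n(x)$; and an \emph{improved} estimate $\|[\Pi_n^4, M_{S_{i-1}\varphi}]\eta\|_{L^p} \lesssim n^{-2}\,2^{i(2-\alpha)}\|\varphi\|_{\mC^\alpha_p}\|\eta\|_{L^\infty}$, valid when $\eta$ is Fourier-localised at scale $\simeq 2^i \lesssim n$. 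The improved bound exploits the cancellation $\mint{-}_{B_n(x)}(y-x)\,\ud y = 0$ (equivalently $D\hat\chi(0)=0$ from Lemma~\ref{lem:taylor_expansion_and_decay_ft_characteristic_function}) to kill the first-order term in the Taylor expansion of $f$, and controls the remainder via Bernstein's inequality $\|D^2 S_{i-1}\varphi\|_{L^p} \lesssim 2^{i(2-\alpha)}\|\varphi\|_{\mC^\alpha_p}$, a Bernstein-type bound $|\eta(y)-\eta(x)| \lesssim 2^i|y-x|\|\eta\|_{L^\infty}$ (using the localisation of $\eta$), and the Hardy--Littlewood maximal inequality.

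These commutator bounds combine with the resolvent estimates from Proposition~\ref{prop:crucial_for_schauder} as follows. For the first ($\mP_n$) claim the Fourier-support constraint forces $2^i \simeq 2^j \lesssim n$, so each copy of $R$ contributes $(2^{2j}+\lambda)^{-1}$ and, since $\eta = R\Delta_i\psi$ is localised at scale $\simeq 2^i \lesssim n$, the improved commutator bound yields $\|\Delta_j \mP_n T_i\|_{L^p} \lesssim 2^{-j(\alpha+\beta+2)}\|\varphi\|_{\mC^\alpha_p}\|\psi\|_{\mC^\beta}$, which is the desired Schauder-type estimate. For the second ($\mQ_n$) claim, the hypothesis $\psi = \mQ_{n-k}\psi$, with $k \in \NN$ chosen large enough to absorb the Fourier-support spread generated by multiplication by $S_{i-1}\varphi$, restricts the relevant $i$ to $2^i \gtrsim n$; each $R$ then contributes only $(n^2+\lambda)^{-1} \lesssim n^{-2}$, and the direct commutator bound gives $\|T_i\|_{L^p} \lesssim n^{-2-\alpha}\,2^{-j\beta}\|\varphi\|_{\mC^\alpha_p}\|\mQ_{n-k}\psi\|_{\mC^\beta}$. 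Converting to $\mC^{\alpha+\beta-1}_p$ with the target prefactor $n^{-3}$ follows from the identity $2^{j(\alpha+\beta-1)}n^{-2-\alpha}2^{-j\beta} = (2^j/n)^{\alpha-1}n^{-3}$ together with $2^j \gtrsim n$ and $\alpha - 1 < 0$.

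The main technical hurdle will be establishing the improved commutator bound: the cancellation coming from the symmetry of $B_n(x)$ (equivalently, $D\hat\chi(0) = 0$) is what upgrades the naive $n^{-\alpha}$ gain to the $n^{-2}$ needed to recover the full Schauder regularity $\alpha+\beta+2$ in the first claim. Additional delicate points will be handling the $\mC^\alpha_p$ (rather than $\mC^\alpha$) regularity via Sobolev--Slobodeckij estimates and Hardy--Littlewood maximal bounds in place of pointwise control, and fixing the shift $k$ in the $\mQ_{n-k}\psi$ hypothesis so that multiplication by $S_{i-1}\varphi$ cannot leak Fourier mass from the excluded low-frequency regime back into the range of $\mQ_n$.
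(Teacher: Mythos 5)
Your proposal is correct and, once unwound, is essentially the paper's proof repackaged at the operator level. The identity $[R, M_f] = R\,[\mA_n, M_f]\,R$ with $[\mA_n, M_f]\eta = \mA_n(f)\,\eta + B_n(f,\eta)$ (where $B_n$ is the symmetric remainder in the discrete Leibniz rule $\mA_n(fg)=\mA_n(f)g + f\mA_n(g)+B_n(f,g)$) is exactly the decomposition the paper uses block-by-block after applying one elliptic Schauder estimate; your ``improved'' $n^{-2}2^{i(2-\alpha)}$ bound (from $D\hat\chi(0)=0$, a second-order Taylor remainder, and the spread $|\eta(y)-\eta(x)|\lesssim 2^i|y-x|\|\eta\|_\infty$) is the same cancellation that makes $B_n$ a bilinear form in gradients, and your power counting matches theirs on both scale ranges. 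One point worth fixing in the write-up: for $\varphi\in\mC^\alpha_p$ with $p<\infty$ the ``direct'' estimate $\|[\Pi_n^4,M_{S_{i-1}\varphi}]\eta\|_{L^p}\lesssim n^{-\alpha}\|\varphi\|_{\mC^\alpha_p}\|\eta\|_\infty$ is not quite available (the Sobolev--Slobodeckij route costs a $B^\alpha_{p,\infty}\hookrightarrow B^\delta_{p,p}$ step with $\delta<\alpha$); however, the Bernstein/Hardy--Littlewood maximal version you also invoke gives $\|[\Pi_n,M_{S_{i-1}\varphi}]\eta\|_{L^p}\lesssim n^{-1}\|\nabla S_{i-1}\varphi\|_{L^p}\|\eta\|_\infty\lesssim n^{-1}2^{i(1-\alpha)}\|\varphi\|_{\mC^\alpha_p}\|\eta\|_\infty$, which plugged into your power count gives $\|T_i\|_{L^p}\lesssim n^{-3}2^{i(1-\alpha-\beta)}\|\varphi\|_{\mC^\alpha_p}\|\psi\|_{\mC^\beta}$ and the target $n^{-3}$ bound exactly, with no slack to absorb. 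A small genuine advantage of your formulation is that writing the commutator as $[\mA_n,M_f]$ (rather than, as the paper's displayed reduction suggests, $(-\mA_n+\lambda)[S_{i-1}\varphi]\cdot R\Delta_i\psi$) keeps $\lambda$ confined to the two resolvent factors; literally bounding $\|(-\mA_n+\lambda)\Delta_j\varphi\|_{L^p}$ by $2^{j(2-\alpha)}\|\varphi\|_{\mC^\alpha_p}$ would fail uniformly in $\lambda$ for low blocks, and the correct term in the Leibniz identity is indeed $\mA_n(S_{i-1}\varphi)$, which is what your algebra produces directly.
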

\begin{proof}
  By the elliptic Schauder
  estimates in Proposition~\ref{prop:elliptic_schauder_estimates}, it is
  sufficient to prove that
  \begin{align*}
    \| (-\mA_n +\lambda) \mP_{n}C_{n, \lambda}(\varphi, \psi)\|_{\mC^{ \alpha +
  \beta}_p} & \lesssim \| \varphi\|_{\mC^\alpha_p} \| \psi \|_{\mC^\beta}, \\
  n \|(- \mA_{n} + \lambda)\mQ_n C_{n,\lambda}(\varphi, \psi)\|_{\mC^{\alpha +\beta-1}_p}
  & \lesssim \| \varphi \|_{\mC^{\alpha}_{p}} \| \mQ_{n-k} \psi \|_{\mC^{\beta}}.
\end{align*}
In turn to obtain this bound,
since the quantities below are supported in an annulus $2^i \mA$, it suffices
to estimate for a given sequence \(i(n)\) such that \(2^{i(n)} \simeq
n\):
\begin{equation}\label{eqn:proof_commutator_estimate_intermediate_large_scale}
  \begin{aligned}
\|S_{i-1} \varphi \Delta_i \psi - (-\mA_n +\lambda)[ S_{i-1}\varphi (-\mA_n
+\lambda)^{-1} \Delta_i \psi ]\|_{L^p} \lesssim 2^{-i (\alpha + \beta)} \|
\varphi\|_{\mC^\alpha_p} \|\psi\|_{\mC^\beta},
\end{aligned}
\end{equation}
if \(i \leq i(n)\), and similarly
\begin{equation}\label{eqn:proof_commutator_estimate_intermediate_small_scale}
  \begin{aligned}
\|S_{i-1} \varphi \Delta_i \psi - (-\mA_n +\lambda)[ S_{i-1}\varphi (-\mA_n
+\lambda)^{-1} \Delta_i \psi ]\|_{L^p} \lesssim n^{-1}2^{-i (\alpha+  \beta-1)} \|
\varphi\|_{\mC^\alpha_p} \| \mQ_{n-k}\psi\|_{\mC^\beta},
\end{aligned}
\end{equation}
if \(i >i(n)\). Moreover, we can choose \(k\) such that
\[ \mQ_{n-k} \Delta_{i} = \Delta_{i}, \forall i \geq i(n), n \in \NN, \] 
so that we may replace \(\psi\) by \(\mQ_{n-k} \psi\) on small scales (hence we
will no longer discuss the appearance of \(\mQ_{n-k}\)).
To obtain these estimates, let \(B_n(\varphi, \psi)\) be defined as
  \[ 
    B_n (\varphi, \psi) (x)= n^2 \mint{-}_{B_{n}(x)} 
    \mint{-}_{B_{n}(y)} \mint{-}_{B_{n}(z)} \mint{-}_{B_{n}(r)}  (\varphi(s)
    {-} \varphi(x))(\psi(s) {-} \psi(x) ) \ud s \ud r \ud z \ud y.
  \]
Then $\mA_{n}$ acting on a product can be decomposed as 
  \[ \mA_n (\varphi \cdot \psi) = \mA_n (\varphi) \cdot \psi {+} \varphi \cdot
  \mA_n (\psi) {+} B_n( \varphi, \psi),\]
  Hence proving
  Equations~\eqref{eqn:proof_commutator_estimate_intermediate_large_scale} and \eqref{eqn:proof_commutator_estimate_intermediate_small_scale}
  reduces to finding a bound for
  \begin{align*}
  \| (-\mA_n + \lambda)[ S_{i-1} \varphi ] (-\mA_n +\lambda)^{-1} [\Delta_i \psi] \|_{L^p} + \| B_n ( S_{i-1} \varphi , (-\mA_n + \lambda )^{-1} \Delta_i \psi ) \|_{L^p}.
  \end{align*}
Starting with the first term, one has:
\begin{align*}
	\|(-\mA_n + \lambda)[ S_{i-1} \varphi ] (-\mA_n +\lambda)^{-1}
	[\Delta_i \psi] \|_{L^p} & \lesssim \|(-\mA_n + \lambda) [S_{i-1}
	\varphi ]\|_{L^p} \|  (-\mA_n +\lambda)^{-1} [\Delta_i \psi]
	\|_{L^\infty}.
\end{align*}
If $i \leq i(n)$,  since $\alpha <2$, one can estimate via Proposition~\ref{prop:crucial_for_schauder}:
\begin{align*}
\|(-\mA_n + \lambda) [S_{i-1} \varphi ]\|_{L^p} \leq \sum_{j =-1}^{i-1} \|(-\mA_n + \lambda) [\Delta_j \varphi ]\|_{L^p} \lesssim  \sum_{j =-1}^{i-1} 2^{j(2-\alpha)}\| \varphi \|_{\mC^\alpha_p}\lesssim 2^{i(2-\alpha)}\| \varphi \|_{\mC^\alpha_p}.
\end{align*}
If $i>i(n)$, following the previous calculations and using that $\alpha>0$:
\begin{align*}
\|(-\mA_n + \lambda) [S_{i-1} \varphi ]\|_{L^p} & \leq \sum_{j =-1}^{i(n)-1} \|(-\mA_n + \lambda) [\Delta_j \varphi ]\|_{L^p} + \sum_{j = i(n) }^{i-1} \|(-\mA_n + \lambda) [\Delta_j \varphi ]\|_{L^p} \\
& \lesssim n^{(2-\alpha)} \|\varphi\|_{\mC^\alpha_p}.
\end{align*}
By Proposition~\ref{prop:schauder_estimates} moreover
\begin{align*}
  \|(-\mA_n +\lambda)^{-1} \Delta_i \psi \|_{L^{\infty}} \lesssim \Big( 2^{-2i}1_{ \{i \leq
i(n)\}} + n^{-2} 1_{\{i > i(n)\}}\Big) 2^{-\beta i} \|\psi\|_{\mC^\beta}.
\end{align*}
Together with the previous bounds we have proven that for \(i \leq i(n)\):
\begin{align*}
  \|(-\mA_n + \lambda)[ S_{i-1} \varphi ] (-\mA_n +\lambda)^{-1} [\Delta_i
  \psi] \|_{L^p} \lesssim 2^{-i(\alpha + \beta)} \| \varphi
  \|_{\mC^{\alpha}_{p}} \| \psi \|_{\mC^{\beta}},
\end{align*}
and similarly (using that \(\alpha<1\)) for \(i > i(n)\):
\begin{align*}
  \|(-\mA_n + \lambda)[ S_{i-1} \varphi ] (-\mA_n +\lambda)^{-1} [\Delta_i
  \psi] \|_{L^p} & \lesssim n^{- \alpha} 2^{-i\beta} \| \varphi
  \|_{\mC^{\alpha}_{p}} \| \mQ_{n-k} \psi \|_{\mC^{\beta}} \\
  & \lesssim n^{-1}2^{- i(\beta + \alpha - 1)}\| \varphi
  \|_{\mC^{\alpha}_{p}} \| \mQ_{n-k} \psi \|_{\mC^{\beta}}, 
\end{align*}
which are bounds of the required order for
\eqref{eqn:proof_commutator_estimate_intermediate_large_scale} and
\eqref{eqn:proof_commutator_estimate_intermediate_small_scale}.
Finally, we have to bound the term containing $B_n$. If $i\leq i(n)$, using
\(\alpha < 1\) we find
  \begin{align*} 
    \| B(S_{i-1} \varphi, (-\mA_{n} + \lambda)^{-1} \Delta_{i} \psi) \|_{L^{p}} & \lesssim \| \nabla S_{i-1}\varphi
    \|_{L^{p}} \| \nabla (-\mA_n +\lambda)^{-1}\Delta_{i} \psi \|_{L^{\infty}}
    \\
    & \lesssim 2^{-i(\alpha - 1)} \| \varphi\|_{\mC^{\alpha}_{p}} 2^{-i(1+\beta)}\| \psi \|_{\mC^{\beta}} 
    \lesssim  2^{-i( \alpha + \beta)} \| \varphi\|_{\mC^\alpha_p} \| \psi \|_{\mC^{\beta}},
  \end{align*}
  whereas if $i> i(n)$
  \begin{align*}
  \| B_n( S_{i-1} \varphi, (-\mA_n +\lambda)^{-1}\Delta_{i} \psi) \|_{L^{p}}
    & \lesssim n \| \nabla S_{i-1} \varphi \|_{L^p} \| (-\mA_n +\lambda)^{-1}\Delta_{i} \psi \|_{L^{\infty}}\\
    & \lesssim n^{-1} 2^{- i(\alpha - 1)} 2^{-\beta
i}\|\varphi\|_{\mC^{\alpha}_{p}} \| \psi\|_{\mC^\beta}.
\end{align*}
These bounds are again of the correct order for
\eqref{eqn:proof_commutator_estimate_intermediate_large_scale},
\eqref{eqn:proof_commutator_estimate_intermediate_small_scale} and hence the
proof is complete.
\end{proof}

\appendix

\section{ }
\subsection{Construction of the process}\label{app:construction}

In this section we provide a rigorous construction of the spatial $\Lambda$-Fleming-Viot process (SLFV) in a random environment.
We work under the following assumptions.
\begin{assumption}\label{assu:rigorous-construction-slfv}
  Let $(\Omega, \mF, \PP)$ be a probability space. Fix $n \in \NN$ and $\mf{u}
  \in (0,1), d=1,2$ and let $w^{0} \colon \TT^d \rightarrow [0, 1]$ and $s_n
  \colon \Omega \times \TT^d \rightarrow (-1, 1)$ be two measurable
  functions.
\end{assumption}

The natural state space of the spatial SLFV process is:
$$M = \{ w \colon \TT^d \rightarrow [0, 1], \ \ w \  \text{measurable}\},$$
which is a metric space when endowed with the distance $d_M(u,w) = \sup_{x \in \TT^d}|u(x)- w(x)|$. Then under the assumption above, for $x \in \TT^d, \mf{p} \in \{\mf{a}, \mf{A}\}$ and any function $w: \TT^{d} \to [0,1]$ define the operator
\(\Theta^{\mf{p}}_{x}\colon M \rightarrow M\) by
\begin{align*} 
  \Theta^{\mf{p}}_{x} w (y) & = w(y) 1_{ \{B^c_{ n}(x)\}}(y) {+}
  (\mf{u}
  1_{ \{ \mf{p} = \mf{a}\}}  {+} (1 {-} \mf{u}) w(y)) 1_{ \{  B_{n}(x)\}}(y) \\
  & = w(y) {+} \mf{u}(1_{ \{\mf{p}=\mf{a}\}} {-} w(y)) 1_{\{{B_{n}(x)}\}} (y) .
\end{align*}
In the discussion below, let $\mathcal{B}(E)$ be the Borel sigma-algebra
associated to some metric space $E$. We say that a probability measure
$\PP^\omega$ on $(E, \mB(E))$ indexed by $\omega \in \Omega$ is a Markov
kernel, if for any $A \in \mB(E)$ the map $\omega \mapsto \PP^\omega(A)$ is
measurable. Then one can build the semidirect product measure $\PP \ltimes
\PP^\omega$ on $\Omega \times E$ (with the product sigma-algebra),
characterized, for $A\in \mF, B \in \mB(E)$, by:
$$\PP \ltimes \PP^\omega (A \times B ) = \int_A \PP^\omega(B) \PP(\ud
\omega).$$
In the definition below we write:
\[ s_{+} (x) = \max \{ s(x), 0\}, \qquad s_{-}(x) = \max \{ -s(x), 0\}. \] 
\begin{lemma}\label{lem:appendix-construction-markov-process}
    Under Assumption~\ref{assu:rigorous-construction-slfv}, fix $\omega \in \Omega$. There exists a unique Markov jump process $t \mapsto w(t)$ in $\DD( [0, \infty) ; M)$ started in $w(0) = w^0$, associated to the generator
    \[\mL(n, s_n(\omega), \mf{u}) \colon C_b(M ; \RR) \rightarrow C_b(M ; \RR),\]
    defined by
    \[ \mL ( f ) (w) = \int_{M} (f(w^\prime) - f(w))\mu (w, \ud w^\prime), \qquad f \in C_b(M; \RR),\]
    where the transition function $\mu \colon M \times \mB(M) \to \RR$ (depending on $s_n(\omega), \mf{u}, n$) is defined by:
    \begin{align*}
        \mu (w, \ud w^\prime) = 0 \ \ \text{unless there exist} \ \ x\in \TT^d, \mf{p} \in \{\mf{a}, \mf{A}\} \ \ \text{such that} \ \ w^\prime = \Theta^\mf{p}_x w. 
    \end{align*}
    And if $w^\prime = \Theta^{\mf{p}}_x w$ for some $x \in \TT^d, \mf{p}\in \{\mf{a}, \mf{A}\}$:
    \begin{align*}
        \mu  (w, \ud w^\prime) = & \bigg\{ (1 {-} |s_n(\omega, x)|) \Big[\Pi_n^{3}w  
      1_{\{\mf{p} = \mf{a} \}} {+} (1 {-}
    \Pi_n^{3} w )1_{\{\mf{p} = \mf{A}\}}\Big]
    (x)\\
    & + (s_n)_{-}(\omega, x) \bigg[ \big(\Pi_n^{3} w \big)^{2} 1_{\{\mf{p}=
      \mf{a}\}} {+} \big(1 {-} \big(\Pi_n^{3} w \big)^{2}
\big) 1_{\{\mf{p} = \mf{A}\}} \bigg] (x) \\
    & +  (s_n)_{+}(\omega, x)
  \bigg[ \Pi_n^{3} w( 2 {-} \Pi_n^{3} w ) 1_{\{\mf{p} = \mf{a} \}} {+} (1 {-}
\Pi_n^{3} w )^2 1_{\{\mf{p} = \mf{A}\}}  \bigg] (x) \bigg\} \ud x.
    \end{align*}
     The law $\PP^\omega$ of $w$ in $\DD([0, \infty); M)$ is a Markov kernel and induces the semidirect product measure $\PP \ltimes \PP^\omega$ on $\Omega \times \DD([0, \infty); M)$.
\end{lemma}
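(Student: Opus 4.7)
The plan is to construct the process pathwise through a Poisson point process representation, which simultaneously yields existence, uniqueness, the Markov property, and the Markov kernel property in $\omega$.

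First, I would verify that the total jump intensity is bounded uniformly in $w$. For each fixed $x \in \TT^d$, summing the three groups of rates in the transition kernel over $\mf{p} \in \{\mf{a}, \mf{A}\}$ yields $(1 - |s_n(\omega, x)|)$ for the neutral block (since $\Pi_n^3 w + (1 - \Pi_n^3 w) = 1$), $(s_n)_-(\omega, x)$ for the negative selective block (since $(\Pi_n^3 w)^2 + (1 - (\Pi_n^3 w)^2) = 1$), and $(s_n)_+(\omega, x)$ for the positive selective block (since $\Pi_n^3 w (2 - \Pi_n^3 w) + (1-\Pi_n^3 w)^2 = 1$). Using $(s_n)_+ + (s_n)_- = |s_n|$, the per-site rate is identically $1$, so $\mu(w, M) = |\TT^d| = 1$ independently of $w$. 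In particular $\mL$ is a bounded operator on $C_b(M; \RR)$.

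Second, I would realize the dynamics on an auxiliary probability space $(\tilde{\Omega}, \tilde{\mF}, \tilde{\PP})$ supporting a Poisson point process $N$ on $\RR_+ \times \TT^d \times [0,1]^2$ with intensity $\ud t \otimes \ud x \otimes \ud u_1 \ud u_2$, independent of $(\Omega, \mF, \PP)$. For each atom $(t, x, u_1, u_2)$ of $N$, the mark $u_1$ decides, via comparison with thresholds determined by $s_n(\omega, x)$, whether the event is neutral or selective (and in the latter case which sign of $s_n$ to use); the mark $u_2$ together with the current state $w_{t-}$ selects the parental type $\mf{p} \in \{\mf{a}, \mf{A}\}$ according to the rules in Definition~\ref{defn:spatial_lamdba_fv_in_random_environment}. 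Setting $w(0) = w^0$ and applying $\Theta^\mf{p}_x$ at each event time produces, for $\tilde{\PP}$-a.e. sample, a c\`adl\`ag trajectory in $M$ with only finitely many jumps on any bounded interval (since the rate is $1$). The pathwise map $(\omega, \tilde{\omega}) \mapsto w(\cdot)$ is jointly measurable because $s_n$ is measurable in $\omega$ by Assumption~\ref{assu:rigorous-construction-slfv} and all remaining operations are explicit measurable functions of the Poisson marks.

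Third, compensating each jump against its predictable intensity shows that for every $f \in C_b(M; \RR)$ the process $f(w_t) - f(w_0) - \int_0^t \mL f(w_s) \ud s$ is a $\tilde{\PP}$-martingale in the natural filtration, so $w$ solves the martingale problem for $\mL$. Uniqueness of the Markov process follows from standard theory for pure-jump processes with bounded generator (e.g.\ the general construction in \cite[Chapter 4]{EthierKurtz1986}), so the law $\PP^\omega$ on $\DD([0,\infty); M)$ is uniquely determined. Defining $\PP^\omega$ as the pushforward of $\tilde{\PP}$ under the pathwise map, Fubini yields measurability of $\omega \mapsto \PP^\omega(A)$ for every measurable $A$, and the semidirect product $\PP \ltimes \PP^\omega$ is well-defined on the product $\sigma$-algebra in the standard way.

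The main obstacle is that $M$ endowed with the sup metric is not separable, so a direct appeal to textbook results on Polish-valued Markov processes is not immediate. This is handled by observing that, up to a $\tilde{\PP}$-null set, trajectories take values in the countably-generated subspace of $M$ consisting of functions reachable from $w^0$ by finitely many applications of $\Theta^{\mf{p}}_x$; the Borel structure on $\DD([0,\infty); M)$ relevant to the construction is generated by the countable family of evaluation functionals $w \mapsto \int \varphi \,w\,\ud x$ for $\varphi$ in a countable dense subset of $C(\TT^d)$, for which the preceding measurability arguments apply without modification.
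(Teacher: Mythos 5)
Your proposal is correct and takes essentially the same route as the paper, which simply invokes the general construction of pure-jump Markov processes with bounded generator from \cite[Section 4.2]{EthierKurtz1986} and the measurability statement in \cite[Equation 4.2.8]{EthierKurtz1986}. Your version unpacks what that citation hides: the explicit check that the per-site rate sums to exactly $1$ (so the jump kernel has unit total mass and $\mL$ is bounded), a pathwise Poisson point process construction, and the observation about the non-separability of $(M, d_M)$ --- the last of which the paper does not address, instead only noting that the orbit $\{\Theta_x^{\mf p} w\}$ is closed so that $\mu$ is a genuine kernel.
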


\begin{proof}
    Note that $\mu$ defined as above is a Markov kernel on $M \times \mB(M)$
(to be precise, here we have to observe that for fixed $w$ the set $\{
\Theta^{\mf{p}}_x w, \ \ x \in \TT^d, \mf{p} \in \{\mf{a}, \mf{A}\}\}$ is
closed and hence measurable in $M$). Hence, the Markov process is constructed
following \cite[Section 4.2]{EthierKurtz1986}. In addition, for $f \in C_b(M;
\RR)$ measurable and bounded the map $\omega \mapsto \int_M f(w^\prime)
\mu_\omega (w, \ud w^\prime)$ is measurable (we made explicit the dependence of
$\mu$ on $\omega$). This implies, e.g.\ by \cite[Equation 4.2.8]{EthierKurtz1986}, that the map $\omega \mapsto \PP^\omega(A)$ is measurable, for $A \in \mB(\DD([0, \infty); M))$. So the proof is complete.
\end{proof}

\begin{lemma}\label{lem:appendix-martingale-problem-proof}
Under Assumption~\ref{assu:rigorous-construction-slfv} fix $\omega \in \Omega$
and let $w$ be the Markov process as in the previous result. For any
$\varphi
\in L^{\infty}(\TT^d)$ the process $t \mapsto \langle w(t), \varphi \rangle$ satisfies the martingale problem of Lemma~\ref{lem:martingale_problem_model_description}.
\end{lemma}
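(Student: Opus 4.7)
My plan is to derive the martingale problem as a direct application of Dynkin's formula to the Markov process $w$ constructed in Lemma~\ref{lem:appendix-construction-markov-process}. Since the generator $\mL$ defined there is bounded (the integral against the finite transition kernel $\mu$ produces something uniformly bounded when applied to $f \in C_b(M;\RR)$), for every $f \in C_b(M;\RR)$ the process
\[ N^f_t = f(w(t)) - f(w(0)) - \int_0^t \mL f(w(s)) \ud s \]
is a martingale, and the predictable quadratic variation of $N^f$ is $\int_0^t (\mL f^2 - 2 f \mL f)(w(s)) \ud s$. Applying this to the bounded functional $f_\varphi(w) = \langle w, \varphi \rangle$ (which is bounded since $w \in M$ and $\varphi \in L^\infty$) and then to $f_\varphi^2$ will yield the martingale representation and its quadratic variation. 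So the task reduces to two explicit computations of $\mL f_\varphi$ and $\mL f_\varphi^2$.

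For the drift, I would first collapse the rate function from $\mu$. Using $|s_n| = (s_n)_+ + (s_n)_-$ and $s_n = (s_n)_+ - (s_n)_-$, a direct algebraic simplification shows that the total rate for $\mf{p} = \mf{a}$ at location $x$ is
\[ r_{\mf{a}}(x) = \Pi_n^3 w(x) + s_n(\omega, x)\bigl( \Pi_n^3 w(x) - (\Pi_n^3 w(x))^2 \bigr), \]
and for $\mf{p} = \mf{A}$ it is $r_{\mf{A}} = 1 - r_{\mf{a}}$. Since $\langle \Theta^{\mf{p}}_x w - w, \varphi \rangle = \mf{u} n^{-d} \bigl( 1_{\{\mf{p}=\mf{a}\}} \Pi_n\varphi(x) - \Pi_n(w\varphi)(x) \bigr)$, summing over $\mf{p}$ and integrating against $\mu$ gives
\[ \mL f_\varphi(w) = \mf{u} n^{-d} \int_{\TT^d} \bigl( r_{\mf{a}}(x) \Pi_n\varphi(x) - \Pi_n(w\varphi)(x) \bigr) \ud x. \]
Self-adjointness of the averaging operator $\Pi_n$ in $L^2$ (since its kernel $\chi_n(x-y)$ is symmetric) converts the first term via $\int \Pi_n^3 w \cdot \Pi_n \varphi \, \ud x = \langle \Pi_n^4 w, \varphi \rangle$ and $\int \Pi_n[s_n(\Pi_n^3 w - (\Pi_n^3 w)^2)] \varphi \, \ud x$, while the second term is simply $\langle w, \varphi \rangle$. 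This reproduces exactly the drift in Lemma~\ref{lem:martingale_problem_model_description}.

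For the quadratic variation I would compute
\[ \mL f_\varphi^2(w) - 2 f_\varphi(w) \mL f_\varphi(w) = \int_M (\langle w', \varphi \rangle - \langle w, \varphi \rangle)^2 \mu(w, \ud w'). \]
Plugging in $(\langle \Theta^{\mf{p}}_x w, \varphi \rangle - \langle w, \varphi \rangle)^2 = \mf{u}^2 n^{-2d} \bigl( 1_{\{\mf{p}=\mf{a}\}} \Pi_n \varphi(x) - \Pi_n(w\varphi)(x) \bigr)^2$ and using $r_{\mf{a}} + r_{\mf{A}} = 1$, the cross terms collapse, leaving
\[ \mf{u}^2 n^{-2d} \int_{\TT^d} r_{\mf{a}}(x)\bigl((\Pi_n\varphi(x))^2 - 2 \Pi_n\varphi(x) \Pi_n(w\varphi)(x)\bigr) + (\Pi_n(w\varphi)(x))^2 \ud x. \]
Substituting $r_{\mf{a}} = (1 + s_n)\Pi_n^3 w - s_n (\Pi_n^3 w)^2$ matches line-by-line the expression in \eqref{eqn:discrete_mp_quadratic_variation}.

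The only genuinely delicate point is that $f_\varphi$ need not be continuous on $M$ with the sup metric, so some care is needed to justify applying Dynkin's formula. However, this is sidestepped by noting that the process is pure jump with almost surely finitely many jumps on compact time intervals (the total jump rate is bounded by $n^{-d} \cdot |\TT^d|$ times the sup of the rate, uniformly in $w$), so $N^{f_\varphi}$ can be computed pathwise as a compensated sum of jumps, and the martingale property together with the quadratic variation formula are obtained by direct verification via the strong Markov property at successive jump times. This is the main obstacle, but it is a standard routine for pure jump Markov processes with bounded total rate; once done, the algebraic identifications above complete the proof.
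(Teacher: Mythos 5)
Your argument is correct and ends with exactly the same identity, but your algebraic route is genuinely tidier than the paper's: rather than splitting $\mL = \mL^{\mathrm{neu}} + \mL^{\mathrm{sel}}_< + \mL^{\mathrm{sel}}_>$ and computing the three pieces separately (as the paper does), you first collapse the three event rates into the single function $r_{\mathfrak a}(x) = \Pi_n^3 w(x) + s_n(\omega,x)\bigl(\Pi_n^3 w(x) - (\Pi_n^3 w(x))^2\bigr)$ via $|s_n| = (s_n)_+ + (s_n)_-$ and $s_n = (s_n)_+ - (s_n)_-$, and then run the full computation once. The identity $r_{\mathfrak a} + r_{\mathfrak A} = 1$ eliminates the cross terms in the carr\'e du champ cleanly. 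Both approaches yield the drift and quadratic variation of Lemma~\ref{lem:martingale_problem_model_description}; the paper's decomposition makes explicit which type of event produces which term, which matches the biological bookkeeping of Definition~\ref{defn:spatial_lamdba_fv_in_random_environment}, while yours is faster to verify.

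One point you raise as a genuine obstacle is in fact not an issue. You worry that $f_\varphi(w) = \langle w, \varphi\rangle$ may fail to be continuous on $M$ with the sup metric, and propose a pathwise compensated-jump argument to circumvent it. But $M$ is metrized by $d_M(u,w) = \sup_{x}|u(x)-w(x)|$, and then
\[
  |f_\varphi(u) - f_\varphi(w)| \;=\; \Bigl|\int_{\TT^d} (u-w)\,\varphi\,\ud x\Bigr| \;\leqslant\; d_M(u,w)\,\|\varphi\|_{L^1(\TT^d)} \;\leqslant\; d_M(u,w)\,\|\varphi\|_{L^\infty(\TT^d)},
\]
so $f_\varphi$ (and hence $f_\varphi^2$) is Lipschitz continuous and bounded, i.e.\ lies in $C_b(M;\RR)$. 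Since the jump kernel $\mu(w,\cdot)$ has total mass $|\TT^d|=1$ uniformly in $w$, the generator $\mL$ is bounded on $C_b(M;\RR)$ and Dynkin's formula, together with the standard formula for the predictable quadratic variation via $\mL f^2 - 2f\,\mL f$, applies directly, as in the paper. Your alternative argument would also work, but it is unnecessary.
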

\begin{proof}
  In the discussion below we omit the dependence of $s_n(\omega)$ on $n$ and
$\omega$, since such dependence is not relevant here. We will apply the
generator to functions of the form $F_\varphi (w) =F( \langle w , \varphi
\rangle)$, with $F \in C(\RR ; \RR), \varphi \in L^{\infty}( \TT^d)$. For simplicity we divide the operator \(\mL  =\mL(n, s, \mf{u})\) in three parts:
  \begin{align*}
    \mL (F_{\varphi})(w) & := \mL^{\mathrm{neu}} (F_{\varphi})(w) +
    \mL^{\mathrm{sel}} (F_{\varphi})(w) \\
    & := \mL^{\mathrm{neu}} (F_{\varphi})(w) +  \mL^{\mathrm{sel}}_{<}
    (F_{\varphi})(w) +  \mL^{\mathrm{sel}}_{>} (F_{\varphi})(w) 
  \end{align*}
  (the first is the neutral part, the second two are the selective parts of the operator), where
  \begin{align*}
    \mL^{\mathrm{neu}}(F_{\varphi}) (w) & =\int\limits_{\TT^d} (1 {-} |s(x)|)
    \Big[\Pi_n^{3} w 
      [F_{\varphi}(\Theta_{x}^{\mf{a}} w) {-} F_{\varphi}(w)] {+} (1 {-}
    \Pi_n^{3} w )[F_{\varphi}( \Theta_{x}^{\mf{A}} w) {-} F_{\varphi}( w)] \Big]
    (x) \ud x\\
    \mL^{\mathrm{sel}}_{<}(F_{\varphi})(w) & = \int\limits_{\TT^d} s_{-}(x)
    \bigg[ \big(\Pi_n^{3} w \big)^{2} [
  F_{\varphi}(\Theta_{x}^{\mf{a}} w) {-} F_{\varphi}(w)] {+} \big(1 {-}
    \big(\Pi_n^{3} w \big)^{2}
\big) [F_{\varphi}(\Theta_{x}^{\mf{A}} w ) {-} F_{\varphi}(w)] \bigg] (x)\ud x \\
    \mL^{\mathrm{sel}}_{>} (F_{\varphi}) (w)& = \int\limits_{\TT^d} s_{+}(x)
    \bigg[ \Pi_n^{3} w ( 2 {-} \Pi_n^{3} w ) [ F_{\varphi}(\Theta_{x}^{\mf{a}} w)
      {-} F_{\varphi}(w)] {+} (1 {-} \Pi_n^{3} w )^2 [F_{\varphi}(\Theta_{x}^{\mf{A}} w )
    {-} F_{\varphi}(w)] \bigg] (x)\ud x 
  \end{align*}
Now, in the special case of \(F = \mathrm{Id}_{\varphi}\),
the neutral part of the generator takes the form
\begin{align*}
    \mL^{\mathrm{neu}} (\mathrm{Id}_{\varphi})(w) & = \mf{u} n^{-d} \int_{\TT^d} (1 {-}
    |s(x)|)[ (\Pi_n^{3} w) (\Pi_n \varphi) - \Pi_n (w\varphi) ](x) \ud x,
\end{align*}
 Analogously, the selective part can be written as
  \begin{align*}
    \mL^{\mathrm{sel}} (\mathrm{Id}_\varphi)(w) = \mf{u} n^{-d} \int_{\TT^d}
    s(x) [ \Pi_n (w\varphi) {-}\big( \Pi_n^{3} w \big)^2 
    \Pi_n \varphi](x)+ 2s_{+}(x) [ \Pi_n^{3} w
    \Pi_n \varphi {-} \Pi_n (w\varphi) ](x) \ud x .
  \end{align*}
  Adding those two we conclude that 
  \begin{align*}
    \mL(\mathrm{Id}_\varphi)(w) & = \mf{u} n^{-d} \int_{\TT^{d}} [
      (\Pi_n^{3} w) 
    (\Pi_n \varphi)  - \Pi_n (w\varphi)  ](x) + s(x)[ (\Pi_n^{3} w )(\Pi_n \varphi) -
    (\Pi_n^{3} w )^{2} \Pi_n \varphi  ](x) \ud x.
  \end{align*}
  This justifies the drift in the required decomposition. 
 To obtain the predictable
  quadratic variation of the martingale make use of Dynkin's formula, that is
  \[ \langle M^{n} ( \varphi) \rangle_{t} = \int_{0}^{t} \mL(
  \mathrm{Id}_{\varphi}^{2}) - 2 \big( 
  \mathrm{Id}_{\varphi} \mL( \mathrm{Id}_{\varphi}) \big) (w_{r}) \ud r. \]
  Once again, it is natural to treat the terms involving $\mathcal{L}^{\mathrm{neu}}$ and
  $\mathcal{L}^{\mathrm{sel}}$ separately. For the neutral term:
  \begin{align*} 
    \big(\mL^{\mathrm{neu}} (\mathrm{Id}_{\varphi}^{2}) & - 2 F_{\varphi}
    \mL^{\mathrm{neu}} (\mathrm{Id}_{\varphi}) \big) (w) \\
    & =  \mf{u}^{2} n^{-2d} \int_{\TT^d} (1 {-}
    | s(x)|) \Big[ \Pi_{n}^{3}w   \big( \Pi_{n}\varphi {-}
      \Pi_{n}(w \varphi)\big)^2 + \big( 1 {-} \Pi_{n}^{3}w \big) \big( \Pi_{n} (w\varphi )
    \big)^2 \Big](x) \ud x,
  \end{align*}
which can be written as
\begin{align*}  
  \mf{u}^{2} n^{-2d} \int_{\TT^d} (1 {-} | s(x)|) \Big[  \Pi_{n}^{3}w \big[
    \big(\Pi_{n}\varphi \big)^2 - 2 \Pi_{n}\varphi(x) \Pi_{n}(w
  \varphi) \big] + \big[ \Pi_{n}(w \varphi ) \big]^2 \Big] (x) \ud x.
\end{align*}
  Analogous calculations for $\mathcal{L}^{\mathrm{sel}}_{<}$ lead to
\begin{align*}
  \big( \mL^{\mathrm{sel}}_{<}   (\mathrm{Id}_{\varphi}^{2}) & -  2
  \mathrm{Id}_{\varphi} \mL^{\mathrm{sel}}_{<}  \mathrm{Id}_{\varphi} \big)(w)= \\
  & = \mf{u}^{2} n^{-2d} \int_{\TT^d} s_{-}(x) \Big[
    (\Pi_{n}^{3}w )^{2} \big[(\Pi_{n}\varphi)^2 - 2 \Pi_{n}\varphi
    \Pi_{n}(w\varphi) \big] + \big[ \Pi_{n}(w \varphi ) \big]^2 \Big] (x) \ud x.
\end{align*}
  Whereas for $\mathcal{L}^{\mathrm{sel}}_{>}$ they lead to 
\begin{align*}
  \big(\mL^{\mathrm{sel}}_{>} (\mathrm{Id}_{\varphi}^{2}) &- 2
  \mathrm{Id}_{\varphi}
  \mL^{\mathrm{sel}}_{>} \mathrm{Id}_{\varphi}\big)(w) \\
   & = \mf{u}^{2} n^{-2d}\int_{\TT^d} s_{+}(x) \Big[
     (\Pi_{n}^{3}w) (2 {-} \Pi_{n}^{3} w) \big(\Pi_{n}\varphi {-}  
    \big(\Pi_{n}( w \varphi )\big)^2 + \big(1 {-}
    \Pi_{n}^{3}w \big)^2 \big(
    \Pi_{n}w \big)^2 \Big](x) \ud x \\
  & = \mf{u}^{2} n^{-2d}\int_{\TT^d} s_{+}(x) \Big[
    (\Pi_{n}^{3}w) (2 {-} \Pi_{n}^{3}w) \big[\big(\Pi_{n}\varphi \big)^2 {-} 2 \Pi_{n}\varphi
    \Pi_{n}(w \varphi ) \big] {+}
  \big[ \Pi_{n}(w \varphi\big) \big]^2 (x) \Big] \ud x. 
  \end{align*}
  Summing neutral and selective terms one obtains
\begin{align*}
  \mf{u}^{2} n^{-2d} \langle & \Pi_{n}^{3} w, (1 {-} | s|)\Big[\big( \Pi_{n}\varphi\big)^2 {-} 2
  \Pi_{n}\varphi \Pi_{n}(w\varphi )\Big] \rangle {+}
  \langle \big(\Pi_{n}(w \varphi) \big)^2, (1 {-}
  | s|) \rangle \\ 
  & + \mf{u} n^{-2 d}\langle  (\Pi_{n}^{3} w )^2, s_{-} 
  \Big[ (\Pi_{n} \varphi )^2 - 2(\Pi_{n}\varphi )
  \big(\Pi_{n} (w \varphi) \big) \Big] + \langle \big(
  \Pi_{n}(w \varphi)\big)^2, s_{-} \rangle \\
  & + \mf{u}^{2} n^{-2d} \langle  \Pi_{n}^{3} w, s_{+} \Big[
    (2 {-} \Pi_{n}^{3}w) \Big(
       (\Pi_{n}\varphi )^2 {-}2 (\Pi_{n}\varphi )
     \big( \Pi_{n}(w\varphi) \big) \Big) \Big] \rangle 
    {+} \langle  \big( \Pi_{n}(w \varphi) \big)^2,
    s_{+} \rangle,
\end{align*}
which can be written in the form from  the statement of the Lemma.

\end{proof}

\subsection{Stochastic bounds}
\label{app:stochastic-bouns}

This appendix is devoted to the control of the noise for approximations of the
Anderson Hamiltonian. In particular, we prove Proposition~\ref{prop:stochastic_estimates}.

\begin{proof}[Proof of Proposition~\ref{prop:stochastic_estimates}]
  First we will prove the bounds for
  \(\xi^{n}, X_{n, \lambda}\) and \(\xi^{n} \diamond
  \Pi_{n}^{2} X_{n, \lambda}\). Eventually we address the convergence of these terms.
  Although only in the first case the
  dimension is allowed to be both \(d=1\) and \(d=2\), we will keep
  \(d\) as a parameter throughout the proof, for the sake of clarity. For convenience, let us indicate sums on $\ZZ^d$ with integrals (for $m \in \NN$):
  \[\int_{(\ZZ^d)^m} f(k_1, \ldots, k_m) \ud k_1 \cdots \ud k_m = \sum_{k_1, \ldots, k_m \in \ZZ^d } f(k_1, \ldots, k_m).\]
\textit{Step 1: Bounds on \( \xi^{n} \).} 
First, observe that by Assumption~\ref{assu:probability-space}:
\[ | \xi^{n}(x)  |\leq 2 n^{\frac{d}{2} }.\] 
This explains both the \(L^{\infty}\) bounds on \(\xi^{n}\) and the bound in
\( \mC^{- \frac{\kappa}{2} }\) (i.e. for \(\zeta=1\)). If we show that
\[ \sup_{n \in \NN} \EE \big[ \| \xi^{n} \|_{\mC^{- \frac{d}{2}
- \frac \kappa 2}} \big] < \infty, \]
the bound for arbitrary \(\zeta\) follows, since by
interpolation, from the definition of Besov spaces, for any
$\zeta \in [0,1]$ and $\alpha, \beta\in \RR$:
\begin{align*}
  \| \varphi \|_{\mC^{ \zeta \alpha + (1- \zeta)\beta}} \leq \|\varphi
  \|_{\mC^{\alpha}}^{\zeta}\|\varphi \|_{\mC^\beta}^{1- \zeta}.
\end{align*} 
Hence let us consider the case \(\zeta = 0\). By Besov embedding, the required
inequality follows if one can show that for any \(p \in [2, \infty)\):
  \begin{align*}
    \sup_{n \in \NN} \EE \| \xi^{n} \|_{B^{{-}
    \frac d2 {-} \frac \kappa 4}_{p,p}}^{p} < \infty.
  \end{align*}   
 Here in view of Assumption~\ref{ass:selection_coefficient}, and by the discrete
 Burkholder-Davis-Gundy inequality as well as Jensen's inequality one finds
that:
  \begin{align*}
    \int_{\TT^ d}  \EE [| \Delta_{j} n^{ \frac{d}{2} } s_n|^p
    (x)] \ud x & \lesssim \int_{\TT^d}  \bigg( \sum_{z \in \ZZ^{d}_{n}}  n^{-d} |
    \Delta_{j} \chi_{Q_{n}}|^2 (z {+} x) \bigg)^{p/2 } \ud x 
    \\
     & \leq \int_{\TT^d} \bigg( \int_{\TT^d} \ud z \ |K_j(x {+} z) |^2
     \bigg)^{p/2 } \ud x \lesssim \| K_j \|_{L^2}^{p} \lesssim 2^{j
     \frac{dp}{2}},
  \end{align*}
  which is a bound of the required order.
    
  Now, let us pass to the bound in $\mC^{\kappa}_{\frac{1}{2 \kappa}}$. In fact we will prove that for any $p \in [1, \infty), \ \zeta\in [0, \frac 1 p)$ we have a bound on $\|\xi^n\|_{\mC^\zeta_p}$. We use the Sobolev-Slobodeckij norm of Proposition~\ref{prop:sobolev_slobodeckij_equivalence} for the Besov space $B^{\zeta}_{p,p}$ (which embeds in $\mC^\zeta_p$, so finding a bound in the latter space is sufficient). Let us start by computing:
  \begin{align*}
  \| \xi^n \|_{B^\zeta_{p,p}} & \simeq \| \xi^n \|_{L^p} + \bigg( \int_{\TT^d} \int_{\TT^d} \frac{|\xi^n(x) - \xi^n(y)|^p}{|x-y|^{d+\zeta p}} \ud x \ud y\bigg)^{\frac 1 p} \\
  & \lesssim n^{\frac d 2} + \bigg( \sum_{z \in \znd \cap \TT^d} \int_{Q_n(z)} \int_{\TT^d} \frac{|\xi^n(x) - \xi^n(y)|^p}{|x-y|^{d+\zeta p}} \ud x \ud y\bigg)^{\frac 1 p} \\
  & \lesssim n^{\frac d 2} + \bigg( \sum_{z \in \znd \cap \TT^d} \int_{Q_n(z)} \int_{\TT^d \setminus Q_n(z)} \frac{|\xi^n(x) - \xi^n(y)|^p}{|x-y|^{d+\zeta p}} \ud x \ud y\bigg)^{\frac 1 p} \\
  & \lesssim n^{\frac d 2} + n^{\frac{d}{2}} n^{\frac d p} \bigg( \int_{Q_n(0)} \int_{\TT^d \setminus Q_n(0)} \frac{1}{|x-y|^{d+\zeta p}} \ud x \ud y\bigg)^{\frac 1 p},
  \end{align*}
  where we have used in the last step that $\|\xi^n\|_\infty \lesssim n^{\frac d 2}$.
  Now we can follow the same calculations as in the proof of Lemma~\ref{lem:besov_regularity_characteristic_function}  to obtain
  \begin{align*}
  \bigg( \int_{Q_n(0)} \int_{\TT^d \setminus Q_n(0)} \frac{1}{|x-y|^{d+\zeta p}} \ud x \ud y\bigg)^{\frac 1 p} \lesssim n^{\zeta - \frac d p}.
  \end{align*}
  Hence, overall for any $p \in [1, \infty)$ and $\zeta \in [0, 1/p)$:
  \begin{align*}
  \| \xi^n \|_{\mC^\zeta_p} \lesssim n^{\frac d 2 + \zeta}.
  \end{align*}

  \textit{Step 2: Bounds for \( X_{n, \lambda} \).} As for \(X_{n, \lambda}\), 
  we need to bound \(n\| \mQ_{n} X_{n, \lambda} \|_{L^{\infty}}\). Here:
  \begin{equation} 
  \label{eqn:stochastic_est_small_parts_bound_I}
  \begin{aligned}
    \| \mQ_{n} X_{n, \lambda} \|_{L^{\infty}(\TT^{d}) }  & = \| \mF^{{-} 1}_{\TT^{d}}
    [ (1 {-} \daleth ) (n^{-1} \cdot) (-\vt_{n}+\lambda)^{{-} 1} (\cdot) \widehat{\xi}_{n}( \cdot)]
    \|_{L^{\infty}(\TT^{d})} \\
     & \leq \| \mF_{\TT^{d}}^{-1} [ (1 {-} \daleth)
    (n^{-1} \cdot) (-\vt_{n} +\lambda)^{{-} 1} (\cdot) ] \|_{L^{1}(\TT^{d})} \| \xi^{n}
    \|_{L^{\infty}(\TT^{d})} \\
    & \lesssim n^{-2}\| \mF_{\RR^{d}}^{-1} [ (1 {-} \daleth) (n^{-1} \cdot) (
    -\widehat{\chi}^{2} + 1 + n^{-2}\lambda)^{-1} ( n^{-1} \cdot)]\|_{L^{1}(\RR^{d})} \|
    \xi^{n} \|_{L^{\infty}(\TT^{d})} 
  \end{aligned}
  \end{equation}
  where we applied the Poisson summation formula of
  Lemma~\ref{lem:poisson_summation_formula}.
Note that 
  \begin{align*}
  \| \mF_{\RR^{d}}^{-1} &[ (1 {-} \daleth)  (n^{-1} \cdot) ( -\widehat{\chi}^{2} + 1
  + n^{-2} \lambda)^{-1} ( n^{-1} \cdot)]\|_{L^{1}(\RR^{d})} \\ 
   & \leq \bigg\| \mF_{\RR^{d}}^{-1} \bigg[  \frac{1 - \daleth (n^{-1} \cdot)}{1 +
     n^{-2}\lambda} + (1 {-} \daleth) (n^{-1} \cdot) \Big[ \frac{1}{ -\widehat{\chi}^2 + 1
   + n^{-2} \lambda } - \frac{1}{1 + n^{-2} \lambda} \Big]( n^{-1} \cdot)\bigg]
   \bigg\|_{L^{1}(\RR^{d})} \\
   & \leq \bigg\| \mF_{\RR^{d}}^{-1} \bigg[  \frac{1 - \daleth (n^{-1} \cdot)}{1 +
   n^{-2} \lambda} \bigg] \bigg\|_{L^{1}(\RR^{d})} +  \bigg\| \mF_{\RR^{d}}^{-1} \bigg[ (1 {-}
     \daleth) (n^{-1} \cdot) \Big[ \frac{1}{ -\widehat{\chi}^2 + 1 +
   n^{-2} \lambda } - \frac{1}{1 + n^{-2} \lambda} \Big]( n^{-1} \cdot)\bigg]
   \bigg\|_{L^{1}(\RR^{d})} 
  \end{align*}
  The first summand is bounded in $L^{1}(\RR^{d})$ uniformly over
  $n$ and \( \lambda \) (with some abuse of notation for the Dirac \(\delta\) function). As for
  the second summand observe that, for some \(c>0\):
  \begin{align*}
  \bigg\| \mF_{\RR^{d}}^{-1} & \bigg[ (1 {-} \daleth) (n^{-1} \cdot) \Big[ \frac{1}{
    -\widehat{\chi}^2 + 1 + n^{-2}\lambda } - \frac{1}{1 + n^{-2}\lambda} \Big]( n^{-1}
\cdot)\bigg]\bigg\|_{L^{1}(\RR^{d})} \\
& \leq \sup_{x\in \RR^{d}}(1+|x|^{2})^{\frac{d+1}{2} }  \bigg\vert \int_{\RR^{d}} 
e^{2\pi \iota \langle x,k \rangle} (1-\daleth(k))\bigg[ \frac{1}{ -\widehat{\chi}^2(k) + 1
+ n^{-2} \lambda } - \frac{1}{1 + n^{-2}\lambda} \bigg] \mathrm{d} k\bigg\vert \\ 
& \lesssim \sum_{ 0 \leq | \alpha | \leq 2d} \int_{\RR^{d}} \bigg\vert
  \bigg[ \partial^{\alpha}\bigg(\frac{1}{ -\widehat{\chi}^2(k) + 1 + n^{-2} \lambda } -
  \frac{1}{1 + n^{-2} \lambda} \bigg) \bigg] 1_{\{|k| \geq c\}}\bigg\vert \ud k,
  \end{align*}
  where with the sum we indicate all partial derivatives up to order
  \(2d\). Now this term can be bounded by
  Lemma~\ref{lem:taylor_expansion_and_decay_ft_characteristic_function}. Let us
  show this for \(\alpha= 0\) (the other cases are similar), where by a Taylor
  expansion:
  \begin{align*}
     \int_{\RR^{d}} \bigg\vert \frac{1}{
     -\widehat{\chi}^2(k) + 1 + n^{-2} \lambda } - \frac{1}{1 + n^{-2}
     \lambda} \bigg\vert 1_{\{|k| \geq c\}}\ud k & \lesssim_{c} \bigg(\frac{1}{1 +
     n^{-2} \lambda}\bigg)^{2} \int_{\RR^{d}} \hat{\chi}^{2}(k)
     1_{\{|k| \geq c\}}\ud k \\
     & \lesssim \int_{\RR^{d}} \frac{1}{1 + |k|^{d +1}}  \ud k < \infty.
  \end{align*}
Combining the last two observations with \eqref{eqn:stochastic_est_small_parts_bound_I} leads to
\begin{align*}
  \sup_{\lambda \geqslant 1} \| \mQ_{n} X_{n, \lambda} \|_{L^{\infty}(\TT^{d}) } \lesssim n^{-2} \| \xi^{n}
  \|_{L^{\infty}(\TT^{d})} \lesssim n^{-2 + \frac{d}{2} },
\end{align*}  
which is of the required order.

\textit{Step 3: Bounds on \( \xi^{n} \reso \Pi_{n}^{2} X_{n , \lambda} \).} We now consider the bound on \(\xi^{n} \reso
\Pi_{n}^{2} X_{n, \lambda}\), starting with \( \lambda =1 \): at the end of
this step we explain how to obtain a bound uniformly over \( \lambda \) at the
cost of a small explosion in \( \lambda \). In this computation it is important to note
that \(d=2\). 

Define $\psi_0(k_1, k_2) $  and $\widehat{\xi}_n(k)$ as
$$\psi_0(k_1, k_2) := \sum_{|i-j|\leq 1} \vr_i(k_1)\vr_j(k_2), \qquad  \widehat{\xi}_n(k):=\mF_{\TT^d} \xi^n (k).$$
Then
\begin{align*}
\EE \big[ \widehat{\xi}_n(k_1) \widehat{\xi}_n(k_2)\big]  & =
\int_{(\TT^2)^2} e^{-2 \pi \iota (k_1 \cdot x_1+k_2 \cdot x_2)}
\chi_{Q_n(x_1)} (x_2) \ud x_1 \ud x_2 \\
 &= \int_{\TT^2} e^{-2 \pi \iota (k_1 +k_2) \cdot x_1} \widehat{\chi}_Q (n^{-1} k_2) \ud x_1
 =  \widehat{\chi}_Q(n^{-1} k_1)1_{\{ k_1 + k_2 =0\}}.
\end{align*}
Hence to compute the renormalisation constant observe that 
\begin{align*}
c_{n} = \EE \big[ \xi^{n} \reso \Pi_n^2 X_{n, 1} (x) \big] & = \int_{(\ZZ^2)^2}
e^{2 \pi \iota (k_1+k_2) \cdot x} \psi_0(k_1, k_2)  \frac{\widehat{\chi}^2(
n^{-1} k_2)}{-\vt_n (k_2) + 1}  \EE \big[ \widehat{\xi}_n (k_1) \widehat{\xi}_n (k_2) \big] \ \ud k_1 \ud k_2 \\
& = \int_{\ZZ^2}  \frac{\widehat{\chi}^2( n^{-1} k) \widehat{\chi}_Q(n^{-1}
k)}{-\vt_n (k) + 1} \ \ud k.\\
\end{align*}
A similar calculation shows that actually
$
c_n = \EE \big[ \xi^n \Pi_{n}^{2} X_{n, 1}\big]
$
and the asymptotic $c_{n} \simeq \log{n}$ follows from a manipulation of the
sum.

We turn our attention to a bound for \(\| \xi^{n} \diamond
\Pi_{n}^{2}X_{n, 1} \|_{\mC^{- \frac{\kappa}{2} }}\).
As before, for $p \geq 2$, consider
\begin{equation}
\label{eqn:stochastic_estimate_besov_norm}
\begin{aligned}
  \EE \| \xi^{n} \reso X_{n, 1} {-} c_{n} \|_{B^{\alpha}_{p,p}}^{p}
  & = \sum_{j \geq {-} 1} 2^{\alpha j p} \EE \| \Delta_{j} ( \xi^{n} \reso
  X_{n, 1} {-} c_{n} 1_{j = {-} 1}) \|_{L^{p}(\TT^{d})}^{p} \\
  & =  \sum_{j \geq {-} 1} 2^{\alpha j p} \int_{\TT^{d}} \EE |\Delta_{j}
  (\xi^{n} \reso X_{n, 1} {-} c_{n} 1_{j = {-} 1} )|^{p} (x) \ud x.
\end{aligned}
\end{equation}
It is now convenient to introduce the notation:
\begin{align*}
\mathcal{K}^{n}_m(x)= \mF_{\TT^2}^{-1} \bigg( \vr_m(\cdot) \frac{ \widehat{\chi}^2(n^{-1} \cdot )}{-\vt_n( \cdot) + 1}\bigg)(x).
\end{align*}
Then the integrand in \eqref{eqn:stochastic_estimate_besov_norm} can be written as
\begin{equation}
\label{eqn:stochastic_estimate_besov_bound_II}
\begin{aligned}
 \EE \big[ & |\Delta_j (\xi^{n} \reso \Pi_n^2 X_{n, 1})(x) - c_n
 1_{\{j=-1\}}|^p\big]  \\
 & = \EE \big[ |\Delta_j (\xi^{n} \reso \Pi_n^2
 X_{n, 1})(x)  - \EE\Delta_j (\xi^{n} \reso \Pi_n^2 X_{n,
 1})(x) |^p\big] \\
 & = \EE \bigg\vert \int_{\TT^2} K_j(x-y) \sum_{|l-m|\leq 1} \bigg(
 \int_{(\TT^2)^2} K_l(y-z_1) \mathcal{K}^{n}_m(y-z_2) \xi^n(z_1) \diamond
 \xi^n(z_2) \ud z_1 \ud z_2 \bigg)\ud y\bigg\vert^p,
\end{aligned}
\end{equation}
where, conveniently:
\begin{align*}
\xi^n(z_1) \diamond \xi^n (z_2) = \xi^n (z_1) \xi^n(z_2) - \EE \big[ \xi^n (z_1) \xi^n(z_2) \big].
\end{align*}
Now we can write \eqref{eqn:stochastic_estimate_besov_bound_II} as a discrete
stochastic integral and apply \cite[Lemma 5.1]{MartinPerkowski2019Bravais} to obtain
\begin{align*}
 \EE & \bigg\vert\!\int_{\TT^2}\! K_j(x{-}y) \sum_{|l-m|\leq 1}\!\sum_{x_1, x_2 \in
 \ZZ^2_n \cap \TT^2} \bigg( \int\limits_{Q_n(x_1) \times Q_n(x_2)}\! K_l(y-z_1)
 \mathcal{K}_m^{n}(y{-}z_2) \! \ud z_1 \ud z_2 \bigg) \xi^n(x_1) \diamond \xi^n(x_2) \ud y\bigg\vert^p \\
  & \lesssim \bigg[ \sum_{x_1, x_2 \in \ZZ^2_n \cap \TT^2} n^{-2d}
  \bigg\vert \int_{\TT^2} K_j(x{-}y) \sum_{|l-m|\leq 1}  \bigg(
\mint{-}\limits_{Q_n(x_1) \times Q_n(x_2)} K_l(y-z_1)
\mathcal{K}_m^{n}(y-z_2)\!  \ud z_1\! \ud z_2 \bigg) \!\ud y \!\bigg\vert^2
\bigg]^{p/2}\\
  &=\bigg[ \sum_{x_1, x_2 \in \ZZ^2_n \cap \TT^2} n^{-2d} \bigg\vert
  \mint{-}\limits_{Q_n(x_1) \times Q_n(x_2)} \int_{\TT^2} K_j(x-y)
\sum_{|l-m|\leq 1}  K_l(y-z_1) \mathcal{K}_m^{n}(y-z_2)  \ud y \ud z_1 \ud z_2
\bigg\vert^2 \bigg]^{p/2}\\
  &\leq \bigg[ \int_{(\TT^2)^2}\bigg\vert \int_{\TT^2} K_j(x-y) \sum_{|l-m|\leq
  1}  K_l(y-z_1) \mathcal{K}^{n}_m(y-z_2)  \ud y \bigg\vert^2 \ud z_1 \ud z_2
\bigg]^{p/2},
\end{align*}
where the last step is an application of Jensen's inequality. Now, via  Parseval's Theorem, the latter is bounded by
\begin{multline*}
\bigg[ \int_{(\ZZ^2)^2} \bigg\vert \int_{\TT^2} K_j(x-y) \sum_{|l-m|\leq 1}  e^{2 \pi \iota k_1 \cdot y} \vr_l (k_1) e^{2 \pi \iota k_2 \cdot y}\vr_m (k_2) \frac{\widehat{\chi}^2(n^{-1} k_2 )}{-\vt_n( k_2) + 1} \ud y \bigg\vert^2 \ud k_1 \ud k_2 \bigg]^{p/2}\\
 = \bigg[ \int_{(\ZZ^2)^2} \bigg\vert e^{2 \pi \iota (k_1 +k_2) \cdot x } \vr_j (k_1+k_2) \psi_0 (k_1, k_2) \frac{\widehat{\chi}^2(n^{-1} k_2 )}{-\vt_n( k_2) + 1}  \bigg\vert^2 \ud k_1 \ud k_2 \bigg]^{p/2}.
\end{multline*}
By Lemma~\ref{lem:taylor_expansion_and_decay_ft_characteristic_function}:
\begin{align*}
\frac{\widehat{\chi}^2(n^{-1} k)}{-\vt_n (k) + 1} \lesssim \frac{\widehat{\chi}^2(n^{-1} k)}{|k|^2 + 1} 1_{\{|k \lesssim n|\}} + \frac{|k|^{-3}}{1} 1_{\{|k|\gtrsim n\}}  \lesssim \frac{1}{1 + |k|^2}.
\end{align*}
Finally, taking into account the supports of the functions,
\begin{align*}
\bigg[ \int_{(\ZZ^2)^2} \bigg\vert \vr_j (k_1+k_2) \psi_0 (k_1, k_2) \frac{1}{1+|k_2|^2}  \bigg\vert^2 \ud k_1 \ud k_2 \bigg]^{p/2} \lesssim \Big[ 2^{j2d}2^{-4j} \Big]^{p/2}\leq 1,
\end{align*}
which provides a bound of the required order. This concludes the proof of the
required bound in the case \( \lambda =1 \). For general \( \lambda \geqslant 1
\) we observe that
\begin{align*}
\xi^{n} \reso \Pi_{n}^{2} X_{n, \lambda} - c_{n} = \xi^{n} \reso
\Pi_{n}^{2} (X_{n , \lambda} - X_{n, 1}) + \xi^{n} \reso \Pi_{n}^{2} X_{n,
\lambda} - c_{n}.
\end{align*}

To complete the proof of our result it now suffices to show that
\begin{align*}
\EE \sup_{\lambda \geqslant 1} \lambda^{-\frac{\kappa}{4}} \| \xi^{n} \reso
\Pi_{n}^{2} ( X_{n , \lambda} - X_{n, 1}) \|_{\mC^{ - \frac{\kappa}{2} }} < \infty.
\end{align*}
For this purpose we observe that by a resolvent identity:
\begin{align*}
X_{n, \lambda} - X_{n, 1} & = \Big[ (- \mA_{n} + \lambda)^{-1} - (-
\mA_{n} + 1)^{-1}  \Big]\xi^{n} \\
& = (1 - \lambda) ( - \mA_{n} + \lambda)^{-1} (- \mA_{n} + 1)^{-1}
\xi^{n}.
\end{align*}
Now we can apply the elliptic Schauder estimates of
Proposition~\ref{prop:elliptic_schauder_estimates} to obtain:
\begin{align*}
\lambda^{-\frac{\kappa}{4} } \| \Pi_{n}^{2} \mP_{n} (X_{n, \lambda} - X_{n, 1})
\|_{\mC^{1 + \frac{\kappa}{4}}} & \lesssim \lambda^{1 - \frac{\kappa}{4}} \|
 ( - \mA_{n} + \lambda)^{-1} X_{n, 1}\|_{\mC^{1 + \frac{\kappa}{4}}} \\
& \lesssim \| X_{n, 1} \|_{\mC^{1 - \frac{\kappa}{4}}}\\
& \lesssim \| \xi^{n} \|_{\mC^{-1 - \frac{\kappa}{4}}}.
\end{align*}
And on small scales, using the regularizing properties of \(
\Pi_{n}^{2} \) from
Corollary~\ref{cor:regularity_gain_convolution_with_characteristic_functions}:
\begin{align*}
\lambda^{- \frac{\kappa}{4}} \| \Pi_{n}^{2}\mQ_{n}  (X_{n, \lambda} - X_{n, 1})
\|_{\mC^{1 + \frac{\kappa}{4}}} & \lesssim \lambda^{- \frac{\kappa}{4}}
n^{2 - \frac{\kappa}{4}} \| \mQ_{n} (X_{n, \lambda} - X_{n, 1})\|_{\mC^{-1 +
\frac{\kappa}{2}}} \\
& \lesssim \lambda^{1 - \frac{\kappa}{4}} n^{2 - \frac{\kappa}{4}} \|
\mQ_{n} (- \mA_{n} + \lambda)^{-1} X_{n, 1} \|_{\mC^{-1 + \frac{\kappa}{2}
}}\\
& \lesssim n^{2 - \frac{3 \kappa}{4}} \| \mQ_{n - k_{0}} X_{n, 1}
\|_{\mC^{-1 + \frac{\kappa}{2}}} \\
& \lesssim n^{- \frac{3 \kappa}{4}} \| \xi^{n} \|_{\mC^{-1 +
\frac{\kappa}{2}}}.
\end{align*}
Here we have chosen a deterministic \( k_{0} \in \NN \) (uniformly over \( n
\)) such that \( \mQ_{n} \mQ_{n - k_{0}} = \mQ_{n} \).
Hence overall we obtain:
\begin{align*}
\EE \sup_{\lambda \geqslant 1} \lambda^{- \frac{\kappa}{4}} \| \xi^{n} \reso \Pi_{n}^{2} (X_{n, \lambda} -
X_{n, 1}) \|_{\mC^{- \frac{\kappa}{2} }} & \lesssim \EE \sup_{\lambda \geqslant 1}
\lambda^{- \frac{\kappa}{4}} \| \xi^{n} \reso \Pi_{n}^{2} (X_{n, \lambda} -
X_{n, 1}) \|_{\mC^{\frac{\kappa}{8}}} \\
&\lesssim \EE \| \xi^{n} \|_{\mC^{ -1 -
\frac{\kappa}{8}}} \sup_{\lambda \geqslant 1} \lambda^{- \frac{\kappa}{4}
}\| \Pi_{n}^{2}(X_{n, \lambda} - X_{n, 1}) \|_{\mC^{1 + \frac{\kappa}{4}}}\\
 & \lesssim \EE  \left[ \| \xi^{n} \|_{\mC^{-1 - \frac{\kappa}{8}}} \big( \|
\xi^{n} \|_{\mC^{-1 - \frac{\kappa}{4}}} + n^{- \frac{\kappa}{2} -
\frac{\kappa}{4}} \| \xi^{n} \|_{\mC^{- 1 + \frac{\kappa}{2}}} \big)\right] \\
& \lesssim 1,
\end{align*}
where the last average is bounded by the same arguments presented in Step \( 1
\) (up to changing \( \kappa \)).
With this we have concluded the
proof of the regularity bound. We are left with a discussion of the
convergence.

\textit{Step 4.}
What we established so far implies tightness of the following sequences of
random variables in their respective spaces:
$$\xi^n \in \mC^{-\frac d 2-\kappa}, \qquad \mP_{n} X_{n, \lambda} \in
\mC^{1- \kappa}, \qquad \xi^n \diamond \Pi_{n}^{2} X_{n, \lambda} \in \mC^{-\kappa}.$$
The next step is to show that the limiting points of \(\xi^{n}\) and
\(\xi^{n} \diamond \Pi_{n}^{2} X_{n, \lambda}\) are unique in
distribution. In particular,  in view of
Proposition~\ref{prop:elliptic_schauder_estimates}, this
would imply weak convergence also of $ \mP_{n} X_{n, \lambda}$. In the last
step we will address the almost sure convergence and the almost sure uniform
bound.

Convergence of $\xi^{n}$ to space time white noise $\xi$ is an instance of
central limit theorem (notice the normalization of variance in
Assumption~\ref{ass:selection_coefficient}). We therefore focus our attention
on the more involved Wick product $\xi^n \diamond X_{n, \lambda}$. Now, the
deterministic bounds at the end of Step \( 3 \) show that the convergences 
\[ \xi^{n} \to \xi \quad \text{ in } \quad \mC^{-1- \kappa}, \qquad
 \xi^{n} \diamond \Pi_{n}^{2} X_{n,1} \to \xi \diamond X_{1} \quad \text{ in } \quad \mC^{ - \kappa}\]
for any \( \kappa > 0 \) imply also the convergence of \( \xi^{n} \diamond
\Pi_{n}^{2} X_{n, \lambda} \) for general \( \lambda \geqslant 1 \). Hence we
can restrict to discussing the case \( \lambda = 1 \).
For fixed $\varphi \in \mS(\TT^2)$
\begin{align*}
\langle  \varphi & , \xi^n \diamond X_{n, 1}\rangle \\
 & = \int_{\TT^2} \varphi (y) \sum_{|l-m|\leq 1}  \sum_{x_1, x_2 \in \ZZ^2_n
 \cap \TT^2} \bigg( \int\limits_{Q_n(x_1) \times Q_n(x_2)} K_l(y-z_1)
 \mathcal{K}_m^{n}(y{-}z_2)  \ud z_1 \ud z_2 \bigg) \xi^n(x_1) \diamond
 \xi^n(x_2) \ud y \\
 & =   \sum_{x_1, x_2 \in \ZZ^2_n} \big\langle \varphi ( \cdot ) ,
 \sum_{|l-m|\leq 1} \Pi_n K_l( \cdot - x_1) \Pi_n \mathcal{K}_m^{n}(
 \cdot {-}x_2) \big\rangle \xi^n(x_1) \diamond \xi^n(x_2). 
\end{align*}
Consider a map $L_{n} : \big(\ZZ^{2}_{n}\big)^2 \to \RR$ defined by
\begin{align*}
L_n (x_1, x_2) := \langle \varphi ( \cdot ) , \sum_{|l-m|\leq 1} \Pi_n^Q K_l( \cdot - x_1) \Pi_n^Q \mathcal{K}_m( \cdot {-}x_2) \rangle 1_{\{(x_1, x_2)\in \TT^2 \times \TT^2\}}.
\end{align*}
This definition naturally extends to $n=\infty$, where $L$ maps $(\RR^2)^2$ to $\RR$.
Our goal is to show that
\begin{equation}\label{eqn:proof_sotchastic_estimates_covergence_second_chaos}
\sum_{(x_1, x_2)\in (\ZZ^2_n)^2} L_n(x_1, x_2) \xi^n(x_1)\diamond \xi^n(x_2) \to \int_{(\RR^2)^2} L(x_1, x_2) \xi( \ud x_1)\diamond \xi(\ud x_2),
\end{equation}
where convergence holds in distribution and  the limit is interpreted as an iterated stochastic integral in the second Wiener-It\^o chaos.
It is sufficient to verify the assumptions of \cite[Lemma 5.4]{MartinPerkowski2019Bravais}.
That is, we have to show that there exists a $g\in L^2((\RR^2)^2)$ such that:
\begin{align*}
\sup_{n \in \NN} |1_{(n\TT^2)^2} \mF_{(\ZZ^2_{n})^2} L_n | \leq
g, \qquad \lim_{n \to \infty} \| 1_{(n\TT^2)^2} \mF_{(\ZZ^2_n)^2} L_n -
\mF_{(\RR^2)^2} L\|_{L^{2}( (\RR^{2})^{2})} = 0
\end{align*}
For this purpose we calculate
\begin{align*}
& 1_{(n\TT^2)^2}  \mF_{(\ZZ^2_n)^2} L_n (k_1, k_2) \\
& = 1_{(n\TT^2)^2}(k_1, k_2)  \int\limits_{(\ZZ^2_n\cap \TT^2)^2} e^{2 \pi \iota (k_1 \cdot x_1 + k_2 \cdot x_2)}\langle \varphi ( \cdot ) , \sum_{|l-m|\leq 1} \Pi_n^Q K_l( \cdot - x_1) \Pi_n^Q \mathcal{K}_m( \cdot {-}x_2) \rangle  \ud x_1 \ud x_2\\
& = 1_{(n\TT^2)^2}(k_1, k_2)  \int\limits_{(\TT^2)^2} e^{2 \pi \iota (k_1 \cdot x_1 + k_2 \cdot x_2)}\langle \varphi ( \cdot ) , \sum_{|l-m|\leq 1} K_l( \cdot - x_1) \mathcal{K}_m( \cdot {-}x_2) \rangle  \ud x_1 \ud x_2 \\
& = 1_{(n\TT^2)^2}(k_1, k_2)  \int_{\TT^2} \varphi (y) e^{2 \pi \iota (k_1 + k_2) \cdot y} \sum_{|l-m|\leq 1} \vr_l( - k_1) \vr_m(-k_2) \frac{ \widehat{\chi}^2(-n^{-1} k_2)}{-\vt_n( -k_2) + 1} \ud y\\
& = 1_{(n\TT^2)^2}(k_1, k_2) ( \mF_{\TT^2} \varphi) (k_1 + k_2)
\sum_{|l-m|\leq 1} \vr_l( k_1) \vr_m(k_2) \frac{ \widehat{\chi}^2(n^{-1}
k_2)}{-\vt_n( k_2) + 1},
\end{align*}
so that the required assumptions are naturally satisfied. 
Since $\varphi$ is smooth, the latter term is bounded in $L^2$, uniformly over
$n$. In particular
\eqref{eqn:proof_sotchastic_estimates_covergence_second_chaos} follows. Hence
the distribution of any limit point of $\langle \varphi, \xi^n \diamond
\Pi_{n}^{2} X_{n, 1}\rangle$ is uniquely characterized and since $\varphi$ is arbitrary
this implies convergence in distribution of $\xi^n \diamond
\Pi_{n}^{2} X_{n, 1}$.

\textit{Step 5.} Above we have proven that \( \xi^{n} \) and \( \xi^{n} \diamond
\Pi_{n}^{2} X_{n, \lambda}\) converge in distribution in \( \mC^{-1-\kappa}  \)
and \( \mC^{- \kappa} \) respectively. Now let us prove almost sure convergence
up to changing probability space (we discuss only the case of \(\xi^{n} \),
since the other term can be treated similarly). We would like to apply
Skorohod's representation theorem, which requires the underlying space to be
separable. Unfortunately the space \( \mC^{-1- \kappa} \) is not separable, but we can embed
\[ \mC^{-1-\kappa} \subseteq B^{-1- \kappa}_{p(\kappa),p(\kappa)} \subseteq
\mC^{-1- 2 \kappa}  \] 
for some \( p(\kappa) \in (1, \infty) \) sufficiently large. Now the space \(
B^{-1 - \kappa}_{p(\kappa), p(\kappa)} \) is separable, so we can apply
Skorohod's representation theorem to obtain almost sure convergence in \(
\mC^{-1-2 \kappa} \). Since \( \kappa \) is arbitrary this is sufficient for
the required result. \\
The last statement we have to prove is that in this new probability space
(that we call \( ( \overline{\Omega}, \overline{\mF}, \overline{\PP}) \)) we
have a uniform bound for almost all \( \omega \in \overline{\Omega}\):
\[ \sup_{n \in \NN} \opnorm{ \bx_{n}(\omega)}_{n, \kappa} < \infty. \] 
Recall that
\begin{align*}
  \opnorm{ \boldsymbol{\xi}_{n}(\omega) }_{n, \kappa}: =  & \sup_{\zeta \in [0,1]
}\left\{  n^{-\zeta}   \|\xi^n\|_{\mC^{- (1-\zeta) -\frac \kappa 2}}\right\}   +
n^{-1}\|\xi^n\|_{L^\infty} + n^{-1- \kappa}\| \xi^n\|_{\mC^{\kappa}_{\frac{1}{2
\kappa}}} \\
 &+ \sup_{\lambda \geqslant 1} \left\{ n \|\mQ_n X_{n, \lambda}\|_{L^\infty} +
\lambda^{-\frac{\kappa}{4}}\| Y_{n, \lambda}
\|_{\mC^{-\frac \kappa 2}} \right\}. 
\end{align*}
Now following Steps \( 1 \) and \( 2 \) we see that the bounds on \( \| \xi^{n} (\omega)
\|_{\infty},  \| \xi^{n} (\omega) \|_{\mC^{\kappa}_{\frac{1}{2 \kappa} }} \) and
\( \| X_{n, \lambda}(\omega) \|_{\infty} \) depend only on the deterministic bound \(
| \xi^{n} (\omega, x) | \leqslant 2 n \) (in \( d=2 \)), so we are left with
proving:
\begin{align*}
  \sup_{n \in \NN} &  \left\{ \sup_{\zeta \in [0,1] } n^{-\zeta}
    \|\xi^n(\omega)\|_{\mC^{- (1-\zeta) - \frac{\kappa}{2} }} + \sup_{\lambda \geqslant 1}
    \lambda^{-\frac{\kappa}{4}}\| \xi^{n} \diamond \Pi_{n}^{2} X_{n, \lambda}(\omega)
\|_{\mC^{-\frac \kappa 2}} \right\} \\
& \lesssim  \sup_{n \in \NN}  \Big\{ \sup_{\zeta \in [0,1] }(n^{-1} \|
  \xi^{n} (\omega) \|_{\infty} )^{\zeta} \| \xi^{n}(\omega) \|_{\mC^{-1 -
\frac{\kappa}{2(1- \zeta)} }}^{1 - \zeta}  + \| \xi^{n} \diamond
\Pi_{n}^{2} X_{n ,1} (\omega) \|_{\mC^{- \frac{\kappa}{2} }}\\
& \qquad \qquad \qquad  + \sup_{\lambda \geqslant 1} \lambda^{- \frac{\kappa}{4}} \| \xi^{n} \reso \Pi_{n}^{2} (X_{n, \lambda} -
X_{n, 1}) (\omega) \|_{\mC^{- \frac{\kappa}{2} }} \Big\} \\
& \lesssim \sup_{n \in \NN} \Big\{ 1 + \| \xi^{n}(\omega) \|_{\mC^{-1 -
\frac{\kappa}{2} }} + \| \xi^{n} \diamond
\Pi_{n}^{2} X_{n ,1} (\omega) \|_{\mC^{- \frac{\kappa}{2} }}\\
& \qquad \qquad \qquad  + \| \xi^{n}(\omega) \|_{\mC^{-1 - \frac{\kappa}{8}}} \big( \|
  \xi^{n}(\omega) \|_{\mC^{-1 - \frac{\kappa}{4}}} + n^{- \frac{\kappa}{2} -
\frac{\kappa}{4}} \| \xi^{n}(\omega) \|_{\mC^{- 1 + \frac{\kappa}{2}}} \big)
\Big\},
\end{align*}
where we used interpolation for the first term, as in Step \( 1 \), and the
same bounds as in Step \( 3 \) for the last term. In particular now the
uniform bound is a consequence of the convergence of \( \xi^{n} \) and \(
\xi^{n} \diamond \Pi_{n}^{2} X_{n, 1} \) in the correct spaces.
\end{proof}

\subsection{Proof of
Lemma~\ref{lem:taylor_expansion_and_decay_ft_characteristic_function}}\label{app:prf-of-lemma}

\begin{proof}[Proof of
  Lemma~\ref{lem:taylor_expansion_and_decay_ft_characteristic_function}]
  Let us start with the term involving the gradient. We have that for \(i = 1, \ldots, d\):
  \[ (D \widehat{\chi})_i (0) = {-} 2 \pi \iota \mint{-}_{B_{1}(0)} x_i
  e^{{-} 2 \pi \iota \langle k, x \rangle} \ud x \Big\vert_{k = 0} = 0.\]
  For the term involving the Hessian, we observe that an analogous computation for
  \(i \neq j\) shows that    
\( (D^2 \widehat{\chi})_{i, j} (0) = 0\) . If \(i =j\) we find that
  \[ 
    (D^2 \widehat{\chi})_{i, i} (0) = {-} (2 \pi)^2 \mint{-}_{B_{1}(0)} x_i^2
    e^{{-} 2 \pi \iota \langle k, x \rangle} \ud x \Big\vert_{k = 0} = \colon
\frac{- (2 \pi)^2}{4} \nu_0,
  \]
  with the value of $\nu_{0}$ as in the statement. 
  The two-sided inequality follows by a Taylor approximation.  

We are left with a bound on the decay of $\hat{\chi}$:
  \begin{equation*}
     \Big\vert  
     \frac{\ud^n}{\ud x_{i_1} \dots \ud x_{i_n}} \hat{\chi}_B(k) \Big\vert \lesssim (1 {+} |k|)^{{-} \frac{d {+} 1}{2}}. 
  \end{equation*}
  For this purpose let \(J_{\nu}( \cdot)\) be the Bessel function of the first kind with  parameter \(\nu\), that is
\begin{align*}
 J_{\nu}(k) = \sum_{m=0}^\infty \frac{(-1)^m}{m! \Gamma(m+\nu+1)} {\left(\frac{k}{2}\right)}^{2m+\nu}.
\end{align*}
The Fourier transform of $\chi_B$ can be written, for some $c, C>0$, as
\begin{align}\label{eqn:FT_of_ch_funct}
\hat{\chi}_B (k) = c(d) \int_0^{ \pi} 
  \sin^d {(t)} e^{{-} 2 \pi \iota |k| \cos{(t)/4} } \ \ud t = C|k|^{{-} d/2}
  J_{d/2}(\pi |k| /2).
\end{align}
In the last step we used one of the alternative representations of Bessel
functions, see e.g. \cite[Section 6.15, Equation (5)]{Watson1995Bessel}
(the author uses the notation \( K_{n} \) for the real part of \( J_{\nu} \),
but in our case the Bessel function is real valued). Since $J_{\frac{1}{2}}(k)  = \sqrt{\frac{2}{\pi k}} \sin k$, the bound for $d = 1$ is immediate.
For $d=2$, we make use of an asymptotic bound for Bessel functions:
 \[ \sup_{\varrho \geq 1} \varrho^{1/2} |J_{\nu}(\varrho)| < {+} \infty.\]
 We provide a proof of this bound in the next Lemma. 
The bound for the derivatives then
 follows from \eqref{eqn:FT_of_ch_funct}, the asymptotic result for Bessel functions, and the following pair of identities
\begin{align*}
 \partial_{x} J_{n}(x) = \hh ( J_{n {-} 1}(x) {+} J_{n {+} 1}(x)), \qquad
  \forall n \in \ZZ, 
  \\
  J_{ {-} n} ( \cdot) = ({-} 1)^{n}J_{n}( \cdot)  \qquad \forall n \in \NN_0.
\end{align*}
%

\end{proof}
The following result is well-known (see e.g. \cite{Watson1995Bessel}, where many deeper results are presented). For completeness we provide a proof that satisfies all our purposes.
\begin{lemma}
Fix $\nu \in \RR$. Then
 \[ \sup_{\varrho \geq 1} \varrho^{1/2} |J_{\nu}(\varrho)| < {+} \infty,\] 
\end{lemma}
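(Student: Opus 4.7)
The plan is to reduce the desired bound to a standard energy/Gronwall argument applied to the Bessel ODE. Recall that $J_\nu$ satisfies
\[
J_\nu''(\varrho) + \frac{1}{\varrho} J_\nu'(\varrho) + \Bigl(1 - \frac{\nu^2}{\varrho^2}\Bigr) J_\nu(\varrho) = 0, \qquad \varrho > 0.
\]
The factor $\varrho^{1/2}$ in the statement hints at the Liouville-type substitution $u(\varrho) := \varrho^{1/2} J_\nu(\varrho)$, which is precisely designed to remove the first-order term. A direct computation (differentiating $J_\nu = u/\sqrt{\varrho}$ twice and plugging into the Bessel equation) shows that $u$ satisfies the simpler equation
\[
u''(\varrho) + \Bigl(1 + \frac{1/4 - \nu^2}{\varrho^2}\Bigr) u(\varrho) = 0, \qquad \varrho > 0.
\]

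Once this reduction is in place, the bound follows from a one-line energy estimate. Define
\[
E(\varrho) := u(\varrho)^2 + u'(\varrho)^2.
\]
Differentiating and using the ODE for $u$ gives
\[
E'(\varrho) = 2u'(\varrho)\bigl(u''(\varrho) + u(\varrho)\bigr) = -\frac{2(1/4-\nu^2)}{\varrho^2}\, u(\varrho) u'(\varrho),
\]
so by the elementary inequality $2|uu'| \le u^2 + u'^2 = E$ one obtains
\[
|E'(\varrho)| \le \frac{|1/4 - \nu^2|}{\varrho^2}\, E(\varrho), \qquad \varrho \ge 1.
\]
Gronwall's inequality then yields
\[
E(\varrho) \le E(1)\, \exp\!\Bigl( |1/4 - \nu^2|\!\int_1^{\varrho} s^{-2}\, \ud s\Bigr) \le E(1)\, e^{|1/4-\nu^2|}, \qquad \forall \varrho \ge 1,
\]
and since $u(\varrho)^2 \le E(\varrho)$, the claim $\sup_{\varrho \ge 1} \varrho^{1/2}|J_\nu(\varrho)| < \infty$ follows.

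The main technical point, which is the only place one has to be careful, is the derivation of the reduced equation for $u$: one must check that the coefficients really combine so that the $u'/\varrho$ terms cancel and only the bounded perturbation $(1/4-\nu^2)/\varrho^2$ of the harmonic oscillator remains. Apart from this short computation everything is routine, and the argument is insensitive to the precise value of $\nu \in \RR$, so no case distinction based on the sign of $\nu$ or the size of $|\nu|$ is necessary. The value $E(1)$ is finite since $J_\nu$ and its derivative are smooth on $(0,\infty)$.
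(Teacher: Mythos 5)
Your proof is correct, and it takes a genuinely different route from the paper. The paper starts from the Poisson-type integral representation $J_{d/2}(\varrho) \propto \varrho^{-d/2}\int_{-1}^1 (1-x^2)^{(d-1)/2}e^{\iota\varrho x}\,\mathrm{d}x$ (inherited from the Fourier transform of the ball's indicator), performs the substitution $x=1-u^2$, and rotates the contour by $\pi/4$ so the oscillatory integral becomes an absolutely convergent Gaussian-type integral that is manifestly bounded in $\varrho$; in effect, a hands-on stationary-phase estimate. Your argument instead passes to the Liouville normal form $u''+\bigl(1+\tfrac{1/4-\nu^2}{\varrho^2}\bigr)u=0$ for $u=\varrho^{1/2}J_\nu$, defines the energy $E=u^2+u'^2$, and closes with an integrable perturbation and Gronwall. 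I checked the computation: the $\varrho^{-3/2}u'$ terms indeed cancel, leaving precisely the coefficient $(1/4-\nu^2)/\varrho^2$, and $2|uu'|\le E$ gives $|E'|\le |1/4-\nu^2|\,\varrho^{-2}E$, so $E(\varrho)\le E(1)e^{|1/4-\nu^2|}$ for $\varrho\ge 1$, with $E(1)<\infty$ since $J_\nu$ is analytic on $(0,\infty)$ for every real $\nu$. Your approach is more elementary and covers \emph{all} $\nu\in\RR$ uniformly, whereas the integral representation the paper uses is only directly available for $\nu>-1/2$ (which suffices there, since only $\nu=d/2$ with $d=1,2$ is ever invoked). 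The paper's contour argument, on the other hand, is tailored to the Fourier-analytic setting of the surrounding section and would also yield the sharper asymptotic constant and phase if one pushed it further, which the ODE energy method does not immediately give.
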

 \begin{proof}
   Through \eqref{eqn:FT_of_ch_funct} and by changing variables $x=cos(t)$ we rewrite the Bessel function as
   \begin{align*}
     \int_{{-} 1}^{ 1}  (1 {-} x^2)^{\frac{d {-} 1}{2}
     } e^{ \iota \varrho x }\ud x = 2 \mathrm{Re} \bigg( \int_0^1 (1 {-} x^2)^{\frac{d {-} 1}{2} } e^{\iota \varrho x} \bigg) \ud x.
   \end{align*}
   A change variables  \(x = 1 {-} u^2.\) yields
   \begin{align*}
     e^{i \varrho } \int_0^1  \big( u^2(2 {-}
     u^2)\big)^{\frac{d {-} 1}{2} } e^{{-} \iota \varrho u^2} u \ud u =
     \frac{e^{i \varrho}}{\varrho^{\frac{d {+} 1}{2} }} \int_0^{\sqrt{\varrho}
     }  \big( w^2 (2 {-} \frac{w^2}{\varrho} )\big)^{\frac{d {-} 1}{2}
     } e^{{-} \iota w^2} w \ud w.
 \end{align*}
 Observe that in order to obtain the desired bound it is now sufficient to show that the integral terms is bounded uniformly in $\rho$.
 After another change of variable  \(w = e^{{-} \iota \frac{\pi}{4}} z\) we obtain
 \begin{multline*}
   \int_0^{e^{\frac{\iota\pi}{4} } \sqrt{\varrho} } \big( {-} \iota z^2
   (2 {+} \iota z^2 / \varrho) \big)^{\frac{d {-} 1}{2} } e^{{-} z^2} z \ud z  \\ 
   =
   \int_0^{\sqrt{\varrho} } \big( {-} \iota z^2
   (2 {+} \iota z^2 / \varrho) \big)^{\frac{d {-} 1}{2} } e^{{-} z^2} z  \ud z +
      \int_0^{\pi/4}  \big( {-} \iota \varrho e^{2 \iota \varphi}
   (2 {+} \iota e^{2 \iota \varphi} ) \big)^{\frac{d {-} 1}{2} } e^{{-} \varrho
   e^{2 \iota \varphi} } \varrho e^{2 \iota \varphi} \ud \varphi. 
 \end{multline*}
 The first integral can be trivially bounded uniformly over \(\varrho\) while
the second one is tends to $0$ as $\rho$ tends to infinity since the
exponential term dominates all the others.
 \end{proof}

\bibliographystyle{plain}

\begin{thebibliography}{10}

\bibitem{Aldous1978}
D.~Aldous.
\newblock Stopping times and tightness.
\newblock {\em Ann. Probability}, 6(2):335--340, 1978.

\bibitem{AllezChouk2015AndersonHamiltonian2D}
R.~Allez and K.~Chouk.
\newblock The continuous {A}nderson hamiltonian in dimension two.
\newblock {\em arXiv preprint arXiv:1511.02718}, 2015.

\bibitem{BahouriCheminDanchin2011FourierAndNonLinPDEs}
H.~Bahouri, J.-Y. Chemin, and R.~Danchin.
\newblock {\em Fourier analysis and nonlinear partial differential equations},
  volume 343 of {\em Grundlehren der Mathematischen Wissenschaften [Fundamental
  Principles of Mathematical Sciences]}.
\newblock Springer, Heidelberg, 2011.

\bibitem{barton/depaulis/etheridge:2002}
N.~H. Barton, F.~Depaulis, and A.~Etheridge.
\newblock {Neutral evolution in spatially continuous populations}.
\newblock {\em Theor. Pop. Biol.}, 61:31--48, 2002.

\bibitem{barton/etheridge/veber:2010}
N.~H. Barton, A.~M. Etheridge, and A.~V\'eber.
\newblock A new model for evolution in a spatial continuum.
\newblock {\em Electron. J. Probab.}, 15:162--216, 2010.

\bibitem{barton/etheridge/veber:2013}
N.~H. Barton, A.~M. Etheridge, and A.~V\'eber.
\newblock Modelling evolution in a spatial continuum.
\newblock {\em J. Stat. Mech.}, page PO1002, 2013.

\bibitem{biswas/etheridge/klimek:2018}
N.~Biswas, A.~Etheridge, and A.~Klimek.
\newblock The spatial {L}ambda-{F}leming-{V}iot process with fluctuating
  selection.
\newblock {\em Electron. J. Probab.}, 26:Paper No. 25, 51, 2021.

\bibitem{Cannizzaro2017Malliavin}
G.~Cannizzaro, P.~K. Friz, and P.~Gassiat.
\newblock Malliavin calculus for regularity structures: The case of g{PAM}.
\newblock {\em J. Funct. Anal.}, 272(1):363--419, 2017.

\bibitem{CaravennaSunZygouras2017Chaos}
F.~Caravenna, R.~Sun, and N.~Zygouras.
\newblock Polynomial chaos and scaling limits of disordered systems.
\newblock {\em J. Eur. Math. Soc. (JEMS)}, 19(1):1--65, 2017.

\bibitem{chetwynd-diggle/etheridge:2017}
J.~Chetwynd-Diggle and A.~Etheridge.
\newblock Superbrownian motion and the spatial lambda-fleming-viot process.
\newblock {\em Electron. J. Probab.}, 23:36 pp., 2018.

\bibitem{chetwynd-diggle/klimek:2019}
J.~{Chetwynd-Diggle} and A.~{Klimek}.
\newblock {Rare mutations in the spatial Lambda-Fleming-Viot model in a
  fluctuating environment and SuperBrownian Motion}.
\newblock {\em arXiv e-prints}, January 2019.

\bibitem{ChoukGairingPerkowski2017}
K.~Chouk, J.~Gairing, and N.~Perkowski.
\newblock An invariance principle for the two-dimensional parabolic anderson
  model with small potential.
\newblock {\em Stochastics and Partial Differential Equations: Analysis and
  Computations}, 5(4):520--558, Dec 2017.

\bibitem{Chouk2019}
K.~Chouk and W.~van Zuijlen.
\newblock Asymptotics of the eigenvalues of the {A}nderson {H}amiltonian with
  white noise potential in two dimensions.
\newblock {\em Ann. Probab.}, 49(4):1917--1964, 2021.

\bibitem{Cox-Durrett-Perkins2000RescaledVoter}
J.~T. Cox, R.~Durrett, and E.~A. Perkins.
\newblock Rescaled voter models converge to super-{B}rownian motion.
\newblock {\em Ann. Probab.}, 28(1):185--234, 2000.

\bibitem{cox/perkins:2019}
J.~T. Cox and E.~A. Perkins.
\newblock Rescaling the spatial {L}ambda-{F}leming-{V}iot process and
  convergence to super-{B}rownian motion.
\newblock {\em Electron. J. Probab.}, 25:Paper No. 57, 56, 2020.

\bibitem{Dawson1978GeostochasticCalculus}
D.~A. Dawson.
\newblock Geostochastic calculus.
\newblock {\em Canad. J. Statist.}, 6(2):143--168, 1978.

\bibitem{DawsonMaisonneuve1993SaintFlour}
D.~A. Dawson, B.~Maisonneuve, and J.~Spencer.
\newblock {\em \'{E}cole d'\'{E}t\'{e} de {P}robabilit\'{e}s de {S}aint-{F}lour
  {XXI}---1991}, volume 1541 of {\em Lecture Notes in Mathematics}.
\newblock Springer-Verlag, Berlin, 1993.
\newblock Papers from the school held in Saint-Flour, August 18--September 4,
  1991, Edited by P. L. Hennequin.

\bibitem{Deimling1985}
K.~Deimling.
\newblock {\em Nonlinear functional analysis}.
\newblock Springer-Verlag, Berlin, 1985.

\bibitem{ErhardHairer2019Discretizations}
D.~Erhard and M.~Hairer.
\newblock Discretisation of regularity structures.
\newblock {\em Ann. Inst. Henri Poincar\'{e} Probab. Stat.}, 55(4):2209--2248,
  2019.

\bibitem{Etheridge2000}
A.~M. Etheridge.
\newblock {\em An introduction to superprocesses}, volume~20 of {\em University
  Lecture Series}.
\newblock American Mathematical Society, Providence, RI, 2000.

\bibitem{etheridge:2008}
A.~M. Etheridge.
\newblock Drift, draft and structure: some mathematical models of evolution.
\newblock {\em Banach Center Publ.}, 80:121--144, 2008.

\bibitem{etheridge/veber/yu:2014}
A.~M. Etheridge, A.~V\'{e}ber, and F.~Yu.
\newblock Rescaling limits of the spatial lambda-{F}leming-{V}iot process with
  selection.
\newblock {\em Electron. J. Probab.}, 25:Paper No. 120, 89, 2020.

\bibitem{EthierKurtz1986}
S.N. Ethier and T.G. Kurtz.
\newblock {\em Markov processes: characterization and convergence}.
\newblock Wiley series in probability and mathematical statistics. Probability
  and mathematical statistics. Wiley, 1986.

\bibitem{forien/penington:2017}
R.~Forien and S.~Penington.
\newblock A central limit theorem for the spatial lambda -fleming-viot process
  with selection.
\newblock {\em Electronic Journal of Probability}, 22, 2017.

\bibitem{Fukushima1976}
M.~Fukushima and S.~Nakao.
\newblock On spectra of the {S}chr\"{o}dinger operator with a white {G}aussian
  noise potential.
\newblock {\em Z. Wahrscheinlichkeitstheorie und Verw. Gebiete},
  37(3):267--274, 1976/77.

\bibitem{GubinelliImkellerPerkowski2015}
M.~Gubinelli, P.~Imkeller, and N.~Perkowski.
\newblock Paracontrolled distributions and singular {PDE}s.
\newblock {\em Forum Math. Pi}, 3:e6, 75, 2015.

\bibitem{GubinelliPerkowski2015LectureNotes}
M.~{Gubinelli} and N.~{Perkowski}.
\newblock {Lectures on singular stochastic PDEs}.
\newblock {\em arXiv e-prints}, page arXiv:1502.00157, Jan 2015.

\bibitem{GubinelliPerkowski2017KPZ}
M.~Gubinelli and N.~Perkowski.
\newblock {KPZ} reloaded.
\newblock {\em Comm. Math. Phys.}, 349(1):165--269, 2017.

\bibitem{Hairer2014}
M.~Hairer.
\newblock A theory of regularity structures.
\newblock {\em Invent. Math.}, 198(2):269--504, 2014.

\bibitem{hedrick:2006}
P.~Hedrick.
\newblock Genetic polymorphism in heterogeneous environments: the age of
  genomics.
\newblock {\em Annu. Rev. Ecol. Evol. Syst.}, 37:67--93, 2006.

\bibitem{Henry1981GeometricTheoryPDEs}
D.~Henry.
\newblock {\em Geometric theory of semilinear parabolic equations}, volume 840
  of {\em Lecture Notes in Mathematics}.
\newblock Springer-Verlag, Berlin-New York, 1981.

\bibitem{HislopSigal-IntroductionSpectralTheory}
P.~D. Hislop and I.~M. Sigal.
\newblock {\em Introduction to spectral theory}, volume 113 of {\em Applied
  Mathematical Sciences}.
\newblock Springer-Verlag, New York, 1996.
\newblock With applications to Schr\"{o}dinger operators.

\bibitem{jakubowski:1986}
A.~Jakubowski.
\newblock On the skorokhod topology.
\newblock In {\em Annales de l'IHP Probabilit{\'e}s et statistiques},
  volume~22, pages 263--285, 1986.

\bibitem{Kato1995}
T.~Kato.
\newblock {\em Perturbation theory for linear operators}.
\newblock Classics in Mathematics. Springer-Verlag, Berlin, 1995.
\newblock Reprint of the 1980 edition.

\bibitem{kerr/packer:1997}
J.~Kerr and L.~Packer.
\newblock Habitat heterogeneity as a determinant of mammal species richness in
  high-energy regions.
\newblock {\em Nature}, 385(6613):252, 1997.

\bibitem{kimura:1953}
M.~Kimura.
\newblock {Stepping stone model of population}.
\newblock {\em Ann. Rep. Nat. Inst. Genetics Japan}, 3:62--63, 1953.

\bibitem{KonnoShiga1988}
N.~Konno and T.~Shiga.
\newblock Stochastic partial differential equations for some measure-valued
  diffusions.
\newblock {\em Probability Theory and Related Fields}, 79(2):201--225, Sep
  1988.

\bibitem{Labbe2018}
C.~Labb\'{e}.
\newblock The continuous {A}nderson {H}amiltonian in {$d\leq 3$}.
\newblock {\em J. Funct. Anal.}, 277(9):3187--3235, 2019.

\bibitem{malecot:1948}
G.~Mal\'ecot.
\newblock {\em Les Math\'ematiques de l'h\'er\'edit\'e}.
\newblock Masson et Cie, Paris, 1948.

\bibitem{MartinPerkowski2019Bravais}
J.~Martin and N.~Perkowski.
\newblock Paracontrolled distributions on {B}ravais lattices and weak
  universality of the 2d parabolic {A}nderson model.
\newblock {\em Ann. Inst. Henri Poincar\'{e} Probab. Stat.}, 55(4):2058--2110,
  2019.

\bibitem{MourratWeber2017}
J.-C. Mourrat and H.~Weber.
\newblock Convergence of the two-dimensional dynamic {I}sing-{K}ac model to
  {$\Phi^4_2$}.
\newblock {\em Comm. Pure Appl. Math.}, 70(4):717--812, 2017.

\bibitem{MuellerMytnikRyzhik2019SpeedRandomFront}
C.~Mueller, L.~Mytnik, and L.~Ryzhik.
\newblock The speed of a random front for stochastic reaction-diffusion
  equations with strong noise.
\newblock {\em Comm. Math. Phys.}, 384(2):699--732, 2021.

\bibitem{Mytnik1996}
L.~Mytnik.
\newblock Superprocesses in random environments.
\newblock {\em Ann. Probab.}, 24(4):1953--1978, 1996.

\bibitem{pausas/carreras/ferre/font:2003}
J.~Pausas, J.~Carreras, A.~Ferr{\'e}, and X.~Font.
\newblock Coarse-scale plant species richness in relation to environmental
  heterogeneity.
\newblock {\em Journal of Vegetation Science}, 14(5):661--668, 2003.

\bibitem{Pazy1983Semigroups}
A.~Pazy.
\newblock {\em Semigroups of linear operators and applications to partial
  differential equations}, volume~44 of {\em Applied Mathematical Sciences}.
\newblock Springer-Verlag, New York, 1983.

\bibitem{Perkins2002SuperprocessesStFlour}
E.~Perkins.
\newblock Dawson-{W}atanabe superprocesses and measure-valued diffusions.
\newblock In {\em Lectures on probability theory and statistics
  ({S}aint-{F}lour, 1999)}, volume 1781 of {\em Lecture Notes in Math.}, pages
  125--324. Springer, Berlin, 2002.

\bibitem{PerkowskiRosati2019RSBM}
Nicolas Perkowski and Tommaso Rosati.
\newblock A rough super-{B}rownian motion.
\newblock {\em Ann. Probab.}, 49(2):908--943, 2021.

\bibitem{rainey/travisano:1998}
P.~B. Rainey and M.~Travisano.
\newblock Adaptive radiation in a heterogeneous environment.
\newblock {\em Nature}, 394(6688):69, 1998.

\bibitem{Shiga1988SteppingStone}
T.~Shiga.
\newblock Stepping stone models in population genetics and population dynamics.
\newblock In {\em Stochastic processes in physics and engineering ({B}ielefeld,
  1986)}, volume~42 of {\em Math. Appl.}, pages 345--355. Reidel, Dordrecht,
  1988.

\bibitem{sickel:1999}
W.~Sickel.
\newblock Pointwise multipliers of lizorkin-triebel spaces.
\newblock In J.~Rossmann, P.~Tak{\'a}{\v{c}}, and G.~Wildenhain, editors, {\em
  The Maz'ya Anniversary Collection}, pages 295--321, Basel, 1999.
  Birkh{\"a}user Basel.

\bibitem{Simon1987ComactSetsInLp}
J.~Simon.
\newblock Compact sets in the space {$L^p(0,T;B)$}.
\newblock {\em Ann. Mat. Pura Appl. (4)}, 146:65--96, 1987.

\bibitem{stein/gerstner/kreft:2014}
A.~Stein, K.~Gerstner, and H.~Kreft.
\newblock Environmental heterogeneity as a universal driver of species richness
  across taxa, biomes and spatial scales.
\newblock {\em Ecology letters}, 17(7):866--880, 2014.

\bibitem{Taylor2011PDEsI}
M.~E. Taylor.
\newblock {\em Partial differential equations {I}. {B}asic theory}, volume 115
  of {\em Applied Mathematical Sciences}.
\newblock Springer, New York, second edition, 2011.

\bibitem{tews/etal:2004}
J.~Tews, U.~Brose, V.~Grimm, K.~Tielb{\"o}rger, M.C. Wichmann, and F.~Schwager,
  M.and~Jeltsch.
\newblock Animal species diversity driven by habitat heterogeneity/diversity:
  the importance of keystone structures.
\newblock {\em Journal of biogeography}, 31(1):79--92, 2004.

\bibitem{Triebe2010}
H.~Triebel.
\newblock {\em Theory of Function Spaces}.
\newblock Modern Birkh{\"a}user Classics. Springer Basel, 2010.

\bibitem{Walsh1986}
J.~B. Walsh.
\newblock An introduction to stochastic partial differential equations.
\newblock In P.~L. Hennequin, editor, {\em {\'E}cole d'{\'E}t{\'e} de
  Probabilit{\'e}s de Saint Flour XIV - 1984}, pages 265--439, Berlin,
  Heidelberg, 1986. Springer Berlin Heidelberg.

\bibitem{Watson1995Bessel}
G.~N. Watson.
\newblock {\em A treatise on the theory of {B}essel functions}.
\newblock Cambridge Mathematical Library. Cambridge University Press,
  Cambridge, 1995.
\newblock Reprint of the second (1944) edition.

\bibitem{wright:1943}
S.~Wright.
\newblock {Isolation by distance}.
\newblock {\em Genetics}, 28:114--138, 1943.

\end{thebibliography}

\end{document}